\theoremstyle{definition}
\def\be{\begin{eqnarray}}
\def\ee{\end{eqnarray}}
\def\matZ{{\mathbb{Z}}}
\def\matR{{\mathbb{R}}}
\def\matQ{{\mathbb{Q}}}
\def\matC{{\mathbb{C}}}
\newcommand{\Stab}{\mathrm{Stab}}
\newcommand{\bA}{\mathsf{A}}
\newcommand{\bT}{\mathsf{T}}
\newcommand{\Lie}{\mathrm{Lie}}
\newcommand{\cL}{\mathscr{L}}
\def\sl{{ s}}
\def\cochar{{\textrm{cochar}}}
\def\DT{\mathsf{DT}}
\def\GW{\mathsf{GW}}
\theoremstyle{definition}
\newtheorem{Definition}{Definition}
\newtheorem{Proposition}{Proposition}
\newtheorem{Lemma}{Lemma}
\newtheorem{Corollary}{Corollary}
\newtheorem{Conjecture}{Conjecture}
\newtheorem{Theorem}{Theorem}
\newtheorem{Remark}{Remark}
\newtheorem{Example}{Example}
\def\tb {{\cal{V}}}  
\def\X{\mathsf{X}} 
\def\Y{\mathsf{Y}} 
\def\fp{ \mathbf{S}} 
\def\bU{{\bf U}} 
\def\bL{{\bf L}}
\def\bD{{\bf D}}
\def\sing{{\mathsf{Sing}}} 
\newcommand{\somespecialrotate}[3][]{%
\begingroup
\sbox\@tempboxa{#3}%
\@tempdima=.5\wd\@tempboxa
\sbox\@tempboxa{\rotatebox[#1]{#2}{\usebox\@tempboxa}}%
\advance\@tempdima by -.5\wd\@tempboxa
\mbox{\hskip\@tempdima\usebox\@tempboxa}%
\endgroup}
\def\M{\mathbf{M}} 
\def\T{\mathbf{T}} 
\def\B{\mathbf{B}} 
\def\Walls{\mathsf{Walls}} 
\def\Mon{\mathbf{Mon}}
\def\B{\mathbf{B}} 
\newcommand{\Uq}{\mathscr{U}_\hbar}
\newlength{\dhatheight}
\newcommand{\doublehat}[1]{%
	\settoheight{\dhatheight}{\ensuremath{\widehat{#1}}}%
	\addtolength{\dhatheight}{-0.35ex}%
	\widehat{\vphantom{\rule{1pt}{\dhatheight}}%
		\smash{\widehat{#1}}}}
\def\gt{{\Uq(\doublehat{\frak{gl}}_1)}}	
\def\gtn{{\Uq(\doublehat{\frak{gl}}_N)}}
\def\gtb{{\Uq(\doublehat{\frak{gl}}_b)}}
\begin{document}
\title{Quantum differential and difference equations for $\mathrm{Hilb}^{n}(\matC^2)$.}


\author{\it \small{Andrey Smirnov, asmirnov@email.unc.edu}\\
\it \small{University of North Carolina at Chapel Hill}\\ 
\small \it{Steklov Mathematical Institute of Russian Academy of Sciences}}

\date{}

\maketitle
\thispagestyle{empty}
	
\begin{abstract}
We consider the quantum {\it difference} equation of the Hilbert scheme of points in $\matC^2$. This equation is the K-theoretic generalization of the quantum differential equation discovered by A.~Okounkov and R.~Pandharipande in \cite{OkPanDiff}. We obtain two explicit descriptions for the monodromy of these equations - representation-theoretic and algebro-geometric. In the representation-theoretic description, the monodromy acts via certain explicit elements in the quantum toroidal algebra $\gt$. In the algebro-geometric description, the monodromy features as transition matrices between the stable envelope bases in equivariant K-theory and elliptic cohomology.  Using the second approach we identify the monodromy matrices for the differential equation with the K-theoretic $R$-matrices of cyclic quiver varieties, which appear as subvarieties in the $3D$-mirror Hilbert scheme.  Most of the results in the paper are illustrated by explicit examples for cases $n=2$ and $n=3$ in the Appendix.






\end{abstract}

\setcounter{tocdepth}{2}
	
\tableofcontents

\section{Introduction}

\subsection{Quantum differential equation for $\textrm{Hilb}^{n}(\matC^2)$} 

In the study of Gromov-Witten/Donaldson-Thomas theories of threefolds and representation theory of affine Yangians a certain first order differential equation with remarkable properties emerges naturally. This equation features as the quantum differential equation (qde) in the equivariant quantum cohomology of the Hilbert scheme of points in the plane $\X=\textrm{Hilb}^{n}(\matC^2)$ and was first discovered and investigated by  A. Okounkov and R. Pandharipande in~\cite{OkPanCoh,OkPanDiff}.

The quantum differential equation describes a connection in a trivial bundle over $\mathbb{P}^1$ with fiber given by the equivariant cohomology $H^{\bullet}_{\bT}(\X)$. The singularities of this connection are located at: 
$$ 
\sing=\{0,\infty, \sqrt[k]{1}, k=2,...,n\} \setminus \{ 1 \} \, \subset \mathbb{P}^1
$$
where $\sqrt[k]{1}$ denotes the set of all complex $k$-th roots of $1$. The most important global properties of the qde are encoded in the homomorphism 
\be \label{homom}
\pi_{1}(\mathbb{P}^1 \setminus \sing,z) \to \textrm{End}(H_{\bT}^{\bullet}(\X))
\ee 
provided by the monodromy of the corresponding connection. 
One of the goals of this paper is to give a complete description of this homomorphism  for special choices of the based point $z \in \mathbb{P}^1$. 

We recall the explicit form of the qde and its main properties in Section~\ref{qdesection}.

\subsection{Quantum difference equation for $\textrm{Hilb}^{n}(\matC^2)$}

In order to understand the monodromy of qde, we first consider its $q$-difference generalization: the quantum difference equation of the Hilbert scheme $\X$ is the K-theoretic version of the quantum connection. This equation has the following form
\be \label{qdeintro}
\Psi(z q) = \M(z) \Psi(z), \ \ \ \Psi(z) \in K_{\bT}(\X), \ \ \ |q|<1
\ee
The operator $ \M(z) \in \textrm{End}(K_{\bT}(\X))$ was computed in Section 8 of \cite{OS}. It is expressed via the action of the quantum toroidal algebra $\gt$ on the equivariant K-theory of the Hilbert scheme $K_{\bT}(\X)$ as follows. The algebra $\gt$ is generated by elements $\alpha^{w}_{k}$ parameterized by  $w\in\matQ$ and  $k\in \matZ$, see Section \ref{fockmod} below. The action of $\gt$ on the equivariant $K$-theories $K_{\bT}(\X)$ was constructed in \cite{Feigin2011}.   For each rational number $w=a/b$ we associate the {\it wall-crossing operator}
$$
\B_{w}(z,q)=: \! \exp\Big( \sum\limits_{k=1}^{\infty}
\dfrac{r^{w}_{k} \alpha^{w}_{-k} \alpha^{w}_{k} }{1-z^{-b k } {q^{a k}}}  \Big)\!:
$$
which is an element in a completion of $\gt$. 
The operator $\M(z)$  acts on $K_{\bT}(\X)$ by:
\be \label{mintr}
\M(z) = \mathscr{O}(1)\!\! \prod^{\longrightarrow}\limits_{{w\in \matQ} \atop {-1 \ll w <0 }} \B_w(z,q)
\ee
where $\mathscr{O}(1)$ denotes the operator of multiplication by the corresponding line bundle in K-theory.  
In the cohomological limit, which corresponds to sending $q\to 1$, the $q$-difference equation (\ref{qdeintro}) degenerates to the quantum differential equation.

\subsection{Monodromy of the quantum difference equation} 
It is well known that the $q$-difference equations are easier to work with than their differential limits. In particular, the multiplicative nature of $q$-difference equations allows one to describe their fundamental solutions in terms of infinite products. 
In Section \ref{monsolsec} we use this idea to compute two fundamental solution matrices for (\ref{qdeintro}), the first is holomorphic at $z=0$ and the second at $z=\infty$.  The {\it monodromy} of (\ref{qdeintro}) is defined (according to Birkhoff) as the transition matrix between these two fundamental solutions.  From this we find that the monodromy has the form of an ordered infinite product:
\be \label{mon1}
\Mon(z) \sim \prod^{\longrightarrow}\limits_{{w\in \matQ}} \B_w(z,q).
\ee
We explain our notation for the product over rational numbers later in Section \ref{prodsect}.
In cohomological limit $q\to 1$ of $\Mon(z)$  we obtain description of the monodromy of the differentiation equation. 
In particular, we find that the operators 
\be \label{monodintro}
\B_{w}:=\B_{w}(\infty,1)= :\! \exp\Big( \sum\limits_{k=1}^{\infty}
r^{w}_{k} \alpha^{w}_{-k} \alpha^{w}_{k}   \Big)\!:
\ee
describe the monodromy of the qde along the loop based at $z=0$ which goes around the singularity located at the root of unity $z=e^{2\pi i w}$, see Fig.\ref{monodex} in Section \ref{mondsec}. As a result (Theorem \ref{nomodtheor}) we obtain a complete description of the homomorphism (\ref{homom}). 

\subsection{Elliptic stable envelopes and mirror symmetry}
Thanks to its geometric origin, equation (\ref{qdeintro}) plays  distinguished role in the world of $q$-difference equations. A geometric  method for computing the monodromy for equations of this type, was recently proposed by M. Aganagic and A. Okounkov in \cite{AOElliptic,OkEll1,OkEll2}. In Section \ref{ellsection} we use their results to show that in the basis of fixed points the monodromy operator admits a Gauss decomposition:
\be \label{mon2}
\Mon(z) = {\bf{U}}(z)^{-1} \, {\bf{L}}(z)
\ee
where ${\bf{U}}(z)$ and ${\bf{L}}(z)$ are certain upper and lower-triangular matrices, with matrix elements given by the fixed points components of the {\it stable envelope classes} in the equivariant elliptic cohomology of $\X$.  

Comparing (\ref{mon1}) and (\ref{mon2}) and using $3D$-mirror symmetry of elliptic stable envelopes we obtain the following result (see Theorem \ref{brmatth} for precise statement): 
\be \label{brintro}
\B_{w}(z,q) = {\bf R}^{-}_{\Y_{w}}(z q^{-w})
\ee
where ${\bf R}^{-}_{\Y_{w}}(z)$ is the K-theoretic {\it $R$-matrix} of certain subvariety $\Y_{w} \subset \X^{!}$, in the {\it 3D-mirror} (also known as the symplectic dual) Hilbert scheme $\X^{!}$. For a rational number $w=a/b$ the irreducible components of $\Y_{w}$ are isomorphic to Nakajima varieties associated with a cyclic quiver with $b$ vertices, see Fig.~\ref{cicqvpic}. Form the representation-theoretic viewpoint,  the $R$-matrix ${\bf R}^{-}_{\Y_{w}}(z q^{-w})$  correspond to the trigonometric $R$-matrix of the quantum toroidal algebra $\gtb$.

Comparing (\ref{brintro}) with (\ref{monodintro}) we conclude that the monodromy of the qde along  a loop which goes around the  singularity located at $z=e^{2\pi i w}$  is described by the K-theoretic $R$-matrix of the mirror variety $\Y_w$:
$$
\B_{w}={\bf R}^{-}_{\Y_{w}}:={\bf R}^{-}_{\Y_{w}}(\infty)
$$ 
Finally, let us note that the wall-crossing operators $\B_{w}(z,q)$ were computed in \cite{OS} using the machinery  of Hopf algebras developed in \cite{EtingofVarchenko}, which is not geometric in its nature. Now, we can view relation (\ref{brintro}) as independent, purely
geometric {\it definition} of this operators.

\subsection{Quantum difference equation as qKZ}
Using (\ref{brintro}) we may rewrite the operator (\ref{mintr}) as an ordered product of $R$-matrices. Thus, (\ref{qdeintro}) takes a form similar to the {\it quantum Knizhnik–Zamolodchikov equation}, see (\ref{qKZ}) and Section \ref{qdesec} for discussion. We note, however, an important difference: the $R$-matrices entering the standard qKZ equations are all $R$-matrices of the same quantum group. In contrast, ${\bf R}^{-}_{\Y_{w}}(z)$ in (\ref{qKZ}) are the $R$-matrices associated with cyclic quivers with length $b$ which vary with~$w$. First, this suggests to view  the equation (\ref{qKZ}) as a proper generalization of qKZ equations to the case of toroidal quantum groups. Second, this suggest considering the the quantum difference and differential equations of $\X$ as the generalized qKZ equations  associated with mirror variety $\X^{!}$. 

\subsection{Connections with other topics in representation theory}

From representation-theoretic perspective, the quantum differential equation for the Hilbert scheme $\X$ describes the so called Casimir connection for the affine Yangian $Y_{\hbar}(\widehat{\frak{gl}}_1)$ \cite{OkMaul}. 
The monodromy of the Casimir connection for the classical symplectic resolutions $X=T^{*}(G/P)$, where $P\subset G$ is a parabolic subgroup, describes the action of the quantum Weyl group  associated with $G$ \cite{valer2,TolMon}. Thus, the monodromy operators $\B_{w}$ can be viewed as generators of the ``quantum Weyl group of the quantum toroidal algebra $\gt$''. The wall-crossing operators $\B_w(z,q)$ with the parameter $z$ ``turned on'' provide the {\it dynamical version} of this Weyl group, with $z$ playing the role of the dynamical parameter. We refer to \cite{EtingofVarchenko} for the introduction  to the dynamical Weyl groups.

Important structures emerge the from the algebra
$$
\mathscr{A}_{X}=\textrm{quantization of } \, \X
$$
known as Cherednik's spherical DAHA for $\frak{gl}(n)$. Using quantization in prime characteristic $p\gg 0$ one constructs the action of the fundamental group in~(\ref{homom}) by the autoequivalences of derived category $D^{b}_{\textrm{Coh}}(\X)$ \cite{OkBez}. This leads to a categorification of the monodromy operators $\B_{w}$. The categorification of the dynamical operators $\B_{w}(z,q)$ remains to be understood in this approach. Perhaps  (\ref{brintro}), relating these operators to the K-theoretic $R$-matrices of the symplectic dual variety $\X^{!}$ gives a hint in this direction.

\section*{Acknowledgments}
This paper emerged from the author's attempt to write an ``example'' for a project currently developed in \cite{KononovSmirnov1,KononovSmirnov2} and we thank Yakov Kononov for collaboration. We thank Hunter Dinkins for reading preliminary version of the paper and useful suggestions.  We thank Andrei Okounkov from whom we learned many of the ideas discussed here.  

The work is partially supported by the Russian Science Foundation under grant
19-11-00062.

\section{Quantum differential equation of $\textrm{Hilb}^{n}(\matC^2)$ \label{qdesection}}
In this paper $\X=\textrm{Hilb}^{n}(\matC^2)$ - the Hilbert scheme of $n$ points in $\matC^2$. We denote by 
$\bT$ the two-dimensional complex torus acting on $\X$ via $$
(x,y) \to (\epsilon_1 x, \epsilon_2 y)
$$
where $(x,y)$ denote the coordinates on $\matC^2$. The $\bT$-equivariant quantum cohomology ring of  $\X$ is computed in \cite{OkPanCoh}. The corresponding quantum differential equation is considered in \cite{OkPanDiff}. This section is a brief overview of these results.


\subsection{Fock space presentation}
Let us consider $\matQ(\epsilon_1,\epsilon_2)$-vector space
\be \label{cohfock}
\textsf{Fock}=\matQ[p_1,p_2,p_3,\dots] \otimes_{\matZ} \matQ(\epsilon_1,\epsilon_2).
\ee
Let us consider the Heisenberg algebra generated by $\alpha_n$, $n\in \matZ\setminus\{0\}$
satisfying the relations
$$
[\alpha_n,\alpha_m]=n \delta_{n+m}.
$$
This algebra acts on the Fock space via 
\be \label{cohaction}
\alpha_{n}(f) = \left\{\begin{array}{ll}
	-n \dfrac{d f}{d p_n}, & n>0,\\
	-p_{-n} f, & n<0.
\end{array}\right.
\ee
The vectors 
\be \label{gwbas}
p_{\mu}=\prod_{i} p_{\mu_i}
\ee
form a basis of the Fock space labeled by partitions $\mu=(\mu_1,\mu_2,\dots)$. There exist an isomorphism of $\matQ(\epsilon_1,\epsilon_2)$ vector spaces 
\be \label{FockDef}
\bigoplus_{n=0}^{\infty} H^{\bullet}_{\bT}( \textrm{Hilb}^{n}(\matC^2) )_{loc}= \textsf{Fock}
\ee
Under this isomorphism, the canonical basis of the torus fixed points in $H^{\bullet}_{\bT}( \textrm{Hilb}^{n}(\matC^2) )_{loc}$ is identified with the basis of normalized Jack polynomials $J_{\lambda}$ in the Fock space. The summand in (\ref{FockDef}) corresponding to a fixed values of $n$ is spanned by the Jack polynomials $J_{\lambda}$ with $|\lambda|=n$. Examples of Jack polynomials can be found in Appendix \ref{appjack}.

\subsection{Quantum differential equation}
Let us consider the following operator acting on the Fock space:
\be \label{cohomm}
\begin{aligned} 
	\textbf{m}(z)=  \dfrac{1}{2} \sum\limits_{k,l>0} ( \epsilon_1\epsilon_2 \, \alpha_{k+l} \alpha_{-k} \alpha_{-l}+\alpha_{-k-l} \alpha_{k} \alpha_{l} )\\
	+(\epsilon_1+\epsilon_2) 
	\sum\limits_{k=1}^{\infty} \dfrac{k}{2} \dfrac{z^k+1}{z^k-1}\alpha_{-k} \alpha_{k}- \dfrac{\epsilon_1+\epsilon_2}{2} \dfrac{z+1}{z-1} \, |\cdot|
\end{aligned}
\ee
where $|\cdot|$ maps $p_{\mu}$ to $|\mu| p_{\mu}$. The operator $\textbf{m}(0)$ corresponds to the classical multiplication by the fist Chern class in equivariant cohomology. In particular, it is diagonal in the basis of fixed points (Jack polynomials) with the following eigenvalues: 
\be \label{eigvald}
\textbf{m}(0)( J_{\lambda} )= c^{(0)}_{\lambda} J_{\lambda}, \ \ c^{(0)}_{\lambda}=\sum_{(i,j) \in \lambda} (i-1) \epsilon_1+ (j-1)\epsilon_2.
\ee
One also observes that 
\be \label{minver}
\textbf{m}(z^{-1})=-(-1)^{l} \textbf{m}(z) (-1)^{l}
\ee
where we denote the operator
$$
(-1)^{l} : p_{\mu} \to (-1)^{l_{\mu}} p_{\mu}.
$$
This means that
\be \label{minf}
\textbf{m}(\infty) (J^{\ast}_{\lambda}) = c^{(\infty)}_{\lambda} J^{\ast}_{\lambda}, 
\ee
where $J^{\ast}_{\lambda}=\left.J_{\lambda}\right|_{p_{i}= -p_{i}}$ denote the Jack polynomials in variables $-p_i$ and
\be \label{infev}
c^{(\infty)}_{\lambda}=-c^{(0)}_{\lambda}.
\ee
The main object of study in \cite{OkPanDiff} is the differential equation (qde):
\be \label{connec}
\nabla \psi =0, \ \ \psi \in \textsf{Fock}, \ \ \nabla=z \dfrac{d}{dz} - {\bf m}(z).
\ee
For a fixed value of $n$ this equation has regular singularities
located at
\begin{equation} \label{siings}
\sing=\{0,\infty, \sqrt[k]{1}, k=2,...,n\} \setminus \{ 1 \} \, \subset \mathbb{P}^1
\end{equation}
where $\sqrt[k]{1}$ denotes the set of all complex $k$-th roots of 1. Note that $z=1$ is {\it not a singularity} of qde.

\subsection{Fundamental solution near $z=0$}
Let $\textsf{J}$ be the matrix with $\lambda$-th column given by $J_{\lambda}$. We assume that the columns of $\textsf{J}$ are {\it ordered by the standard dominance order on partitions}. 
By (\ref{eigvald}) we have
$$
\textbf{m}(0) \, \textsf{J} = \textsf{J} \,c^{(0)}
$$
where $c^{(0)}$ is the diagonal matrix with eigenvalues $c^{(0)}_{\lambda}$.  Explicit examples of matrices $\textsf{J}$ can be found in Appendix \ref{appjack}. 

\begin{Remark} \label{basrem}
Unless otherwise stated, all operators acting in the Fock space are considered in the basis (\ref{gwbas}). 
Thus, we sometimes refer to the operators as ``matrices''  without confusion. In this view, the matrix $\textsf{J}$ 
is the transition matrix from the basis ``Gromov-Witten'' basis $p_{\lambda}$ to the ``Donaldson-Thomas'' basis $J_{\mu}$ of the Fock space, see Appendix \ref{appjack}. 
\end{Remark}

By basic theory of ordinary differential equations, the solutions of qde (\ref{connec}) in $D_{0}=\{|z|<1\}$ are described by the fundamental solution matrix of the form:
\be \label{solzer}
\psi^{0}(z) =\psi^{reg}_{0}(z)  z^{c^{(0)}}, 
\ee
where $z^{c^{(0)}}$ denotes the diagonal matrix with eigenvalues $z^{c^{(0)}_{\lambda}}$, the matrix $\psi^{reg}_{0}(z)$ is holomorphic in $D_{0}$ and  is normalized by the condition 
\be \label{solzernorm}
\psi^{reg}_{0}(0)=\textsf{J}\,   \ \ \ 
\ee
The coefficients in the Taylor expansion of $\psi^{reg}_{0}(z)$  at $z=0$ are uniquely determined by equation (\ref{connec}) and ``initial condition'' (\ref{solzernorm}).

\begin{Remark} \label{monrem}
By definition, the {\it columns} of the matrix $\psi^{0}(z)$ form a basis of solutions of qde. Any other solution is a $\matQ(\epsilon_1,\epsilon_2)$-linear combination of these solutions. This means that all other fundamental solution matrices of the qde are of the form
$$
\psi^{0}(z)\, A
$$
for some invertible $A\in \textrm{End}(\textsf{Fock})$.
\end{Remark}

\subsection{Quantum connection and monodromy}
$\nabla$ is a flat connection in a trivial bundle over $\mathbb{P}^{1}\setminus \sing$ with a fiber $\textsf{Fock}$. 
The monodromy of this connection defines a representation of the fundamental group $\pi_1(\mathbb{P}^1 \setminus \sing,p)$ based  at a point $p \in \mathbb{P}^{1}\setminus \sing$. 

In Section \ref{mondsec} we compute the monodromy for $p=0_{+}$ - a point ``infinitesimally'' close to $0$\footnote{$0$ is a singularity of qde and we can not take $p=0$ as a base point.}. In this case it is convenient to normalize the fundamental solution matrix by
\be \label{dtnorms}
\psi^{0}_{\mathsf{DT}}(z):=\psi_{0}(z)\, \Gamma_{\DT}
\ee
where $\Gamma_{\DT}$ denotes the diagonal matrix with eigenvalues
\be \label{gamDT}
(\Gamma_{\textsf{DT}})_{\lambda,\lambda}=\prod\limits_{\textsf{w} \in \textrm{char}_{\bT}(T_{\lambda} \X)  } \dfrac{1}{\Gamma(\textsf{w}+1)}.
\ee
and $\Gamma(x)$ stands for the standard Gamma function of $x$.
In other words, the $\lambda$-th column of the fundamental solution matrix (\ref{solzer}) is multiplied by the product of gamma functions (\ref{gamDT}).

Let $\gamma \in \pi_1(\mathbb{P}^1 \setminus \sing,0_{+})$ and let us denote by $\psi^{0}_{\mathsf{DT}}(\gamma \cdot z)$ the solution of the qde which is obtained from $\psi^{0}_{\mathsf{DT}}(z)$ via an analytic continuation along a loop representing~$\gamma$. The columns of the matrix $\psi^{0}_{\mathsf{DT}}(\gamma \cdot z)$ are solutions of the qde and thus are linear combinations of the columns of the original fundamental solution matrix, see Remark \ref{monrem}. Thus
$$
\psi^{0}_{\mathsf{DT}}(\gamma \cdot z) = \psi^{0}_{\mathsf{DT}}(z) \cdot \nabla_{\DT}(\gamma)
$$
for some matrix $\nabla_{\DT}(\gamma) \in \textrm{End}(\textsf{Fock})$. The map
$$
\nabla_{\DT}: \, \gamma \mapsto \nabla_{\DT}(\gamma)
$$
defines an antihomomorphism 
\be \label{defmonod}
\nabla_{\DT}: \, \pi_1(\mathbb{P}^1 \setminus \sing,0_{+}) \longrightarrow \mathrm{End}(\textsf{Fock}), \ \ 
\ee
which is called the monodromy of qde.
\begin{Remark}
This map is an antihomomorphism because fundamental solutions transform in (\ref{defmonod}) by multiplication from the right, in particular: 
$$
\psi^{0}_{\DT}(\gamma_{w_2} \cdot \gamma_{w_1} \cdot z)=
\psi^{0}_{\DT}(\gamma_{w_1} \cdot z) \nabla_{\DT}(\gamma_{w_2}) =\psi^{0}_{\DT}( z)  \nabla_{\DT}(\gamma_{w_1}) \nabla_{\DT}(\gamma_{w_2})
$$
so that 
$$
\nabla_{\mathsf{DT}}(\gamma_{w_1} \cdot \gamma_{w_2})=\nabla_{\mathsf{DT}}(\gamma_{w_2}) \cdot \nabla_{\mathsf{DT}}(\gamma_{w_1}).
$$
\end{Remark}
Thanks to the choice of normalization (\ref{gamDT}) the monodromy operators have good properties:
\begin{Theorem}
{\it  The matrix elements of matrices $\nabla_{\DT}(\gamma)$, $\gamma \in \pi_1(\mathbb{P}^1 \setminus \sing,0_{+})$ are rational functions in
\be \label{tedef}
t_1 = e^{2 \pi i \epsilon_1}, \ \ \ t_2 = e^{2 \pi i \epsilon_2}.
\ee
}
\end{Theorem}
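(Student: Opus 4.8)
The plan is to reduce to a set of generators of $\pi_1(\mathbb{P}^1\setminus\sing,0_+)$ and to verify rationality of the monodromy matrix for each one. Since the rational functions of $t_1,t_2$ form a field, and $\nabla_{\DT}$ is an antihomomorphism, it suffices to treat the standard generators: a small loop $\gamma_0$ encircling $z=0$, and, for each root of unity $\zeta\in\sing\setminus\{0,\infty\}$, the loop $\gamma_\zeta$ running from $0_+$ along a fixed path to a neighbourhood of $\zeta$, once around $\zeta$, and back. The loop around $\infty$ is then the product of the others by the relation $\prod\gamma=1$ in the fundamental group of the punctured sphere, so it needs no separate treatment.

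For $\gamma_0$ the claim is immediate. Near $z=0$ the normalized solution is $\psi^{0}_{\DT}(z)=\psi^{reg}_0(z)\,z^{c^{(0)}}\Gamma_{\DT}$, and since $\Gamma_{\DT}$ and $e^{2\pi i c^{(0)}}$ are both diagonal they commute, so analytic continuation around $0$ gives $\nabla_{\DT}(\gamma_0)=e^{2\pi i c^{(0)}}$; by (\ref{eigvald}) each $c^{(0)}_\lambda$ is a non-negative integer combination $A_\lambda\epsilon_1+B_\lambda\epsilon_2$, hence $e^{2\pi i c^{(0)}_\lambda}=t_1^{A_\lambda}t_2^{B_\lambda}$ is a monomial in $t_1,t_2$. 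For $\gamma_\zeta$ with $\zeta=e^{2\pi i a/d}$ a primitive $d$-th root of unity I would first compute the local monodromy. Writing the qde as the system $\psi'=z^{-1}{\bf m}(z)\psi$ and inspecting (\ref{cohomm}): the term $\tfrac{\epsilon_1+\epsilon_2}{2}\tfrac{z+1}{z-1}|\cdot|$ is regular at $\zeta$ (its only pole is at $z=1\notin\sing$), and in $(\epsilon_1+\epsilon_2)\sum_k\tfrac{k}{2}\tfrac{z^k+1}{z^k-1}\alpha_{-k}\alpha_k$ only the summands with $d\mid k$ are singular there, with $\mathrm{Res}_{z=\zeta}\tfrac{z^{dm}+1}{z^{dm}-1}=\tfrac{2\zeta}{dm}$, so that
\[
\mathrm{Res}_{z=\zeta}\,z^{-1}{\bf m}(z)=(\epsilon_1+\epsilon_2)\sum_{m\ge1}\alpha_{-dm}\alpha_{dm}.
\]
This residue is diagonal in the basis $p_\mu$ with eigenvalue $(\epsilon_1+\epsilon_2)N_\zeta(\mu)$, $N_\zeta(\mu)=\sum_{i:\,d\mid\mu_i}\mu_i\in\matZ_{\ge0}$; for generic $\epsilon_1,\epsilon_2$ the differences of these eigenvalues are not positive integers, so $\zeta$ is non-resonant and the local monodromy is conjugate to $\exp\!\big(2\pi i(\epsilon_1+\epsilon_2)\,\mathrm{diag}(N_\zeta(\mu))\big)$, with eigenvalues the monomials $(t_1t_2)^{N_\zeta(\mu)}$. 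Hence $\nabla_{\DT}(\gamma_\zeta)=C_\zeta^{-1}\,\mathrm{diag}\!\big((t_1t_2)^{N_\zeta(\mu)}\big)\,C_\zeta$, where $C_\zeta$ is the connection matrix expressing $\psi^{0}_{\DT}$ in terms of the normalized local solution at $\zeta$ along the chosen path.

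The remaining, and main, point is that $C_\zeta$ is itself rational in $t_1,t_2$. This is invisible from the ODE directly: the coefficients relating the solution at $z=0$ to the ones at the roots of unity are built from Gamma functions, and it is precisely the normalization (\ref{gamDT}) that is engineered to make them rational — ratios $\Gamma(x+n)/\Gamma(x)$ at integer-shifted arguments are polynomial in $x$, while products forced by the reflection formula $\Gamma(x)\Gamma(1-x)=\pi/\sin\pi x$ become $(\sin\pi x)^{\pm1}$ and hence rational functions of $e^{2\pi i x}$. The clean way to make this rigorous, which I would follow, is to pass to the $q$-difference equation (\ref{qdeintro}): its Birkhoff connection matrix is the explicit ordered product $\Mon(z)\sim\prod_w\B_w(z,q)$ obtained in Section~\ref{monsolsec}, the factor $\Gamma_{\DT}$ arises as the $q\to1$ limit of the theta-function normalization of the $q$-solutions, and in that limit the loop around $\zeta=e^{2\pi i w}$ contributes exactly $\B_w=\exp\!\big(\sum_{k\ge1}r^w_k\alpha^w_{-k}\alpha^w_k\big)$ of (\ref{monodintro}), whose matrix elements in the fixed-point basis are manifestly rational in $t_1,t_2$; this is the content of Theorem~\ref{nomodtheor}, and equivalently, by (\ref{brintro}), $\B_w$ is a K-theoretic $R$-matrix of a Nakajima variety, with rational matrix elements. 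Feeding this back through the reduction to generators gives the theorem.

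The hard part is therefore not the structure of the argument — reduction to generators, the trivial loop $\gamma_0$, and the residue computation at the roots of unity are routine — but the control of the $q\to1$ limit of the fundamental solutions and the identification of the limiting connection matrices with the explicit operators $\B_w$. Once that identification is in place, rationality of all matrix elements of $\nabla_{\DT}(\gamma)$ is simply read off from the explicit exponential formula.
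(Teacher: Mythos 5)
Your argument is structurally sound and takes a genuinely different route from the paper: the paper's proof is a one-line appeal to the ``logic of Theorem 3'' of \cite{OkPanDiff} (i.e., the same $\Gamma$-function/reflection-formula mechanism that yields Theorem \ref{thmbased1} for the base point $z=1$), whereas you reduce to generators of $\pi_1(\mathbb{P}^1\setminus\sing,0_+)$ and analyze each one. Your local computations are correct: $\nabla_{\DT}(\gamma_0)=e^{2\pi i c^{(0)}}$ is a diagonal matrix of monomials in $t_1,t_2$ (this is exactly the matrix $\mathsf{E}_0$ of Section \ref{loopmondrs}), and your residue of $z^{-1}\mathbf{m}(z)$ at a primitive $d$-th root of unity, $(\epsilon_1+\epsilon_2)\sum_{m}\alpha_{-dm}\alpha_{dm}$, is diagonal on $p_\mu$ with the eigenvalues you state, and non-resonant for generic parameters.

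The weak point is the one you yourself flag: the rationality of the connection matrices $C_\zeta$ is not proved but deferred to Theorem \ref{nomodtheor} (equivalently to the identification $\nabla_{\DT}(\gamma_w)=\B^{\bullet}_{w}$ of Proposition \ref{loopmnth}). This is not circular --- the proofs in Sections \ref{monsolsec}--\ref{mondsec} do not use the present theorem --- but it makes your argument rest on the entire machinery of the paper (the $q$-difference fundamental solutions, the elliptic stable envelopes, and the $\tau\to 0$ limit of Lemma \ref{limlemm} and Proposition \ref{prolim}), none of which is available at this point in the text; as written, the ``proof'' of the crucial step is a forward reference rather than an argument. If you want a self-contained derivation at this stage, the shortest route is the one implicit in the paper's citation: Theorem \ref{thmbased1} already asserts that $\nabla_{\GW}(\gamma')$ is Laurent polynomial in $t_1,t_2$, and Proposition \ref{gwdtmonod} gives $\nabla_{\GW}(\gamma')=\mathsf{P}\,\nabla_{\DT}(\gamma)\,\mathsf{P}^{-1}$ with $\mathsf{P}$ the Macdonald transition matrix, which is rational in $t_1,t_2$; rationality of $\nabla_{\DT}(\gamma)$ follows at once. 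What your route buys, once the later results are in place, is much more: the explicit formula $\nabla_{\DT}(\gamma_w)=\B^{\bullet}_{w}$ rather than mere rationality. Finally, a small caution: calling the matrix elements of $\B^{\bullet}_{w}$ ``manifestly rational'' from the normally ordered exponential is too quick, since the structure constants $r^{w}_k$ and the prefactors $\hbar^{\Omega_w}$ involve half-integer powers of $\hbar=t_1t_2$; rationality of the full matrix in $t_1,t_2$ holds (as the examples in Appendix \ref{monapp} confirm) but requires checking that these square roots cancel.
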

\begin{proof}
Apply logic of Theorem 3  in \cite{OkPanDiff}.
\end{proof}

In Section \ref{mondsec} we compute the monodromy matrices $\nabla_{\DT}(\gamma)$ for a certain choice of generators of $\pi_1(\mathbb{P}^1 \setminus \sing,0_{+})$.

\subsection{Monodromy based at $z=1$}
In \cite{OkPanDiff}, instead of the fundamental group based at $z=0_{+}$ the nonsingular based point $z=1$ is considered. In this case it is convenient to work with the fundamental solution matrix 
$\psi_{\GW}(z)$ which is holomorphic near $z=1$ and is normalized by 
\be \label{gwnorm}
\psi_{\GW}(1)=\Gamma_{\GW}. 
\ee
where $\Gamma_{\GW}$ denotes the diagonal matrix with eigenvalues
$$
(\Gamma_{\GW})_{\lambda,\lambda}=\prod\limits_{i} g(\mu_i,t_1) g(\mu_i,t_2), \ \ \ g(x,t)=\dfrac{x^{x t}}{ \Gamma(t x)}.
$$
For $\gamma \in \pi_1(\mathbb{P}^1 \setminus \textrm{sing},1)$ this solution transforms as
$$
\psi_{\GW}(\gamma \cdot z)= \psi_{\GW}(z) \, \nabla_{\GW}(\gamma)
$$
which provides an antihomomorphism 
\be \label{holons}
\nabla_{\GW}:\, \pi_1(\mathbb{P}^1 \setminus \sing,1) \longrightarrow \textrm{End}(\textsf{Fock})
\ee
With normalization (\ref{gwnorm}) the monodromy matrices $\nabla_{\GW}(\gamma)$ have good properties:
\begin{Theorem}[Theorem 3  in \cite{OkPanDiff}] \label{thmbased1}
{\it Let $\gamma \in \pi_1(\mathbb{P}^1 \setminus \sing,1)$, then the matrix elements of
matrices $\nabla_{\mathsf{GW}}(\gamma)$ are Laurent polynomials  in $t_1$, $t_2$ given by (\ref{tedef}).}
\end{Theorem}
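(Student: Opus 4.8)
The plan is to reproduce the logic of Theorem~3 of \cite{OkPanDiff}. Present $\pi_1(\mathbb{P}^1\setminus\sing,1)$ in terms of the loop $\gamma_0$ around $z=0$, the loops $\gamma_\zeta$ around each root of unity $\zeta$ of order $d$ with $2\le d\le n$, and the loop $\gamma_\infty$ around $z=\infty$, subject to the single relation $\gamma_\infty\cdot\big(\prod_\zeta\gamma_\zeta\big)\cdot\gamma_0=1$ for a suitable cyclic ordering. It then suffices to show that $\nabla_{\GW}$ sends each of these generators to a matrix with Laurent-polynomial entries in $t_1,t_2$: the class of Laurent-polynomial matrices is closed under products, and the inverse of such a matrix is again Laurent-polynomial provided its determinant is a monomial in $t_1,t_2$. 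That last point holds in general, since $\det\nabla_{\GW}(\gamma)=\exp\!\big(2\pi i\sum_{z_0}\mathrm{Res}_{z_0}\,\mathrm{tr}\,(z^{-1}\textbf{m}(z))\big)$, the sum running over the singularities enclosed by $\gamma$; at $z_0=0,\infty$ the residue operator is $\textbf{m}(0)$, resp.\ $\textbf{m}(\infty)$, with trace a $\matZ$-linear combination of $\epsilon_1,\epsilon_2$ by (\ref{eigvald}),(\ref{infev}), while at $z_0=\zeta$ a primitive $d$-th root of unity only the poles $z^{jd}=1$ of $\textbf{m}(z)$ meet $z=\zeta$, and their residues sum to the $\zeta$-independent operator $(\epsilon_1+\epsilon_2)\sum_{j\ge1}\alpha_{-jd}\alpha_{jd}$, again with trace proportional to $\epsilon_1+\epsilon_2$. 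Hence $\det\nabla_{\GW}(\gamma)$ is a monomial in $t_1,t_2$.

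For $\gamma_0$: in the fundamental solution $\psi^{0}(z)=\psi^{reg}_0(z)\,z^{c^{(0)}}$ the monodromy of $\gamma_0$ is the diagonal matrix $D_0=e^{2\pi i c^{(0)}}$, whose entries $\prod_{(i,j)\in\lambda}t_1^{\,i-1}t_2^{\,j-1}$ are monomials in $t_1,t_2$; likewise $\gamma_\infty$ has diagonal monomial monodromy $D_\infty$ in the solution adapted at $z=\infty$, using (\ref{minf}),(\ref{infev}). Therefore $\nabla_{\GW}(\gamma_0)=C_0^{-1}D_0\,C_0$ and $\nabla_{\GW}(\gamma_\infty)=C_\infty^{-1}D_\infty\,C_\infty$, where $C_0$ (resp.\ $C_\infty$) is the transition matrix relating the solution $\psi_{\GW}$ normalized at the regular point $z=1$ to the solution adapted at $z=0$ (resp.\ $z=\infty$). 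The problem is thus reduced to showing that $C_0,C_\infty$ and their inverses have Laurent-polynomial entries. Here the geometry enters: by the dictionary of \cite{OkPanCoh,OkPanDiff} the qde computes the $\bT$-equivariant Gromov--Witten theory of $\matC^2\times\mathbb{P}^1$, and, up to the diagonal $\Gamma$-factors in (\ref{gwnorm}), $C_0$ and $C_\infty$ are generating functions of equivariant GW---equivalently DT---invariants of $\matC^2\times\mathbb{P}^1$ with relative conditions at $0$ and $\infty$. These are rational functions of $t_1,t_2$ by the relevant rationality theorems, and the normalization $\Gamma_{\GW}$ of (\ref{gwnorm}) is designed, through the tangent weights, precisely so that the rational transition matrix becomes an honest Laurent polynomial. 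This is exactly what distinguishes the GW normalization from the DT normalization of (\ref{gamDT}), which yields only rationality --- hence the statement of the preceding theorem.

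For a loop $\gamma_\zeta$ around a primitive $d$-th root of unity, the residue computation above shows that in a solution adapted at $z=\zeta$ the local monodromy is (for $\epsilon_1+\epsilon_2$ off the resonance lines $\tfrac1k\matZ$, and across them after the $\Gamma_{\GW}$-normalization) conjugate to the diagonal matrix $D_\zeta$ with entries $(t_1t_2)^{m}$, $m\in\matZ_{\ge0}$; thus $\nabla_{\GW}(\gamma_\zeta)=C_\zeta^{-1}D_\zeta\,C_\zeta$. One then reduces $C_\zeta$ to matrices already under control: the rescaling $z\mapsto z/\zeta$ together with the ``level $d$'' structure of $\textbf{m}(z)$ --- the residue operator involves only the modes $\alpha_{\pm jd}$ --- exhibits the qde in a neighbourhood of $z=\zeta$ as (the qde near $z=1$ of) a Hilbert scheme of fewer points, the remaining Heisenberg modes acting as spectators. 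An induction on $n$ then reduces $C_\zeta$ to transition matrices of the type treated above; in the base cases the relevant reduced equation is hypergeometric, with connection matrix a ratio of Gamma functions which the reflection formula $\Gamma(x)\Gamma(1-x)=\pi/\sin(\pi x)$ turns into a trigonometric function of $\epsilon_i$, i.e.\ a Laurent polynomial in $t_i$ after the normalization (\ref{gwnorm}).

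The soft ingredients --- holomorphy of the GW-normalized monodromy in $\epsilon_1,\epsilon_2$ away from the resonance loci, the residue computations and the resulting monomiality of eigenvalues and of $\det\nabla_{\GW}(\gamma)$, and closure of Laurent-polynomial matrices under products and inverses --- are routine. The main obstacle is the analysis of the transition matrices $C_0,C_\infty,C_\zeta$: one must show that the analytic continuation of $\psi_{\GW}$ from the regular point $z=1$ to each singular point introduces only $\Gamma$-function transcendence, and that this transcendence is cancelled exactly by the diagonal normalization $\Gamma_{\GW}$. That is the crux, to be handled either through the GW/DT rationality theorems for $\matC^2\times\mathbb{P}^1$ and its $\matZ_d$-orbifold relatives (for the roots-of-unity singularities), or through explicit hypergeometric reductions; it is also what singles out the GW normalization of (\ref{gwnorm}) as the one for which Laurent-polynomiality, rather than mere rationality, holds.
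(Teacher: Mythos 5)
The paper itself supplies no proof of this statement: it is quoted verbatim as Theorem~3 of \cite{OkPanDiff}, as the theorem header indicates, so your proposal is a reconstruction of the Okounkov--Pandharipande argument rather than something to compare against a proof in the text. Your structural choices are reasonable---a generator-and-relation presentation of $\pi_1(\mathbb{P}^1\setminus\sing,1)$, residue calculus for $z^{-1}\mathbf{m}(z)$ at each singularity, monomiality of $\det\nabla_{\GW}(\gamma)$ so that the class of Laurent-polynomial matrices is closed under the inverses one needs---and the residue computation at a primitive $d$-th root of unity, with local eigenvalues $(t_1t_2)^m$, $m\in\matZ_{\geq 0}$, is correct.

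However, the reduction you announce---``the problem is thus reduced to showing that $C_0,C_\infty$ and their inverses have Laurent-polynomial entries''---is not attainable. By Theorem~\ref{thmtransp}, the transition matrix from the $z=0$ solution to $\psi_{\GW}$ is (up to a diagonal factor) the Macdonald matrix $\mathsf{P}$, whose entries are Laurent polynomials but whose inverse is not: already for $n=2$, $\det\mathsf{P}=(t_1-t_2)/(2t_1t_2)$, and the $t_1-t_2$ in the denominator of $\mathsf{P}^{-1}$ does not cancel. The theorem is nonetheless true because the \emph{conjugate} $\mathsf{P}\,\mathsf{E}_0\,\mathsf{P}^{-1}$ undergoes a nontrivial cancellation (cf.\ the explicit $n=2,3$ computations of $\nabla_{\GW}(\gamma')$ in the Appendix); so the correct target is Laurent-polynomiality of the conjugates, not of the $C$'s one by one. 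Beyond this, you explicitly leave the crux unproved: the claim that $\Gamma_{\GW}$ absorbs the Gamma-function transcendence of the connection matrices is asserted (``to be handled either through the GW/DT rationality theorems\ldots or through explicit hypergeometric reductions'') rather than carried out, and the assertion that the qde near $z=\zeta$ reduces to that of a smaller Hilbert scheme needs justification, since only the residue operator at $\zeta$---not the full connection form near $\zeta$---lives in the modes $\alpha_{\pm jd}$. What you have is a plausible skeleton, but the step that actually proves the theorem is missing, and the intermediate reduction you propose to reach it is stronger than what is true.
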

\noindent
The monodromies of $\nabla_{\textsf{DT}}$ and $\nabla_{\textsf{GW}}$ are related by the transport of qde from $0$ to $1$, which is described by the following important result: 
\begin{Theorem}[Theorem 4 in \cite{OkPanDiff}] \label{thmtransp}
{\it At $z=1$ the solution (\ref{dtnorms}) has the following form:
$$
\psi^{0}_{\DT}(1) = \Gamma_{\GW} \, \mathsf{P} \, 
$$
where $\mathsf{P}$ is the matrix with $\lambda$-th column given by Macdonald polynomial $P_{\lambda}$ in Haiman's normalization.}
\end{Theorem}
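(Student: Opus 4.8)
The plan is to reduce the statement to the transport problem for $\nabla$ between $z=0$ and $z=1$ and to solve that by identifying the flat sections of the quantum connection with generating functions of equivariant enumerative invariants of $\X$, whose value at $z=1$ is governed by Macdonald combinatorics.

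First I would check that $z=1$ is a \emph{regular} (non-singular) point of $\nabla$, which is what makes the left-hand side meaningful. On the simply connected disc $D_0=\{|z|<1\}$, which meets no point of $\sing$, the matrix $\psi^{0}(z)=\psi^{reg}_0(z)\,z^{c^{(0)}}$ is single-valued and holomorphic, and since $z=1\in\partial D_0$ is a regular point it extends holomorphically across it; as $z^{c^{(0)}}$ equals the identity at $z=1$ for the principal branch, the value $\psi^{0}_{\DT}(1)=\psi^{reg}_0(1)\,\Gamma_{\DT}$ is unambiguous. Regularity at $z=1$ is an inspection of (\ref{cohomm}): using $\tfrac{z^k+1}{z^k-1}=1+\tfrac{2}{z^k-1}$, $\tfrac{z+1}{z-1}=1+\tfrac{2}{z-1}$, the identity $\sum_{k\ge 1}\alpha_{-k}\alpha_k=|\cdot|$, and $\tfrac{k}{z^k-1}-\tfrac1{z-1}=-\tfrac{k-1}{2}+O(z-1)$ as $z\to 1$, one finds that the apparent simple poles at $z=1$ cancel, and in fact $\textbf{m}(1)=\tfrac12\sum_{k,l>0}(\epsilon_1\epsilon_2\,\alpha_{k+l}\alpha_{-k}\alpha_{-l}+\alpha_{-k-l}\alpha_k\alpha_l)$.

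To evaluate $\psi^{0}_{\DT}(1)$ I would identify the $\lambda$-th column of $\psi^{0}_{\DT}(z)$ with a generating function of equivariant Gromov--Witten invariants of $\X$ in the Novikov variable $z$: the factor $z^{c^{(0)}}$ is the classical part, the normalisation $\psi^{reg}_0(0)=\textsf J$ matches the large-volume limit $z\to 0$ to the $\bT$-fixed-point (Jack) basis, and the diagonal twist $\Gamma_{\DT}$ of (\ref{gamDT}) is exactly the normalisation turning this cohomological solution into the reduced Donaldson--Thomas partition function of the relative local curve $\matC^2\times\mathbb P^1$ — the degeneration in which the $q$-Pochhammer factors over the boxes of $\lambda$ collapse into the products of Gamma functions of (\ref{gamDT}). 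Granting the Gromov--Witten/Hilbert and Gromov--Witten/Donaldson--Thomas correspondences of \cite{OkPanCoh} (and the local-curve inputs they rest on), the value at $z=1$ of the relevant one-leg vertex equals, by the explicit box formula together with Haiman's description of the $\bT$-character of the Procesi bundle, the entries of $\Gamma_{\GW}\,\mathsf P$ with the two Macdonald parameters specialised to $t_1,t_2$ of (\ref{tedef}); the scalars $\prod_i g(\mu_i,t_1)g(\mu_i,t_2)$ defining $\Gamma_{\GW}$ are precisely the factors relating these two normalisations of the vertex.

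The main obstacle is that this route rests on two substantial inputs — the enumerative interpretation of the flat sections via the correspondences of \cite{OkPanCoh}, and Haiman's combinatorial identity recognising the resulting box sum at $z=1$ as a Macdonald polynomial; once both are available the proof reduces to bookkeeping the passage from $\Gamma_{\DT}$ to $\Gamma_{\GW}$. A more representation-theoretic alternative would be to prove that $\psi_{\GW}(z)$ satisfies, besides the qde in $z$, a commuting family of $q$-difference (``shift'') equations in the equivariant parameters $\epsilon_1,\epsilon_2$ whose restriction to the regular point $z=1$ is the system of Macdonald difference operators; since $P_\lambda$ in Haiman's normalisation is characterised uniquely as the triangular eigenbasis of that system with prescribed leading term, and $\psi^{reg}_0(0)=\textsf J$ supplies exactly that triangularity, one again obtains $\psi^{0}_{\DT}(1)=\Gamma_{\GW}\,\mathsf P$. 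In either approach the hard part is the same — establishing the enumerative (respectively shift-operator) identity for the qde; the normalisation computations themselves are routine.
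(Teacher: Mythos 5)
The paper does not prove this statement: it is imported verbatim as Theorem 4 of \cite{OkPanDiff}, so there is no internal proof to compare against. Your sketch correctly reconstructs the strategy of the proof in that source — identify the $\Gamma_{\DT}$-normalized fundamental solution with the reduced relative Donaldson--Thomas partition function of $\matC^2\times\mathbb{P}^1$ via the GW/Hilbert and GW/DT correspondences of \cite{OkPanCoh} and the local-curves theory, then evaluate the equivariant one-leg vertex at $z=1$ using Haiman's character formula for the Procesi bundle, with $\Gamma_{\DT}$ and $\Gamma_{\GW}$ arising as the Gamma-function limits matching the two vertex normalizations — and your preliminary check that $z=1$ is a regular point of $\nabla$ (cancellation of the poles of ${\bf m}(z)$ against the $|\cdot|$ term) is correct. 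As you acknowledge, the two deferred inputs are exactly the substantial content, so this is an accurate outline of the original argument rather than an independent proof.
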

Examples of Macdonald polynomials and matrices $P_{\lambda}$ can be found in Appendix \ref{macappen}.

Let $\gamma \in \pi_1(\mathbb{P}^1 \setminus \textrm{sing},0_{+})$ and let $\gamma'=t \gamma t^{-1} \in \pi_1(\mathbb{P}^1 \setminus \textrm{sing},1)$ where  $t$ is the real path from $0_{+}$ to $1$, then we have:
\begin{Proposition} \label{gwdtmonod}
{\it The matrices $\nabla_{\DT}(\gamma)$ and $\nabla_{\GW}(\gamma')$ are related by:
\be \label{monrelat}
\nabla_{\GW}(\gamma') =\mathsf{P}\, \nabla_{\DT}(\gamma) \, \mathsf{P}^{-1}.
\ee}
\end{Proposition}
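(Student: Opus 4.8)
The plan is to relate the two fundamental solution matrices $\psi^{0}_{\DT}$ and $\psi_{\GW}$ by a single constant matrix, identify that matrix using Theorem \ref{thmtransp}, and then transport the relation around the conjugated loop $\gamma'=t\gamma t^{-1}$.

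First I would check that the segment of the positive real axis from $0_{+}$ to $1$ meets no point of $\sing$. Every element of $\sing$ other than $0$ and $\infty$ is a root of unity, hence has modulus $1$; the only such number on the positive real axis is $1$ itself, which is excluded from $\sing$ in (\ref{siings}). Consequently the analytic continuation of the fundamental solution matrix $\psi^{0}_{\DT}(z)$ along the real path $t$ is well defined and yields a fundamental solution matrix $\widetilde{\psi}(z)$ of the qde (\ref{connec}) on a neighbourhood of $z=1$. Since $\widetilde{\psi}(z)$ and $\psi_{\GW}(z)$ are then two fundamental solution matrices of the same equation near $z=1$, the argument of Remark \ref{monrem} gives $\widetilde{\psi}(z)=\psi_{\GW}(z)\,C$ for a unique invertible constant matrix $C\in\mathrm{End}(\textsf{Fock})$. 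Evaluating at $z=1$, the normalisation $\psi_{\GW}(1)=\Gamma_{\GW}$ from (\ref{gwnorm}) together with Theorem \ref{thmtransp}, which states $\widetilde{\psi}(1)=\psi^{0}_{\DT}(1)=\Gamma_{\GW}\,\mathsf{P}$, forces $\Gamma_{\GW}\,\mathsf{P}=\Gamma_{\GW}\,C$, hence $C=\mathsf{P}$. Thus
\[
\widetilde{\psi}(z)=\psi_{\GW}(z)\,\mathsf{P},\qquad\text{equivalently}\qquad \psi_{\GW}(z)=\widetilde{\psi}(z)\,\mathsf{P}^{-1}.
\]

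Finally I would compute the monodromy of $\psi_{\GW}$ along $\gamma'=t\gamma t^{-1}$, viewed as the concatenation ``first $t^{-1}$, then $\gamma$, then $t$''. Starting from $\psi_{\GW}(z)=\widetilde{\psi}(z)\,\mathsf{P}^{-1}$ near $z=1$: continuation backwards along $t$ returns $\psi^{0}_{\DT}(z)\,\mathsf{P}^{-1}$ near $0_{+}$; continuation around $\gamma$ turns this into $\psi^{0}_{\DT}(z)\,\nabla_{\DT}(\gamma)\,\mathsf{P}^{-1}$, since $\psi^{0}_{\DT}(\gamma\cdot z)=\psi^{0}_{\DT}(z)\,\nabla_{\DT}(\gamma)$ and the constant matrix $\mathsf{P}^{-1}$ is unaffected by analytic continuation; continuation forwards along $t$ then gives $\widetilde{\psi}(z)\,\nabla_{\DT}(\gamma)\,\mathsf{P}^{-1}=\psi_{\GW}(z)\,\mathsf{P}\,\nabla_{\DT}(\gamma)\,\mathsf{P}^{-1}$. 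As this is exactly $\psi_{\GW}(\gamma'\cdot z)$, the defining relation of $\nabla_{\GW}$ yields $\nabla_{\GW}(\gamma')=\mathsf{P}\,\nabla_{\DT}(\gamma)\,\mathsf{P}^{-1}$, which is (\ref{monrelat}).

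The first two paragraphs are routine. The step that requires care — and the main potential pitfall — is the orientation bookkeeping in the last step: because $\nabla_{\DT}$ and $\nabla_{\GW}$ are antihomomorphisms and solution matrices transform by right multiplication, one must traverse the three pieces of $\gamma'$ in the correct order and keep the constant $\mathsf{P}^{\pm1}$ on the correct side, so that conjugation of the loop by $t$ produces conjugation of the monodromy by $\mathsf{P}$ rather than by $\mathsf{P}^{-1}$. Once the convention $\gamma'=t\gamma t^{-1}$ is fixed as above, the outcome is unambiguous.
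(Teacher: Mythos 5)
Your argument is correct and is essentially the paper's proof spelled out in full: you identify the constant relating $\psi^0_{\DT}$ and $\psi_{\GW}$ as $\mathsf{P}$ by evaluating at $z=1$ via Theorem \ref{thmtransp} and (\ref{gwnorm}), and then carry out the transport along $\gamma'=t\gamma t^{-1}$, which the paper compresses into the sentence ``thus the corresponding monodromies are conjugated by $\mathsf{P}$.'' Your extra care with path orientation and the side on which $\mathsf{P}^{\pm 1}$ sits is exactly the bookkeeping the paper leaves implicit, and it comes out on the correct side.
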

\begin{proof}
By Theorem \ref{thmtransp} and (\ref{gwnorm}) we have
$$
\psi_{\DT}(1)=\psi_{\GW}(1)  \mathsf{P}
$$
thus the corresponding monodromies are conjugated by $\mathsf{P}$. 
\end{proof}

\subsection{Fundamental solution near $z=\infty$}
Let $\textsf{J}^{\ast}$ be the matrix with $\lambda$-th column given by the vector $J^{*}_{\lambda}$. By (\ref{minf}) we have
$$
\textbf{m}(\infty) \, \textsf{J}^{\ast}=\textsf{J}^{\ast} \, c^{
(\infty)}
$$
where $c^{(\infty)}$ is the diagonal matrix with eigenvalues $c^{(\infty)}_{\lambda}$. In $D_{\infty} = |z|>1$  we have unique fundamental solution of the form
\be \label{infsol}
\psi_{\infty}(z) =\psi^{reg}_{\infty}(z)  z^{c^{(\infty)}}, 
\ee
with $\psi^{reg}_{\infty}(z)$ holomorphic in $D_{\infty}$ normalized by
\be
\label{initcondinf}
\psi^{reg}_{\infty}(\infty)=\textsf{J}^{\ast}.
\ee
The coefficients of the Taylor series expansion of $\psi^{reg}_{\infty}(z)$ at $z=\infty$ are uniquely determined by the qde (\ref{connec}) and initial condition (\ref{initcondinf}). As above we also define
\be 
\psi^{\infty}_{\mathsf{DT}}(z):=\psi_{\infty}(z)\, \Gamma_{\DT}.
\ee
{\bf Important}: this time we assume that the columns of matrix 
$\psi^{\infty}_{\mathsf{DT}}(z)$ are ordered by {\it opposite dominance order on partitions}. 
Explicitly, this means the following: the dominance order and the opposite dominance order on partitions are related by the transposition 
$\lambda \to \lambda'$. Let us consider the corresponding matrix $$
\fp_{\lambda,\mu}:=\delta_{\lambda,\mu^{'}}=\left(\begin{array}{lll}
0&\cdots& 1\\
\cdots&\cdots & \cdots\\
1& \cdots & 0
\end{array}\right).
$$
Then, we assume that the initial conditions are given by
$$
\psi^{reg}_{\infty}(\infty)=\textsf{J}^{\ast} = (-1)^{l} \textsf{J}\, \fp
$$
and, respectively 
$$
\left.\psi^{\infty}_{\mathsf{DT}}(z) z^{-c^{(\infty)}}\right|_{z=\infty}=(-1)^{l} \textsf{J}\, \Gamma_{\DT}\,\fp.
$$
As we will see below, with this choice of normalization, the transports of the qde from $z=0$ to $z=\infty$ have a natural Gauss decomposition.

\subsection{Transport of qde}
Assume that we have  analytic continuations of the fundamental solutions $\psi^{0}_{\DT}(z)$ and $\psi^{\infty}_{\DT}(z)$ to some larger domains. In this case we can compare them at the points of
$$
\{z \in \matC: |z|=1, z\not\in \sing\}. 
$$
Explicitly, this is the set of points  $z=e^{2 \pi i s}$ for some $s\in \matR$.  Let us consider the transition matrix between two fundamental solutions:
\be \label{transpdef0}
\textrm{Tran}_{\DT}(s) =  \psi^0_{\DT}(e^{2 \pi i s})^{-1} \psi^{\infty}_{\DT}(e^{2 \pi i s}).
\ee
This operator describes the transport of the qde from $z=0$ to $z=\infty$ along the line  which intersects $|z|=1$ at $e^{2 \pi i s}$. The transport depends only on homotopy equivalence class of the path. This means that $\textrm{Tran}_{\DT}(s)$ is a piecewise constant function of $s$, which changes value only when $e^{2 \pi i s}$ hits a singularity. 
\begin{Theorem}[Section 4.6 \cite{OkPanDiff}] \label{tran0}
{\it The transport of qde along the positive part of the real axis $\matR_{+}$ equals:
$$
\mathrm{Tran}_{\DT}(0)= \mathsf{P}^{-1}\, \mathsf{P}^{*}.
$$
where 
$$
\mathsf{P}^{*}=(-1)^{l}\, \mathsf{P} \, \fp.
$$
}
\end{Theorem}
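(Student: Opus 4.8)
The plan is to deduce the statement from the symmetry (\ref{minver}) of $\textbf{m}(z)$ under the inversion $z\mapsto z^{-1}$ of $\mathbb{P}^{1}$ --- which fixes $z=1$ and interchanges the singular points $0$ and $\infty$ --- together with Theorem \ref{thmtransp}. All the genuinely analytic content, in particular the Macdonald matrix $\mathsf{P}$, already lives in Theorem \ref{thmtransp}; what remains is to identify $\psi^{\infty}_{\DT}$ with an inverted copy of $\psi^{0}_{\DT}$ and then to match normalizations at $z=1$.

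First I would check that inversion conjugates the qde into itself: writing it as $z\psi'=\textbf{m}(z)\psi$ and setting $\phi(z):=(-1)^{l}\psi(z^{-1})$, the chain rule gives $z\phi'(z)=-(-1)^{l}\textbf{m}(z^{-1})\psi(z^{-1})$, and substituting $\textbf{m}(z^{-1})=-(-1)^{l}\textbf{m}(z)(-1)^{l}$ from (\ref{minver}) together with $\big((-1)^{l}\big)^{2}=\mathrm{Id}$ yields $z\phi'(z)=\textbf{m}(z)\phi(z)$. Hence $(-1)^{l}\psi_{0}(z^{-1})$ solves the qde near $z=\infty$, and so does $(-1)^{l}\psi^{0}_{\DT}(z^{-1})\fp$ (right multiplication by the constants $\Gamma_{\DT}$ and $\fp$). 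Next I would verify that this candidate is normalized exactly as $\psi^{\infty}_{\DT}$ was in Section \ref{qdesection}: writing $\psi^{0}_{\DT}(z)=\psi^{reg}_{0}(z)\,\Gamma_{\DT}\,z^{c^{(0)}}$ (the two diagonal factors commute), substituting $z\mapsto z^{-1}$, using $c^{(\infty)}=-c^{(0)}$ from (\ref{infev}), and moving the permutation through the exponential via $z^{A}\fp=\fp\,z^{\fp A\fp}$ (which re-expresses the exponent in the opposite dominance order), one finds that the prefactor of $z^{c^{(\infty)}}$ tends, as $z\to\infty$, to $(-1)^{l}\textsf{J}\,\Gamma_{\DT}\,\fp$ by (\ref{solzernorm}) --- precisely the prescribed normalization. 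By uniqueness of the fundamental solution of that form,
$$
\psi^{\infty}_{\DT}(z)=(-1)^{l}\,\psi^{0}_{\DT}(z^{-1})\,\fp ,
$$
an identity of analytic continuations valid along all of $\matR_{+}$, since inversion preserves $\matR_{+}$ and $\matR_{+}$ contains no singularity of the qde.

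Now I evaluate at $z=1$, a nonsingular point of $\matR_{+}$: the identity gives $\psi^{\infty}_{\DT}(1)=(-1)^{l}\psi^{0}_{\DT}(1)\fp$, and inserting it into the definition (\ref{transpdef0}) of the transport,
$$
\mathrm{Tran}_{\DT}(0)=\psi^{0}_{\DT}(1)^{-1}\psi^{\infty}_{\DT}(1)=\psi^{0}_{\DT}(1)^{-1}(-1)^{l}\psi^{0}_{\DT}(1)\,\fp .
$$
Substituting $\psi^{0}_{\DT}(1)=\Gamma_{\GW}\mathsf{P}$ from Theorem \ref{thmtransp}, and using that $\Gamma_{\GW}$ and $(-1)^{l}$ are both diagonal in the basis (\ref{gwbas}) and hence commute, so $\Gamma_{\GW}^{-1}(-1)^{l}\Gamma_{\GW}=(-1)^{l}$, gives
$$
\mathrm{Tran}_{\DT}(0)=\mathsf{P}^{-1}(-1)^{l}\mathsf{P}\,\fp=\mathsf{P}^{-1}\mathsf{P}^{*}
$$
by the definition $\mathsf{P}^{*}=(-1)^{l}\mathsf{P}\fp$, which is the claim.

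The hard part is not any single computation but the bookkeeping in the second paragraph: keeping straight the two column orderings (standard versus opposite dominance) and verifying that the factors $(-1)^{l}$, $\fp$ and $\Gamma_{\DT}$ assemble in exactly the order dictated by the normalization conventions of Section \ref{qdesection}, so that the displayed identity for $\psi^{\infty}_{\DT}$ holds on the nose, with no leftover diagonal matrix. Once it does, everything follows from Theorem \ref{thmtransp}, which carries all the nontrivial input.
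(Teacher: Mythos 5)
Your proof is correct and follows essentially the same route as the paper: the paper's own (two-line) argument likewise combines the inversion symmetry (\ref{minver}) with Theorem \ref{thmtransp} to get $\psi^{\infty}_{\DT}(1)=\Gamma_{\GW}\,(-1)^{l}\,\mathsf{P}\,\fp$ and then cancels $\Gamma_{\GW}$ against $\psi^{0}_{\DT}(1)^{-1}$. Your write-up simply makes explicit the step the paper leaves implicit, namely that $(-1)^{l}\psi^{0}_{\DT}(z^{-1})\fp$ solves the qde with exactly the normalization prescribed for $\psi^{\infty}_{\DT}$.
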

\begin{proof}
Let $\psi^{\infty}_{\DT}(z)$ be the solution (\ref{infsol}). 
Theorem \ref{thmtransp} together with (\ref{minver}) gives
$$
\psi^{\infty}_{\DT}(1)=\Gamma_{\mathsf{GW}} (-1)^{l}   \, \mathsf{P}\, \fp
$$
Thus, 
$$
\textrm{Tran}_{\DT}(0)= \psi^{0}_{\DT}(1)^{-1} \psi^{\infty}_{\DT}(1)= \mathsf{P}^{-1}\, (-1)^{l}\, \mathsf{P}\, \fp.
$$
\end{proof}

\noindent
In Section \ref{mondsec} we generalize this result and describe 
$\textrm{Tran}_{\DT}(s)$ for an arbitrary $s\in \matR$ such that $e^{-2 \pi i s}\not\in \sing$.

\section{K-theoretic $q$-difference equation \label{kthsec}}

The quantum difference equation (QDE) is the K-theoretic generalization of the quantum differential equation (qde) in cohomology. For the Nakajima quiver varieties these equations were computed in \cite{OS}. In particular, the case 
of the Hilbert scheme $\textrm{Hilb}^{n}(\matC^2)$ is considered in detail in Section 8 of \cite{OS}. In this section we briefly review this construction.   

\subsection{Fock representation of quantum toroidal $\frak{gl}_1$ \label{fockmod}}
In this section we denote
\be \label{fockkth}
\textsf{Fock}=\matQ[p_1,p_2,p_3,\dots] \otimes_{\matZ} \matQ(t_1,t_2).
\ee
with $t_1, t_2$ given by (\ref{tedef}). There is an isomorphism of $\matQ(t_1,t_2)$-vector spaces 
\be \label{Focksum}
\bigoplus_{n=0}^{\infty}\, K_{\bT}(\textrm{Hilb}^{n}(\matC^2))_{loc}= \textsf{Fock}
\ee
Under this isomorphism the K-theory classes  $\mathscr{O}_{\lambda} \in K_{\bT}(\X)$ of fixed points $\lambda \in \X^{\bT}$ are mapped to {\it Macdonald polynomials} $
P_{\lambda} \in \textsf{Fock}$ in Haiman's normalization~\cite{HaimanHilb}.
These polynomials form a basis of the Fock space, see Appendix \ref{macappen} for examples of $P_{\lambda}$.


Similarly to the qde in cohomology, the K-theoretic QDE for $\textrm{Hilb}^{n}(\matC^2)$ is described via an action of certain algebra on $\textsf{Fock}$. The K-theoretic structure is, however, much richer. The Fock space (\ref{fockkth}) is a natural representation of a quantum group $\gt$, called {\it quantum toroidal} ${\frak{gl}}_1$. The representation theory of this algebra and its role in mathematical physics is an exceptionally rich subject: we refer to \cite{Miki,DingIohara,NegShuf,ShifVass,ZenMor,Feigin2012,Feigin2011} for a very incomplete list of research by different groups.


The algebra $\gt$, has an explicit presentation in terms of generators and relations \cite{ShifVass}:
$$
\gt = \Big\langle e_{(n,m)}:  (n,m)\in \matZ^{2}\setminus (0,0) \Big\rangle/ \textrm{relations}
$$
Given a rational number 
$w=\frac{a}{b}$ with $\textrm{gcd}(a,b)=1$ one can consider the generators with ``slope'' $w$, see Fig. \ref{strogg}:
$$
\alpha^{w}_{k}=e_{b k,a k}, \ \ k\in \matZ\setminus \{0\}. 
$$
For each $w\in \matQ \cup \{\infty\}$ the elements $\alpha^{w}_{k}$ generate the slope $w$ Heisenberg subalgebra $H_{w} \subset \gt $ 
subject to the following relations:
$$
[\alpha^{w}_{i},\alpha^{w}_{j}] = \delta_{i+j} r^{(w)}_{i}, \ \ \ r^{(w)}_{i}= \dfrac{i\, (\hbar^{\frac{k b}{2}}- \hbar^{-\frac{k b}{2}}) }{(t_1^{\frac{k}{2}}-t_1^{-\frac{k}{2}})(t_2^{\frac{k}{2}}-t_2^{-\frac{k}{2}})( \hbar^{\frac{k}{2}}-\hbar^{-\frac{k}{2}}) }
$$
In particular, the slope $w=0$ Heisenberg subalgebra $\alpha^{0}_{k}=e(k,0)$ acts on the Fock space by
$$
\alpha^{0}_{m} (f)=  \left\{\begin{array}{rr}
\dfrac{-p_{-m} \cdot f}{(t_1^{m/2}-t_1^{-m/2})(t_2^{m/2}-t_2^{-m/2})}  &m<0\\
-m \dfrac{d f}{d p_m} & m>0
\end{array}\right. 
$$
which is a natural deformation of (\ref{cohaction}). Note that  the slope $w=\infty$-subalgebra is commutative (in this case $b=0$). The elements $\alpha^{\infty}_{k}=e(0,k)$ act diagonally in the basis of fixed points $P_{\lambda}$ and can be identified with operators of multiplication by tautological bundles in the equivariant K-theory $K_{\bT}(\textrm{Hilb}^{n}(\matC^2))$, see Section 8.1.5 of \cite{OS} for more details.  The  operators $\alpha^{\infty}_{k}$  can also be identified with so called Macdonald operators for $\frak{gl}_{\infty}$.


For a general slope $w=\frac{a}{b}$ the operators $\alpha^{w}_{k}$ act in (\ref{Focksum})  changing the weight by $k b$ units
\be \label{creann}
\alpha^{w}_{k} : K_{\bT}(\textrm{Hilb}^{n}(\matC^2)) \longrightarrow K_{\bT}(\textrm{Hilb}^{n-k b}(\matC^2)). 
\ee
In particular, $\alpha^{w}_{k} (f)=0$ for any $f\in K_{\bT}(\textrm{Hilb}^{n}(\matC^2))$ with $n< bk$.
For this reason $\alpha^{w}_{-k}$ with $k>0$ are usually referred to as {\it creation operators} and $\alpha^{w}_{k}$ as {\it annihilation operators}. The action of the operators $\alpha^{w}_{k}$ on the Fock space for general $w$ and $k$ is quite complicated. Still, it can be described explicitly in the fixed point basis~\cite{MR3322196}.

\begin{figure}[H]
	\centering
	\includegraphics[width=10cm]{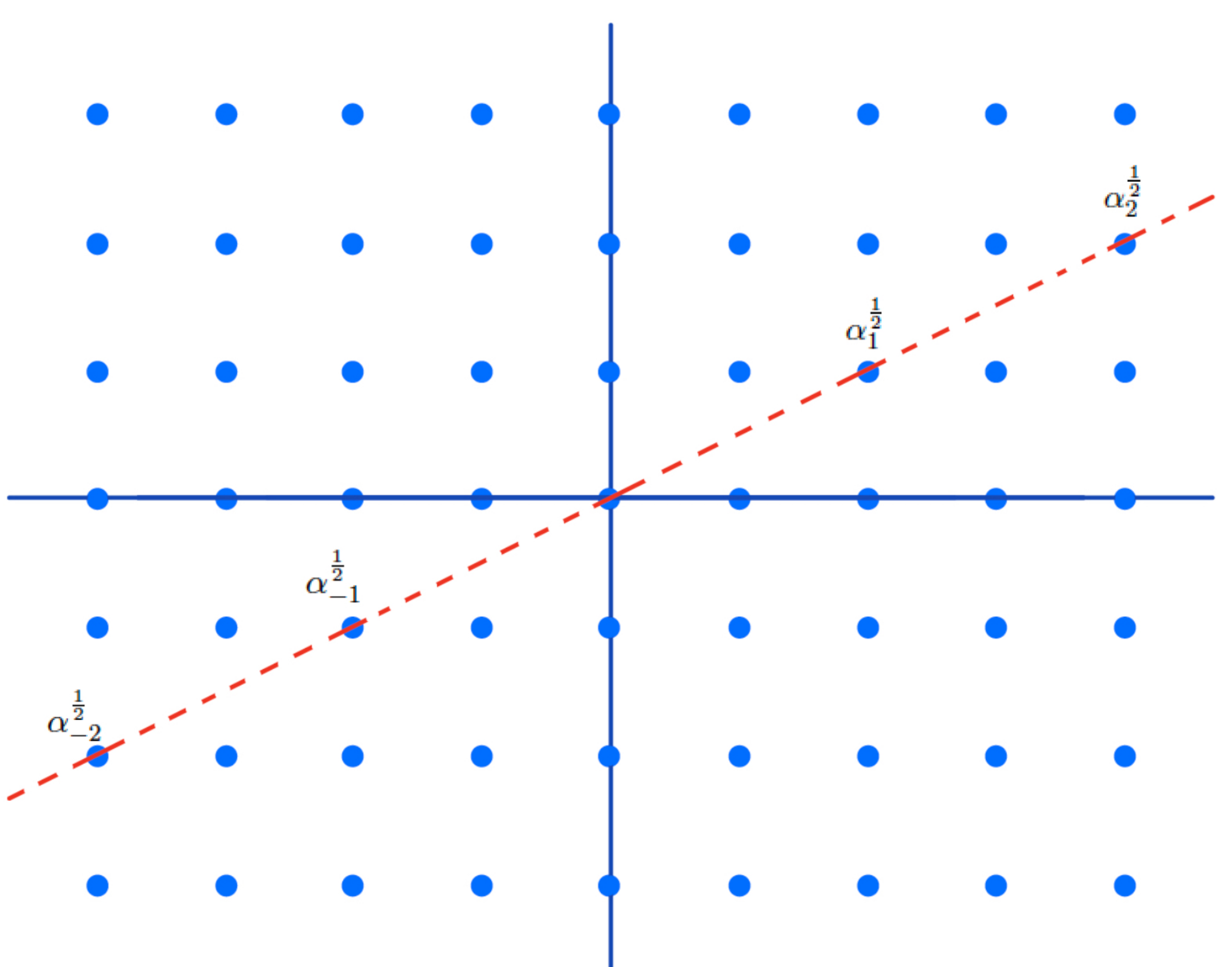}
	\caption{ \label{strogg} The structure of the toroidal algebra  and the Heisenberg subalgebra of slope $w=1/2$.}
\end{figure}

\subsection{Wall-crossing operators}
For any $w\in \matQ \cup \{\infty\}$ we define the {\it wall-crossing operator} $\B_{w}(z,q)$ acting on the Fock space by
\be \label{Bdef}
\B_{w}(z,q)=: \! \exp\Big( \sum\limits_{k=1}^{\infty}
\dfrac{r^{w}_{k}}{1-z^{-b k } {q^{a k}}} \alpha^{w}_{-k} \alpha^{w}_{k} \Big)\!:
\ee
where $a$ and $b$ denote the numerator and the denominator of $w$ and $q$ is a formal complex parameter. The symbol $::$ denotes the normally ordered operator. This means that in the Taylor expansion of  the exponent (\ref{Bdef})  the {\it annihilation operators}  $\alpha^{w}_{k}$ with $k>0$ act before the {\it creation operators} $\alpha^{w}_{-k}$.  By (\ref{creann}) only finitely many terms in this expansion act non-trivially. Thus the action of $\B_{w}(z,q)$ for $w\in \matQ$ is well defined. 

It is clear from (\ref{Bdef}) that the wall-crossing operators preserve the summands in (\ref{Focksum}). By the same reason as above the operators $\B_{w}(z,q)$ act non-trivially (i.e., $\B_{w}(z,q)\neq 1$ ) on $K_{\bT}(\textrm{Hilb}^{n}(\matC^2))$ only for $w$ corresponding to the Farey sequence of order $b$:
$$
\Walls_{n}=\Big\{w= \frac{a}{b} \in \matQ: 1 \leq |b|\leq n \Big\} \subset \matQ
$$
which we will call {\it walls}. From the explicit formulas for the action of the $\infty$-slope Heisenberg algebra one finds that the action of $\B_{\infty}(z,q)$ is also well defined, but we will not need it in this paper.  

Finally, we note that the matrix elements of $\B_{w}(z,q)$ are rational functions in $z$ and $q$. Using the action of $\gt$ on the Fock space the matrices $\B_{w}(z,q)$ can be computed very explicitly, see examples in Sections 8.3.7-8.3.8 of \cite{OS}. 
\subsection{Monodromy operators}
Note that $\B_{w}(0,q)=1$. We denote 
\be \label{monodromyB}
\B_w=\B_{w}(\infty,q) = : \! \exp\Big( \sum\limits_{k=1}^{\infty}
r^{w}_{k} \alpha^{w}_{-k} \alpha^{w}_{k} \Big)\!:
\ee
We call $\B_w$ the {\it monodromy operators}. The matrix elements of $\B_w$ do not depend on $z$ or $q$. 
From the explicit formulas above one computes
$\B_0=(t_1 t_2)^n$. It follows from Proposition \ref{proshiftslope} below that 
$$
\B_{w}=(t_1 t_2)^n, \ \ w\in \matZ.
$$
In contrast, the monodromy operators $\B_{w}$ for non-integral values of $w$ are quite nontrivial, see the examples in Appendix \ref{monapp}.

We denote $\B_{w}(z):=\B_{w}(z,1)$. 
The following Lemma is immediate from definition (\ref{Bdef}).
\begin{Lemma}
	{\it For $s\in \matQ$ we have
		\be \label{blim}
		\lim\limits_{q\to 0}\, \B_{w}(z q^{s},q) = 
		\left\{\begin{array}{ll}
			\B_{w} & w>s  \\
			1 & w<s  \\
			\B_{w}(z) & s=w
		\end{array}\right.
		\ee}
\end{Lemma}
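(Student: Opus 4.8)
The claim to prove is the limit formula
$$
\lim_{q\to 0}\B_{w}(zq^{s},q)=\begin{cases}\B_{w}&w>s\\ 1&w<s\\ \B_{w}(z)&s=w\end{cases}
$$
for $s\in\matQ$, where $\B_{w}(z,q)$ is the normally-ordered exponential in~(\ref{Bdef}).

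The plan is to work directly from the definition~(\ref{Bdef}) and reduce everything to the behaviour of the scalar coefficients $c_{k}(z,q):=\dfrac{r^{w}_{k}}{1-z^{-bk}q^{ak}}$ multiplying the operators $\alpha^{w}_{-k}\alpha^{w}_{k}$. First I would note that, because of~(\ref{creann}), on each summand $K_{\bT}(\textrm{Hilb}^{n}(\matC^2))$ of the Fock space only finitely many terms in the Taylor expansion of the exponential act nontrivially (those with $kb\le n$), and the normal ordering means each such term is a finite composition of the operators $\alpha^{w}_{\pm k}$ with coefficients that are polynomials in the $c_{k}$. Hence the matrix elements of $\B_{w}(zq^{s},q)$ are polynomials in the finitely many quantities $c_{k}(zq^{s},q)$, and it suffices to compute $\lim_{q\to 0}c_{k}(zq^{s},q)$ for each relevant $k$ and then use continuity of polynomials to pass the limit inside.

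The core computation is then elementary: substituting $z\mapsto zq^{s}$ gives
$$
c_{k}(zq^{s},q)=\frac{r^{w}_{k}}{1-z^{-bk}q^{-bsk}q^{ak}}=\frac{r^{w}_{k}}{1-z^{-bk}q^{(a-bs)k}}.
$$
Writing $w=a/b$ with $b>0$, the exponent is $(a-bs)k=bk(w-s)$, which for $k>0$ is positive iff $w>s$ and negative iff $w<s$. When $w>s$ the term $z^{-bk}q^{(a-bs)k}\to 0$, so $c_{k}\to r^{w}_{k}$ and the exponential converges to $\B_{w}=\B_{w}(\infty,q)$ in~(\ref{monodromyB}). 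When $w<s$ the same term blows up, so $c_{k}\to 0$, every factor in the product becomes the identity, and the limit is $1$. When $s=w$ the substitution gives exactly $c_{k}(z,q)=\dfrac{r^{w}_{k}}{1-z^{-bk}}$, which is independent of $q$ and equals the coefficient defining $\B_{w}(z)=\B_{w}(z,1)$; the limit is therefore $\B_{w}(z)$ trivially. One small bookkeeping point: in the case $w<s$ one should observe that $z^{-bk}q^{(a-bs)k}\to\infty$ makes $c_k\to 0$ rather than merely bounded, so the whole normally ordered exponential tends to its constant term, which is the identity operator.

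I do not expect any genuine obstacle here — the statement is, as the text says, ``immediate from definition.'' The only thing requiring a word of care is justifying the interchange of the limit $q\to 0$ with the (a priori infinite) product/exponential, and that is handled precisely by the finiteness remark after~(\ref{Bdef}): on any fixed $K_{\bT}(\textrm{Hilb}^{n}(\matC^2))$ the operator $\B_{w}(zq^{s},q)$ is a polynomial expression in finitely many $\alpha^{w}_{\pm k}$ with coefficients that are polynomials in finitely many rational functions $c_{k}$, so the limit is computed coefficient-by-coefficient. I would present the proof in three short displayed cases following the computation of $c_{k}(zq^{s},q)$ above, with the finiteness remark invoked once at the start.
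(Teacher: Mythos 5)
Your proof is correct and is exactly the intended argument: the paper gives no proof, declaring the Lemma ``immediate from definition (\ref{Bdef}),'' and your computation of the limiting coefficients $r^{w}_{k}/(1-z^{-bk}q^{bk(w-s)})$ in the three cases, together with the finiteness observation that justifies passing the limit inside the normally ordered exponential, is precisely what that remark leaves implicit.
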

\noindent
In Section \ref{loopmondrs} we prove that the operator $\B_w$ describes the monodromy of $\nabla_{\textsf{DT}}$ along the loop around the singularity of $\nabla_{\textsf{DT}}$ at $z=e^{2\pi i w}$, which explains our choice of its name. 

\subsection{Basic properties of wall-crossing operators}
Let us denote by  $\cL_{0}$  the operator of tensor multiplication by the line bundle $\mathscr{O}(1)$ in  equivariant K-theory. In the basis of fixed points it is characterized by the following eigenvalues 
\be \label{eigval}
\cL_{0} \,( P_{\lambda})= (\prod\limits_{(i,j)\in \lambda} \, t_1^{i-1} t_2^{j-1})\, P_{\lambda}.
\ee

\begin{Proposition} \label{proshiftslope}
	{\it $\cL_{0}$ intertwines the action of the Heisenberg algebras $H_{w}$ and $H_{w-1}$ on the Fock space:
		$$
		\cL_{0}^{-1} \alpha^{w}_k = \alpha^{w-1}_k \cL_{0}^{-1}.
		$$
		In particular, it intertwines the wall crossing operators
		\be \label{btrans}
		\cL^{-1}_{0} \B_{w}(z q,q) \cL_{0}  = \B_{w-1}(z,q)
		\ee
		and the monodromy operators
		$$
		\cL^{-1}_{0} \B_{w}(z) \cL_{0}  = \B_{w-1}(z), \ \ \ \cL^{-1}_{0} \B_{w} \cL_{0}  = \B_{w-1}.
		$$}
\end{Proposition}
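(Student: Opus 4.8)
The plan is to reduce everything to the single intertwining identity $\cL_0^{-1}\alpha^w_k\cL_0 = \alpha^{w-1}_k$ for all $w\in\matQ\cup\{\infty\}$ and all $k\in\matZ\setminus\{0\}$, and then to deduce the statements about $\B_w(z,q)$, $\B_w(z)$ and $\B_w$ formally. For the formal part: conjugation by the invertible operator $\cL_0$ is an automorphism of $\mathrm{End}(\textsf{Fock})$, so it may be applied term-by-term inside the normally ordered exponential in (\ref{Bdef}); it carries $\alpha^w_{-k}\alpha^w_k$ to $\alpha^{w-1}_{-k}\alpha^{w-1}_k$, and since $\alpha^w_{-k}$ and $\alpha^{w-1}_{-k}$ are both creation operators while $\alpha^w_k$ and $\alpha^{w-1}_k$ are both annihilation operators (the denominator $b$ of $w$ is unchanged under $w\mapsto w-1$, so the grading shift in (\ref{creann}) is the same), normal ordering is preserved. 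To match coefficients, note that $r^{w}_{k}$ depends on $w$ only through $b$, hence $r^{w-1}_{k} = r^{w}_{k}$, and $1-(zq)^{-bk}q^{ak} = 1-z^{-bk}q^{(a-b)k}$ is precisely the denominator appearing in $\B_{w-1}(z,q)$. This yields (\ref{btrans}); setting $q=1$ gives $\cL_0^{-1}\B_w(z)\cL_0 = \B_{w-1}(z)$ and letting $z\to\infty$ (or reading off (\ref{monodromyB})) gives $\cL_0^{-1}\B_w\cL_0 = \B_{w-1}$.

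So the entire content sits in the Heisenberg-level identity. I would prove it by identifying the reindexing $w\mapsto w-1$, i.e. $(bk,ak)\mapsto(bk,(a-b)k)$, with the restriction to the slope subalgebras of the automorphism $\sigma\in\mathrm{Aut}(\gt)$ attached to the element $\bigl(\begin{smallmatrix}1&0\\-1&1\end{smallmatrix}\bigr)$ of the $SL(2,\matZ)$-symmetry group of $\gt$ (see \cite{ShifVass}): concretely $\sigma(e_{(n,m)})$ is a scalar multiple of $e_{(n,m-n)}$, and on the generators $\alpha^w_k$ the scalar is forced to be $1$ because $\sigma$ must intertwine $[\alpha^w_i,\alpha^w_j]=\delta_{i+j}r^{w}_{i}$ with $[\alpha^{w-1}_i,\alpha^{w-1}_j]=\delta_{i+j}r^{w-1}_{i}$ and $r^{w-1}_{i}=r^{w}_{i}$. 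Thus the claim is $\mathrm{Ad}(\cL_0^{-1})=\sigma$ on the image of $\gt$ in $\mathrm{End}(\textsf{Fock})$, and since $\sigma$ is an algebra automorphism this need only be checked on a generating set, e.g. the vertical generators $e_{(0,\pm1)}$ together with one horizontal generator $e_{(1,0)}=\alpha^0_1$. On the vertical generators both sides act trivially: $\sigma$ fixes $(0,\pm1)$, and $\alpha^\infty_{\pm1}$ act by multiplication by tautological bundles, hence commute with $\cL_0$, which is diagonal in the fixed-point basis $P_\lambda$ by (\ref{eigval}). This leaves the one nontrivial check, $\cL_0^{-1}\alpha^0_1\cL_0 = e_{(1,-1)}=\alpha^{-1}_1$.

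The main obstacle is exactly this last check. One route is to compute $\cL_0^{-1}\alpha^0_1\cL_0$ directly in the Macdonald basis: $\cL_0$ rescales the $(\lambda,\mu)$ matrix element of any operator by the ratio $\bigl(\prod_{(i,j)\in\mu}t_1^{i-1}t_2^{j-1}\bigr)\big/\bigl(\prod_{(i,j)\in\lambda}t_1^{i-1}t_2^{j-1}\bigr)$, and one compares the result against the known matrix elements of $e_{(1,0)}$ and $e_{(1,-1)}$ in the basis $P_\lambda$ from \cite{MR3322196}. A second, more conceptual route is to invoke the geometric fact that the $\mathscr{O}(1)$-twist realizes this $SL(2,\matZ)$-shift on the $\gt$-module $\bigoplus_n K_{\bT}(\textrm{Hilb}^n(\matC^2))$, as in Section 8 of \cite{OS} (this is also what underlies the appearance of the range $-1\ll w<0$ in (\ref{mintr})). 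Pinning down that the scalar is $1$ — rather than, say, a root of unity compatible with the Heisenberg relations — is the only place where genuine input beyond the definitions is required; everything downstream is the bookkeeping of the first paragraph.
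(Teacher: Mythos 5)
The paper offers no argument of its own here: its ``proof'' is the single line ``See \cite{OS}, Section 8,'' which is precisely the source your second route invokes. Your formal reduction is correct and worth having on record: $r^{w}_{k}$ depends on $w$ only through the denominator $b$, which is unchanged under $w\mapsto w-1$ (since $\gcd(a,b)=1$ implies $\gcd(a-b,b)=1$), and $1-(zq)^{-bk}q^{ak}=1-z^{-bk}q^{(a-b)k}$ is the correct denominator for $\B_{w-1}(z,q)$; the grading (\ref{creann}) likewise depends only on $b$, so normal ordering is preserved under conjugation. The one soft spot is your claim that the scalar in $\sigma(\alpha^{w}_{k})=c_k\,\alpha^{w-1}_{k}$ is ``forced to be $1$'' by matching the Heisenberg relations: those relations only force $c_{k}c_{-k}=1$, not $c_{\pm k}=1$ separately. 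You correctly walk this back in your final paragraph, and it is worth noting that for the consequences actually used downstream — the identities for $\B_{w}(z,q)$, $\B_{w}(z)$, $\B_{w}$ — the ambiguity is harmless, since only the products $\alpha^{w}_{-k}\alpha^{w}_{k}$ enter (\ref{Bdef}) and the scalars cancel. Only the first displayed identity of the Proposition genuinely requires $c_k=1$, and for that the geometric input from Section 8 of \cite{OS} (or the direct check against the fixed-point matrix elements of $e_{(1,0)}$ and $e_{(1,-1)}$ from \cite{MR3322196}) is indeed unavoidable; your proposal correctly isolates this as the only nontrivial step.
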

\begin{proof}
	See \cite{OS}, Section 8.
\end{proof}

\begin{Proposition}
	{\it For arbitrary complex parameters $a$ and $b$ the wall-crossing operators commute
		$$
		\B_w(a,q) \B_w(b,q)= \B_w(b,q)\B_w(a,q), 
		$$
		i.e., the coefficients of the Taylor expansion $\B_{w}(z)=\sum\limits_{k=0}^{\infty} \B_{w,k} z^k $ commute
		$$
		\B_{w,i}\B_{w,j}=\B_{w,j}\B_{w,i} 
		$$}
\end{Proposition}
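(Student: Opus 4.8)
The plan is to prove something slightly stronger: for a fixed wall $w$, every operator $\B_w(z,q)$, whatever the value of $z$, lies in one and the same commutative subalgebra of $\mathrm{End}(\textsf{Fock})$ — the one generated by the operators $M_k:=\alpha^w_{-k}\alpha^w_k$, $k\ge 1$. Once this is established the commutation $\B_w(a,q)\B_w(b,q)=\B_w(b,q)\B_w(a,q)$ is immediate, and the statement about the Taylor coefficients $\B_{w,k}$ follows by writing $\B_w(a,1)\B_w(b,1)=\B_w(b,1)\B_w(a,1)$ as an identity of power series in $a$ and $b$ and comparing coefficients. Since every operator in sight preserves the summands $K_{\bT}(\textrm{Hilb}^n(\matC^2))$ of (\ref{Focksum}), one may fix $n$ once and for all, so that all sums and products below are finite and convergence is not an issue.

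First I would draw two elementary consequences from the Heisenberg relation $[\alpha^w_i,\alpha^w_j]=\delta_{i+j}\,r^w_i$. For $k\ne l$ the only brackets among $\alpha^w_{\pm k},\alpha^w_{\pm l}$ that can be nonzero are $[\alpha^w_{-k},\alpha^w_k]$ and $[\alpha^w_{-l},\alpha^w_l]$, so any polynomial in $\alpha^w_{\pm k}$ commutes with any polynomial in $\alpha^w_{\pm l}$; in particular the operators $M_k$, $k\ge 1$, pairwise commute (trivially for $k=l$). Because the negative modes commute among themselves and the positive modes commute among themselves, the normally ordered exponential in (\ref{Bdef}) factors as $\B_w(z,q)=\prod_{k\ge1}\,:\!\exp\!\big(c_k(z)\,M_k\big)\!:$, where $c_k(z)$ is the scalar coefficient of $\alpha^w_{-k}\alpha^w_k$ occurring in (\ref{Bdef}); the factors indexed by distinct $k$ commute with one another by the remark just made.

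Next I would identify each single-mode factor as a genuine function of $M_k$. An induction on $n$ using $[\alpha^w_k,\alpha^w_{-k}]=r^w_k$ gives the falling-factorial identity $(\alpha^w_{-k})^n(\alpha^w_k)^n=\prod_{j=0}^{n-1}\big(M_k-j\,r^w_k\big)$, whence $:\!\exp(c\,M_k)\!:=\sum_{n\ge0}\frac{c^n}{n!}\prod_{j=0}^{n-1}(M_k-j\,r^w_k)$ is a power series in the single operator $M_k$ — a polynomial on each fixed $\textrm{Hilb}^n$. Hence all of these operators, for $c$ ranging over $\matC$ and $k$ fixed, lie in the commutative algebra $\matC[M_k]$. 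Combining this with the factorization of the previous paragraph, $\B_w(z,q)$ lies in the commutative subalgebra generated by $\{M_k\}_{k\ge1}$ for every value of $z$, which gives the Proposition. (The degenerate case $r^w_k=0$, which does not occur for generic $t_1,t_2$, is even simpler: then $\alpha^w_{-k}$ and $\alpha^w_k$ commute outright and $:\!\exp(cM_k)\!:=\exp(cM_k)$.)

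I do not anticipate a genuine obstacle. The only point needing a little care is the bookkeeping that turns the normally ordered exponential into the honest product $\prod_k f_{c_k(z)}(M_k)$ of functions of commuting operators — this rests on the falling-factorial identity together with the fact that the negative (resp. positive) modes of $H_w$ commute among themselves — plus the routine observation that everything takes place inside a fixed graded piece, where all sums are finite. An alternative route would be to note that each $M_k$ acts semisimply on $\textsf{Fock}$ and that $\B_w(z,q)$ is, mode by mode, simultaneously diagonalizable with the $M_k$; but this would require unwinding the $H_w$-module structure of $\textsf{Fock}$ and is less clean than the purely algebraic argument above.
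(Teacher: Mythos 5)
Your argument is correct and is precisely the detailed version of the paper's one-line proof, which simply asserts that the claim "follows directly from (\ref{Bdef}) and relations in $H_w$." The factorization over modes and the falling-factorial identity $(\alpha^w_{-k})^n(\alpha^w_k)^n=\prod_{j=0}^{n-1}(M_k-j\,r^w_k)$ make explicit exactly why each $\B_w(z,q)$ lies in the commutative algebra generated by the $\alpha^w_{-k}\alpha^w_k$, which is the intended content.
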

\begin{proof}
	Follows directly from (\ref{Bdef}) and relations in $H_w$.
\end{proof}
The following result  describes the transformation of  the wall-crossing  operators  and the quantum difference equation  under $z\to z^{-1}$.

\begin{Proposition} \label{invz}
	{\it $$
		\B_{w}(z,q) \B_w^{-1}= \B_w^{-1} \B_{w}(z,q) = \B_{w}(z^{-1},q^{-1})^{-1}  
		$$
		in particular, at $q=1$
		$$
		\B_{w}(z) \B_w^{-1}= \B_w^{-1} \B_{w}(z) = \B_{w}(z^{-1})^{-1}. 
		$$}
\end{Proposition}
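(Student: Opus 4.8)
The plan is to work directly from the definition (\ref{Bdef}) and the commutation relations in the Heisenberg subalgebra $H_w$. Write $w = a/b$ with $\gcd(a,b)=1$, $b\geq 1$. From (\ref{Bdef}) and (\ref{monodromyB}) we have, inside a single normally-ordered exponential,
\be
\B_{w}(z,q)\,\B_w^{-1} = \;:\!\exp\Big( \sum_{k=1}^{\infty} \Big( \frac{r^w_k}{1 - z^{-bk}q^{ak}} - r^w_k \Big)\, \alpha^w_{-k}\alpha^w_k \Big)\!: \,,
\ee
and the same expression with the factors in the opposite order, because the operators $\alpha^w_{-k}\alpha^w_k$ for different $k$ all commute with one another (the only nonzero brackets among the $\alpha^w_j$ are $[\alpha^w_k,\alpha^w_{-k}] = r^w_k$, and a Heisenberg ``number-type'' operator $\alpha^w_{-k}\alpha^w_k$ commutes with $\alpha^w_{-l}\alpha^w_l$ for all $l$). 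This already gives the first equality $\B_w(z,q)\B_w^{-1} = \B_w^{-1}\B_w(z,q)$, and reduces the remaining claim to an identity of scalar coefficient functions inside the normally-ordered exponential.

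Next I would simplify the coefficient. Compute
\be
\frac{r^w_k}{1 - z^{-bk}q^{ak}} - r^w_k = r^w_k\cdot\frac{z^{-bk}q^{ak}}{1 - z^{-bk}q^{ak}} = -\,r^w_k\cdot\frac{1}{1 - z^{bk}q^{-ak}}\,,
\ee
where the last step is the elementary identity $\frac{u}{1-u} = -\frac{1}{1-u^{-1}}$ with $u = z^{-bk}q^{ak}$. Therefore
\be
\B_{w}(z,q)\,\B_w^{-1} = \;:\!\exp\Big( -\sum_{k=1}^{\infty} \frac{r^w_k}{1 - z^{bk}q^{-ak}}\, \alpha^w_{-k}\alpha^w_k \Big)\!: \;=\; \B_{w}(z^{-1},q^{-1})^{-1}\,,
\ee
since $\B_w(z^{-1},q^{-1})$ is by definition $:\!\exp\big(\sum_k \frac{r^w_k}{1 - z^{bk}q^{-ak}}\alpha^w_{-k}\alpha^w_k\big)\!:$, and normal-ordered exponentials of commuting number operators compose/invert by adding/negating the scalar coefficients. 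This proves the general statement; setting $q=1$ gives the $q=1$ case verbatim (note $q^{-ak}=1$, so the formula reads $\B_w(z)\B_w^{-1} = \B_w(z^{-1})^{-1}$, consistent with the cohomological involution (\ref{minver})).

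The only point requiring a little care — and the one I would state explicitly rather than gloss over — is the legitimacy of merging two normally-ordered exponentials into one and of inverting a normally-ordered exponential term-by-term. This is justified here because on each summand $K_{\bT}(\textrm{Hilb}^n(\matC^2))$ only finitely many $k$ contribute (by (\ref{creann}), $\alpha^w_k$ kills classes of weight $< bk$), so all the sums are finite and all manipulations take place in a genuine finite-dimensional endomorphism algebra; moreover the operators $N_k := \alpha^w_{-k}\alpha^w_k$ mutually commute, so $:\!\exp(\sum_k c_k N_k)\!: \,=\, \prod_k :\!\exp(c_k N_k)\!:$ and each factor is invertible with inverse $:\!\exp(-c_k N_k)\!:$ (one checks $:\!\exp(cN_k)\!:\,:\!\exp(c'N_k)\!:\,=\,:\!\exp((c+c')N_k)\!:$ directly from the definition of normal ordering, since all creation operators stand to the left of all annihilation operators in both factors and $N_k$ is already normally ordered). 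With that bookkeeping in place the argument is purely formal. I do not anticipate a genuine obstacle; the main thing is to present the scalar identity cleanly and to flag the finiteness/commutativity facts that make the formal exponential manipulations rigorous.
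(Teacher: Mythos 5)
Your plan — reduce everything to the scalar identity $\tfrac{1}{1-u}-1=-\tfrac{1}{1-u^{-1}}$ via the definition (\ref{Bdef}) and the relations in $H_w$ — is exactly the paper's one-line proof expanded. But there is a genuine gap in the step you try to ``flag and check,'' namely the claim that $:\!\exp(cN_k)\!:\,:\!\exp(c'N_k)\!:\,=\,:\!\exp((c+c')N_k)\!:$ ``directly from the definition of normal ordering, since $N_k$ is already normally ordered.'' Under the standard Wick convention $:(a^\dagger a)^n\!: = (a^\dagger)^n a^n$ this identity is \emph{false}. The fact that each factor separately has creation operators to the left does not make the product normally ordered: multiplying $\sum_n \tfrac{c^n}{n!}(a^\dagger)^n a^n$ by $\sum_m \tfrac{(c')^m}{m!}(a^\dagger)^m a^m$ leaves blocks $a^n(a^\dagger)^m$ that must be reordered, producing contraction terms proportional to $r_k=[\alpha^w_k,\alpha^w_{-k}]$. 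Concretely, on a state of occupation number $p$ in mode $k$ one has $:\!\exp(cN_k)\!:\mapsto (1+cr_k)^p$, so the genuine composition law is
\[
:\!\exp(cN_k)\!:\,:\!\exp(c'N_k)\!:\,=\,:\!\exp(\tilde c\,N_k)\!:,\qquad 1+\tilde c\,r_k=(1+cr_k)(1+c'r_k),
\]
i.e.\ $\tilde c = c+c'+cc'r_k$, not $c+c'$. The additive rule you invoke holds only for the \emph{ordinary} exponential $\exp\bigl(\sum_k c_k\,\alpha^w_{-k}\alpha^w_k\bigr)$ of the (already normally ordered) operator in the exponent, where the summands commute and $\exp(A)\exp(B)=\exp(A+B)$ is automatic. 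That reading — applying $:\ :$ to the operator inside the exponential rather than to each term of the Taylor series — is evidently what the paper intends, since only then does the scalar identity you derive actually close the argument. You should state this convention explicitly rather than assert the additive rule as a consequence of Wick ordering; as written, that sentence is where the proof breaks. Once the convention is fixed to ``ordinary exponential of a normally ordered exponent,'' the rest of your computation (the simplification $\tfrac{r_k}{1-u}-r_k=-\tfrac{r_k}{1-u^{-1}}$, the mode-by-mode finiteness via (\ref{creann}), and the $q=1$ specialization) is correct and complete.
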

\begin{proof}
	Follows directly from (\ref{Bdef}) and relations in $H_w$.
\end{proof}

\subsection{Quantum difference equation for $\textrm{Hilb}^{n}(\matC^2)$ \label{prodsect}}
The {\it quantum difference equation}  for $\textrm{Hilb}^{n}(\matC^2)$ (QDE) has the following form
\be \label{qdiffe}
\Psi(z q) = \M(z) \Psi(z), \ \ \Psi(z) \in \textsf{Fock}
\ee
where $\M(z) \in \textrm{End}(\textsf{Fock})$ is given by
\be \label{Mdef}
\M(z)=\cL_{0} \prod\limits_{w\in [-1,0)}^{\longrightarrow}\, \B_{w}(z,q).
\ee
In the ``classical limit'' $z=0$ the operator $\M(z)$
coincides with~$\cL_{0}$:
\be \label{m0}
\M(0)=\cL_{0}.
\ee
In (\ref{Mdef}) and throughout this paper we use the following conventions:
$$
\prod\limits_{w \in I}^{\longrightarrow} \, \B_{w}(z,q)  
$$
denotes the product over the rational numbers in an interval $I \subset \matR$ ordered so that $w$ increases from the left to the right. Similarly, 
$$
\prod\limits_{w \in I}^{\longleftarrow} \, \B_{w}(z,q)
$$
denotes the ordered product of operators with $w$ increasing from right to left. If $I\cap \Walls_{n}$ is finite, as for instance in (\ref{Mdef}), then all but finitely many terms in these products act on the Fock space as identity operators. Thus, for fixed $n$ and bounded $I$ these products are finite and well defined.  

\begin{Example}
	Let us assume that $n=3$ then, for instance
	$$
	\prod\limits_{w \in [0,1)}^{\longrightarrow} \, \B_{w}(z)  =\B_{0}(z) \B_{\frac{1}{3}}(z)\B_{\frac{1}{2}}(z) \B_{\frac{2}{3}}(z) 
	$$
	or
	$$
	\prod\limits_{w \in (-1,1)}^{\longleftarrow} \, \B_w =\B_{\frac{2}{3}} \B_{\frac{1}{2}} \B_{\frac{1}{3}} \B_{0} \B_{-\frac{1}{3}}\B_{-\frac{1}{2}} \B_{-\frac{2}{3}}.
	$$
	
\end{Example}

\section{Solutions of QDE and monodromy \label{monsolsec}}
The multiplicative nature of $q$-difference equations allows us to construct fundamental solutions and monodromy matrices as infinite products of the wall crossing operators. For instance, in the case of zero-dimensional $A_{\infty}$  quiver varieties we are dealing with the first order {\it scalar} $q$-difference equations and the analysis of monodromy is elementary, see Section 5 of \cite{dinksmir3}. In this section apply the same logic for the Hilbert scheme $\mathrm{Hilb}^{n}(\matC^2)$. 

\subsection{Fundamental solution of QDE near $z=0$ \label{zersol}} 
By (\ref{m0}) the operator $\M(0)$ is diagonal in the basis of fixed points, which in the Fock space corresponds to the basis of Macdonald polynomials $P_{\lambda}$. We denote by $\textsf{P}$ the matrix with columns given by eigenvectors $P_{\lambda}$. We denote by ${\mathbf{E}_{0}}$ the diagonal matrix of eigenvalues, so that 
$$
\M(0)\, \textsf{P}=\textsf{P}\, {\mathbf{E}_{0}}.
$$
The eigenvalues of $ {\mathbf{E}_{0}}$ are monomials in $t_1$ and $t_2$ given by (\ref{eigval}).

From the basic theory of $q$-difference equations there exists a unique fundamental solution of the QDE of the form
\be \label{zerte}
\Psi_{0}(z)=\mathsf{P}\, \Psi^{reg}_{0}(z)\, e^{\frac{\ln({\mathbf{E}}_0) \ln(z)}{\ln(q)}}, \ \ \ \Psi^{reg}_{0}(z)=1+\sum\limits_{k=1}^{\infty}\,\Psi^{reg}_{0,k}\, z^k
\ee
The matrix  $\Psi^{reg}_{0}(z)$ solves
\be \label{equatzero}
\Psi^{reg}_{0}(z q) {\mathbf{E}}_0=\M^{\bullet}(z) \Psi^{reg}_{0}(z).
\ee
where $\M^{\bullet}(z)=\mathsf{P}^{-1} \M(z) \mathsf{P}$ is the matrix for the operator $\M(z)$ in the basis of the fixed points $\mathsf{P}$. Here, again, we assume implicitly that $\M(z)$ is the ``matrix'' of the corresponding operator in the basis $p_{\mu}$ and $\mathsf{P}$ is the transition matrix from $p_{\mu}$ to the basis of Macdonald polynomials $P_{\mu}$, see Remark \ref{basrem} above and Appendix \ref{macappen} for examples of $\mathsf{P}$. 

Let us define
$$
\M^{\bullet}_{0}(z):={\mathbf{E}}^{-1}_0 \M^{\bullet}(z), \ \ \ \M^{\bullet}_{k}(z):= {\mathbf{E}}^{-k}_0 \M^{\bullet}_{0}(z q^{k}) {\mathbf{E}}^{k}_0.
$$
Then the infinite product 
\be \label{p0}
\Psi^{reg}_{0}(z)=\M^{\bullet}_{0}(z)^{-1} \M^{\bullet}_{1}(z)^{-1} \M^{\bullet}_{2}(z)^{-1}\dots
\ee
solves (\ref{equatzero}). We denote by
$$
\B^{\bullet}_{w}(z):=\mathsf{P}^{-1}\B_{w}(z) \mathsf{P}, \ \ \ \B^{\bullet}_{w}:=\mathsf{P}^{-1}\B_{w} \mathsf{P}
$$
the matrices of operators $\B_{w}(z)$ and $\B_{w}$ in the basis of fixed points $\mathsf{P}$. 
\begin{Proposition} \label{asymzero}
	{\it For $s\in \matQ$ the fundamental solution (\ref{p0}) has the following limits:
		$$
		\lim\limits_{q\to 0} \Psi^{reg}_{0}(z q^{s}) = \left\{\begin{array}{ll} 1, & s\geq 0\\ 
		\Big(\prod\limits^{\longleftarrow}_{w\in (s,0)}\, (\B^{\bullet}_{w})^{-1} \Big)\cdot \B^{\bullet}_{s}(z)^{-1} & s<0
		\end{array}\right.
		$$}
\end{Proposition}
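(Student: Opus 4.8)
The plan is to analyze the $q \to 0$ limit of the infinite product \eqref{p0} factor by factor, using the limit formula \eqref{blim} for the individual wall-crossing operators together with the product formula \eqref{Mdef} for $\M(z)$. First I would substitute $z \mapsto z q^{s}$ everywhere and observe that $\M^{\bullet}_{k}(zq^s)$ is built, via conjugation by powers of ${\mathbf{E}}_0$, from $\M^{\bullet}_{0}(zq^{s+k})$, which in turn (after stripping ${\mathbf{E}}_0$) is the conjugate by $\mathsf{P}$ of the ordered product $\prod^{\longrightarrow}_{w \in [-1,0)} \B_w(zq^{s+k},q)$. So the $k$-th factor $\M^{\bullet}_{k}(zq^s)^{-1}$ in \eqref{p0} is, up to the ${\mathbf{E}}_0$-conjugation, the reversed ordered product $\prod^{\longleftarrow}_{w\in[-1,0)} \B^{\bullet}_w(zq^{s+k},q)^{-1}$ — note the order reverses and each factor inverts when we take the inverse of an ordered product.

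Next I would apply \eqref{blim} to each factor $\B_w(zq^{s+k},q)$: as $q\to 0$ it tends to $\B_w$ if $w > s+k$, to $1$ if $w < s+k$, and to $\B_w(z)$ if $w = s+k$. Since $w$ ranges over $[-1,0)$, for $k \geq 1$ and $s \geq 0$ we have $s+k \geq 1 > w$ for all relevant $w$, hence every factor with $k\geq 1$ collapses to the identity in the limit; and for $k=0$, if $s \geq 0$ then $s \geq 0 > w$ as well (when $s=0$, $w<0=s$ so still the limit is $1$; when $s>0$, $w < 0 < s$), so the $k=0$ factor is also trivial. This gives the first case, $\lim_{q\to 0}\Psi^{reg}_0(zq^s) = 1$ for $s \geq 0$. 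For $s < 0$, only the $k=0$ factor survives (for $k\geq 1$, $s+k$ may land in $[-1,0)$ only if $-1 \leq s+k < 0$, but one must check carefully — actually for $s<0$ there can be a $k$ with $s+k\in[-1,0)$; however that $k$ is then $k=\lceil -s\rceil - 1$ or similar, and one must track it). Let me instead note that the cleanest route for $s<0$ is: $\M^{\bullet}_k(zq^s)$ has limit $1$ unless $s+k \in [-1,0)$, i.e. unless $k \in \{0,1,\dots\}$ with $-1-s \le k < -s$; since $s \in (-1,0)$ gives $0 < -s < 1$ so the only such $k$ is $k=0$. (For general rational $s<0$ one reduces to $s\in(-1,0)$ using Proposition \ref{proshiftslope} and the $q$-shift structure, or handles finitely many surviving factors.) With $s\in(-1,0)$ and $k=0$: the surviving $k=0$ factor is $\lim_{q\to 0}\prod^{\longleftarrow}_{w\in[-1,0)}\B^{\bullet}_w(zq^s,q)^{-1}$, in which by \eqref{blim} factors with $w > s$ give $(\B^{\bullet}_w)^{-1}$, the factor $w=s$ (if $s$ is a wall) gives $\B^{\bullet}_s(z)^{-1}$, and factors with $w < s$ give $1$; reading off the reversed order this is exactly $\big(\prod^{\longleftarrow}_{w\in(s,0)}(\B^{\bullet}_w)^{-1}\big)\cdot \B^{\bullet}_s(z)^{-1}$, as claimed (noting $\B^{\bullet}_{-1}$ does not appear separately since $w\in[-1,0)$ and either $-1<s$ contributes $(\B^\bullet_{-1})^{-1}$, absorbed into the product over $(s,0)$... — here one uses $\cL_0$-conjugation / Proposition \ref{proshiftslope} to see $\B_{-1} = \B_0$-type triviality or to extend the range; I would be careful with the endpoint $w=-1$).

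The main obstacle, and the step requiring genuine care rather than routine computation, is the bookkeeping of the ${\mathbf{E}}_0$-conjugations $\M^{\bullet}_k(z) = {\mathbf{E}}_0^{-k}\M^{\bullet}_0(zq^k){\mathbf{E}}_0^k$ interacting with the $q\to 0$ limit: one must confirm that conjugation by the (finite, invertible, $q$-independent) matrix ${\mathbf{E}}_0^{k}$ commutes with the limit and does not spoil convergence, and that the infinite product \eqref{p0} converges $q$-adically uniformly enough that the termwise limit equals the limit of the product. Since for fixed $n$ the Fock summand is finite-dimensional and all but finitely many $\B_w$ act as identity, the product \eqref{p0} is genuinely a finite product of operators on $K_\bT(\mathrm{Hilb}^n)_{loc}$ up to corrections vanishing as $q\to 0$, which resolves the convergence issue; the remaining subtlety is purely the endpoint/indexing combinatorics of which $w$ and which $k$ contribute, together with the treatment of non-integral rational $s$ by shifting into $(-1,0)$ via Proposition \ref{proshiftslope}. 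I would therefore organize the write-up as: (i) rewrite \eqref{p0} with $z\mapsto zq^s$ and expand each $\M^\bullet_k$ via \eqref{Mdef}; (ii) reduce to $s\in(-1,0)$ or $s\geq 0$ using the shift; (iii) apply \eqref{blim} termwise and read off the surviving factors in the correct (reversed) order.
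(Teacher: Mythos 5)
Your overall strategy — termwise application of the limit formula \eqref{blim} to the infinite product \eqref{p0}, mediated by Proposition~\ref{proshiftslope} to handle the $\mathbf{E}_0$-conjugations — is exactly the paper's strategy. However, the specific intermediate claim you land on is false, and it is precisely the claim that matters when $s \le -1$.

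You assert that $\M^{\bullet}_k(zq^s)$ has limit $1$ as $q\to 0$ unless $s+k \in [-1,0)$. This is correct for $s+k \ge 0$ (every factor $\B^{\bullet}_w(zq^{s+k},q)$ has $w<0\le s+k$, so each tends to $1$), but it is wrong for $s+k < -1$. In that range every $w\in[-1,0)$ satisfies $w > s+k$, so by \eqref{blim} every factor $\B^{\bullet}_w(zq^{s+k},q)$ tends to $\B^{\bullet}_w$, not to $1$, and hence
$$
\lim_{q\to 0}\M^{\bullet}_k(zq^s) \;=\; \mathbf{E}_0^{-k}\Big(\prod_{w\in[-1,0)}^{\longrightarrow}\B^{\bullet}_w\Big)\mathbf{E}_0^{k} \;\ne\; 1.
$$
For $s<-1$ there are several such $k$ (namely $0\le k < -s-1$), so several factors of \eqref{p0} survive the limit, and your ``only the $k=0$ factor survives'' reading of the case $s<0$ is restricted to $s\in[-1,0)$. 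The hedge ``or handles finitely many surviving factors'' acknowledges the issue, but the argument as written does not resolve it, and the suggested ``reduction of $s$ to $(-1,0)$'' is not a literal reduction.

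The missing step, which is the actual content of the paper's proof, is to use Proposition~\ref{proshiftslope} (i.e.\ \eqref{btrans}) \emph{before} taking limits, to absorb the $\mathbf{E}_0$-conjugation into a shift of the wall index: from $\mathbf{E}_0^{-k}\B^{\bullet}_w(zq^k,q)\mathbf{E}_0^{k}=\B^{\bullet}_{w-k}(z,q)$ one gets
$$
\M^{\bullet}_k(z)=\prod_{w\in[-1-k,-k)}^{\longrightarrow}\B^{\bullet}_w(z,q),
$$
so that the infinite product \eqref{p0} telescopes to the closed form
$$
\Psi^{reg}_0(z)=\prod_{w\in(-\infty,0)}^{\longleftarrow}\B^{\bullet}_w(z,q)^{-1}.
$$
From this single ordered product over all negative rational slopes, the statement for every $s\in\matQ$ (including $s\le -1$) follows directly from \eqref{blim}, with no case analysis on $k$ at all. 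You identified Proposition~\ref{proshiftslope} as the relevant tool and correctly flagged the $\mathbf{E}_0$-bookkeeping as the delicate step; the fix is to use that proposition to re-index the $w$-range rather than to leave the conjugation hanging outside the limit.
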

\begin{proof}
	In the basis of fixed points $\mathsf{P}$ the matrix of the operator $\cL_{0}$ is given by ${\mathbf{E}}_0$, thus
	$$
	\M^{\bullet}_{0}(z)=\prod\limits_{w\in [-1,0)}^{\longrightarrow}\,\B^{\bullet}_{w}(z)
	$$
	and from (\ref{btrans}) we find
	$$
	\M^{\bullet}_{k}(z) =\prod\limits_{w\in [-1-k,-k)}^{\longrightarrow}\,\B^{\bullet}_{w}(z)
	$$
	which gives
	$$
	\Psi^{reg}_{0}(z)= \prod\limits^{\longleftarrow}_{w\in (-\infty,0)}\, \B^{\bullet}_{w}(z)^{-1}.
	$$
	The proposition follows from (\ref{blim}).
\end{proof}

\subsection{Fundamental solution of QDE near $z=\infty$} 
By the general theory of quantum difference equations for Nakajima quiver varieties \cite{OS} the operator $\M(z)$ near $z=\infty$ defines the QDE for the Nakajima variety with opposite stability condition. In particular, the matrix $\M(\infty)$ is diagonalizable over $\matQ(t_1,t_2)$ with eigenvalues given by monomials in $t_1,t_2$. 

Let $\textsf{H}^{\ast}$ be the matrix with columns given by the eigenvectors of $\M(\infty)$. Let ${\mathbf{E}_{\infty}}$ be the diagonal matrix of eigenvalues so that:
$$
\M(\infty)\, \textsf{H}^{\ast}=\textsf{H}^{\ast}\, {\mathbf{E}_{\infty}}
$$
In Section \ref{tmatsec} we will show that
$
\textsf{H}^{\ast}=\textsf{P}^{\ast}$ with $\textsf{P}^{\ast}$ as in Theorem \ref{tran0}, but at the moment we only need the fact that the eigenbasis  $\textsf{H}^{\ast}$ exists. 
The QDE has unique fundamental solution of the form
\be \label{psiinf}
\Psi_{\infty}(z)= \textsf{H}^{\ast}\, \Psi^{reg}_{\infty}(z)\, e^{\frac{\ln({\mathbf{E}}_{\infty}) \ln(z)}{\ln(q)}}, \ \ \ \Psi^{reg}_{\infty}(z) = 1+ \sum \limits_{k=1}^{\infty}\Psi^{reg}_{\infty,k}\, z^{-k}.
\ee
The matrix $\Psi^{reg}_{\infty}(z)$ solves the equation:
\be \label{psiinf2}
\Psi^{reg}_{\infty}(z) \, {\mathbf{E}}_{\infty}=\M^{\ast}(z) \Psi^{reg}_{\infty}(z)
\ee
where $\M^{\ast}(z) = (\textsf{P}^{\ast})^{-1}\, \M(z) \, \textsf{P}^{\ast}$ denotes the matrix of $\M(z)$ in the basis of eigenvectors $\textsf{P}^{\ast}$. We denote
$$
\M_{0}^{\ast}(z)={\mathbf{E}}^{-1}_{\infty} \M^{\ast}(z), \ \ \ \M^{\ast}_{k}(z)={\mathbf{E}}^{k}_{\infty}\, \M^{\ast}_{0}(z q^{-k})\, {\mathbf{E}}^{-k}_{\infty} 
$$
then the infinite product
\be \label{infsolut}
\Psi^{reg}_{\infty}(z)=\M_1^{\ast}(z)\M_2^{\ast}(z)\M_2^{\ast}(z)\cdots
\ee
solves (\ref{psiinf2}). Let us denote 
$$
\B^{\ast}_{w}(z):=(\textsf{P}^{\ast})^{-1}\, \B_{w}(z)\, \textsf{P}^{\ast}, \ \ \B^{\ast}_{w}:=(\textsf{P}^{\ast})^{-1}\, \B_{w}\, \textsf{P}^{\ast}
$$
the matrices  of the operators $\B_{w}(z)$ and $\B_{w}$ in the basis $\textsf{P}^{\ast}$.
\begin{Proposition} \label{inflimps}
	{\it The solution (\ref{infsolut}) has the following limits:
		$$
		\lim\limits_{q\to 0} \, \Psi^{reg}_{\infty}(z q^{s})=
		\left\{
		\begin{array}{ll}
		\B^{\ast}_{s}(z^{-1})^{-1} \prod\limits_{w\in [0,s)}^{\longleftarrow}\, (\B^{\ast}_w)^{-1}  & s \geq 0  \\ & \\
		1, & s<0
		\end{array}\right.
	$$}
\end{Proposition}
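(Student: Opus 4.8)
The plan is to mirror the proof of Proposition \ref{asymzero}, but working near $z=\infty$ with the opposite stability condition. The first step is to rewrite the infinite product (\ref{infsolut}) as an ordered product of wall-crossing operators. Since $\M(\infty) = \textsf{P}^{\ast}\, {\mathbf E}_{\infty}\, (\textsf{P}^{\ast})^{-1}$ is the operator $\cL_{0}$ up to the product of the $\B_{w}$ with $w\in[-1,0)$ evaluated at $z=\infty$ — more precisely, by (\ref{Mdef}) and $\B_{w}(\infty,q)=\B_w$ we have $\M(z) = \cL_0 \prod^{\longrightarrow}_{w\in[-1,0)}\B_w(z,q)$, so that at $z=\infty$ the ``constant'' part of $\M(z)$ differs from $\cL_0$. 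I would first express $\M^{\ast}_0(z) = {\mathbf E}_{\infty}^{-1}\M^{\ast}(z)$ as an ordered product of the $\B^{\ast}_w(z)$ over $w$ in a half-open unit interval (the analogue of the identity $\M^{\bullet}_0(z)=\prod^{\longrightarrow}_{w\in[-1,0)}\B^{\bullet}_w(z)$ in the proof above), then use the intertwining relation (\ref{btrans}) from Proposition \ref{proshiftslope} — now conjugated by $\textsf{P}^{\ast}$ rather than $\textsf{P}$ — to telescope the product $\M^{\ast}_1(z)\M^{\ast}_2(z)\cdots$ into a single ordered product over all sufficiently positive $w$:
\be \notag
\Psi^{reg}_{\infty}(z) = \prod\limits^{\longrightarrow}_{w\in(0,+\infty)}\, \B^{\ast}_{w}(z).
\ee
The direction of the product (increasing vs.\ decreasing $w$) and the sign of the shift in $\M^{\ast}_k$ versus $\M^{\bullet}_k$ are the details that must be checked carefully against (\ref{infsolut}) and the definition of $\M^{\ast}_k$; I expect the decreasing orientation ``$\longleftarrow$'' that appears in the statement to emerge here because of the $q^{-k}$ (rather than $q^{k}$) shift.

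Once $\Psi^{reg}_{\infty}(z)$ is written as this ordered product, the $q\to 0$ limit is a direct application of Lemma \ref{blim}. For $s<0$: substituting $z\mapsto zq^{s}$, every factor $\B^{\ast}_w(zq^{s},q)$ with $w>s$ — hence every factor in the product, since all have $w>0>s$ — converges to $\B^{\ast}_w$ by (\ref{blim}); but I must also track the $z$-dependence. Here one needs to be slightly more careful than in Proposition \ref{asymzero}: in (\ref{infsolut}) the argument appearing is $z$ and $zq^{-k}$, so after the rescaling $z\mapsto zq^s$ the factor $\B^{\ast}_w$ sees argument $zq^{s}\cdot q^{-\text{(shift)}}$, and whether this tends to a nontrivial limit $\B^{\ast}_w$, to $1$, or to $\B^{\ast}_w(z^{\pm1})$ depends on comparing the exponent of $q$ to $0$ — exactly the trichotomy of Lemma \ref{blim}. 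For $s<0$ all factors have positive slope strictly larger than $s$ and the whole product collapses to $1$; for $s\geq 0$ the factors with $w>s$ give $\B^{\ast}_w$, the factors with $0\le w<s$ drop out (limit $1$), and the single factor at $w=s$ survives with its $z$-dependence intact, producing the term $\B^{\ast}_{s}(z^{-1})^{-1}$ after accounting for the inversion $z\to z^{-1}$ coming from working near infinity (the $z^{-k}$ expansion in (\ref{psiinf})) and the fact that the surviving factor enters as $\M^{\ast}_0$ which carries an inverse relative to the $\Psi^{reg}_0$ side.

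The main obstacle, and the step I would spend the most care on, is bookkeeping the inversions and the reversal of order: near $z=\infty$ one is solving the QDE for the Nakajima variety with opposite stability, so both the direction of the infinite product and the $z\to z^{-1}$ substitution are flipped relative to the $z=0$ case, and the surviving $w=s$ term appears as $\B^{\ast}_{s}(z^{-1})^{-1}$ rather than $\B^{\ast}_{s}(z)^{-1}$. Concretely I would (i) verify the base identity for $\M^{\ast}_0(z)$ by combining (\ref{Mdef}) with Proposition \ref{invz}, which governs the $z\to z^{-1}$, $q\to q^{-1}$ behaviour of the wall-crossing operators and is precisely what converts the product ``$\prod_{w\in[-1,0)}\B_w(z,q)$'' read near infinity into an inverse product of $\B_w(z^{-1},\cdot)^{-1}$'s; (ii) telescope using the conjugated (\ref{btrans}); and (iii) pass to the limit termwise via Lemma \ref{blim}, the termwise passage being legitimate because for fixed $n$ only finitely many $\B^{\ast}_w$ act nontrivially on each graded summand of the Fock space. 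Everything else is a routine transcription of the proof of Proposition \ref{asymzero} with ${\mathbf E}_0\rightsquigarrow{\mathbf E}_{\infty}$, $\mathsf{P}\rightsquigarrow\textsf{P}^{\ast}$, and the orientation of the product reversed.
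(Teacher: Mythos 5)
Your overall strategy (telescoping the product of the $\M^{\ast}_k$ into wall\-/crossing operators via (\ref{btrans}) and then applying Lemma (\ref{blim}) termwise) is the right skeleton, but the identity you base everything on,
$\Psi^{reg}_{\infty}(z)=\prod^{\longrightarrow}_{w\in(0,+\infty)}\B^{\ast}_{w}(z)$,
is false, and this is not a bookkeeping issue. Carrying out the telescoping on the finite truncation gives
$$
\M_1^{\ast}(z)\cdots \M_N^{\ast}(z)=\M^{\ast}(z/q)\cdots\M^{\ast}(z/q^{N})\,{\mathbf{E}}_{\infty}^{-N}
=\Big(\prod\limits_{w\in[0,N)}^{\longrightarrow}\B^{\ast}_{w}(z,q)\Big)\,\big(\M^{\ast}(0)\big)^{N}\,{\mathbf{E}}_{\infty}^{-N},
$$
and the trailing $z$-independent factor $\big(\M^{\ast}(0)\big)^{N}{\mathbf{E}}_{\infty}^{-N}$ does \emph{not} disappear: evaluating the same product identity at $z=\infty$, where $\M^{\ast}(\infty)={\mathbf{E}}_{\infty}$, shows it equals $\Big(\prod_{w\in[0,N)}^{\longrightarrow}\B^{\ast}_{w}\Big)^{-1}$. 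This compensating factor is the essential ingredient you are missing. Without it the naked product $\prod^{\longrightarrow}\B^{\ast}_{w}(z,q)$ does not even converge as $N\to\infty$, because $\B^{\ast}_{w}(z,q)\to\B^{\ast}_{w}\neq 1$ for large $w$; your own computation betrays this, since for $s<0$ you correctly note that every factor tends to $\B^{\ast}_{w}$ and then assert the product ``collapses to $1$'', which is a contradiction. Likewise for $s\geq 0$ you predict the surviving factors are the $\B^{\ast}_{w}$ with $w>s$, whereas the statement to be proved retains the inverses $(\B^{\ast}_{w})^{-1}$ for $w\in[0,s)$ — the opposite range.

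Once the compensating factor is included, everything falls into place without any appeal to ``opposite stability'' heuristics: in
$\Big(\prod_{[0,N)}^{\longrightarrow}\B^{\ast}_{w}(zq^{s},q)\Big)\Big(\prod_{[0,N)}^{\longleftarrow}(\B^{\ast}_{w})^{-1}\Big)$
the factors with $w>s$ cancel in pairs against their inverses, the factors with $w<s$ in the first product tend to $1$ leaving the uncancelled $(\B^{\ast}_{w})^{-1}$ for $w\in[0,s)$, and the wall $w=s$ contributes $\B^{\ast}_{s}(z)(\B^{\ast}_{s})^{-1}=\B^{\ast}_{s}(z^{-1})^{-1}$ by Proposition \ref{invz} — which is the actual origin of the $z\to z^{-1}$, not the $z^{-k}$ expansion near infinity. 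For $s<0$ all pairs cancel and the limit is $1$. You should therefore replace step (i)--(ii) of your plan by: establish the finite-product identity above, identify $\big(\M^{\ast}(0)\big)^{N}{\mathbf{E}}_{\infty}^{-N}$ by evaluating at $z=\infty$, and only then pass to the limit.
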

\begin{proof}
	Let us consider a finite approximation of infinite product (\ref{infsolut}):
	$$
	\Psi^{reg}_{N}(z)=\M_1^{\ast}(z)\M_2^{\ast}(z)\cdots \M_N^{\ast}(z)=\M^{\ast}(z/q)\M^{\ast}(z/q^2)\cdots \M^{\ast}(z/q^N) {\mathbf{E}}^{-N}_{\infty}
	$$
	Using (\ref{btrans}) we rewrite the last expression in the form
	$$
	\Psi^{reg}_{N}(z)=\Big(\prod\limits_{[0,N)}^{\longrightarrow}\, \B^{*}_{w}(z,q)\Big)\, (\M(0)^{*})^{N}\, {\mathbf{E}}^{-N}_{\infty}.
	$$
	From (\ref{blim})  we find:
	\be \label{f1}
	\lim\limits_{q\to 0}\, \Psi^{reg}_{N}(z q^{s})=\B^{*}_{s}(z) \Big(\prod\limits_{(s,N)}^{\longrightarrow}\, \B^{*}_{w} \Big)\, (\M(0)^{*})^{N}\, {\mathbf{E}}^{-N}_{\infty}
	\ee
	Let us note that
	$$
	\M^{\ast}(z) \M^{\ast}(z/q)\cdots \M^{\ast}(z/q^{N-1})=\Big(\prod\limits_{w\in [0,N)} \, \B^{\ast}_{w}(z,q)\Big)\, \M^{\ast}(0)^N
	$$
	where the last equality is by (\ref{btrans}). At $z=\infty$ we obtain
	$$
	{\mathbf{E}}^{N}_{\infty}=\Big(\prod\limits_{w\in [0,N)} \, \B^{\ast}_{w}\Big)\, \M^{\ast}(0)^N
	$$
	which is the same as
	\be \label{f2}
	\Big(\prod\limits_{[0,N)}^{\longrightarrow}\, \B^{*}_{w} \Big)\, (\M(0)^{*})^{N}\, {\mathbf{E}}^{-N}_{\infty}=1.
	\ee
	Dividing (\ref{f1}) by (\ref{f2}) and assuming that $N> s$ we  obtain
	$$
	\lim\limits_{q\to 0}\, \Psi^{reg}_{N}(z q^{s})=\B^{*}_{s}(z) \prod\limits_{[0,s]}^{\longleftarrow}\, (\B^{*}_{w})^{-1}=\B^{*}_{s}(z^{-1})^{-1} \prod\limits_{[0,s)}^{\longleftarrow}\, (\B^{*}_{w})^{-1}
	$$
	where the last equality is by Proposition \ref{invz}.
	We see that for large $N$, the limit does not depend on $N$ and the proposition follows. 
\end{proof}

\subsection{Monodromy of QDE}
The monodromy of QDE is defined as the transition matrix between the two fundamental solutions constructed above:
\be \label{mondef}
\Mon(z,t_1,t_2):=\Psi_{0}(z)^{-1} \Psi_{\infty}(z).
\ee
Clearly, $\Mon( z q,t_1,t_2)=\Mon(z,t_1,t_2)$. 
\begin{Remark}
	If we change a basis of the Fock space by  $A \in \textrm{End}(\textsf{Fock})$ then the fundamental solutions transforms as 
	$$
	\Psi_{0}(z) \to \textsf{A} \Psi_{0}(z), \ \  \Psi_{\infty}(z) \to \textsf{A} \Psi_{\infty}(z)
	$$
	Thus, $\Mon(z,t_1,t_2)$ does not depend on the basis in which  the QDE is considered. 
\end{Remark}
\noindent
It will be convenient to work with {\it the regular part of monodromy} which is defined like (\ref{mondef}) but without exponential factors in (\ref{zerte}) and (\ref{psiinf}):
\be \label{monregmon}
\Mon^{reg}(z,t_1,t_2):= e^{\frac{\ln({\mathbf{E}}_0) \ln(z)}{\ln(q)}}\,\Mon(z,t_1,t_2)\, e^{-\frac{\ln({\mathbf{E}}_{\infty}) \ln(z)}{\ln(q)}}.
\ee
The regular part is not $q$-periodic
\be \label{regqw}
\Mon^{reg}(z q,t_1,t_2) ={\mathbf{E}}_0\, \Mon^{reg}(z,t_1,t_2)\,{\mathbf{E}}^{-1}_{\infty}.
\ee
Combining Propositions \ref{asymzero} and \ref{inflimps} we obtain:
\begin{Theorem} \label{limmon}
	{\it For $s\in \matQ$ the monodromy of qde has the following asymptotic at $q\to 0$:
		\be \label{monlimt}
		\lim_{q\to 0} \Mon^{reg}(z q^{s},t_1,t_2)=\left\{\begin{array}{ll}
			\B^{\bullet}_s(z^{-1})^{-1}\, \prod\limits_{w\in [0,s)}^{\longleftarrow}\, (\B^{\bullet}_w)^{-1}   \cdot \,  \T   & s \geq 0, \\
			\B^{\bullet}_{s}(z)\, \prod\limits^{\longrightarrow}_{w\in (s,0)}\, \B^{\bullet}_{w} \,  \cdot  \T, & s<0.
		\end{array}\right.
		\ee
		where $\B^{\bullet}_{w}(z)$, $ \B^{\bullet}_{w}$ denote the matrices of the operators $\B_{w}(z)$ and $ \B_{w}$ in the basis of fixed points $\mathsf{P}$  and $\T$ is the matrix of transition matrix between the bases $\mathsf{P}$ and $\mathsf{H}^{\ast}$:
		\be \label{tmatdef}
		\T:=\mathsf{P}^{-1} \mathsf{H}^{\ast}. 
		\ee}
	\end{Theorem}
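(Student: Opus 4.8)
The plan is to derive Theorem~\ref{limmon} by directly combining the two asymptotic formulas already established in Propositions~\ref{asymzero} and~\ref{inflimps}, after carefully accounting for the change of basis between the fixed-point basis $\mathsf{P}$ at $z=0$ and the eigenbasis $\mathsf{H}^{\ast}$ at $z=\infty$. The starting point is the definition of the regular monodromy:
\begin{equation*}
\Mon^{reg}(z,t_1,t_2)= e^{\frac{\ln(\mathbf{E}_0)\ln z}{\ln q}}\,\Psi_{0}(z)^{-1}\,\Psi_{\infty}(z)\, e^{-\frac{\ln(\mathbf{E}_{\infty})\ln z}{\ln q}}.
\end{equation*}
Substituting the factored forms $\Psi_{0}(z)=\mathsf{P}\,\Psi^{reg}_{0}(z)\,e^{\ln(\mathbf{E}_0)\ln z/\ln q}$ and $\Psi_{\infty}(z)=\mathsf{H}^{\ast}\,\Psi^{reg}_{\infty}(z)\,e^{\ln(\mathbf{E}_{\infty})\ln z/\ln q}$, the exponential factors cancel against those in \eqref{monregmon}, and I obtain the clean expression
\begin{equation*}
\Mon^{reg}(z,t_1,t_2)=\Psi^{reg}_{0}(z)^{-1}\,\mathsf{P}^{-1}\mathsf{H}^{\ast}\,\Psi^{reg}_{\infty}(z)=\Psi^{reg}_{0}(z)^{-1}\,\T\,\Psi^{reg}_{\infty}(z),
\end{equation*}
using \eqref{tmatdef}. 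Here I should be slightly careful: $\Psi^{reg}_{0}(z)$ as written in \eqref{zerte}--\eqref{p0} lives in the $p_\mu$-basis, so the correct bookkeeping is that $\Psi^{reg}_{0}(z)$ conjugated into the fixed-point basis is $\B^{\bullet}$-valued, which is precisely what Proposition~\ref{asymzero} computes; similarly Proposition~\ref{inflimps} gives the $\mathsf{P}^{\ast}$-basis limit of $\Psi^{reg}_{\infty}(z)$. So the identity above should be read with all three factors expressed consistently, and the transition matrix $\T=\mathsf{P}^{-1}\mathsf{H}^{\ast}$ absorbs the discrepancy between the $\mathsf{P}$-basis and the $\mathsf{H}^{\ast}$-basis.

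Next I would substitute $z\mapsto z q^{s}$ and take $q\to 0$. For $s\geq 0$, Proposition~\ref{asymzero} gives $\lim_{q\to 0}\Psi^{reg}_{0}(z q^{s})=1$, hence its inverse is also $1$; and Proposition~\ref{inflimps} gives $\lim_{q\to 0}\Psi^{reg}_{\infty}(z q^{s})=\B^{\ast}_{s}(z^{-1})^{-1}\prod^{\longleftarrow}_{w\in[0,s)}(\B^{\ast}_w)^{-1}$. Multiplying, the limit is $\T\cdot\big(\text{that product in the }\mathsf{P}^{\ast}\text{-basis}\big)$; but conjugating by $\T$ on the left turns a $\mathsf{P}^{\ast}$-basis operator into a $\mathsf{P}$-basis operator, i.e. $\T\,\B^{\ast}_{w}\,\T^{-1}=\B^{\bullet}_{w}$ and likewise for $\B^{\ast}_{s}(z^{-1})$. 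Therefore $\T\cdot\B^{\ast}_{s}(z^{-1})^{-1}\prod^{\longleftarrow}_{[0,s)}(\B^{\ast}_w)^{-1}=\B^{\bullet}_{s}(z^{-1})^{-1}\prod^{\longleftarrow}_{[0,s)}(\B^{\bullet}_w)^{-1}\cdot\T$, which is exactly the first line of \eqref{monlimt}. For $s<0$, Proposition~\ref{inflimps} gives $\lim_{q\to 0}\Psi^{reg}_{\infty}(z q^{s})=1$, and Proposition~\ref{asymzero} gives $\lim_{q\to0}\Psi^{reg}_{0}(z q^{s})=\big(\prod^{\longleftarrow}_{w\in(s,0)}(\B^{\bullet}_w)^{-1}\big)\cdot\B^{\bullet}_{s}(z)^{-1}$; inverting this product reverses the order and the inverses, yielding $\B^{\bullet}_{s}(z)\cdot\prod^{\longrightarrow}_{w\in(s,0)}\B^{\bullet}_{w}$, and then multiplying on the right by $\T$ (which is now in the correct $\mathsf{P}$-basis already, since the $\Psi^{reg}_{0}$-factor supplied the $\mathsf{P}$-conjugation) gives the second line of \eqref{monlimt}.

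The main obstacle I anticipate is not any hard analysis — the convergence of the infinite products and the validity of interchanging $\lim_{q\to0}$ with the (finite, for fixed $n$) products was already handled in the proofs of Propositions~\ref{asymzero} and~\ref{inflimps} — but rather the bookkeeping of bases. One must track precisely which basis each of $\Psi^{reg}_{0}$, $\Psi^{reg}_{\infty}$, $\B_w$ is expressed in at each stage, and verify that the conjugations by $\mathsf{P}$, $\mathsf{P}^{\ast}$, and $\T=\mathsf{P}^{-1}\mathsf{H}^{\ast}$ compose correctly so that all the $\B$'s in the final answer carry the bullet superscript (fixed-point basis $\mathsf{P}$) while $\T$ sits on the far right. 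A secondary subtlety is the order of the ordered products under inversion: $(\prod^{\longrightarrow}X_w)^{-1}=\prod^{\longleftarrow}X_w^{-1}$, which must be applied in the $s<0$ case, together with Proposition~\ref{invz} to rewrite $\B^{\bullet}_s(z)^{-1}$ versus $\B^{\bullet}_s(z^{-1})^{-1}$ consistently with how the $s\ge 0$ and $s<0$ formulas are phrased. Once these are pinned down, the theorem is an immediate consequence of the two preceding propositions.
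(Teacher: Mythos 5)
Your proposal is correct and follows exactly the route the paper intends: the paper's own proof is literally the phrase ``Combining Propositions~\ref{asymzero} and~\ref{inflimps} we obtain'' followed by the theorem statement, and your bookkeeping of the cancellation of exponential factors, the identity $\Mon^{reg}(z)=\Psi^{reg}_{0}(z)^{-1}\,\T\,\Psi^{reg}_{\infty}(z)$, and the conjugation $\T\,\B^{\ast}_{w}\,\T^{-1}=\B^{\bullet}_{w}$ is precisely what that one-line proof suppresses. One small remark: Proposition~\ref{invz} is not actually needed --- for $s<0$ inverting $\Psi^{reg}_{0}(z q^{s})$ directly produces $\B^{\bullet}_{s}(z)$, and for $s\ge 0$ Proposition~\ref{inflimps} already supplies $\B^{\ast}_{s}(z^{-1})^{-1}$, so the two cases match the theorem's phrasing without any further rewriting.
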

	\begin{Remark}
		The above theorem says that the limit $\lim\limits_{q\to 0} \Mon^{reg}(z q^{s},t_1,t_2)$ is a piecewise constant function of $s \in \matQ$, which changes only when $s$ crosses a ``wall'' from $\Walls_n\subset \matQ$. Moreover, if $s\not \in \Walls_n$ then $\B^{\bullet}_{s}(z)=1$ and the limit is independent on the K\"ahler variable $z$, see example below.
	\end{Remark}
	
	\begin{Example}
		Let us consider the case $n=3$ and $s=-2/3$ then
		$$
		\lim\limits_{q\to 0} \Mon^{reg}(z q^{-2/3},t_1,t_2)=\B^{\bullet}_{-2/3}(z) \B^{\bullet}_{-1/2} \B^{\bullet}_{-1/3}\, \T,
		$$
		but for $n=2$ we obtain
		$$
		\lim\limits_{q\to 0} \Mon^{reg}(z q^{-2/3})= \B^{\bullet}_{-1/2} \, \T,
		$$
		in particular, the last limit does not depend on $z$.
	\end{Example}

\section{Elliptic stable envelope \label{ellsection}}
A new approach to studying the monodromy  was recently suggested in \cite{AOElliptic,OkEll1,OkEll2}. 
In this approach, the monodromy of the QDE for a variety $\X$ is identified with the transition matrix between the {\it elliptic stable bases} in the equivariant elliptic cohomology of $\X$.
In this section we apply this idea to $\X=\textrm{Hilb}^{n}(\matC^2)$. 
In this and next sections we will also use equivariant parameters $a$ and $\hbar$ defined by
$$
t_1=a \hbar^{1/2}, \ \ t_2=a^{-1} \hbar^{1/2}.
$$

\subsection{Multiplicative notations}
Let us introduce the following functions:
$$
\hat{s}(x):=x^{1/2}-x^{-1/2}, \ \ 
\varphi(x):=\prod\limits_{i=0}^{\infty}(1-x q^{i})
$$
the last product converges for $|q|<1$ which we assume throughout.  Let 
\be \label{thetafun}
\theta(x): = \varphi(q x) \hat{s}(x)   \varphi(q x^{-1}).
\ee
denote the $q$-theta function.  We will denote by $\hat{S}$, $\Phi$, $\Theta$  multiplicative extensions of these functions to Laurent polynomials via the rules:
$$
\hat{S}(a+b)=\hat{S}(a) \hat{S}(b), \ \ \Phi(a+b)=\Phi(a) \Phi(b), \ \ \Theta(a+b)=\Theta(a) \Theta(b)
$$
Given a Laurent polynomial $P$ we defile by $\det(P)$ via
$
\det(a+b)=a b. 
$
\begin{Example} 
	Let $P=a-2b +3 c$ then 
	$$
	\Phi(P)=\dfrac{\varphi(a) \varphi(c)^3}{\varphi(b)^2}, \ \ 
	\det(P)=\dfrac{a c^3}{b^2}.
	$$
	If $P^{*}=a^{-1}-2b^{-1} +3 c^{-1}$ denote the dualization in K-theory then
	$$
	\Theta(P)=\Phi(q P) \hat{S}(P) \Phi(q P^{*}).
	$$
	The last formula, clearly, holds for any K-theory class $P$. 
\end{Example}
Let $\tb \in K_{\bT}(\X)$ be the class of the tautological bundle over the Hilbert scheme $\X$. The class 
$$
P=\tb+\tb \otimes \tb^{\ast} t_1 - \tb\otimes \tb^{\ast}
$$
is a polarization of $\X$, in other words, it is half of the class of the tangent bundle:
$$
T \X =P + \hbar^{-1} P^{*}  \in K_{\bT}(\X).
$$
For the computations below, the following Lemma is convenient. 
\begin{Lemma} \label{philemm} {\it 
		$$
		\dfrac{1}{\hat{s} (T \X)}\, \dfrac{\Theta(P) }{\Phi((q-\hbar) P)}=\Phi(q T\X) \, \mathscr{O}(1)^{1/2}\, \hbar^{-n/2}\, t_1^{n^2/2} 
		$$}
\end{Lemma}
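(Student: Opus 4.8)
The plan is to verify the identity by expanding both sides as products over the Chern roots of the tautological bundle $\tb$ at a fixed point $\lambda\in\X^{\bT}$, and to reduce everything to an elementary identity for the $q$-theta function. First I would write, at the fixed point $\lambda$, the Chern roots of $\tb$ as the monomials $t_1^{i-1}t_2^{j-1}$ over the boxes $(i,j)\in\lambda$; then the tangent space $T_\lambda\X$ has the well-known arm/leg description, and the polarization $P=\tb+t_1\,\tb\otimes\tb^{\ast}-\tb\otimes\tb^{\ast}$ restricts to a sum over boxes of the corresponding weights. The point of introducing $\det$, $\hat S$, $\Phi$, $\Theta$ as multiplicative extensions (as in the Example preceding the Lemma) is precisely that each side of the claimed equality becomes a product of scalar factors indexed by these weights, so the identity is equivalent to a weight-by-weight (or at worst a per-box) statement.

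Next I would use the definition $\Theta(x)=\Phi(qx)\,\hat s(x)\,\Phi(qx^{-1})$ from \eqref{thetafun} to rewrite $\Theta(P)$ as $\Phi(qP)\,\hat S(P)\,\Phi(qP^{\ast})$. The denominator $\Phi((q-\hbar)P)$ splits as $\Phi(qP)/\Phi(\hbar P)$ under the multiplicative rule, so the ratio $\Theta(P)/\Phi((q-\hbar)P)$ collapses to $\hat S(P)\,\Phi(qP^{\ast})\,\Phi(\hbar P)$; note the $\Phi(qP)$ factors cancel, which is the first place the specific combination $q-\hbar$ matters. On the other hand, from $T\X=P+\hbar^{-1}P^{\ast}$ one gets $\hat s(T\X)=\hat S(P)\,\hat S(\hbar^{-1}P^{\ast})$, so the prefactor $1/\hat s(T\X)$ kills the $\hat S(P)$ and leaves $\hat S(\hbar^{-1}P^{\ast})^{-1}\,\Phi(qP^{\ast})\,\Phi(\hbar P)$. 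Similarly $\Phi(qT\X)=\Phi(qP)\,\Phi(q\hbar^{-1}P^{\ast})$, so the claimed right-hand side is $\Phi(qP)\,\Phi(q\hbar^{-1}P^{\ast})\,\mathscr O(1)^{1/2}\,\hbar^{-n/2}\,t_1^{n^2/2}$. At this stage the identity has become an equality of explicit products in $\Phi$, $\hat S$, $\det$, and monomials in $\hbar$, $t_1$, evaluated on $P$ and $P^{\ast}$ — no geometry left, only bookkeeping.

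The remaining work is to match these two products. I would group the factors so that $\Phi(\hbar P)$ on the left is compared with $\Phi(qP)\,\Phi(q\hbar^{-1}P^{\ast})$ on the right via the functional equation $\varphi(x)=(1-x)\,\varphi(qx)$ applied term by term; each application of $\varphi(x)=(1-x)\varphi(qx)$ trades a $\Phi$-shift for a $\hat S$-type or $\det$-type monomial, and after finitely many such steps — one per box, or more precisely per Chern root of $P$ — the $\Phi$'s should cancel identically, leaving a purely monomial identity. That residual monomial identity is then the assertion that the product over boxes of the leftover weights equals $\mathscr O(1)^{1/2}\hbar^{-n/2}t_1^{n^2/2}$; here $\mathscr O(1)=\det(\tb)$ has eigenvalue $\prod_{(i,j)\in\lambda}t_1^{i-1}t_2^{j-1}$ by \eqref{eigval}, $\hbar^{-n/2}$ comes from the $\hbar$-grading of $P^{\ast}$ versus $P$ (there are $n$ boxes, so $\det(\hbar^{-1}P^{\ast}/P)$ contributes $\hbar^{-n}$ and a half of that appears), and the $t_1^{n^2/2}$ is the total $t_1$-weight of $\det(t_1\,\tb\otimes\tb^{\ast})=t_1^{n^2}\det(\tb\otimes\tb^{\ast})$, halved.

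\emph{Main obstacle.} I expect the genuinely delicate part to be the $t_1^{n^2/2}$ and $\hbar^{-n/2}$ factors: these are not visible box-by-box in a symmetric way, because the cross term $\tb\otimes\tb^{\ast}$ mixes all boxes and its determinant is a global quantity of degree $n^2$. So one cannot naively localize the monomial identity to a single box; instead one must carefully track total degrees in $t_1$ and in $\hbar$ across the whole Young diagram, and in particular be careful that the square roots $\mathscr O(1)^{1/2}$ and $\hbar^{-n/2}$ and $t_1^{n^2/2}$ are being taken with consistent sign conventions (the polarization $P$ is chosen precisely so that these half-integer powers are well defined). Verifying that the $\det$-monomials produced by the repeated use of $\varphi(x)=(1-x)\varphi(qx)$ assemble into exactly $\det(P)^{?}\,\det(P^{\ast})^{?}$ with the right exponents, and that these in turn equal $\mathscr O(1)^{1/2}\hbar^{-n/2}t_1^{n^2/2}$, is the computational heart of the proof; everything before it is formal manipulation of the multiplicative extensions.
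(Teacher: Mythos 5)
Your plan starts the same way the paper does: expand $\Theta(P)=\Phi(qP)\hat S(P)\Phi(qP^{\ast})$, factor $\Phi((q-\hbar)P)=\Phi(qP)/\Phi(\hbar P)$, cancel the $\Phi(qP)$, and use the polarization relation $T\X=P+\hbar^{-1}P^{\ast}$ to split $\hat s(T\X)$ and $\Phi(qT\X)$ into $P$-pieces. Up to a different order of moves (you cancel $\hat S(P)$ from $\hat S(T\X)=\hat S(P)\hat S(\hbar^{-1}P^{\ast})$ directly, while the paper first converts $\hat S(P)$ into a $\Phi$-ratio and only afterwards converts $1/\hat s(T\X)$) this is precisely the paper's computation, carried out in the multiplicative-extension calculus; localizing to a fixed point does not change it.

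The gap is in what you expect the final step to produce. After reducing to
$\hat S(\hbar^{-1}P^{\ast})^{-1}\Phi(qP^{\ast})\Phi(\hbar P)=\Phi(qP)\Phi(q\hbar^{-1}P^{\ast})\cdot(\text{monomial})$,
you propose that finitely many applications of $\varphi(x)=(1-x)\varphi(qx)$ (one per Chern root) will make \emph{all} the $\Phi$'s cancel identically, leaving a bare monomial. That is not what happens, and it cannot happen: the lemma's right-hand side visibly contains $\Phi(qT\X)$, so the product of $q$-Pochhammer factors survives. What the single per-root conversion $\hat s(x)=x^{1/2}\varphi(x^{-1})/\varphi(qx^{-1})$ actually does is turn $\hat S(\hbar^{-1}P^{\ast})^{-1}$ into a determinant monomial times $\Phi(q\hbar P)/\Phi(\hbar P)$; the $\Phi(\hbar P)$ then cancels and you are left with $\Phi(q\hbar P)\Phi(qP^{\ast})$. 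These factors do not match $\Phi(qP)\Phi(q\hbar^{-1}P^{\ast})$ root-by-root. They match only globally, because of the symplectic self-duality that underlies the polarization: $\hbar P+P^{\ast}=(T\X)^{\ast}=\hbar T\X$, so $\Phi(q\hbar P)\Phi(qP^{\ast})=\Phi(q(\hbar P+P^{\ast}))$ combines into the $\Phi$ of a full tangent-type class. This is exactly the step labeled ``Since $P^{\ast}+\hbar^{-1}P=T\X$'' in the paper's proof (there $P$ and $P^{\ast}$ have been transposed, but the content is $\hbar P+P^{\ast}=\hbar T\X$); you need to invoke it explicitly rather than hope for a cancellation. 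Relatedly, the part you single out as delicate — assembling $\mathscr O(1)^{1/2}\hbar^{-n/2}t_1^{n^2/2}$ — is the routine part: it is just $\det(P)^{1/2}\det(T\X)^{\pm1/2}$, computed in one line from $\det(\tb)=\mathscr O(1)$ and from the pairing of $T\X$-weights $(w,\hbar^{-1}w^{-1})$. The substantive content is the $\Phi$-combination via the polarization relation, which your outline currently elides.
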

\begin{proof}
	Using the multiplicative notation we write:
	$$
	\dfrac{1}{\hat{s} (T \X)}\, \dfrac{\Theta(P) }{\Phi((q-\hbar) P)}=\dfrac{1}{\hat{s} (T \X)}\, \dfrac{\Phi(q P) \hat{s}(P) \Phi(q P^{\ast}) }{\Phi((q-\hbar) P)}=
	\dfrac{ 
		\det(P)^{1/2} \Phi( P^{\ast} + \hbar P) }{\hat{s} (T \X)}.
	$$
	Since $P^{\ast} + \hbar^{-1} P = T \X$ we have
	$$
	\dfrac{1}{\hat{s} (T \X)}\, \dfrac{\Theta(P) }{\Phi((q-\hbar) P)} = \det(P)^{1/2} \det(T\X)^{1/2}  \Phi(q T \X). 
	$$
	Using the explicit form of the polarization and $\det(\tb)=\mathscr{O}(1)$ we compute
	$$
	\det(P)^{1/2}=\det(\tb)^{1/2} t_1^{n^2/2} = \mathscr{O}(1)^{1/2} t_1^{n^2/2}. 
	$$
	As $\X$ is symplectic, with the $\bT$-weight of the symplectic form given by $\hbar$, for each $w \in \textrm{weight}_{\bT}(T_{\lambda} \X)$ we have the symplectic dual dual weight $w^{-1} \hbar^{-1} \in \textrm{weight}_{\bT}(T_{\lambda}\X)$. Thus
	$$
	\det(T \X) = \hbar^{-n}.
	$$
	Lemma follows.
\end{proof}

\subsection{Fixed point components of elliptic stable envelopes} 
For a cocharacter $\sigma \in \Lie_{\matR}(\bA)$ and a fixed point $\lambda \in \X^{\bT}$ the construction of \cite{AOElliptic,OkEll1} provides a class $\Stab^{Ell}_{\sigma}(\lambda)$ in the equivariant elliptic cohomology of $\X$, called the  elliptic stable envelope of~$\lambda$. In this paper we assume that $\sigma$ corresponds to the cocharacter $a\to 0$. 

The classes $\Stab^{Ell}_{\sigma}(\lambda)$ can be characterized by their fixed point components
\be \label{ellipticmat}
U_{\lambda,\mu}(a,z,\hbar):= \left.\Stab^{Ell}_{\sigma}(\lambda)\right|_{\mu}
\ee
By their definition, the components $U_{\lambda,\mu}(a,z,\hbar)$ are sections of a certain line bundle over the abelian variety $E^{\,3}=E_{z}\times E_{a}\times E_{\hbar}$ where $E=\matC^{\times}/q^{\matZ}$ denotes the  elliptic curve with modulus $q$. The parameters $a,z,\hbar$ are viewed as coordinates on the factors. 
For the Hilbert scheme $\X$ the explicit formulas for $U_{\lambda,\mu}(a,z,\hbar)$ in terms of the theta functions were computed in \cite{SmirnovEllipticHilbert}.

Let us denote by $\succ$ the standard dominance order on partitions. From the support condition, in the definition of the elliptic stable envelope  classes $U_{\lambda,\mu}(a,z,\hbar)=0$ if $\lambda \succ \mu$. In other words, the matrix $U_{\lambda,\mu}(a,z,\hbar)$ is {\it upper triangular} if the fixed points are ordered by $\succ$. 

The diagonal elements of this matrix are given explicitly by
$$
U_{\lambda,\lambda}(a,z,\hbar) = \Theta(N^{-}_{\lambda}) := \prod\limits_{{w\in \textrm{char}_{\bT}(T_{\lambda }\X )} \atop {\langle \sigma,w \rangle < 0} } \, \theta(w).
$$
where $T_{\lambda}\X=N^{+}_{\lambda} \oplus N^{-}_{\lambda}$ is the decomposition into repelling and attracting subspaces for $\sigma$, and the product is over the $\bT$-weights of $N^{+}_{\lambda}$.  Similarly, the elliptic stable envelopes for the opposite cocharacter are characterized by the {\it lower triangular} matrix
\be \label{lellmat}
L_{\lambda,\mu}(a,z,\hbar):= \left.\Stab^{Ell}_{-\sigma}(\lambda)\right|_{\mu}
\ee
with $L_{\lambda,\mu}(a,z,\hbar)=0$ if $\mu\succ\lambda$ and 
$$
L_{\lambda,\lambda}(a,z,\hbar) = \Theta(N^{+}_{\lambda}) := \prod\limits_{{w\in \textrm{char}_{\bT}(T_{\lambda }\X )} \atop {\langle \sigma,w \rangle > 0} } \, \theta(w).
$$

\subsection{Properties of the elliptic stable envelope}
Let us consider the automorphism of the torus 
$$
\iota : \bT\to \bT, \ \ \ \iota(t_1,t_2)=(t_2,t_1).
$$
and let $\iota^{*}$ be the corresponding induced maps in equivariant K-theory or elliptic cohomology. On the classes of fixed points it acts by $\lambda \to \lambda^{'}$ ($\lambda'$ denotes the transposed partition). Note that $\iota(a)=a^{-1}$ and thus the corresponding map of the Lie algebras maps the cocharacter $\sigma$ to $-\sigma$.   The uniqueness of the elliptic stable envelope classes thus implies that:
$$
\iota^{*} (\Stab^{Ell}_{\sigma}(\lambda))= \Stab^{Ell}_{-\sigma}(\lambda').
$$
In the fixed point components this gives:
\begin{Lemma} \label{lulemma}
	{\it The matrices of stable envelopes (\ref{ellipticmat}) and (\ref{lellmat}) are related by
		\be \label{luslem}
		L(a,z,\hbar) = \fp\ U(a^{-1},z,\hbar)\, \fp
		\ee
		where $\fp$ is the antidiagonal matrix describing the transformation $\lambda\to \lambda'$. In the basis of partitions ordered by $\succ$ it has the form:
		$$
		\fp_{\lambda,\mu}:=\delta_{\lambda,\mu^{'}}=\left(\begin{array}{lll}
		0&\cdots& 1\\
		\cdots&\cdots & \cdots\\
		1& \cdots & 0
		\end{array}\right).
		$$}
\end{Lemma}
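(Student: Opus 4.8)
The plan is to derive the identity \eqref{luslem} directly from the symmetry $\iota^{*}(\Stab^{Ell}_{\sigma}(\lambda)) = \Stab^{Ell}_{-\sigma}(\lambda')$ by reading it off in fixed-point components. First I would record how the involution $\iota$ acts on all the relevant data: on the torus by $\iota(t_1,t_2)=(t_2,t_1)$, hence on the equivariant parameter by $\iota(a)=a^{-1}$ while $\iota(\hbar)=\hbar$ (since $\hbar = t_1 t_2$ is $\iota$-invariant) and $\iota(z)=z$ (the K\"ahler parameter is untouched by the torus automorphism); and on the set of fixed points by $\lambda\mapsto\lambda'$. Because $\iota$ sends $\sigma$ to $-\sigma$, the stable envelope for $\sigma$ gets exchanged with the one for $-\sigma$ after transposing the label, which is exactly the displayed uniqueness consequence.

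Next I would restrict the equality $\iota^{*}(\Stab^{Ell}_{\sigma}(\lambda)) = \Stab^{Ell}_{-\sigma}(\lambda')$ to a fixed point $\mu \in \X^{\bT}$. On the right-hand side this is by definition $L_{\lambda',\mu}(a,z,\hbar)$. On the left-hand side, pulling back along $\iota$ and then restricting to $\mu$ is the same as restricting the original class $\Stab^{Ell}_{\sigma}(\lambda)$ to the fixed point $\iota(\mu)=\mu'$ and then applying $\iota^{*}$ to the resulting section, i.e. substituting $a\mapsto a^{-1}$ (and leaving $z,\hbar$ fixed) in $U_{\lambda,\mu'}(a,z,\hbar)$. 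This yields the scalar identity
\[
L_{\lambda',\mu}(a,z,\hbar) = U_{\lambda,\mu'}(a^{-1},z,\hbar).
\]
Finally I would rewrite this in matrix form. With $\fp_{\lambda,\mu}=\delta_{\lambda,\mu'}$ the antidiagonal permutation matrix implementing transposition, the substitution $\lambda\mapsto\lambda'$, $\mu\mapsto\mu'$ on the two indices of $U$ is precisely conjugation $U \mapsto \fp\, U\, \fp$ (using $\fp^{2}=\mathrm{id}$), so the componentwise identity becomes $L(a,z,\hbar)=\fp\, U(a^{-1},z,\hbar)\,\fp$, which is \eqref{luslem}. I would also note in passing that this is consistent with the triangularity statements: $U$ is upper-triangular and $L$ lower-triangular in the $\succ$-ordering, and conjugation by the order-reversing permutation $\fp$ indeed swaps upper- and lower-triangularity, and it matches on the diagonal since $N^{-}_{\lambda}$ for $\sigma$ transforms into $N^{+}_{\lambda'}$ for $-\sigma$ under $\iota$.

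The only real point requiring care — the ``main obstacle,'' though it is more a bookkeeping subtlety than a genuine difficulty — is the identification $\iota^{*}(\alpha)|_{\mu} = \iota^{*}\big(\alpha|_{\iota(\mu)}\big)$ for an elliptic cohomology class $\alpha$, i.e. making sure the pullback of a section along the torus automorphism interacts correctly with fixed-point restriction and with the change of coordinates $a\mapsto a^{-1}$ on the abelian variety $E_z\times E_a\times E_\hbar$. This is the naturality of fixed-point restriction with respect to the automorphism $\iota$ of $\bT$, together with the fact that $\iota$ induces the coordinate change $a\mapsto a^{-1}$, $\hbar\mapsto\hbar$, $z\mapsto z$ on the base; once this is spelled out, the rest is a one-line substitution. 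Everything else is formal.
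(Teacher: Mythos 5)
Your proof is correct and takes essentially the same route as the paper: the paper also derives the lemma from the uniqueness relation $\iota^{*}(\Stab^{Ell}_{\sigma}(\lambda))=\Stab^{Ell}_{-\sigma}(\lambda')$ for the involution $\iota(t_1,t_2)=(t_2,t_1)$ and then passes to fixed-point components, leaving the naturality of restriction (your "main obstacle" paragraph) implicit. You have merely spelled out that bookkeeping step and the consistency check on triangularity, which the paper records separately in equation (\ref{sthetn}).
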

\noindent
Comparing the diagonal elements of (\ref{luslem}) we find
\be \label{sthetn}
\fp \left.\Theta(N^{-})\right|_{a=1/a} \fp= \Theta(N^{+})
\ee
The following property is also convenient for explicit computations:
\begin{Lemma}  \label{invlem}
	{\it There are matrix identities 
		$$
		L^{t}(a,z^{-1} \hbar^{-1},\hbar) U(a,z,\hbar)=\Theta(T \X)
		$$
		where $\Theta(T \X)$ denotes the diagonal matrix with
		$
		\Theta(T \X)_{\lambda,\lambda}= \Theta(T_{\lambda} \X).
		$
	}
\end{Lemma}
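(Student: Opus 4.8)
The plan is to read off the triangular shape and the diagonal of the product
$M(a,z,\hbar):=L^{t}(a,z^{-1}\hbar^{-1},\hbar)\,U(a,z,\hbar)$ directly, and to obtain the vanishing of the off-diagonal entries from the orthogonality (``resolution of the identity'') of elliptic stable envelopes established in \cite{AOElliptic,OkEll1}, after matching conventions.

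First I would record the shape. The support conditions make $U(a,z,\hbar)$ upper triangular and $L(a,z,\hbar)$ lower triangular with respect to (a fixed linear refinement of) $\succ$, and this is a statement about which entries vanish, with no dependence on $z$. Hence $L^{t}(a,z^{-1}\hbar^{-1},\hbar)$ is upper triangular, and so is $M$; in particular $M_{\lambda,\mu}=0$ if $\lambda\succ\mu$. Next I would compute the diagonal: for a product of two upper-triangular matrices the diagonal multiplies, so $M_{\lambda,\lambda}=L_{\lambda,\lambda}(a,z^{-1}\hbar^{-1},\hbar)\,U_{\lambda,\lambda}(a,z,\hbar)=\Theta(N^{+}_{\lambda})\,\Theta(N^{-}_{\lambda})$. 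The $\bT$-weights of $T_{\lambda}\X$ are monomials in $t_{1},t_{2}$, so $\Theta(N^{\pm}_{\lambda})$ do not depend on $z$; and since $T_{\lambda}\X=N^{+}_{\lambda}\oplus N^{-}_{\lambda}$ with $\Theta$ multiplicative, this gives $M_{\lambda,\lambda}=\Theta(T_{\lambda}\X)$, matching the right-hand side.

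The substance is the off-diagonal vanishing: I must show $M_{\lambda,\nu}=\sum_{\mu}L_{\mu,\lambda}(a,z^{-1}\hbar^{-1},\hbar)\,U_{\mu,\nu}(a,z,\hbar)=0$ for $\lambda\prec\nu$. My plan is to recognize the identity $M=\Theta(T\X)$ as equivalent, by elementary linear algebra, to $\sum_{\mu}\Stab^{Ell}_{-\sigma}(\lambda)|_{\mu}(z^{-1}\hbar^{-1})\,\Stab^{Ell}_{\sigma}(\nu)|_{\mu}(z)/\Theta(T_{\mu}\X)=\delta_{\lambda,\nu}$, which is precisely the orthogonality of the elliptic stable envelopes for the opposite cochambers $\sigma$ and $-\sigma$ under the natural bilinear pairing in the equivariant elliptic cohomology of $\X$: localization contributes the elliptic Euler class $\Theta(T_{\mu}\X)$ at each fixed point $\mu$, while the Kähler-parameter mismatch in pairing an elliptic cohomology class with its dual produces exactly the shift $z\mapsto z^{-1}\hbar^{-1}$ --- the elliptic incarnation of the substitution $z\mapsto z^{-1}$ for cohomological stable envelopes, with the power of $\hbar$ dictated by the normalization $T\X=P+\hbar^{-1}P^{*}$ of the polarization $P$. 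Concretely, for fixed $a,\hbar$ the entry $z\mapsto M_{\lambda,\nu}(a,z,\hbar)$ is a holomorphic section of a line bundle on $E_{z}$ --- the automorphy factors of $U$ and of the shifted $L$ combining, thanks to this shift, into the automorphy factor attached to $\Theta(T\X)$, which is trivial along $E_{z}$ --- and when $\lambda\prec\nu$ the triangularity forces that bundle to have no nonzero sections, so the entry vanishes identically. As a fallback one may quote the inversion theorem for elliptic stable envelopes directly from \cite{AOElliptic,OkEll1}, or verify the identity on the explicit theta-function formulas of \cite{SmirnovEllipticHilbert} for $\X=\mathrm{Hilb}^{n}(\matC^{2})$.

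\textbf{The main obstacle} is this last step, and within it the convention bookkeeping: deciding which cochamber plays the role of $\sigma$ and which of $-\sigma$, which polarization enters each factor, and pinning down the exact exponent of $z$ and power of $\hbar$ in the Kähler shift so that the automorphy factors cancel precisely; compatibility with Lemma \ref{lulemma} and with $T\X=P+\hbar^{-1}P^{*}$ is the sanity check. Once these are aligned, the line-bundle degree count on $E_{z}$ (equivalently, the cited duality) closes the argument with no further computation.
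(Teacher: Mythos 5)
Your high-level plan matches the paper's exactly: the paper disposes of this lemma with the single citation ``Proposition 3.4 in \cite{AOElliptic} applied to $\X$,'' and you correctly reduce --- via the transpose and the observation that $AB=I$ iff $BA=I$ for square matrices --- to the orthogonality relation for opposite-chamber elliptic stable envelopes, read off the triangular shape and the diagonal directly, and offer the citation as a fallback. So the proposal is sound and essentially the same approach.

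The one issue is internal to your sketch of the off-diagonal vanishing, not to the plan. You say the automorphy factors combine ``into the automorphy factor attached to $\Theta(T\X)$, which is trivial along $E_z$'' and then that ``when $\lambda\prec\nu$ the triangularity forces that bundle to have no nonzero sections.'' These cannot both be right: if the $z$-automorphy of $M_{\lambda,\nu}$ were trivial, the entry would be a holomorphic function on $E_z$ and hence a (possibly nonzero) constant, not forced to vanish; and triangularity has nothing to do with whether a line bundle on $E_z$ has sections --- it only cuts the summation range and identifies the diagonal. The rigidity argument actually runs the other way: the shift $z\mapsto z^{-1}\hbar^{-1}$ makes the $z$-automorphy of the $\mu$-th summand independent of $\mu$, so the sum is a well-defined section; but that automorphy for the $(\lambda,\nu)$-entry is a translate of the trivial bundle by a point read off from the mismatch of tautological weights at $\lambda$ and $\nu$, which is \emph{nonzero} precisely when $\lambda\ne\nu$, and a nontrivial degree-zero bundle on an abelian variety has no nonzero holomorphic sections. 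Since you defer to \cite{AOElliptic} as the fallback, the proposal still closes; but the degree count as written would need that fix before it could stand on its own.
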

\begin{proof}
	This identity is Proposition 3.4 in \cite{AOElliptic} applied to $\X$. 
\end{proof}
\noindent
In this and following sections we use $\,\tilde\,$ to denote the normalized matrices of stable envelopes, defined by
$$
\tilde{\bf{U}}_{\lambda,\mu}(a,z):=\dfrac{{{U}}_{\lambda,\mu}(a,z)}{{{U}}_{\mu,\mu}(a,z)}, \ \ \ \tilde{{L}}_{\lambda,\mu}(a,z):=\dfrac{{{L}}_{\lambda,\mu}(a,z)}{{{L}}_{\mu,\mu}(a,z)},
$$
so that $\tilde{\bf{U}}_{\lambda,\lambda}(a,z)=\tilde{\bf{L}}_{\lambda,\lambda}(a,z)=1$. 
Note that the previous lemma implies that
\be \label{invluiden}
\tilde{\bf{L}}^{t}(a,z^{-1} \hbar^{-1}) \tilde{\bf{U}}(a,z)=Id.
\ee

\subsection{Twisted elliptic stable envelopes}
It is convenient to introduce twisted elliptic envelopes by
\be \label{stabnorm}
\textbf{Stab}^{Ell}_{\sigma}(\lambda) =\kappa^{*}(\Theta(N^{+}_{\lambda}))\, {\Stab}^{Ell}_{\sigma}(\lambda)
\ee
where $\kappa^{*}$ denotes the change of variables
\be \label{mirsym}
\kappa^{*}: \ \ a \to z \sqrt{\hbar},  \ \ \hbar \to 1/\hbar,  \ \ z \to a \sqrt{\hbar}. \ \ \
\ee
Note that the new prefactor $\kappa^{*}(\Theta(N^{\pm}_{+}))$ only depends on $z$ and $\hbar$. 
We denote the corresponding matrices of the fixed point components
$$
{\bf{U}}_{\lambda,\mu}(a,z): = \left.\textbf{Stab}^{Ell}_{\sigma}(\lambda)\right|_{\mu}, 
$$
and as in Lemma \ref{lulemma} we define: 
$$
\ \ \  
{\bf{L}}(a,z): = \fp {\bf{U}}(a^{-1},z) \fp
$$
Let $\kappa^{*}(\Theta(N^{\pm}))$ be the diagonal matrices with diagonal  $\kappa^{*}(\Theta(N^{\pm}_{\lambda}))$ then 
\be \label{lunorms2}
{\bf{U}}(a,z)=\kappa^{*}(\Theta(N^{+})) {{U}}(a,z), \ \ \ {\bf{L}}(a,z)=\left.\kappa^{*}(\Theta(N^{-}))\right|_{z=z^{-1} \hbar^{-1}} {{L}}(a,z).
\ee

\subsection{Mirror conjecture for $\X$}
The twisted version of the elliptic stable envelope (\ref{stabnorm}) behaves better with respect to the so called {\it 3D-mirror symmetry}. Informally, the mirror conjecture states that there exists a dual variety $\X^{!}$ so that the twisted elliptic stable envelopes of $\X$ and $\X^{!}$ coincide after identification of K\"ahler and equivariant parameters by (\ref{mirsym}). In terms of the fixed point components this can be formulated as follows.
\begin{Conjecture} \label{conjmir}
	{\it The Hilbert scheme is self-dual with respect to $3D$-mirror symmetry $\X\cong \X^{!}$ and
		\be \label{mrshilb}
		{\bf{U}}(a,z) = \kappa^{*}\Big({\bf{L}}(a,z^{-1} \hbar^{-1})^t \Big)
		\ee where $t$ denotes transposed matrix. }
\end{Conjecture}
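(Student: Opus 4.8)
The plan is to prove (\ref{mrshilb}) via the uniqueness characterization of the twisted elliptic stable envelope. Recall that $\mathbf{U}(a,z)$ is pinned down by three properties: (i) the support condition $U_{\lambda,\mu}=0$ unless $\mu\succeq\lambda$, i.e. upper triangularity in the order $\succ$; (ii) the diagonal normalization (with the twisting prefactor of (\ref{stabnorm}) multiplying in); and (iii) the quasi-periodicity of the off-diagonal sections $U_{\lambda,\mu}$ under the shifts $z\mapsto qz$, $a\mapsto qa$, $\hbar\mapsto q\hbar$ of the abelian variety $E^{3}$, which fixes how they transform by the line bundle assembled from $\mathscr{O}(1)$ and the Chern roots of the tautological bundle. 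So it suffices to show that $V(a,z):=\kappa^{*}\big(\mathbf{L}(a,z^{-1}\hbar^{-1})^{t}\big)$ again obeys (i)--(iii); uniqueness then forces $V=\mathbf{U}$. Conceptually this is the instance, for the self-mirror $\X$, of the general $3D$-mirror relation between $\mathbf{U}$ of a variety and the transposed $\mathbf{L}$ of its mirror, so the genuine content is that $\X\cong\X^{!}$, with dual fixed points again partitions of $n$; along the way the order reversal and the transposition $\lambda\mapsto\lambda'$ that enters through $\fp$ have to be tracked carefully.

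Properties (i) and (ii) should be comparatively routine. For (i): by Lemma \ref{lulemma} and (\ref{lunorms2}) the matrix $\mathbf{L}(a,z)$ is lower triangular in $\succ$, hence $\mathbf{L}(a,z^{-1}\hbar^{-1})^{t}$ is upper triangular, and $\kappa^{*}$ of (\ref{mirsym}) merely permutes the elliptic parameters $a,z,\hbar$ without touching the fixed-point labels, so $V(a,z)$ is upper triangular like $\mathbf{U}(a,z)$. For (ii): by (\ref{lunorms2}) the diagonal of $\mathbf{L}(a,z)$ is $\left.\kappa^{*}(\Theta(N^{-}_{\lambda}))\right|_{z\to z^{-1}\hbar^{-1}}\,\Theta(N^{+}_{\lambda})$; transposition leaves it unchanged, and after applying $\kappa^{*}$ and the substitution $z\to z^{-1}\hbar^{-1}$ the two occurrences of $\kappa^{*}$ and the two $z$-reflections combine. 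Using $N^{+}_{\lambda}\oplus N^{-}_{\lambda}=T_{\lambda}\X$, the symplectic pairing of tangent weights, the identity (\ref{sthetn}) and the quasi-periodicity of $\theta$, this should collapse to $V_{\lambda,\lambda}(a,z)=\kappa^{*}(\Theta(N^{+}_{\lambda}))\,\Theta(N^{-}_{\lambda})=\mathbf{U}_{\lambda,\lambda}(a,z)$, which is a bookkeeping exercise in the multiplicative notation of Lemma \ref{philemm}.

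Property (iii) is the heart of the matter and where I expect the real obstacle. Under $z\mapsto qz$ the component $\mathbf{U}_{\lambda,\mu}$ picks up a monomial governed by the restriction to $\mu$ of the K\"ahler line bundle (built from $\mathscr{O}(1)$, whose fixed-point weights are the monomials of (\ref{eigval})), while under $a\mapsto qa$ it picks up a monomial built from the Chern roots of the tautological bundle at $\lambda$ and $\mu$, i.e. from the tangent weights; since $\kappa^{*}$ swaps these two roles, (\ref{mrshilb}) forces the ``$\mathscr{O}(1)$-weight'' combinatorics of $\X$ to coincide with the ``tautological/tangent-weight'' combinatorics --- precisely the self-mirror statement made quantitative, with the twisting (\ref{stabnorm}) tuned so the match is exact and not merely up to a diagonal. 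I see two routes. The structural one: run the abelianization / off-shell formula of \cite{AOElliptic}, writing both $\mathbf{U}(a,z)$ and $V(a,z)$ as the same iterated residue of the elliptic stable envelope for the ambient torus action, for which $3D$-mirror symmetry is the understood Gale duality of the toric datum, and checking that $\kappa^{*}$ is its residue-level avatar; the delicate point is that the non-abelian quotient defining $\X$ contributes a symmetrization factor that must be shown compatible with (\ref{stabnorm}). The hands-on one: take the closed formula for $U_{\lambda,\mu}(a,z,\hbar)$ from \cite{SmirnovEllipticHilbert} --- a sum of products of theta functions indexed by the boxes of $\lambda$ and $\mu$ --- and push it through $\kappa^{*}$, matrix transposition, the reflection $z\to z^{-1}\hbar^{-1}$ and the partition transposition hidden in $\fp$, verifying that the output is the same formula; here $3D$-mirror symmetry should surface as a visible combinatorial involution of the summation exchanging the two Young diagrams and the horizontal versus vertical arm--leg statistics, the bulk of the work being the ensuing theta-reflection identities. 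A useful consistency check: combining (\ref{mrshilb}) with the twisted inversion relation (\ref{invluiden}) should show that $\tilde{\mathbf{U}}(a,z)$ is $\kappa^{*}$-anti-self-dual, a reformulation one can test in the examples $n=2,3$.
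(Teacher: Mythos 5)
The statement you are addressing is stated in the paper as a \emph{Conjecture}, not a theorem: the paper explicitly remarks that a proof of (\ref{mrshilb}) ``is a work in progress by several groups'' and that it has only been checked by computer for small $n$ using the explicit formulas of \cite{SmirnovEllipticHilbert}. So there is no proof in the paper to compare against, and your proposal does not supply one either. Your reduction to the uniqueness axioms (i)--(iii) is a sensible framing, and your handling of (i) and (ii) is plausible, but the argument is circular exactly where you yourself flag the obstacle: step (iii). The quasi-periodicity of $\mathbf{U}_{\lambda,\mu}$ in $z$ is governed by the K\"ahler lattice $\mathrm{Pic}(\X)$ via $\mathscr{O}(1)|_{\mu}$, while its quasi-periodicity in $a$ is governed by the tautological Chern roots at the fixed points; asserting that $\kappa^{*}$ of the transposed $\mathbf{L}$-matrix transforms with the \emph{same} line bundle on $E^{3}$ is precisely the quantitative content of the self-mirror statement $\X\cong\X^{!}$, i.e.\ it is the conjecture itself. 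Neither of the two routes you sketch for establishing it (abelianization compatible with the twist (\ref{stabnorm}), or a direct theta-function computation with the formulas of \cite{SmirnovEllipticHilbert}) is carried out, and each contains the genuinely hard, currently open work.

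To be concrete about what is missing: you would need either (a) an identification of the off-shell/abelianized presentations of $\Stab^{Ell}_{\sigma}$ for $\X$ and for $\X^{!}$ under the exchange (\ref{mirsym}), including the symmetrization over the non-abelian Weyl group and the matching of the twist $\kappa^{*}(\Theta(N^{+}_{\lambda}))$, or (b) a closed combinatorial identity of theta functions indexed by pairs of partitions, invariant under the involution exchanging $(\lambda,\mu)\mapsto(\mu',\lambda')$ together with $a\leftrightarrow z\sqrt{\hbar}$ and $\hbar\mapsto\hbar^{-1}$. Absent one of these, what you have is a correct statement of \emph{why} the conjecture should be true and a verification strategy for small $n$ (which the paper already reports), not a proof. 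Your proposed consistency check via (\ref{invluiden}) is fine as far as it goes, but note the paper already records that the conjecture is \emph{equivalent} to the anti-self-duality $\tilde{\bf{U}}(a,z)^{-1}=\kappa^{*}(\tilde{\bf{U}}(a,z))$, so this reformulation does not reduce the difficulty.
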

\noindent
By Lemma \ref{invlem} this conjecture is equivalent to the identity:
$$
\tilde{\bf{U}}(a,z)^{-1} =\kappa^{*}(\tilde{\bf{U}}(a,z)) 
$$
For explicit examples of $3D$-mirror symmetry of the elliptic stable envelope we refer to \cite{RSVZ1,RSVZ2,SmirnovZhuHypertoric,RW}, see also \cite{Dink5} for approach which uses {\it vertex functions}. The proof of $3D$-mirror symmetry for  the Hilbert scheme (\ref{mrshilb}) is a work in progress by several groups \cite{KorZeitToroid,OkAganToAppear}.  

\begin{Remark}
	Conjecture \ref{conjmir} implies that the twisted stable envelopes are fixed point components of a certain class $\frak{m}$ in the equivariant elliptic cohomology of $\X\times \X^{!}$, called  {\it duality interface} in \cite{SmirnovZhuHypertoric}. 
\end{Remark}

\begin{Remark}
	Conjecture \ref{conjmir} can be checked directly for several first values of $n$ using a computer and the explicit formulas from \cite{SmirnovEllipticHilbert}.
\end{Remark}

\subsection{Monodromy from the elliptic stable envelope}
The one-leg vertex functions with a descendent $\tau \in K_{\bT}(\X)$: 
$$
{\bf V}^{(\tau)}_{0}(z)  \in K_{\bT}(\X)[[z]]
$$
of the Hilbert scheme $\X$ are defined as the generating series of quasimaps to $\X$, see Section 7.2 of \cite{Okpcmi} for the definition.
The vertex functions are certain generalizations of hypergeometric functions. In particular, they provide a basis  to a certain system of $q$-hypergeometric equations, which are holomorphic near $z=0$ and satisfy ${\bf V}^{(\tau)}_{0}(0)=\tau$. Solving the same hypergeometric system at $z=\infty$ with the same boundary conditions gives functions
$$
{\bf V}^{(\tau)}_{\infty}(z)  \in K_{\bT}(\X)[[z^{-1}]]
$$
The functions ${\bf V}^{(\tau)}_{\infty}(z)$ can be considered as vertex functions for the Nakajima variety associated with the same combinatorial data (quiver and dimension vectors) but with the opposite stability condition. We need the following result of A.Okounkov \cite{OkEll2}, which describes the monodromy of the properly normalized vertex functions. 

\begin{Theorem}
	{\it 
		Let us consider the vertex functions of $\X$ normalized so that 
		\be \label{vernorm}
		\tilde{{\bf V}}_{0}(z)=     \frac{\Phi((q-\hbar) P) }{\Theta(P)}\, {{\bf V}}_{0}(z), \ \ \ \tilde{{\bf V}}_{\infty}(z)=     \frac{\Phi((q-\hbar) P) }{\Theta(P)}\, {{\bf V}}_{\infty}(z)
		\ee
		then 
		\be \label{vermonoddes}
		\tilde{{\bf V}}_{\infty}(z)={\bf \Lambda}(z) \tilde{{\bf V}}_{0}(z)
		\ee
		where the monodromy matrix is expressed via the elliptic stable envelopes (\ref{lunorms2}) by:
		\be  \label{vertmondr}
		{\bf \Lambda}(z)= (-1)^n\, {\bf{L}}(a^{-1},z^{-1} \hbar^{-1})^{-1} \,{\bf{U}}(a,z). 
		\ee
	}
\end{Theorem}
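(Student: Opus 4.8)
This is the specialization to $\X=\mathrm{Hilb}^{n}(\matC^2)$ of Okounkov's general monodromy theorem for vertex functions of Nakajima varieties \cite{OkEll2}; the plan is to recall the mechanism of that proof and to check that its hypotheses are met here. The starting point is that $\tilde{{\bf V}}_{0}(z)$ and $\tilde{{\bf V}}_{\infty}(z)$ are fundamental solution matrices --- holomorphic at $z=0$ and at $z=\infty$ respectively --- of one and the \emph{same} holonomic system of $q$-difference equations: the QDE (\ref{qdiffe}) in the K\"ahler variable $z$, \emph{together with} the dual $q$-difference (quantum Knizhnik--Zamolodchikov) equations in the equivariant variable $a$. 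Consequently ${\bf \Lambda}(z):=\tilde{{\bf V}}_{\infty}(z)\,\tilde{{\bf V}}_{0}(z)^{-1}$ is the Birkhoff connection matrix of this system; it is the unique matrix intertwining the two bases of solutions, and since the $\tau$-normalization of the vertices cancels in the ratio it is independent of that choice.

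First I would pin down the analytic type of ${\bf \Lambda}(z)$. Because $\tilde{{\bf V}}_{0}$ and $\tilde{{\bf V}}_{\infty}$ satisfy the same $z$- and $a$-difference equations, ${\bf \Lambda}(z)$ is $q$-periodic in $z$ and transforms by the cocycle of those equations in $a$; combined with the explicit quasi-periodicity of the normalization prefactor $\Phi((q-\hbar)P)/\Theta(P)$ from (\ref{vernorm}) --- whose transformation is governed by the $q$-theta identity of Lemma \ref{philemm} and by $T\X=P+\hbar^{-1}P^{\ast}$ --- this forces every matrix element ${\bf \Lambda}(z)_{\lambda,\mu}$, in the fixed-point basis, to be a section of exactly the line bundle over $E_{z}\times E_{a}\times E_{\hbar}$ carried by the fixed-point components of the twisted elliptic stable envelopes (\ref{stabnorm}). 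Thus ${\bf \Lambda}(z)$ and the candidate $(-1)^{n}\,{\bf L}(a^{-1},z^{-1}\hbar^{-1})^{-1}\,{\bf U}(a,z)$ lie in the same finite-dimensional space of sections.

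Next I would establish triangularity and compute the diagonal. Expanding the vertex functions in the chamber $\sigma$ (the cocharacter $a\to 0$), the fixed-point component $(\tilde{{\bf V}}_{0})_{\lambda,\mu}$ has a well-defined leading asymptotics that vanishes unless $\mu\preceq\lambda$, so $\tilde{{\bf V}}_{0}$ is triangular for the dominance order $\succ$; symmetrically $\tilde{{\bf V}}_{\infty}$ is triangular for the opposite chamber. The leading ``empty-quasimap'' contributions on the diagonal, once the normalization (\ref{vernorm}) is inserted and Lemma \ref{philemm} together with the symplectic duality of $\bT$-weights is applied, reduce to the products of theta functions $\Theta(N^{+}_{\lambda})$ and $\Theta(N^{-}_{\lambda})$ --- exactly the diagonal entries of ${\bf U}(a,z)$ and ${\bf L}(a,z)$ in (\ref{lunorms2}). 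Hence ${\bf \Lambda}(z)$ is a product of a lower- and an upper-triangular matrix whose triangular factors have the same diagonals as ${\bf L}(a^{-1},z^{-1}\hbar^{-1})^{-1}$ and ${\bf U}(a,z)$. Invoking the uniqueness theorem for elliptic stable envelopes --- a section of the above line bundle with the prescribed triangularity and diagonal is unique --- and checking that $(-1)^{n}\,{\bf L}(a^{-1},z^{-1}\hbar^{-1})^{-1}\,{\bf U}(a,z)$ satisfies these same constraints (its triangular structure and diagonals following from the support and normalization of the stable envelopes and from Lemmas \ref{lulemma}--\ref{invlem}), I conclude that the two matrices coincide, which is (\ref{vertmondr}).

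The main obstacle is Step 2 rather than the triangularity: matching the quasi-periodicity factors of ${\bf \Lambda}(z)$ with those of the stable-envelope combination \emph{on the nose} --- including the shifts $a\mapsto a^{-1}$ and $z\mapsto z^{-1}\hbar^{-1}$, the $\kappa^{*}$-twist (\ref{mirsym}), and the overall scalar. This requires transporting the prefactor $\Phi((q-\hbar)P)/\Theta(P)$ carefully through both families of difference equations, which is precisely the role of Lemma \ref{philemm}, and then verifying that the resulting line bundle has a one-dimensional space of sections with the required vanishing, so that the remaining constant ambiguity collapses to the stated sign $(-1)^{n}$ (traceable to the $\det(T\X)=\hbar^{-n}$ and $\mathscr{O}(1)^{1/2}$ factors appearing in Lemma \ref{philemm}).
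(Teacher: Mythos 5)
The paper's own proof is much shorter than yours: it simply invokes Corollary 3.2 of \cite{OkEll2} for $\X=\mathrm{Hilb}^n(\matC^2)$, and the only content it supplies is the sign $(-1)^n$, which is a pure bookkeeping discrepancy between the normalization of elliptic stable envelopes adopted in this paper and the one fixed by formula (45) of \cite{AOElliptic} used in \cite{OkEll2}: the two conventions differ by $(-1)^{\mathrm{rk}(\mathrm{ind}^{\sigma}_{\lambda})}$ per chamber, so the combination ${\bf{L}}^{-1}{\bf{U}}$ picks up $(-1)^{\mathrm{rk}(\mathrm{ind}^{+\sigma}_{\lambda})-\mathrm{rk}(\mathrm{ind}^{-\sigma}_{\lambda})}=(-1)^n$, the last equality being the index computation of Section 3.8 of \cite{SmirnovEllipticHilbert}. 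Your proposal instead tries to reconstruct the internal mechanism of the cited theorem. As an outline of \cite{OkEll2} it is faithful in spirit, but as a proof it leaves precisely the hard steps as assertions: (i) that $\tilde{{\bf V}}_{0}$ and $\tilde{{\bf V}}_{\infty}$ satisfy the same $a$-difference (qKZ) system, (ii) that the resulting quasi-periodicity pins the matrix elements of ${\bf \Lambda}(z)$ into exactly the line bundle carried by the twisted stable envelopes, and (iii) that the space of sections with the stated triangularity, diagonal, and vanishing is one-dimensional. Those three facts are the substance of \cite{AOElliptic,OkEll2}, not consequences of anything established in this paper, so your argument is circular unless you cite them --- at which point it collapses to the paper's one-line proof.

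The one place where your account is wrong rather than merely incomplete is the sign. You trace $(-1)^n$ to the $\det(T\X)=\hbar^{-n}$ and $\mathscr{O}(1)^{1/2}$ factors of Lemma \ref{philemm}; but Lemma \ref{philemm} plays no role in this theorem --- it enters only in the subsequent theorem converting the vertex monodromy ${\bf \Lambda}(z)$ into the capping-operator monodromy $\tilde{\bf \Lambda}(z)$ via (\ref{basrecap}). Here the sign has nothing to do with the polarization or $\det(T\X)$; it is purely the stable-envelope normalization mismatch described above.
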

\begin{proof}
	This is Corollary 3.2 in \cite{OkEll2} applied to Hilbert scheme $\X = \textrm{Hilb}^{n}(\matC^2)$. We only need to explain the sign $(-1)^{n}$: in \cite{OkEll2} the normalization of stable envelope is fixed by formula (45) in \cite{AOElliptic}.
	It differs from the one accepted in this paper by 
	$(-1)^{\textrm{rk}(\textrm{ind}^{\sigma}_{\lambda})}$ where $\textrm{ind}^{\sigma}_{\lambda}$ denotes the index of the fixed point $\lambda$ corresponding to the chamber $\sigma$. Thus, the ratio of stable envelopes ${\bf{L}}(a, \hbar)$ and ${\bf{U}}(a,z)$  differs from those in Corollary 3.2 of \cite{OkEll2} by a sign
	$$
	(-1)^{\textrm{rk}(\textrm{ind}^{+\sigma}_{\lambda})-\textrm{rk}(\textrm{ind}^{-\sigma}_{\lambda})} =(-1)^n 
	$$
	The last identity is by direct computation as in Section 3.8 of  \cite{SmirnovEllipticHilbert}.
\end{proof}
The fundamental solutions $\Psi_{0}(z)$, $\Psi_{\infty}(z)$ play the role of the {\it capping operators} in the enumerative geometry. The relation between the bare vertex functions, capping operators and capped vertex functions is the following, see Section 7.4 of \cite{Okpcmi}:
\be \label{basrecap}
\Psi_{0}(z) \, \hat{s}(T \X)^{-1}\, {\bf V}^{(\tau)}_{0}(z) = \Psi_{\infty}(z) \,\hat{s}(T \X)^{-1}\, {\bf V}^{(\tau)}_{\infty}(z) =\langle \tau \rangle \in K_{\bT}(\X)(z)
\ee
where $\langle \tau \rangle \in K_{\bT}(X)(z)$ is known as the capped vertex with a descendent $\tau$. As a rational function of $z$, the capped vertex $\langle \tau \rangle$ has trivial monodromy. Thus, the equation (\ref{basrecap}) says that the monodromy of the vertex function and the capping operators (the solutions of QDE) are inverses of each other. Combining all the factors together we find:
\begin{Theorem}
	{\it Let us normalize solutions of QDE by 
		\be \label{psinorm}
		\tilde{\Psi}_{0}(z)= {\Psi}_{0}(z) \Phi( q\, T\X), \ \ \tilde{\Psi}_{\infty}(z)= {\Psi}_{\infty}(z) \Phi( q\, T\X)
		\ee
		where $\Phi( q\, T\X)$ denotes the diagonal operator 
		$$
		\Phi( q\, T\X)_{\lambda,\lambda}=\prod\limits_{w\in \{\bT-\textrm{weights of} \, \,  T_{\lambda}\X \}} \, \varphi(q w)
		$$
		(the product runs over the $\bT$-weights appearing in the tangent space $T_{\lambda} \X$ at a fixed point $\lambda \in \X^{\bT}$).
		Then, we have
		$$
		\tilde{\Psi}_{0}(z)=\tilde{\Psi}_{\infty}(z)\, \tilde{\bf \Lambda}(z)
		$$
		where 
		$$
		\tilde{\bf \Lambda}(z)=\mathscr{O}(1)^{1/2}\,{ \bf \Lambda}(z)\, \mathscr{O}(1)^{-1/2}.
		$$
		and ${\bf \Lambda}(z)$ is given by (\ref{vertmondr})}.
\end{Theorem}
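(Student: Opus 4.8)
The plan is to deduce the statement from the vertex monodromy of the previous theorem together with the capping identity (\ref{basrecap}), absorbing all the resulting diagonal conjugation factors by means of Lemma \ref{philemm}.

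First I would recast (\ref{basrecap}) as a matrix identity. Let $\tau$ run over the fixed point basis of $K_{\bT}(\X)$ and assemble the descendent vertices ${\bf V}^{(\tau)}_{0}(z)$ into the columns of a matrix ${\bf V}_{0}(z)$, and likewise ${\bf V}^{(\tau)}_{\infty}(z)$ into ${\bf V}_{\infty}(z)$. Since the vertices are linear in $\tau$ and ${\bf V}^{(\tau)}_{0}(0)=\tau$, the matrix ${\bf V}_{0}(z)$ equals the identity at $z=0$, hence is invertible as a matrix of (formal, then rational) functions of $z$; the same argument at $z=\infty$ applies to ${\bf V}_{\infty}(z)$. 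Because $\hat{s}(T\X)^{-1}$ acts diagonally, (\ref{basrecap}) then reads $\Psi_{0}(z)\hat{s}(T\X)^{-1}{\bf V}_{0}(z)=\Psi_{\infty}(z)\hat{s}(T\X)^{-1}{\bf V}_{\infty}(z)$, so that
\[
\Psi_{\infty}(z)^{-1}\Psi_{0}(z)=\hat{s}(T\X)^{-1}\,{\bf V}_{\infty}(z)\,{\bf V}_{0}(z)^{-1}\,\hat{s}(T\X).
\]

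Next I would insert the vertex monodromy. Writing $D$ for the diagonal operator $\Phi((q-\hbar)P)/\Theta(P)$, so that $\tilde{\bf V}_{\bullet}(z)=D\,{\bf V}_{\bullet}(z)$ by (\ref{vernorm}), relation (\ref{vermonoddes}) becomes $D\,{\bf V}_{\infty}(z)={\bf \Lambda}(z)\,D\,{\bf V}_{0}(z)$, hence ${\bf V}_{\infty}(z)\,{\bf V}_{0}(z)^{-1}=D^{-1}\,{\bf \Lambda}(z)\,D$. Substituting this and using that $\hat{s}(T\X)$ and $D$ are diagonal and commute, one obtains $\Psi_{\infty}(z)^{-1}\Psi_{0}(z)=C\,{\bf \Lambda}(z)\,C^{-1}$ with $C:=\hat{s}(T\X)^{-1}D^{-1}$. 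By Lemma \ref{philemm},
\[
C=\frac{1}{\hat{s}(T\X)}\,\frac{\Theta(P)}{\Phi((q-\hbar)P)}=\Phi(q\,T\X)\,\mathscr{O}(1)^{1/2}\,\hbar^{-n/2}\,t_{1}^{n^{2}/2}.
\]
The scalar $\hbar^{-n/2}t_{1}^{n^{2}/2}$ drops out of the conjugation, and the two remaining factors $\Phi(q\,T\X)$ and $\mathscr{O}(1)^{1/2}$ are both diagonal, hence commute, so $C\,{\bf \Lambda}(z)\,C^{-1}=\Phi(q\,T\X)\bigl(\mathscr{O}(1)^{1/2}{\bf \Lambda}(z)\mathscr{O}(1)^{-1/2}\bigr)\Phi(q\,T\X)^{-1}=\Phi(q\,T\X)\,\tilde{\bf \Lambda}(z)\,\Phi(q\,T\X)^{-1}$. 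Thus $\Psi_{0}(z)=\Psi_{\infty}(z)\,\Phi(q\,T\X)\,\tilde{\bf \Lambda}(z)\,\Phi(q\,T\X)^{-1}$, and multiplying on the right by $\Phi(q\,T\X)$ yields $\tilde{\Psi}_{0}(z)=\tilde{\Psi}_{\infty}(z)\,\tilde{\bf \Lambda}(z)$ in view of (\ref{psinorm}).

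The only non-formal ingredients are (\ref{basrecap}) and the vertex monodromy theorem, both already proved/quoted; the rest is bookkeeping. The point I expect to need the most care is the meaning of the half-integral symbols $\mathscr{O}(1)^{1/2}$, $\hbar^{-n/2}$, $t_{1}^{n^{2}/2}$: these are well defined in localized equivariant $K$-theory because the eigenvalues of $\mathscr{O}(1)$ are the monomials (\ref{eigval}), whose square roots one takes monomial-by-monomial, and the same half-integral powers already occur inside ${\bf \Lambda}(z)$ through the normalization factors $\Phi((q-\hbar)P)$, $\Theta(P)$ and the parameters $a,\hbar$; one should check the chosen square roots are globally consistent, after which all the diagonal-commutation manipulations above are unambiguous. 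A secondary point, dealt with above, is the invertibility of ${\bf V}_{0}(z)$ and ${\bf V}_{\infty}(z)$, which follows from ${\bf V}^{(\tau)}_{0}(0)=\tau$ and its analogue at $z=\infty$.
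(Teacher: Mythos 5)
Your proof is correct and takes essentially the same route as the paper: rewrite the capping identity (\ref{basrecap}) in terms of the normalized vertices and solutions via Lemma \ref{philemm}, then substitute the vertex monodromy (\ref{vermonoddes}) and cancel the diagonal factors. Your extra care about assembling the descendent vertices into an invertible matrix (using ${\bf V}^{(\tau)}_{0}(0)=\tau$ and its analogue at $z=\infty$) makes explicit a step the paper leaves implicit, but the argument is the same.
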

\begin{proof}
	
	Applying Lemma \ref{philemm} to normalizations (\ref{vernorm}) and (\ref{psinorm}) we write (\ref{basrecap}) in the form:
	$$
	\tilde{\Psi}_{0}(z)\, \mathscr{O}(1)^{1/2}  \, \tilde{\bf V}^{(\tau)}_{0}(z) =
	\tilde{\Psi}_{\infty}(z)\, \mathscr{O}(1)^{1/2}\, \tilde{\bf V}^{(\tau)}_{\infty}(z)
	$$
	The theorem then follows from (\ref{vermonoddes}).
\end{proof}

For the monodromy (\ref{mondef}) we obtain 
\begin{Corollary} \label{corrmonodgam}
	{\it The monodromy of the QDE equals:
		\be \label{gaussell}
		\begin{array}{l}
			\Mon^{reg}(z)=\\ 
			\\
			(-1)^{n}\,\Phi( q\, T\X) \mathscr{O}(1)^{1/2}\, {\bf{U}}(a,z)^{-1} \, {\bf{L}}(a^{-1},z^{-1} \hbar^{-1})\,\mathscr{O}(1)^{-1/2} \, \Phi( q\, T\X)^{-1}
		\end{array}
		\ee}
\end{Corollary}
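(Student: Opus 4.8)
The plan is to combine the previous theorem, which already expresses $\tilde\Psi_0(z) = \tilde\Psi_\infty(z)\,\tilde{\bf\Lambda}(z)$, with the definition of the monodromy $\Mon(z,t_1,t_2) = \Psi_0(z)^{-1}\Psi_\infty(z)$ and the passage to the regular part $\Mon^{reg}$. First I would invert the relation from the previous theorem to get $\tilde\Psi_\infty(z) = \tilde\Psi_0(z)\,\tilde{\bf\Lambda}(z)^{-1}$, and observe that the tilded solutions differ from the untilded ones only by the diagonal factor $\Phi(q\,T\X)$ applied on the right, i.e. $\tilde\Psi_\bullet(z) = \Psi_\bullet(z)\,\Phi(q\,T\X)$ for $\bullet \in \{0,\infty\}$. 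Since $\Phi(q\,T\X)$ commutes with itself, this rescaling cancels in the combination $\Psi_0(z)^{-1}\Psi_\infty(z)$ up to conjugation: precisely, $\Mon(z) = \Psi_0(z)^{-1}\Psi_\infty(z) = \Phi(q\,T\X)\,\tilde\Psi_0(z)^{-1}\tilde\Psi_\infty(z)\,\Phi(q\,T\X)^{-1} = \Phi(q\,T\X)\,\tilde{\bf\Lambda}(z)^{-1}\,\Phi(q\,T\X)^{-1}$.

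Next I would unwind $\tilde{\bf\Lambda}(z) = \mathscr{O}(1)^{1/2}\,{\bf\Lambda}(z)\,\mathscr{O}(1)^{-1/2}$ and substitute the explicit form ${\bf\Lambda}(z) = (-1)^n\,{\bf L}(a^{-1},z^{-1}\hbar^{-1})^{-1}\,{\bf U}(a,z)$ from (\ref{vertmondr}). Inverting, $\tilde{\bf\Lambda}(z)^{-1} = (-1)^n\,\mathscr{O}(1)^{1/2}\,{\bf U}(a,z)^{-1}\,{\bf L}(a^{-1},z^{-1}\hbar^{-1})\,\mathscr{O}(1)^{-1/2}$ (the scalar $(-1)^n$ is central, so it survives inversion unchanged). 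Conjugating by $\Phi(q\,T\X)$ and collecting everything gives exactly the right-hand side of (\ref{gaussell}). The only remaining point is to justify replacing $\Mon$ by $\Mon^{reg}$: by definition (\ref{monregmon}), $\Mon^{reg}(z) = e^{\ln(\mathbf{E}_0)\ln(z)/\ln(q)}\,\Mon(z)\,e^{-\ln(\mathbf{E}_\infty)\ln(z)/\ln(q)}$, and the exponential prefactors in the fundamental solutions $\Psi_0(z)$, $\Psi_\infty(z)$ of (\ref{zerte}) and (\ref{psiinf}) are precisely designed so that stripping them off turns $\Mon$ into $\Mon^{reg}$ while leaving the $\Phi(q\,T\X)$ and $\mathscr{O}(1)^{\pm 1/2}$ factors untouched — these latter are diagonal and $z$-independent (resp. have no $q$-periodicity issues), so they commute through the conjugation formula without producing extra terms.

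The main obstacle, such as it is, is bookkeeping rather than anything conceptual: one must be careful that ${\bf U}$, ${\bf L}$ are written in the \emph{fixed-point basis} whereas $\Psi_0$, $\Psi_\infty$ were defined via eigenbases $\mathsf{P}$, $\mathsf{H}^\ast$, so the identity $\mathsf{H}^\ast = \mathsf{P}^\ast$ (asserted in the text, to be proved in Section~\ref{tmatsec}) and the matching of orderings (dominance order $\succ$ for stable envelopes versus dominance/opposite-dominance for $\Psi_0$/$\Psi_\infty$) must be invoked consistently, and one should check that $\mathscr{O}(1)^{1/2}$ and $\Phi(q\,T\X)$ genuinely commute (both are diagonal in the fixed-point basis, so they do). A secondary subtlety is keeping track of the argument substitutions $a \to a^{-1}$ and $z \to z^{-1}\hbar^{-1}$ in ${\bf L}$ — these come directly from (\ref{vertmondr}) and should simply be transcribed, not re-derived. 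Modulo these housekeeping checks, the corollary is an immediate algebraic consequence of the theorem it follows.
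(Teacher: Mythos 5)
Your derivation is correct and is essentially the paper's own (implicit) argument: the corollary is obtained precisely by inverting the relation $\tilde{\Psi}_{0}(z)=\tilde{\Psi}_{\infty}(z)\,\tilde{\bf \Lambda}(z)$, unwinding $\tilde{\bf \Lambda}(z)=\mathscr{O}(1)^{1/2}{\bf \Lambda}(z)\mathscr{O}(1)^{-1/2}$ with ${\bf \Lambda}(z)=(-1)^{n}{\bf L}(a^{-1},z^{-1}\hbar^{-1})^{-1}{\bf U}(a,z)$ from (\ref{vertmondr}), and conjugating by the diagonal matrix $\Phi(q\,T\X)$, all of which commutes as you say. Your closing remark about why the result is $\Mon^{reg}$ rather than $\Mon$ — namely that the exponential prefactors of (\ref{zerte}) and (\ref{psiinf}) are stripped off because the capping operators in (\ref{basrecap}) are the power-series parts of the fundamental solutions — is exactly the (equally terse) bookkeeping the paper relies on, consistent with the quasi-periodicity (\ref{regqw}) of both sides.
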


\begin{Remark}
	Note that, in this corollary,
	$$
	\Phi( q\, T\X) \mathscr{O}(1)^{1/2}\, {\bf{U}}(a,z)^{-1}$$ is an upper triangular and 
	$$
	{\bf{L}}(a^{-1},z^{-1} \hbar^{-1})\,\mathscr{O}(1)^{1/2} \Phi( q\, T\X)^{-1}
	$$ is a lower-triangular matrix. Thus, the corollary  provides a {\it Gauss decomposition} of the monodromy.
\end{Remark}
\section{Gauss decomposition of wall-crossing operators \label{gaussBsection}}
Theorem \ref{limmon} says that the wall-crossing operators $\B^{\bullet}_{w}(z)$ appear as $q\to0$ limits of the monodromy $\Mon^{reg}(z)$. Thus, by Corollary \ref{corrmonodgam} these operators can be expressed via limits of elliptic stable envelopes ${\bf{U}}(a,z)$ and ${\bf{L}}(a,z)$. In this section, following \cite{KononovSmirnov1,KononovSmirnov2} we show that $q\to0$ limits of matrices ${\bf{U}}(a,z)$ factors into product of {\it K-theoretic stable envelopes} of $\X$ and its 3D-mirror $\X^{!}$. As a result, we obtain natural Gauss decomposition of matrices the $\B^{\bullet}_{w}(z)$ and $\B^{\bullet}_{w}$ expressed via K-theoretic stable classes of $\X$ and $\X^{!}$. 

\subsection{K-theoretic stable envelope} 
In the limit $q\to 0$, the elliptic cohomology scheme of $\X$ degenerates to the scheme $\textrm{spec}(K_{\bT}(\X))$. The limit of the elliptic stable envelope then gives a section of the trivial line bundle over $\textrm{Spec}(K_{\bT}(\X))$, i.e., the K-theory class. Here is the precise statement: 

\begin{Theorem} \label{Kththem}
	{\it For generic $\sl\in H^{2}(\X,\matQ) \cong \matQ$ we have
		\be \label{ktheorlimdef}
		\lim\limits_{q\to 0} \left.\Stab^{Ell}_{\sigma}(\lambda)\right|_{z=z q^{\sl}}=\Stab^{\X,Kth,[\sl]}_{\sigma}(\lambda) \in K_{\bT}(\X)\otimes \matQ[t_1^{\pm 1/2},t_2^{\pm 1/2}]
		\ee
		where $\Stab^{\X,Kth,[\sl]}_{\sigma}(\lambda)$ is the K-theoretic stable envelope of a fixed point $\lambda$ with a slope $\sl$. In particular, for the matrices of the fixed point components (\ref{ellipticmat}) we have
		$$
		\lim\limits_{q\to 0}\, U_{\lambda,\mu}(a,z q^{\sl})=
		\left.\Stab^{\X,Kth,[\sl]}_{\sigma}(\lambda)\right|_{\mu} \in \matQ[t_1^{\pm 1/2},t_2^{\pm 1/2}]. 
	$$ }
\end{Theorem}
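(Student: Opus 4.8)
The plan is to identify the $q\to 0$ limit on the left of (\ref{ktheorlimdef}) with $\Stab^{\X,Kth,[\sl]}_{\sigma}(\lambda)$ by verifying that it satisfies the three properties which characterize the $K$-theoretic stable envelope of slope $\sl$, and then invoking uniqueness of the latter. Recall that the elliptic stable envelope $\Stab^{Ell}_{\sigma}(\lambda)$ is pinned down by: (i) the support/triangularity condition $U_{\lambda,\mu}=0$ whenever $\lambda\succ\mu$; (ii) the diagonal normalization $U_{\lambda,\lambda}=\Theta(N^{-}_{\lambda})$; and (iii) the quasi-periodicity conditions in the variables $a,\hbar,z$ that determine it as a section of the relevant line bundle over $E_{a}\times E_{\hbar}\times E_{z}$. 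On the $K$-theoretic side, for generic $\sl$ the class $\Stab^{\X,Kth,[\sl]}_{\sigma}(\lambda)$ is uniquely characterized (Maulik--Okounkov, see \cite{Okpcmi}) by: the same triangularity condition; the diagonal normalization $\left.\Stab^{Kth,[\sl]}_{\sigma}(\lambda)\right|_{\lambda}=\hat{s}(N^{-}_{\lambda})$ up to the fixed polarization prefactor (this is where the half-integer powers $t_i^{\pm 1/2}$ enter); and a degree bound --- the $a$-Newton polytope of the off-diagonal component $\left.\Stab^{Kth,[\sl]}_{\sigma}(\lambda)\right|_{\mu}$ lies in a window determined by $\sl$, the polarization $P$, and the attracting/repelling decompositions of $T_{\lambda}\X$ and $T_{\mu}\X$.

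First I would check that the limit exists and lands in $\matQ[t_1^{\pm 1/2},t_2^{\pm 1/2}]$. As $q\to 0$ the equivariant elliptic cohomology scheme of $\X$ degenerates to $\textrm{Spec}(K_{\bT}(\X))$, so a section of the degenerating line bundle specializes to an honest $K$-theory class; concretely $\theta(x)\to \hat{s}(x)$ and the $\Phi$-factors in the explicit theta-function formulas of \cite{SmirnovEllipticHilbert} for $U_{\lambda,\mu}$ become rational, while the substitution $z\mapsto z q^{\sl}$ for generic $\sl$ removes the would-be zeros and poles in $z$ and keeps the limit finite. Once the limit is taken, condition (i) survives verbatim, and condition (ii) follows from $\lim_{q\to 0}\Theta(N^{-}_{\lambda})=\hat{s}(N^{-}_{\lambda})$, which is exactly the $K$-theoretic diagonal normalization. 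Thus two of the three defining properties are immediate.

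The main obstacle is the degree condition. The point is that the quasi-periodicity of $U_{\lambda,\mu}(a,z,\hbar)$ under $z\to zq$ is controlled by the same line bundle over $E_{a}\times E_{z}$ that records the polarization, so after the substitution $z=z_0 q^{\sl}$ each power $z^{d}$ occurring in an off-diagonal entry is traded for $z_0^{d}q^{\sl d}$; combining this with the equivariant quasi-periodicity in $a$, the monomials in $a$ that survive the $q\to 0$ limit are precisely those whose total $q$-exponent stays non-negative, which is exactly the $\sl$-dependent half-space cutting out the slope-$\sl$ Newton-polytope window. Genericity of $\sl$ places us strictly inside a chamber, so there is no ambiguity at the walls of the window and no cancellation among competing leading terms. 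Having verified all three characterizing properties, uniqueness of $\Stab^{\X,Kth,[\sl]}_{\sigma}(\lambda)$ gives (\ref{ktheorlimdef}); restricting to a fixed point $\mu$ yields the displayed formula for $\lim_{q\to 0}U_{\lambda,\mu}(a,zq^{\sl})$. Alternatively, and more cheaply, one can take the limit directly in the explicit formulas of \cite{SmirnovEllipticHilbert} and recognize the result as the slope-$\sl$ envelope, or simply cite the general degeneration statement of \cite{AOElliptic}; the abstract route above is the one that explains why the slope appears the way it does.
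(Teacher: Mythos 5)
The paper's proof of Theorem~\ref{Kththem} is a one-line citation of Proposition~4.3 in \cite{AOElliptic}, which is precisely the ``more cheaply, simply cite'' alternative you mention in your closing sentence. Your main proposal, by contrast, is an attempt to re-derive that proposition by directly verifying the three characterizing properties of the slope-$\sl$ K-theoretic stable envelope (triangularity, diagonal normalization, Newton-polytope degree bound) on the $q\to 0$ limit. That re-derivation strategy is the correct one conceptually, and you are right that triangularity passes through trivially and that $\theta\to\hat{s}$ handles the diagonal. What you gain is an explanation of \emph{why} the shift $z\mapsto zq^{\sl}$ produces a slope-$\sl$ envelope rather than a slope-$0$ one; what the paper's citation buys is brevity.

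The one place where your sketch is genuinely thin is exactly the step you flag as ``the main obstacle,'' the degree condition. Saying that the surviving $a$-monomials are ``those whose total $q$-exponent stays non-negative'' is the right intuition, but it glosses over how that window is actually extracted: the off-diagonal entries $U_{\lambda,\mu}$ are balanced ratios of $\theta$-functions in $a$ and $z$ jointly, and the quasi-periodicity data lives in a line bundle over $E_a\times E_z\times E_\hbar$ whose restriction to each factor is what produces, on one side, the containment of the $a$-Newton polytope in the window of the polarization, and on the other, the $\sl$-dependent translate of that window. To make the argument rigorous you would need to write the entries in the explicit form of \cite{SmirnovEllipticHilbert} (or abstractly via the degree of the line bundle in \cite{AOElliptic}), track separately the $a$-quasi-periodicity and the $z$-quasi-periodicity of each balanced factor, and check that the substitution $z=z_0q^{\sl}$ converts the latter into a shift of the $a$-window by $\sl\cdot\deg(\cdot)$ before taking the termwise $q\to 0$ limit. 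That bookkeeping is nontrivial (it is the actual content of Proposition~4.3 in \cite{AOElliptic}), and as written your sketch asserts its conclusion rather than carrying it out; if you want a self-contained argument this is the step that needs to be filled in.
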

For the definition of K-theory classes $\Stab^{\X,Kth,[\sl]}_{\sigma}(\lambda)$ we refer to Section 9 of \cite{Okpcmi} or Section 2 of \cite{OS}.

\begin{proof}
	Proposition 4.3 in \cite{AOElliptic}.
\end{proof}
\begin{Remark}
	We note that in \cite{AOElliptic}, to get rid of square roots, the K-theoretic limit is additionally twisted by the square root of the polarization. In this case, the limit is an element of $K$-theory:
	$$
	\det (P^{1/2}) \otimes \lim\limits_{q\to 0} \left. \Stab^{Ell}_{\sigma}(\lambda)\right|_{z=z q^{\sl}} \in K_{\bT}(\X)
	$$
	We will use (\ref{ktheorlimdef}) as definition of K-theoretic stable envelope in this paper. It differs from the K-theoretic stable envelope of \cite{AOElliptic} by  the factor $\det (P^{1/2})$.
\end{Remark}

\subsection{K-theoretic limit for general slopes}
For the Hilbert scheme $\X$, by generic slope in Theorem \ref{Kththem} we mean $\sl \in \matQ\setminus \Walls_n$.
For this paper we need a more general version of Theorem \ref{Kththem}, which includes the limits for non-generic slopes $\sl \in \Walls_n$. The limits of this kind were studied in \cite{KononovSmirnov2}, in particular see Section 8 in \cite{KononovSmirnov2} for discussion of the Hilbert scheme $\X$. 

\noindent
For $w=\frac{a}{b} \in \matQ$ we consider the following cyclic subgroup of $\bA$:
\be \label{cycmu}
\mu_{w} = \{e^{ 2 \pi i k w}: k=0,\dots, b-1 \} \subset \bA
\ee
As a subgroup of $\bA$ is acts naturally on $\X$. Let 
$$
\Y_{w} = \X^{\mu_{w}}\subset \X.
$$
be its fixed point set. 
\begin{Proposition}
	{\it The subvariety $\Y_w$ is a union of connected components:
		\be \label{cvar}
		\Y_w = \coprod_{n_0+\dots+n_{b-1}=n}\, \X(n_0,\dots,n_{b-1}) 
		\ee
		where $\X(n_0,\dots,n_{b-1})$ is isomorphic to the Nakajima variety associated with cyclic quiver of length $b$ with dimension vector $\textsf{v}=(n_0,\dots,n_{b-1})$ and framing vector 
		$\textsf{w}=(1,0,\dots,0)$, see Fig \ref{cicqvpic}: }
	\begin{figure}[H]
		\centering
		\includegraphics[width=4cm]{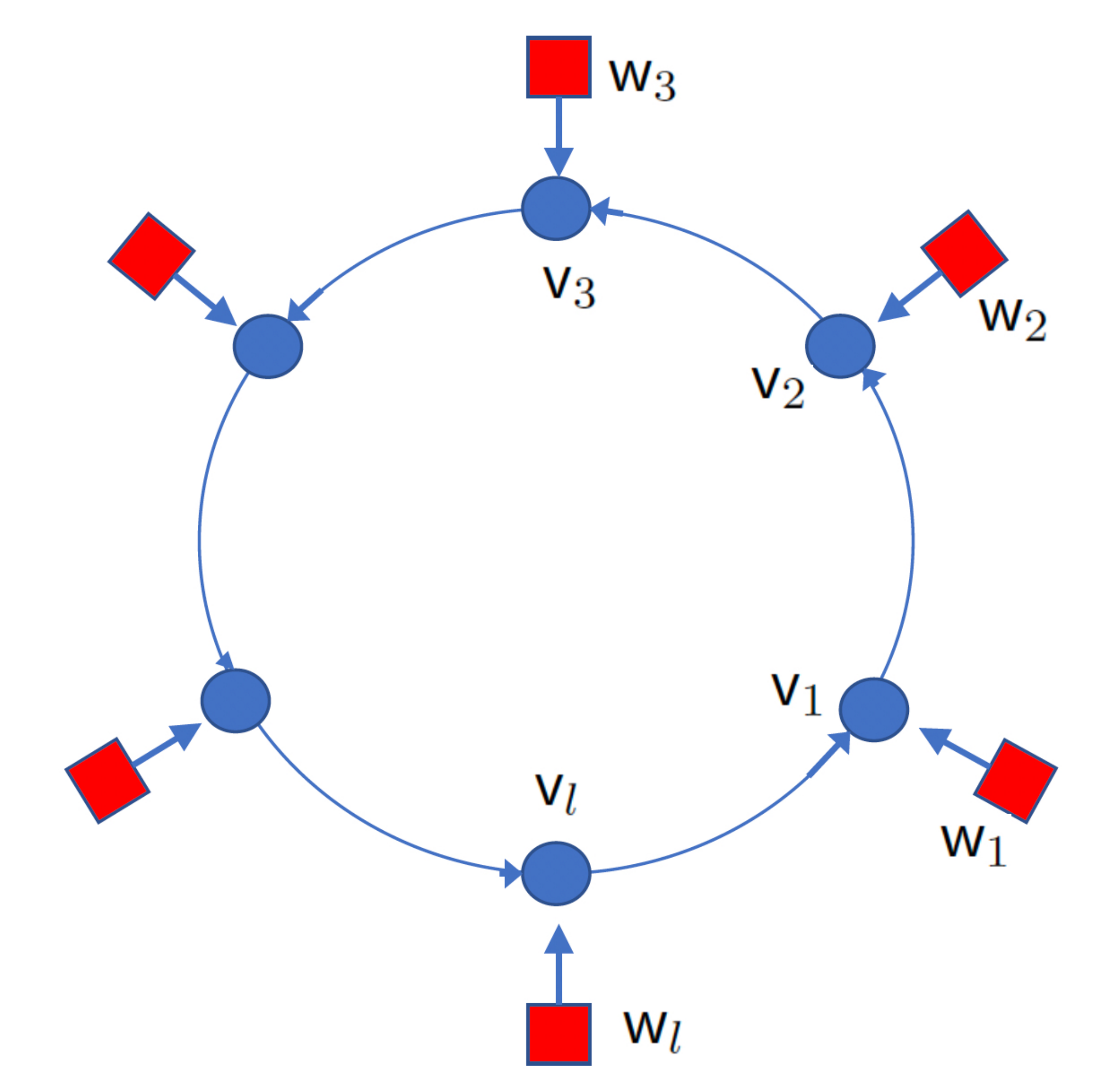}
		\caption{\label{cicqvpic} A cyclic quiver with dimension vector $\textsf{v}$ and framing vector $\textsf{w}$.}
	\end{figure}
\end{Proposition}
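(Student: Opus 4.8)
The plan is to use the ADHM (quiver) presentation of the Hilbert scheme and to decompose the data underlying a $\mu_w$-fixed point into isotypic pieces. Recall that, with $V$ an $n$-dimensional space, $W$ one-dimensional, $X,Y\in\mathrm{End}(V)$ and $i\in\mathrm{Hom}(W,V)$,
\[
\X=\mathrm{Hilb}^n(\matC^2)=\big\{(X,Y,i):[X,Y]=0,\ \matC[X,Y]\,i(W)=V\big\}\big/GL(V),
\]
with $\bT$ acting by $(X,Y,i)\mapsto(\epsilon_1 X,\epsilon_2 Y,i)$. Since $\gcd(a,b)=1$, the element $\zeta=e^{2\pi i a/b}$ is a primitive $b$-th root of unity, it generates $\mu_w$, it lies in $\bA\subset\bT$, and it acts by $(X,Y,i)\mapsto(\zeta X,\zeta^{-1}Y,i)$; hence $\Y_w=\X^{\mu_w}=\X^{\zeta}$. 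A point $[(X,Y,i)]$ is $\zeta$-fixed iff there is $g\in GL(V)$ with $gXg^{-1}=\zeta X$, $gYg^{-1}=\zeta^{-1}Y$, $gi=i$.

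First I would prove the rigidity lemma: any $\phi\in\mathrm{End}(V)$ commuting with $X$ and $Y$ and satisfying $\phi i=i$ equals $\mathrm{id}_V$, because it fixes $i(W)$ and hence all of $\matC[X,Y]\,i(W)=V$. Two consequences: the intertwiner $g$ attached to a fixed point is unique (the ratio of two candidates satisfies the hypotheses), and $g^b=\mathrm{id}_V$, since conjugation by $g^b$ multiplies $X,Y$ by $\zeta^{\pm b}=1$ and $g^b i=i$. Thus every fixed point equips $V$ with a canonical $\mu_b$-module structure, and I would decompose $V=\bigoplus_{k\in\matZ/b}V_k$ into the $\zeta^k$-eigenspaces of $g$, setting $n_k=\dim V_k$, so that $\sum_k n_k=n$.

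Next I would read off the quiver data. The identities $gXg^{-1}=\zeta X$, $gYg^{-1}=\zeta^{-1}Y$ give $X(V_k)\subseteq V_{k+1}$ and $Y(V_k)\subseteq V_{k-1}$, and $gi=i$ gives $i(W)\subseteq V_0$; so $(X|_{V_k},Y|_{V_k},i)$ is exactly a representation of the doubled cyclic quiver on $b$ vertices with dimension vector $\textsf{v}=(n_0,\dots,n_{b-1})$ and framing $\textsf{w}=(1,0,\dots,0)$ at vertex $0$. The relation $[X,Y]=0$ splits into $X_{k-1}Y_k-Y_{k+1}X_k=0$ for $k\in\matZ/b$, which are the zero-level moment-map equations of the corresponding cyclic quiver variety (with no framing term at vertex $0$, matching the $j$-free presentation of $\X$ used above), and the cyclicity condition for $\X$ becomes the stability condition, in the induced chamber, for the cyclic quiver: a graded invariant subspace containing $i(W)$ is $X,Y$-invariant hence all of $V$, and conversely $\matC[X,Y]i(W)$ is graded because $X,Y$ are homogeneous. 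The map ``reassemble the graded summands'' is equivariant for the block inclusion $\prod_k GL(V_k)\hookrightarrow GL(V)$ and respects both the moment-map equations and the GIT stability, so it is an isomorphism of the cyclic quiver variety $\mathfrak{M}(\textsf{v},\textsf{w})$ onto the part $\X(n_0,\dots,n_{b-1})$ of $\Y_w$ where the isotypic dimensions are $(n_0,\dots,n_{b-1})$. Since that tuple is a locally constant function on $\Y_w$, these parts are open and closed, which gives (\ref{cvar}); smoothness of $\Y_w$ comes for free, as the fixed locus of a finite group acting on the smooth variety $\X$.

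I expect the only point requiring real care to be the bookkeeping around the GIT quotient: checking that the stability chamber which cuts $\X$ out of the quiver data restricts to the intended chamber for the cyclic quiver variety, and that the moment-map and framing data at vertex $0$ correspond correctly. The rigidity lemma and the eigenspace decomposition are routine. The identification of a cyclic-group fixed locus with the Nakajima variety of the covering quiver is classical, and for the present situation the details are carried out in \cite{KononovSmirnov2}, Section 8, which one may alternatively invoke directly.
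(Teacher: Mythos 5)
Your argument is correct and complete: the rigidity lemma, the resulting canonical $\mu_b$-grading of $V$, the reading-off of the cyclic quiver data, and the matching of moment map and stability are exactly the standard covering-quiver description of a cyclic-group fixed locus in a Nakajima variety. The paper itself does not write this out --- its proof is a one-line citation to Proposition 6 of \cite{KononovSmirnov1} --- so your proposal simply supplies the argument that the reference encapsulates; the one cosmetic point worth keeping in mind is that calling the pieces $\X(n_0,\dots,n_{b-1})$ ``connected components'' also uses connectedness of each cyclic quiver variety with framing $(1,0,\dots,0)$, which your open-and-closed argument does not itself establish (it only gives the disjoint-union decomposition, which is the substantive content of the statement).
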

\begin{proof}
Proposition 6 in \cite{KononovSmirnov1}.
\end{proof}

\begin{Remark} \label{rempoints}
We note that if at least one $n_i=0$ then the cyclic quiver variety $$\X(n_0,\dots,n_{b-1})\cong pt$$ or empty, see \cite{dinksmir2} for combinatorial description of zero dimensional $A_n$-type quiver varieties. In particular  for $w=\frac{a}{b}$ with $b>n$ all components of $\Y_{w}=\X^{\bT}$ are points. This can be summarized as: 
\be \label{ywalls}
\{w\in \matQ:  \Y_{w} \neq \Y^{\bT} \} = \Walls_{n}.
\ee
\end{Remark}

\subsection{K-theory limit of the elliptic stable envelopes} 
Let ${\bf U}^{+}_w(a)$ (respectively ${\bf U}^{-}_w(a)$) denote the matrix of fixed point components of the K-theoretic stable envelopes of $\X$ with slope 
$w+\epsilon$ (respectively $w-\epsilon$) for small ample $0<\epsilon \ll 1$:
\be \label{defrestr}
{\bf U}^{\pm}_w(a)_{\lambda,\mu}: = \left.\Stab_{\sigma}^{\X,Kth,w\pm \epsilon }(\lambda)\right|_{\mu}, \ \ \ {\bf L}^{\pm}_w(a)_{\lambda,\mu}: = \left.\Stab_{-\sigma}^{\X,Kth,w\pm \epsilon }(\lambda)\right|_{\mu}.
\ee
We will also need the normalized matrices:
$$
\tilde{\bf U}^{\pm}_w(a)_{\lambda,\mu}: = \dfrac{{\bf U}^{\pm}_w(a)_{\lambda,\mu}}{{\bf U}^{\pm}_w(a)_{\mu,\mu}}, \ \ \ \tilde{\bf L}^{\pm}_w(a)_{\lambda,\mu}: = \dfrac{{\bf L}^{\pm}_w(a)_{\lambda,\mu}}{{\bf L}^{\pm}_w(a)_{\mu,\mu}}.
$$
Matrices (\ref{defrestr}) in cases $n=2,3$ can be found in Appendix~\ref{stabxapp}. 
\begin{Remark} \label{nowallrem}
	If $w \not \in \Walls_{n}$ then ${\bf U}^{+}_w(a)={\bf U}^{-}_w(a)$.
	It is also known that   ${\bf U}^{+}_w(a)={\bf U}^{-}_w(a)$, ${\bf L}^{+}_w(a)={\bf L}^{-}_w(a)$ for $w\in \matZ$, see \cite{GorskyNegut}.
\end{Remark}

Similarly we denote by ${ U}^{+}_{\Y_w}$, ${ L}^{+ }_{\Y_w}$ (respectively ${U}^{-}_{\Y_w}$, ${L}^{- }_{\Y_w}$) the matrices of $K$-theoretic stable envelopes for the cyclic quiver variety variety $\Y_{w}$ with small ample (respectively anti-ample) slope: 
\be \label{lufpy}
{U}^{\pm }_{\Y_w}(a)_{\lambda,\mu}={\left.\Stab^{\Y_{w},Kth,\pm \epsilon }_{\sigma}(\lambda)\right|_{\mu}}, \ \ \ 
{L}^{\pm }_{\Y_w}(a)_{\lambda,\mu}={\left.\Stab^{\Y_{w},Kth,\pm \epsilon }_{-\sigma}(\lambda)\right|_{\mu}}
\ee
We denote the corresponding normalized matrices by
$$
\tilde{U}^{\pm }_{\Y_w}(a)_{\lambda,\mu}=\dfrac{{U}^{\pm }_{\Y_w}(a)_{\lambda,\mu}}{{U}^{\pm }_{\Y_w}(a)_{\mu,\mu}}, \ \ \ \tilde{L}^{\pm }_{\Y_w}(a)_{\lambda,\mu}=\dfrac{{L}^{\pm }_{\Y_w}(a)_{\lambda,\mu}}{{L}^{\pm }_{\Y_w}(a)_{\mu,\mu}}.
$$
Matrices (\ref{lufpy}) for $n=2,3$ can be found in Appendix \ref{stabyapp}.
Applying Theorem~\ref{Kththem} to (\ref{invluiden}) we obtain:
\begin{Lemma} \label{kthinvlem}
	{\it There are matrix identities:
		$$
		\Big(\tilde{L}^{\pm }_{\Y_w}(a)\Big)^{t}\, \tilde{U}^{\mp }_{\Y_w}(a)=1, \ \ \ \Big(\tilde{\bf L}^{\pm}_w(a)\Big)^{t}\, \tilde{\bf U}^{\mp}_{-w}(a)=1.
		$$}
\end{Lemma}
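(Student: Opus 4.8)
The plan is to degenerate the elliptic identity (\ref{invluiden}) to $K$-theory by taking the $q\to 0$ limit. The key point is that Lemma \ref{invlem} is just Proposition 3.4 of \cite{AOElliptic}, which holds for an arbitrary Nakajima variety; hence its normalized form (\ref{invluiden}), $\tilde{\bf L}^{t}(a,z^{-1}\hbar^{-1})\,\tilde{\bf U}(a,z)=Id$, is available not only for $\X$ but also for each cyclic quiver variety $\Y_w$ appearing in (\ref{cvar}). So I would first record these two elliptic identities, one for $\X$ and one for $\Y_w$, and then pass to the limit.

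For $\X$, substituting $z\mapsto z q^{v}$ in (\ref{invluiden}) gives
$$
\tilde{\bf L}^{t}(a,z^{-1}\hbar^{-1}q^{-v})\,\tilde{\bf U}(a,z q^{v}) = Id,
$$
an identity valid for all $q$. Sending $q\to 0$: by Theorem \ref{Kththem}, together with its extension to non-generic slopes $v\in\Walls_n$ from \cite{KononovSmirnov2}, the limit of the second factor is the normalized $K$-theoretic stable envelope matrix of slope $v$; and since the base point value $z^{-1}\hbar^{-1}$ plays no role in the $q\to 0$ limit, the limit of the first factor is the normalized $K$-theoretic stable envelope matrix of slope $-v$. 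Taking $v=-(w\pm\epsilon)$ with $0<\epsilon\ll1$ identifies the two factors with $\big(\tilde{\bf L}^{\pm}_w(a)\big)^{t}$ and $\tilde{\bf U}^{\mp}_{-w}(a)$ in the notation (\ref{defrestr}), giving the second asserted identity. Running the same argument for $\Y_w$ — whose envelopes (\ref{lufpy}) carry slopes $\pm\epsilon$ near $0$ — with $v=\mp\epsilon$ yields $\big(\tilde{L}^{\pm}_{\Y_w}(a)\big)^{t}\,\tilde{U}^{\mp}_{\Y_w}(a)=1$.

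The remaining points are routine: the $q\to0$ limit commutes with transposition, with matrix multiplication (a product of two entrywise convergent matrix families converges to the product of the limits), and with normalization by the diagonal entries (after normalization the diagonal is identically $1$, so no division survives in the limit); and one must check that the slope/$\epsilon$-conventions in (\ref{defrestr}) and (\ref{lufpy}) are matched, in particular that $\tilde{\bf L}(a,z^{-1}\hbar^{-1}q^{-v})$ degenerates to the slope-$(-v)$ envelope with the correct sign of $\epsilon$. The one genuinely nontrivial input is Theorem \ref{Kththem} itself, together with its non-generic-slope refinement from \cite{KononovSmirnov2}: the existence of these $q\to0$ limits and their identification with $K$-theoretic stable envelopes is where the real content lies, and given that input the lemma follows formally.
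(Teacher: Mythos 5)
Your proposal is correct and takes essentially the same route as the paper: the paper's entire "proof" is the sentence preceding the lemma, "Applying Theorem \ref{Kththem} to (\ref{invluiden}) we obtain," and you have simply spelled out the substitution $z\mapsto z q^{v}$, the slope bookkeeping $v=-(w\pm\epsilon)$, the observation that Proposition 3.4 of \cite{AOElliptic} (hence (\ref{invluiden})) holds equally for the cyclic quiver varieties $\Y_w$, and the routine commutation of the $q\to0$ limit with transposition, products, and diagonal normalization. No gaps.
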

Applying Theorem \ref{Kththem} to Lemma \ref{lulemma} we obtain:
\begin{Lemma} \label{luskthlem}
	{\it There are matrix identities:
		$$
		{L}^{\pm }_{\Y_w}(a) = \fp \, {U}^{\pm }_{\Y_w}(a^{-1}) \, \fp, \ \ \ {\bf L}^{\pm}_w(a) = \fp\, {\bf U}^{\pm}_w(a^{-1})\, \fp.
		$$}
\end{Lemma}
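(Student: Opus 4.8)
The plan is to obtain both identities by passing to the $q\to 0$ limit in the elliptic-cohomology identity of Lemma \ref{lulemma}, which is already available for $\X$ and which carries over to $\Y_w$ by the same argument.

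First I would record the elliptic identity for $\Y_w$. Since $\gcd(a,b)=1$, the cyclic group $\mu_w$ of (\ref{cycmu}) is precisely the group of $b$-th roots of unity inside $\bA$, hence is stable under the involution $a\mapsto a^{-1}$; therefore the automorphism $\iota$ restricts to an automorphism of $\Y_w=\X^{\mu_w}$ which acts on $\bT$ by swapping $t_1,t_2$, reverses the chamber $\sigma\to-\sigma$, and induces on $\Y_w^{\bT}=\X^{\bT}$ the same transposition $\lambda\to\lambda'$ recorded by $\fp$. Running the uniqueness characterization of elliptic stable envelopes for the Nakajima variety $\Y_w$ exactly as in the paragraph preceding Lemma \ref{lulemma} gives
$$
L_{\Y_w}(a,z,\hbar)=\fp\, U_{\Y_w}(a^{-1},z,\hbar)\,\fp,
$$
and for $\X$ itself this identity is Lemma \ref{lulemma}.

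Next I would specialize the K\"ahler variable and take the limit. Choose $0<\epsilon\ll 1$ small enough that the intervals $(w,w+\epsilon]$ and $[w-\epsilon,w)$ meet no point of $\Walls_n$; then the slopes $w\pm\epsilon$ are generic and Theorem \ref{Kththem} gives $\lim_{q\to 0}L(a,zq^{w\pm\epsilon},\hbar)={\bf L}^{\pm}_w(a)$ and $\lim_{q\to 0}U(a,zq^{w\pm\epsilon},\hbar)={\bf U}^{\pm}_w(a)$. Since $\fp$ is a constant matrix and the substitution $a\mapsto a^{-1}$ is independent of $q$, the limit commutes with both, so applying $\lim_{q\to 0}$ to the elliptic identity for $\X$ with $z\mapsto zq^{w\pm\epsilon}$ yields ${\bf L}^{\pm}_w(a)=\fp\,{\bf U}^{\pm}_w(a^{-1})\,\fp$. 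The same steps for $\Y_w$, with $z\mapsto zq^{\pm\epsilon}$ (small ample, resp.\ anti-ample, slope) and Theorem \ref{Kththem} applied to the Nakajima variety $\Y_w$, give ${L}^{\pm}_{\Y_w}(a)=\fp\,{U}^{\pm}_{\Y_w}(a^{-1})\,\fp$.

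The only point requiring care — and essentially the only obstacle — is that the $q\to0$ limit of the elliptic stable envelope commutes with the specialization $a\mapsto a^{-1}$, i.e.\ that $\lim_{q\to0}U(a^{-1},zq^{\sl},\hbar)=(\lim_{q\to0}U(a,zq^{\sl},\hbar))|_{a\mapsto a^{-1}}$, and likewise for $\Y_w$. This holds because, by Theorem \ref{Kththem}, the limit is a fixed element of $K_{\bT}(\X)\otimes\matQ[t_1^{\pm 1/2},t_2^{\pm 1/2}]$ (resp.\ of $K_{\bT}(\Y_w)\otimes\matQ[t_1^{\pm 1/2},t_2^{\pm 1/2}]$) whose formation is uniform in generic $a$, so the two orders of operations agree; the rest of the argument is formal.
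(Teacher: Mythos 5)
Your proposal is correct and takes essentially the same route as the paper, which proves Lemma~\ref{luskthlem} in one line by applying Theorem~\ref{Kththem} to Lemma~\ref{lulemma}. You usefully spell out the two points the paper leaves implicit — that the $\iota$-symmetry argument behind Lemma~\ref{lulemma} restricts to $\Y_w$ because $\mu_w$ is stable under $a\mapsto a^{-1}$, and that the $q\to 0$ limit commutes with the substitution $a\mapsto a^{-1}$.
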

\noindent 
We are ready to formulate the main theorem of this section, describing the K-theoretic limit of the elliptic stable envelope for arbitrary slopes:
\begin{Theorem} \label{elslim}
	{\it If the mirror symmetry Conjecture \ref{conjmir} holds, then for all $w\in H^{2}(\X,\matQ)\cong\matQ$ we have the following limits for the elliptic stable envelope matrix (\ref{ellipticmat}):
		$$
		\lim\limits_{q\to 0} {{U}}(a,z q^{w}) = \gamma_{w}(a)\, \kappa^{*}(\tilde{ L}^{\mp }_{\Y_w}(a))^{t}\, \gamma_{w}(a)^{-1} \, { \bf U}^{\pm}_w(a)
		$$
		Here $\kappa^{*}$ denotes substitution (\ref{mirsym}) and $\gamma_{w}(a)$ denotes the diagonal matrix with elements
		\be \label{gamm}
		\gamma_{w}(a)_{\lambda,\lambda}=  \prod\limits_{\Box \in \lambda}\, (-a)^{w c_{\lambda}(\Box) } (-\hbar^{-\frac{1}{2}})^{\lfloor w h_{\lambda}(\Box) \rfloor + \frac{1}{2}}.  
		\ee
		where $c_{\lambda}(\Box)$ and $h_{\lambda}(\Box)$ are the standard content and hook lengths of a box $\Box$ in a Young diagram $\lambda$.
	}
\end{Theorem}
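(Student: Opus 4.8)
The plan is to transport the computation to the $3D$-mirror $\X^{!}$ via Conjecture \ref{conjmir} (using $\X^{!}\cong\X$), and there to recognise the rescaling $z\mapsto z q^{w}$ as a fractional shift of the \emph{equivariant} parameter, whose $q\to 0$ behaviour is governed by the finite cyclic subgroup $\mu_{w}\subset\bA$ of (\ref{cycmu}) and its fixed locus $\Y_{w}=\X^{\mu_{w}}$. Concretely, I would first rewrite the left hand side: combining the normalization relations (\ref{lunorms2}) with Conjecture \ref{conjmir} expresses the untwisted matrix $U(a,z)$ of $\X$ as $\kappa^{*}$ of a diagonally twisted transpose of the untwisted matrix $L(a,\,\cdot\,)$ of $\X^{!}$. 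Substituting $z\mapsto z q^{w}$ and using that $\kappa^{*}$ in (\ref{mirsym}) interchanges the Kähler variable $z$ with the equivariant variable $a$, the shift $z\mapsto z q^{w}$ on $\X$ turns into a shift of the $\bA$-equivariant variable of $\X^{!}$ by $q^{\pm w}$, up to half-integral powers of $\hbar$ and signs produced by the $\sqrt{\hbar}$'s in $\kappa^{*}$.

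Next I would take the $q\to 0$ limit on $\X^{!}$. For $w\in\Walls_{n}$ this is a non-generic K-theoretic limit. By the analysis of such limits in \cite{KononovSmirnov2} (see Section~8 there), together with the decomposition (\ref{cvar}) of $\Y_{w}=\X^{\mu_{w}}$ into Nakajima varieties for cyclic quivers, the limit factorizes as a matrix product: the K-theoretic stable envelope of $\Y_{w}$ times the K-theoretic stable envelope of $\X^{!}$ with the perturbed slope $w\pm\epsilon$. The $\pm$ records the side from which $q^{w}$ is approached, and the opposite chamber $\mp$ and the transpose on the $\Y_{w}$-factor are forced by the order of composition and by Lemmas \ref{kthinvlem} and \ref{luskthlem}. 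The $\X^{!}$-factor is then identified with ${\bf U}^{\pm}_{w}(a)$ of $\X$ by the K-theoretic version of $3D$-mirror symmetry, obtained by letting $q\to 0$ in (\ref{mrshilb}) with the help of Theorem \ref{Kththem}; the $\Y_{w}$-factor, being a stable class of $\X^{!}$, carries $\X^{!}$-coordinates and therefore appears as $\kappa^{*}(\tilde{L}^{\mp}_{\Y_{w}}(a))^{t}$ once written back in $\X$-coordinates.

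Finally I would assemble the diagonal matrix $\gamma_{w}(a)$. What remains is purely diagonal bookkeeping: the prefactors $\kappa^{*}(\Theta(N^{\pm}))$ of (\ref{lunorms2}), the conversion between normalized and unnormalized stable envelopes, and — the decisive contribution — the $q\to 0$ asymptotics of the diagonal theta-function entries of $U$ evaluated at arguments rescaled by $q^{wk}$. The quasi-periodicity $\theta(x q^{m})\sim (-x^{-1})^{\lfloor m\rfloor}\,(\cdots)$ with $m=wk\notin\matZ$ produces exactly the floor $\lfloor w h_{\lambda}(\Box)\rfloor$ together with the signs $(-\hbar^{-1/2})^{\lfloor\cdot\rfloor+\frac12}$, while the factor $(-a)^{w c_{\lambda}(\Box)}$ comes from the $a$-weights of $T_{\lambda}\X$, whose exponents are the contents $c_{\lambda}(\Box)$. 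Verifying that these combine precisely into (\ref{gamm}) — a leading-order computation with the explicit theta-function formulas of \cite{SmirnovEllipticHilbert} — is the one genuinely computational step and, together with establishing the factorization of the previous paragraph for the \emph{finite} group $\mu_{w}$ rather than a subtorus, is where the main difficulty lies.
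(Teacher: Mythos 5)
Your proposal follows the same route the paper takes, which is simply to invoke Theorems~3 and~4 of \cite{KononovSmirnov2} applied to $\X=\mathrm{Hilb}^n(\matC^2)$; what you have written is essentially a correct unpacking of the content of those two cited theorems. The structural observations are all right: using Conjecture~\ref{conjmir} together with (\ref{lunorms2}) and the swap $a\leftrightarrow z\sqrt{\hbar}$ in $\kappa^{*}$ to convert the K\"ahler shift $z\mapsto z q^{w}$ on $\X$ into a fractional shift of the equivariant parameter on $\X^{!}$; recognizing that the resulting non-generic $q\to 0$ limit is governed by the fixed locus $\Y_{w}=\X^{\mu_{w}}$ of the finite cyclic subgroup (\ref{cycmu}) and factors into a $\Y_{w}$-stable-envelope piece and a $\X$-stable-envelope piece with perturbed slope $w\pm\epsilon$; and identifying the chamber/transpose conventions through Lemmas~\ref{kthinvlem} and \ref{luskthlem}.

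One small slip in the final paragraph: you attribute the factor $(-a)^{w c_{\lambda}(\Box)}$ to the $a$-weights of $T_{\lambda}\X$. With $t_1=a\hbar^{1/2}$, $t_2=a^{-1}\hbar^{1/2}$, the $a$-exponents of the tangent weights at $\lambda$ are $\pm h_{\lambda}(\Box)$, i.e.\ the hook lengths, and those are exactly what produce the $\hbar$-power $(-\hbar^{-1/2})^{\lfloor w h_{\lambda}(\Box)\rfloor+1/2}$ you already account for. The contents enter through the K\"ahler direction: the diagonal entries of the elliptic envelope on the $\X^{!}$ side depend on the K\"ahler variable through exponents governed by $\deg\mathscr{O}(1)|_{\lambda}=\prod_{(i,j)\in\lambda}t_1^{i-1}t_2^{j-1}$, whose $a$-exponent after the substitution $z\mapsto a\sqrt{\hbar}$ of $\kappa^{*}$ is $\sum_{\Box}c_{\lambda}(\Box)$ (up to sign). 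The $q\to 0$ asymptotics of those $z$-dependent theta factors, evaluated at $z q^{w}$ and pushed through $\kappa^{*}$, is what yields $(-a)^{w c_{\lambda}(\Box)}$. This does not affect the correctness of your overall plan, since you flag this as the one computational step to be carried out with the explicit formulas of \cite{SmirnovEllipticHilbert}, but a careful execution should draw the contents from the K\"ahler weights rather than the tangent weights.
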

\begin{proof}
	Theorem 3 and Theorem 4 in \cite{KononovSmirnov2} applied to the Hilbert scheme $\X$.  
\end{proof}

\begin{Remark}
	For generic $w\not \in \Walls_n$ by (\ref{ywalls}) we have $\tilde{ L}^{\mp }_{\Y_w}(a)=1$ and the last theorem reduces to the Aganagic-Okounkov K-theoretic limit - Theorem~\ref{Kththem}.
\end{Remark}

\begin{Remark}
	By Lemma \ref{kthinvlem} we have
	$$
	(\tilde{ U}^{\pm }_{\Y_w}(a) )^{-1}=(\tilde{L}^{\mp }_{\Y_w}(a))^{t}
	$$
	and the above theorem can be also written as
	\be \label{betterfactor}
	\lim\limits_{q\to 0} {{U}}(a,z q^{w}) = \gamma_{w}(a)\, \kappa^{*}(\tilde{U}^{\pm}_{\Y_w}(a))^{-1}\, \gamma_{w}(a)^{-1} \, {\bf U}^{\pm}_w(a).
	\ee
\end{Remark}
\noindent
The following identity will be convenient for computations:
\begin{Lemma} \label{lemgamma}
	{\it In the basis of fixed points the operator 
		$\gamma_{w}(a)^{-1} \fp \gamma_{-w}(a) \fp$ acts by
		$$
		\gamma_{w}(a)^{-1} \fp \gamma_{-w}(a) \fp : \ \  P_{\lambda} \to \Big((-1)^{\frac{\mathrm{codim}_{\X}(\Y_{w})}{2} } \hbar^{\frac{\mathrm{codim}_{\X}(\Y_{w})}{4}} \prod\limits_{\Box\in \lambda} \,\hbar^{\lfloor h_{\lambda}(\Box) \cdot w  \rfloor} \Big)\, P_{\lambda}
		$$
		where  $\textrm{codim}_{\X}(\Y_{w})$ denotes the codimension in $\X$ of the component of (\ref{cvar}) containing the fixed point $\lambda$.} 
\end{Lemma}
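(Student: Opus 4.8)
The plan is to reduce the identity to an elementary ratio of the diagonal matrices $\gamma_{\pm w}(a)$ and then to identify the resulting count of boxes with the codimension of $\Y_{w}$. First I would note that $\fp$ permutes the fixed point basis by transposition, $\fp:P_{\lambda}\mapsto P_{\lambda'}$, while $\gamma_{w}(a)$ and $\gamma_{-w}(a)$ are diagonal in this basis. Composing the four factors, the operator $\gamma_{w}(a)^{-1}\fp\,\gamma_{-w}(a)\,\fp$ therefore acts on $P_{\lambda}$ by the scalar
$$
\frac{\gamma_{-w}(a)_{\lambda',\lambda'}}{\gamma_{w}(a)_{\lambda,\lambda}},
$$
so everything comes down to evaluating this ratio.

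To do so I would use the box bijection between $\lambda$ and $\lambda'$ sending the box in row $i$, column $j$ to the box in row $j$, column $i$; under it the content changes sign, $c_{\lambda'}=-c_{\lambda}$, and the hook length is preserved, $h_{\lambda'}=h_{\lambda}$. Plugging in the definition (\ref{gamm}) and re-indexing the product defining $\gamma_{-w}(a)_{\lambda',\lambda'}$ along this bijection, the monomials $(-a)^{\pm w c}$ cancel against those in $\gamma_{w}(a)_{\lambda,\lambda}$, and the ratio collapses to $\prod_{\Box\in\lambda}(-\hbar^{-1/2})^{\lfloor -w\,h_{\lambda}(\Box)\rfloor-\lfloor w\,h_{\lambda}(\Box)\rfloor}$. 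Writing $w=a/b$ in lowest terms and using $\lfloor -x\rfloor=-\lfloor x\rfloor$ for $x\in\matZ$ and $\lfloor -x\rfloor=-\lfloor x\rfloor-1$ otherwise, together with $(-\hbar^{-1/2})^{e}=(-1)^{e}\hbar^{-e/2}$, each box $\Box$ contributes $\hbar^{\lfloor w\,h_{\lambda}(\Box)\rfloor}$ when $b\mid h_{\lambda}(\Box)$ and $-\hbar^{\lfloor w\,h_{\lambda}(\Box)\rfloor+1/2}$ otherwise. Setting $m:=\#\{\Box\in\lambda:\ b\nmid h_{\lambda}(\Box)\}$, the eigenvalue becomes $(-1)^{m}\hbar^{m/2}\prod_{\Box\in\lambda}\hbar^{\lfloor w\,h_{\lambda}(\Box)\rfloor}$.

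It then remains to prove $2m=\mathrm{codim}_{\X}(\Y_{w})$ at $\lambda$. For this I would invoke the standard $\bT$-character of $T_{\lambda}\X$, whose $2n$ weights split into $n$ pairs indexed by the boxes of $\lambda$, the pair attached to $\Box$ having opposite $\bA$-weights $\pm h_{\lambda}(\Box)$; since $\mu_{w}$ from (\ref{cycmu}) sits inside $\bA$ and acts only through the $a$-direction (so that $\hbar$ is $\mu_{w}$-invariant), its character on that pair is $e^{\mp 2\pi i k w\,h_{\lambda}(\Box)}$. Hence the pair attached to $\Box$ lies in $T_{\lambda}\Y_{w}=(T_{\lambda}\X)^{\mu_{w}}$ exactly when $b\mid h_{\lambda}(\Box)$, and spans normal directions of $\Y_{w}=\X^{\mu_{w}}$ otherwise, so $\mathrm{codim}_{\X}(\Y_{w})=2m$. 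Substituting $m=\mathrm{codim}_{\X}(\Y_{w})/2$, and hence $(-1)^{m}=(-1)^{\mathrm{codim}_{\X}(\Y_{w})/2}$ and $\hbar^{m/2}=\hbar^{\mathrm{codim}_{\X}(\Y_{w})/4}$, gives exactly the claimed action on $P_{\lambda}$.

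The first two paragraphs are routine bookkeeping with contents, hook lengths and floor functions; the real content, and the only place where anything nontrivial happens, is the last step — correctly locating $\mu_{w}$ inside $\bT$ and extracting from the tangent-weight combinatorics of the Hilbert scheme the statement that a box contributes to $T_{\lambda}\Y_{w}$ if and only if its hook length is divisible by $b$.
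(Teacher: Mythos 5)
Your proof is correct and is exactly the computation the paper intends (the lemma is stated without proof there): the reduction to the diagonal ratio $\gamma_{-w}(a)_{\lambda',\lambda'}/\gamma_{w}(a)_{\lambda,\lambda}$, the floor-function bookkeeping, and the identification $2m=\mathrm{codim}_{\X}(\Y_w)$ via the pairing of tangent weights $a^{\pm h_{\lambda}(\Box)}\hbar^{\ast}$ all match the facts the paper itself uses in the proof of Theorem \ref{factthm}, where $\Theta(N^{\pm}_{\lambda}(\Y_w))$ is taken over boxes with $h_{\lambda}(\Box)w\in\matZ$. No gaps.
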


\subsection{Gauss decomposition of $\B^{\bullet}_{w}(z)$}
Theorem \ref{elslim} suggests to introduce the following matrices:
\be \label{lunorms}
\begin{array}{l}
	{\bf {U}}^{\pm}_{\Y_{w}}(a,z,\hbar):=\gamma_{w}(a)\, \kappa^{*} ({{U}}^{\pm}_{\Y_{w}}(a)) ,
	\\
	\\
	{\bf {L}}^{\pm}_{\Y_{w}}(a,z,\hbar):=\fp \, \gamma_{-w}(a)\, \kappa^{*} ({{U}}^{\pm}_{\Y_{w}}(a^{-1})) \fp. 
\end{array}
\ee
By Lemma \ref{luskthlem} the last one takes the form
$$
{\bf {L}}^{\pm}_{\Y_{w}}(a,z,\hbar):=\fp \, \gamma_{-w}(a)\, \fp\, \kappa^{*}\, ({{L}}^{\pm}_{\Y_{w}}(a)). 
$$
We are ready to formulate the main result of this section:
\begin{Theorem} \label{factthm}
	{\it For all $w\in \matQ$ and $l_1,l_2\in\{+,-\}$ we have the following matrix identities

		\vspace{2mm}
		\noindent
		if $w<0$ then
		$$
		\begin{array}{l}
		\Big(\B^{\bullet}_{w}(z)\prod\limits_{s \in (w,0)}^{\longrightarrow} \, \B^{\bullet}_{s} \Big) \,\cdot  \T= 
		\\
		\\ (-1)^{n}\, \,\bU^{l_1}_{w}(a,\hbar)^{-1}  \bU^{l_1}_{\Y_{w}}(a,z,\hbar)   \bL^{l_2}_{\Y_{w}}(a,z,\hbar)^{-1}  \bL^{l_2}_{-w}(a^{-1},\hbar),
		\end{array}
		$$
		if $w \geq0$ then
		$$
		\begin{array}{l}
		\Big(\B^{\bullet}_{w}(z^{-1})^{-1} \prod\limits_{s\in [0,w)}^{\longleftarrow} \, (\B^{\bullet}_{s})^{-1} \Big) \cdot \T=\\
		\\
		(-1)^{n} \,\bU^{l_1}_{w}(a,\hbar)^{-1}  \bU^{l_1}_{\Y_{w}}(a,z,\hbar)   \bL^{l_2}_{\Y_{w}}(a,z,\hbar)^{-1}  \bL^{l_2}_{-w}(a^{-1},\hbar) 
		\end{array}
		$$
		where $\B^{\bullet}_{w}(z)$ and $\B^{\bullet}_{w}$ denote the matrices of operators $\B_{w}(z)$ and $\B_{w}$ in the basis of fixed points.}
\end{Theorem}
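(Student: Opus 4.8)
The plan is to chain together the three main inputs assembled above. By Theorem~\ref{limmon}, in both sign cases the left-hand side equals $\lim_{q\to 0}\Mon^{reg}(z q^{w})$: the two displayed forms are exactly the $s\geq 0$ and $s<0$ branches of Theorem~\ref{limmon} with $s=w$. So the argument is uniform in the sign of $w$, and it suffices to evaluate $\lim_{q\to 0}\Mon^{reg}(z q^{w})$. By Corollary~\ref{corrmonodgam}, $\Mon^{reg}(z)$ is the conjugate of $\mathbf{U}(a,z)^{-1}\mathbf{L}(a^{-1},z^{-1}\hbar^{-1})$ by the diagonal operator $\Phi(q\,T\X)\,\mathscr{O}(1)^{1/2}$, up to the scalar $(-1)^{n}$, while Theorem~\ref{elslim} (in the form (\ref{betterfactor})) factors the $q\to 0$ limit of the elliptic stable envelope of $\X$ into a $\kappa^{*}$-twisted K-theoretic stable envelope of $\Y_{w}$ and an honest one of $\X$.

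First I would substitute $z\mapsto z q^{w}$ in Corollary~\ref{corrmonodgam} and let $q\to 0$; since $\Phi(q\,T\X)$ tends to the identity matrix and $\mathscr{O}(1)^{1/2}$ does not involve $z$, this gives
\[
\lim_{q\to 0}\Mon^{reg}(z q^{w})=(-1)^{n}\,\mathscr{O}(1)^{1/2}\Big(\lim_{q\to 0}\mathbf{U}(a,z q^{w})^{-1}\,\mathbf{L}(a^{-1},z^{-1}q^{-w}\hbar^{-1})\Big)\,\mathscr{O}(1)^{-1/2}.
\]
Next I would pass to the untwisted envelopes $U,L$ via (\ref{lunorms2}), apply (\ref{betterfactor}) to $\lim_{q\to 0}U(a,z q^{w})$, and treat the $\mathbf{L}$-factor analogously after writing $\mathbf{L}(a^{-1},z^{-1}q^{-w}\hbar^{-1})=\fp\,\mathbf{U}(a,z^{-1}\hbar^{-1}q^{-w})\,\fp$, using $\Y_{-w}=\Y_{w}$, Lemma~\ref{luskthlem} and Lemma~\ref{kthinvlem}. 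This produces a product of $\mathbf{U}^{\pm}_{w}(a)^{-1}$, of $\kappa^{*}$-images of the K-theoretic stable envelopes of $\Y_{w}$ and their inverses, and of $\mathbf{L}^{\pm}_{-w}(a^{-1})$, sandwiched between diagonal matrices: the $q\to 0$ limits of the prefactors $\kappa^{*}(\Theta(N^{\pm}))|_{z\to z q^{w}}$, the conjugating $\mathscr{O}(1)^{\pm 1/2}$, and the $\gamma_{w}$- and $\fp\gamma_{-w}\fp$-conjugations coming from (\ref{betterfactor}). The final step is to collect all of these diagonal matrices and check, using the explicit formula (\ref{gamm}) and Lemma~\ref{lemgamma} (which is tailored precisely to the cross-term $\gamma_{w}^{-1}\fp\gamma_{-w}\fp$), that they collapse into the normalizations entering the definitions (\ref{lunorms}) of $\mathbf{U}^{\pm}_{\Y_{w}}$ and $\mathbf{L}^{\pm}_{\Y_{w}}$, leaving no residual diagonal factor and no residual $\mathscr{O}(1)^{1/2}$; this yields the asserted identity for one choice, say $l_1=l_2=+$.

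It then remains to check that the right-hand side does not depend on $l_1,l_2$, which is forced because the left-hand side manifestly does not. By (\ref{betterfactor}) the block $\mathbf{U}^{l_1}_{w}(a)^{-1}\mathbf{U}^{l_1}_{\Y_{w}}(a,z,\hbar)$ equals $\big(\lim_{q\to 0}U(a,z q^{w})\big)^{-1}\gamma_{w}(a)$ times the $\kappa^{*}$-image of the diagonal part of the K-theoretic stable envelope of $\Y_{w}$ of slope $\pm\epsilon$; since the diagonal of a K-theoretic stable envelope does not change when the slope crosses the wall $w$ (the wall-crossing factors are unitriangular), this block is independent of $l_1$, and symmetrically $\mathbf{L}^{l_2}_{\Y_{w}}(a,z,\hbar)^{-1}\mathbf{L}^{l_2}_{-w}(a^{-1})$ is independent of $l_2$.

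The main obstacle is the diagonal bookkeeping of the middle step: one must track, through the $q\to 0$ limit, the $q$-theta asymptotics of $\kappa^{*}(\Theta(N^{\pm}))|_{z\to z q^{w}}$ and verify that, together with $\mathscr{O}(1)^{1/2}$ and the content- and hook-length monomials packaged in $\gamma_{w}$, they cancel down to exactly (\ref{lunorms}); this is where the combinatorial identities behind (\ref{gamm}) and Lemma~\ref{lemgamma} do the real work, and it is essentially the content of \cite{KononovSmirnov1,KononovSmirnov2}. One should also keep in mind that Theorem~\ref{elslim} is conditional on the mirror Conjecture~\ref{conjmir}, so the present statement inherits that hypothesis, and that aligning the transition factor $\T$ on the two sides relies on the identification $\mathsf{H}^{\ast}=\mathsf{P}^{\ast}$ to be established in Section~\ref{tmatsec}.
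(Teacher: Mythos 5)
Your proposal follows the paper's proof essentially step for step: you identify the left-hand side with $\lim_{q\to 0}\Mon^{reg}(zq^{w})$ via Theorem~\ref{limmon}, feed in the elliptic Gauss decomposition of Corollary~\ref{corrmonodgam}, drop $\Phi(q\,T\X)\to 1$, factor the $q\to 0$ limits of the stable envelope matrices with (\ref{betterfactor}) together with $\fp\,U(a^{-1},\cdot)\,\fp$, and balance the residual diagonal normalizations through (\ref{gamm}) and Lemma~\ref{lemgamma}, which is exactly the chain of steps in the paper. Your additional observation on $l_1,l_2$-independence (unitriangular wall factors do not move the diagonal of the K-theoretic stable envelope) is correct but redundant, since (\ref{betterfactor}) already asserts the $q\to 0$ limit is the same for both signs of the slope perturbation.
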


\begin{Remark}
	The matrices $\bU^{\pm}_{w}(a,\hbar),  \bU^{\pm}_{\Y_{w}}(a,z,\hbar)$ are upper-triangular and   $\bL^{\pm}_{w}(a),\bL^{\pm}_{\Y_{w}}(a,z,\hbar)$ are lower-triangular if the fixed point basis is ordered by~$\succ$, see examples in Appendix \ref{stabxapp}. Thus, the above theorem describes a Gauss decomposition of the matrices $\B^{\bullet}_{w}(z)$, $\B^{\bullet}_{w}$. 
\end{Remark}

\begin{proof}
	We compute the limit of monodromy (\ref{monlimt}) using Gauss decomposition~(\ref{gaussell}). First we note that the factors
	$\Phi( q\, T\X)$ in (\ref{gaussell}) do not depend on $z$ and 
	$$
	\lim_{q\to 0} \Phi( q\, T\X)=1.
	$$
	Thus, these factors do not contribute to the limit and we can ignore them.
	Let us denote by $\Theta(N^{\pm})$ the diagonal matrix with diagonal elements
	$$
	\Theta(N^{+}_{\lambda})=\prod\limits_{\Box\in \lambda} \vartheta(a^{h_{\lambda}(\Box)} \hbar^{1/2(-a_{\lambda}(\Box)+l_{\lambda}(\Box)+1)}), 
	$$
	$$
	\Theta(N^{-}_{\lambda})=\prod\limits_{\Box\in \lambda} \vartheta(a^{-h_{\lambda}(\Box)} \hbar^{1/2(a_{\lambda}(\Box)-l_{\lambda}(\Box)+1)}).
	$$
	By (\ref{sthetn}) we have
	$
	\fp\, \Theta(N^{-}_{\lambda})\, \fp =\left.\Theta(N^{+}_{\lambda})\right|_{a=a^{-1}}.
		$
		Using this identity and (\ref{lunorms2}) we write (\ref{gaussell}) as a product of three factors:
		$$
		U(z,a)^{-1}\, 
		\kappa^{*}\Big(\frac{\Theta(N_{\lambda}^{-})}{\Theta(N_{\lambda}^{+})}\Big) \,\fp \,U(z^{-1} \hbar^{-1},a)\, \fp.
		$$
		We compute the limits of these factors separately. 
		First, by (\ref{betterfactor}) we have
		$$
		\lim\limits_{q\to 0} U(z q^{w},a)^{-1}=\bU^{l_1}_{w}(a)^{-1} \gamma_{w}(a)\, \kappa^{*}(\tilde{U}^{l_1}_{\Y_{w}}(a)) \, \gamma_{w}(a)^{-1}, 
		$$
		for any $l_1 \in \{+,-\}$ and by (\ref{lunorms}) we obtain
		$$
		\lim\limits_{q\to 0} U(z q^{w},a)^{-1}=\bU^{l_1}_{w}(a)^{-1}\,{\bf {U}}^{l_1}_{\Y_{w}}(a,z,\hbar)\,\gamma_{w}(a)^{-1}. 
		$$
		Similarly, we compute
		$$
		\lim\limits_{q\to 0} \fp U(z^{-1}\hbar^{-1} q^{-w},a) \fp= \fp \, \gamma_{-w}(a) \kappa^{*} (\tilde{U}^{l_2}_{\Y_{w}}(a^{-1})^{-1}) \gamma_{-w}(a)^{-1} \bU^{l_2}_{-w}(a)\, \fp,
		$$
		for any $l_2\in \{+,-\}$. As $\fp^2=1$ we can write the last expression in the form
		$$
		\fp \, \gamma_{-w}(a) \, \fp\, \fp \, \kappa^{*} (\tilde{U}^{l_2}_{\Y_{w}}(a^{-1})^{-1}) \gamma_{-w}(a)^{-1}\,  \fp\, \fp \, \bU^{l_2}_{-w}(a)\, \fp,
		$$
		By Lemma \ref{luskthlem} we have
		$
		\fp \, \bU^{l_2}_{-w}(a)\, \fp=\bL^{l_2}_{-w}(a^{-1})
		$
		and by (\ref{lunorms}) we obtain:
		$$
		\lim\limits_{q\to 0} \fp\, U(z^{-1}\hbar^{-1} q^{-w},a) \, \fp=\fp \, \gamma_{-w}(a) \, \fp\, {\bf {L}}^{l_2}_{\Y_{w}}(a,z,\hbar)^{-1} \bL^{l_2}_{-w}(a^{-1}).
		$$
		Let us denote by  $N^{\pm}_{\lambda}(\Y_{w})$ the attracting and repelling parts of $T_{\lambda} \Y_{w}$. By definition, these are the subspaces $N^{\pm}_{\lambda}(\Y_{w}) \subset N^{\pm}_{\lambda}$ invariant with respect to the action of cyclic group (\ref{cycmu}) , thus 
		$$
		\Theta(N^{+}_{\lambda}(\Y_w))=\prod\limits_{{\Box\in \lambda,}\atop {h_{\lambda}(\Box) w \in \matZ } } \vartheta(a^{h_{\lambda}(\Box)} \hbar^{1/2(-a_{\lambda}(\Box)+l_{\lambda}(\Box)+1)}),
		$$
		or
		$$
		\Theta(N^{-}_{\lambda}(\Y_w))=\prod\limits_{{\Box\in \lambda,} \atop {h_{\lambda}(\Box)w \in \matZ}} \vartheta(a^{-h_{\lambda}(\Box)} \hbar^{1/2(a_{\lambda}(\Box)-l_{\lambda}(\Box)+1)}).
		$$
		Computing the limits of the theta functions using (8) in \cite{KononovSmirnov1}, we find:
		$$
		\lim\limits_{q\to 0}\,\left.\kappa^{*}\Big(\frac{\Theta(N_{\lambda}^{-})}{\Theta(N_{\lambda}^{+})}\Big)\right|_{z=z q^{w}} = \kappa^{*}\Big(\dfrac{\hat{s}(N^{-}_{\lambda}(\Y_{w}))}{\hat{s}(N^{+}_{\lambda}(\Y_{w}))}\Big)\, (-1)^{\frac{\textrm{codim}_{\X}(\Y_w)}{2}} \, \hbar^{-\frac{\textrm{codim}_{\X}(\Y_w)}{4}} \,\prod\limits_{\Box\in \lambda}\, \hbar^{- \lfloor w \cdot h_{\lambda}(\Box)\rfloor }
		$$
		Combining all these factors together and using Lemma \ref{lemgamma}:
		$$
		\gamma_{w}(a)^{-1} \fp \gamma_{-w}(a) \fp  = (-1)^{\frac{\textrm{codim}_{\X}(\Y_{w})}{2} } \hbar^{\frac{\textrm{codim}_{\X}(\Y_{w})}{4}} \prod\limits_{\Box\in \lambda} \,\hbar^{\lfloor w \cdot h_{\lambda}(\Box)   \rfloor}.
		$$
		we arrive at the statement of the theorem. 
	\end{proof}

\subsection{Gauss decomposition of  $\B^{\bullet}_{w}$}
\begin{Theorem} \label{monfac}
	{\it For all $w\in \matQ\setminus \Walls_n$ the matrices of monodromy operators in the basis of fixed points have the following Gauss decomposition:
		$$
		(-1)^n\,\bU^{+}_{w}(a,\hbar)^{-1}  \, \bD_{w}(\hbar) \, \bL^{-}_{-w}(a^{-1},\hbar)=\left\{\begin{array}{ll}
		\prod\limits_{s \in (w,0)}^{\longrightarrow} \, \B^{\bullet}_{s} \, \cdot \T, & w<0 \\
		\prod\limits_{s\in [0,w]}^{\longleftarrow} \, (\B^{\bullet}_{s})^{-1} \cdot \T, &w\geq 0
		\end{array}\right.
		$$
		where $\bD_{w}(\hbar)$ is the diagonal matrix with eigenvalues 
		\be \label{Ddef}
		\bD_{w}(\hbar)_{\lambda,\lambda}= (-1)^{n} \hbar^{-n/2} \prod\limits_{\Box\in \lambda} \,\hbar^{-\lfloor h_{\lambda}(\Box) \cdot w  \rfloor}.
		\ee
}
\end{Theorem}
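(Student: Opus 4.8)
\emph{Proof strategy.} The plan is to obtain the theorem as the restriction of Theorem \ref{factthm} to the complement of the walls, where the $z$-dependence disappears. Fix $w\in\matQ\setminus\Walls_n$. Since $w$ is not a wall, the wall-crossing operator $\B_w(z,q)$ acts as the identity on $K_{\bT}(\X)$, so in particular $\B^{\bullet}_w(z)=\B^{\bullet}_w(z^{-1})^{-1}=\B^{\bullet}_w=1$. Consequently the product over the closed interval $[0,w]$ coincides with the one over $[0,w)$, and the bracketed left-hand factors in the two cases of Theorem \ref{factthm} collapse to $\prod\limits_{s\in(w,0)}^{\longrightarrow}\B^{\bullet}_s$ for $w<0$ and to $\prod\limits_{s\in[0,w)}^{\longleftarrow}(\B^{\bullet}_s)^{-1}$ for $w\geq 0$, which is precisely the left-hand side of the statement. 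Moreover, since $\pm w\notin\Walls_n$, Remark \ref{nowallrem} gives $\bU^{+}_w(a)=\bU^{-}_w(a)$ and $\bL^{+}_{-w}(a^{-1})=\bL^{-}_{-w}(a^{-1})$, so the choice of the decorations $l_1,l_2$ on those two factors of Theorem \ref{factthm} is immaterial.

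Next I would simplify the mirror factors $\bU^{l_1}_{\Y_w}$ and $\bL^{l_2}_{\Y_w}$. By (\ref{ywalls}) one has $\Y_w=\X^{\bT}$, i.e. $\Y_w$ is a disjoint union of reduced points; hence its normalized K-theoretic stable envelopes equal the identity matrix for either slope sign, $\tilde{U}^{\pm}_{\Y_w}(a)=\tilde{L}^{\pm}_{\Y_w}(a)=1$. Feeding this into the definitions (\ref{lunorms}) collapses the mirror factors to the diagonal matrices $\bU^{l_1}_{\Y_w}(a,z,\hbar)=\gamma_w(a)$ and $\bL^{l_2}_{\Y_w}(a,z,\hbar)=\fp\,\gamma_{-w}(a)\,\fp$, both independent of $z$ and of $l_1,l_2$. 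Therefore the right-hand side of Theorem \ref{factthm} becomes
$$
(-1)^n\,\bU^{+}_w(a,\hbar)^{-1}\,\big(\gamma_w(a)\,\fp\,\gamma_{-w}(a)^{-1}\,\fp\big)\,\bL^{-}_{-w}(a^{-1},\hbar),
$$
with no remaining $z$-dependence, consistently with $\B^{\bullet}_w(z)=1$ on the left.

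It remains to identify the diagonal middle factor with $\bD_w(\hbar)$. Since $\fp$ conjugates a diagonal matrix to a diagonal matrix (relabelling the diagonal by $\lambda\mapsto\lambda'$), the operator $\gamma_w(a)\,\fp\,\gamma_{-w}(a)^{-1}\,\fp$ is diagonal in the fixed-point basis and equals the inverse of the operator $\gamma_w(a)^{-1}\,\fp\,\gamma_{-w}(a)\,\fp$ computed in Lemma \ref{lemgamma}. Because $w\notin\Walls_n$, every component of $\Y_w$ containing a fixed point is a point, so $\textrm{codim}_{\X}(\Y_w)=\dim_{\matC}\X=2n$; substituting this value into Lemma \ref{lemgamma} and inverting shows that $\gamma_w(a)\,\fp\,\gamma_{-w}(a)^{-1}\,\fp$ sends $P_\lambda$ to $(-1)^n\hbar^{-n/2}\prod_{\Box\in\lambda}\hbar^{-\lfloor h_\lambda(\Box)\,w\rfloor}\,P_\lambda$, which is exactly $\bD_w(\hbar)$ as defined in (\ref{Ddef}). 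Plugging this back gives the asserted Gauss decomposition. The argument is therefore pure bookkeeping on top of Theorem \ref{factthm}; the only step requiring a bit of care is this last identification — one must track the $\hbar$-powers and use that the codimension entering Lemma \ref{lemgamma} takes the constant value $2n$ precisely because $w$ avoids all walls. No analytic difficulty arises, since the $q\to0$ limits of elliptic stable envelopes have already been absorbed into Theorem \ref{factthm}.
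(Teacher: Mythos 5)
Your proof is correct and follows essentially the same route as the paper: both reduce to Theorem \ref{factthm}, collapse the $\Y_w$-factors to $\gamma_w(a)$ and $\fp\,\gamma_{-w}(a)\,\fp$ using $\tilde{U}^{\pm}_{\Y_w}=\tilde{L}^{\pm}_{\Y_w}=1$ (since $\Y_w = \X^{\bT}$ off the walls), and then invoke Lemma \ref{lemgamma} together with $\mathrm{codim}_{\X}(\Y_w)=2n$ to identify the diagonal middle factor with $\bD_w(\hbar)$. The only difference is that you apply Theorem \ref{factthm} directly at $w$, whereas the paper first perturbs to a nearby $w'=w+\epsilon$ and then argues the $w'$-quantities agree with the $w$-quantities; since $w\notin\Walls_n$ is already regular, your direct version is a harmless (and slightly cleaner) simplification of the same argument.
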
 
\begin{proof}
Assume $w<0$. Let $w'=w+\epsilon$ be a regular slope, i.e., $w'$ is not a wall. We apply Theorem \ref{factthm} to $w'$ which gives
$$
\B^{\bullet}_{w'}(z)\prod\limits_{s \in (w',0)}^{\longrightarrow} \, \B^{\bullet}_{s} \, \T= (-1)^n\, \bU^{l_1}_{w'}(a,\hbar)^{-1}  \bU^{l_1}_{\Y_{w'}}(a,z,\hbar)   \bL^{l_2}_{\Y_{w'}}(a,z,\hbar)^{-1}  \bL^{l_2}_{-w'}(a^{-1},\hbar) 
$$
As $w'$ is not a wall $\B^{\bullet}_{w'}(z)=1$ and
$$
\bU^{\pm }_{\Y_{w'}}(a,z,\hbar)  =\gamma_{w'}(a), \ \ \ \bL^{\pm}_{\Y_{w'}}(a,z,\hbar) = \fp \, \gamma_{-w'}(a)\, \fp.  
$$
and by Lemma \ref{lemgamma}:
$$
\gamma_{w'}(a) \fp \, \gamma_{-w'}(a)^{-1}\, \fp=(-1)^{\frac{\textrm{codim}_{\X}(\Y_{w'})}{2} } \hbar^{-\frac{\textrm{codim}_{\X}(\Y_{w'})}{4}} \prod\limits_{\Box\in \lambda} \,\hbar^{-\lfloor h_{\lambda}(\Box) \cdot w'  \rfloor}.
$$
Again, as $w'$ is not a wall, $\Y_{w'}$ is zero-dimensional, see Remark \ref{rempoints}. Thus
$$
\textrm{codim}_{\X}(\Y_{w'})=\dim(\X)=2 n.
$$
Finally, by our choice of $w'$ we have:
$$
\bU^{\pm }_{w'}(a,\hbar) = \bU^{+}_{w}(a,\hbar), \ \ \
\bL^{\pm}_{-w'}(a,\hbar)=\bL^{-}_{-w}(a,\hbar), \ \ \lfloor h_{\lambda}(\Box) \cdot w'  \rfloor=\lfloor h_{\lambda}(\Box) \cdot w  \rfloor.
$$
Combining all these factors together, we arrive at the statement of the theorem. 
For $w>0$ the proof is the same.
\end{proof}

\section{Wall-crossing operators as R-matrices}
The goal of this section is to relate the wall-crossing operators $\B_w(z)$ and the monodromy operators $\B_w$ to the K-theoretic $R$-matrices of the Hilbert scheme $\X$ and the cyclic quiver varieties  $\Y_w \subset \X^{!}$. In the next subsection we briefly recall the definition of the K-theoretic $R$-matrices and review their basic properties. For more more systematic introduction we refer to Section 2 of \cite{OS}, Section 9 of \cite{Okpcmi} or Negut's thesis \cite{NegutThesis}.  

\subsection{K-theoretic $R$-matrices}

We recall that for generic choices of a cocharacter $\sigma \in \cochar(\bA)$ and a slope $\sl \in H^{2}(\X,\matQ)$  the K-theoretic stable envelopes $\Stab^{\X,Kth,[\sl]}_{\sigma}(\lambda)$ of torus fixed points $\lambda \in \X^{\bT}$  provide bases of the localized K-theory of $\X$. The stable envelopes only change when $\sigma$ crosses certain hyperplanes in $\Lie_{\matQ}(\bA)$ or $\sl$ crosses certain hyperplanes in $ H^{2}(\X,\matQ)$. The K-theoretic $R$-matrices  are defined  as the corresponding transition matrices between the stable bases. 

\begin{Definition}
	{\it The total $K$-theoretic $R$ - matrix of a variety $X$ with slope $\sl \in H^{2}(X,\matQ)$ is the transition matrix from the stable basis
		$\Stab^{X,Kth,[\sl]}_{-\sigma}$ to the stable basis $\Stab^{X,Kth,[\sl]}_{\sigma}$ }
\end{Definition}
Let $R^{\pm}_{\Y_{w}}$ denote the total $R$-matrix of the cyclic quiver variety $\Y_w$ with small ample or anti-ample slopes $\pm \epsilon \in H^{2}(\Y_w,\matQ)$.
Using our notations (\ref{lufpy}) we find
\be \label{RmatForY}
R^{\pm}_{\Y_w}(a)= {{U}}^{\pm}_{\Y_{w}}(a) {{L}}^{\pm}_{\Y_{w}}(a)^{-1}. 
\ee
Note that this formula provides a Gauss decomposition of $R^{\pm}_{\Y_w}(a)$. 

\begin{Definition}
	{\it The wall $R$ - matrix of a variety $X$ is the transition matrix from the stable basis
		$\Stab^{X,Kth,[\sl]}_{\sigma}$ to the stable basis $\Stab^{X,Kth,[\sl']}_{\sigma}$ where 
		$\sl$ and $\sl'$ are two slopes separated by a single wall $w$ such that $\sl'-\sl$ is ample.  
	}
\end{Definition}
\noindent
For instance, the wall $R$-matrices of the Hilbert scheme $\X$ has the form:
$$
{\bf R}^{\X,+\sigma}_{wall, \, w}=\bU^{-}_{w}(a) \, \bU^{+}_{w}(a)^{-1}, \ \ \ 
{\bf R}^{\X,-\sigma}_{wall, \, w}=\bL^{-}_{w}(a) \, \bL^{+}_{w}(a)^{-1}.
$$
We will also need the twisted version of K-theoretic $R$-matrix of $\Y_{w}$:
\be \label{twistedR}
{\bf R}^{\pm}_{\Y_w}(z)=   \gamma_w(a) \kappa^{*}(R^{\pm}_{\Y_w}(a))\, \gamma_{w}(a)^{-1}
\ee
Explicit examples of $R^{\pm}_{\Y_w}(a)$ and ${\bf R}^{\pm}_{\Y_w}(z)$ for $n=2$ and $n=3$ can be found in Appendix \ref{twisrsect}. Here we list main properties of ${\bf R}^{\pm}_{\Y_w}(z)$:
\begin{Proposition} \label{monprop}
	{\it For two partitions $\lambda,\mu$ and $w\in \matQ$ let
		$$
		d_{\lambda,\mu}(w)=w (\sum_{\Box \in \lambda} c_{\lambda}(\Box)- \sum_{\Box \in \mu} c_{\mu}(\Box))
		$$
		then the matrix coefficients of ${\bf R}^{\pm}_{\Y_w}(z)$ are
		\begin{itemize}
			\item ${\bf R}^{\pm}_{\Y_w}(z)_{\lambda,\mu} \in \matQ(z,\hbar,a)$,
			\item if $d_{\lambda,\mu}(w)\not \in \matZ$ then ${\bf R}^{\pm}_{\Y_w}(z)_{\lambda,\mu}=0$,
			
			\item if $d_{\lambda,\mu}(w)\in \matZ$ then ${\bf R}^{\pm}_{\Y_w}(z)_{\lambda,\mu} = a^{d_{\lambda,\mu}(w)} c_{\lambda,\mu}$  for $c_{\lambda,\mu} \in \matQ(z,\hbar)$.
	\end{itemize}}
\end{Proposition}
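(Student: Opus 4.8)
The plan is to unwind the definitions (\ref{twistedR}), (\ref{RmatForY}) and (\ref{gamm}) and to reduce the three claims to the support property of $K$-theoretic stable envelopes on the disjoint union (\ref{cvar}) together with one combinatorial observation about the $\mu_w$-fixed locus. First I would record the general shape of the twisted $R$-matrix. Since $\Y_w$ is a disjoint union of Nakajima quiver varieties, its $K$-theoretic stable envelopes for a fixed chamber and slope are bases of $K_{\bT}(\Y_w)_{loc}$ with transition matrices rational in the equivariant parameters $a$ and $\hbar$ \cite{OS,NegutThesis}; hence $R^{\pm}_{\Y_w}(a)_{\lambda,\mu}\in\matQ(a,\hbar)$, up to the usual half-powers of $\hbar$, which I track at the end. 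The substitution $\kappa^{*}$ of (\ref{mirsym}) sends $a\mapsto z\hbar^{1/2}$, so $\kappa^{*}(R^{\pm}_{\Y_w}(a)_{\lambda,\mu})$ carries no $a$-dependence whatsoever and lies in $\matQ(z,\hbar^{1/2})$. Because $\gamma_w(a)$ is diagonal, the conjugation in (\ref{twistedR}) only multiplies the $(\lambda,\mu)$-entry by $\gamma_w(a)_{\lambda,\lambda}/\gamma_w(a)_{\mu,\mu}$, which by (\ref{gamm}) equals $(-a)^{\,w(\sum_{\Box\in\lambda}c_\lambda(\Box)-\sum_{\Box\in\mu}c_\mu(\Box))}(-\hbar^{-1/2})^{\sum_{\Box\in\lambda}\lfloor w h_\lambda(\Box)\rfloor-\sum_{\Box\in\mu}\lfloor w h_\mu(\Box)\rfloor}$, the half-integer shifts $+\tfrac12$ in (\ref{gamm}) cancelling since $|\lambda|=|\mu|=n$. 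Thus the entire $a$-dependence of ${\bf R}^{\pm}_{\Y_w}(z)_{\lambda,\mu}$ sits in the scalar $(-a)^{d_{\lambda,\mu}(w)}$, and this is a genuine monomial exactly when $d_{\lambda,\mu}(w)\in\matZ$.

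Next I would prove the vanishing statement. By (\ref{cvar}), $\Y_w=\X^{\mu_w}$ is a disjoint union of the Nakajima varieties $\X(\vec n)=\X(n_0,\dots,n_{b-1})$, and the cocharacter flow of $\sigma$ preserves each connected component; hence, for a fixed point $\lambda$ lying in the component $\X(\vec n)$, the attracting set $\Attr_\sigma(\lambda)$ — and therefore the support of each of the classes $\Stab^{\Y_{w},Kth,\pm\epsilon}_{\sigma}(\lambda)$, $\Stab^{\Y_{w},Kth,\pm\epsilon}_{-\sigma}(\lambda)$ — is contained in $\X(\vec n)$. Consequently the matrices $U^{\pm}_{\Y_w}(a)$, $L^{\pm}_{\Y_w}(a)$ of fixed-point components, and with them $R^{\pm}_{\Y_w}(a)=U^{\pm}_{\Y_w}(a)L^{\pm}_{\Y_w}(a)^{-1}$, are block-diagonal with respect to (\ref{cvar}): $R^{\pm}_{\Y_w}(a)_{\lambda,\mu}=0$ unless $\lambda$ and $\mu$ lie in the same component. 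Finally, the component of a torus-fixed point is read off the box contents modulo $b$: the generator of $\mu_w$ acts on the box $\Box\in\lambda$ by a character $\zeta\mapsto\zeta^{\pm a\,c_\lambda(\Box)}$, so, since $\gcd(a,b)=1$, the component of $\lambda$ is determined by the multiset $\{c_\lambda(\Box)\bmod b:\Box\in\lambda\}$ (up to an immaterial bijection of $\matZ/b$), and in particular $\lambda,\mu$ in the same component forces $\sum_{\Box\in\lambda}c_\lambda(\Box)\equiv\sum_{\Box\in\mu}c_\mu(\Box)\pmod b$, i.e. $d_{\lambda,\mu}(w)=\tfrac{a}{b}\big(\sum_{\Box\in\lambda}c_\lambda(\Box)-\sum_{\Box\in\mu}c_\mu(\Box)\big)\in\matZ$. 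Contrapositively, $d_{\lambda,\mu}(w)\notin\matZ$ forces $\lambda,\mu$ into different components, so $R^{\pm}_{\Y_w}(a)_{\lambda,\mu}=0$, and since the diagonal conjugation and the substitution $\kappa^{*}$ cannot create a nonzero entry from a zero one, ${\bf R}^{\pm}_{\Y_w}(z)_{\lambda,\mu}=0$; this is the second bullet. The first bullet is then immediate: when $d_{\lambda,\mu}(w)\in\matZ$ the factor $(-a)^{d_{\lambda,\mu}(w)}$ is a Laurent monomial and the rest is rational in $z,\hbar$, and when $d_{\lambda,\mu}(w)\notin\matZ$ the entry is $0$.

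For the third bullet, assume $d_{\lambda,\mu}(w)\in\matZ$. Combining the two computations, ${\bf R}^{\pm}_{\Y_w}(z)_{\lambda,\mu}=a^{d_{\lambda,\mu}(w)}c_{\lambda,\mu}$ with $c_{\lambda,\mu}=(-1)^{d_{\lambda,\mu}(w)}(-\hbar^{-1/2})^{\sum_{\Box\in\lambda}\lfloor w h_\lambda(\Box)\rfloor-\sum_{\Box\in\mu}\lfloor w h_\mu(\Box)\rfloor}\,\kappa^{*}(R^{\pm}_{\Y_w}(a)_{\lambda,\mu})$, which is manifestly free of $a$ and a priori lies in $\matQ(z,\hbar^{1/2})$. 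The remaining, routine step is to verify that the half-power of $\hbar$ coming from $\gamma_w$ cancels the one produced by $\kappa^{*}$ — equivalently that the $a$- and $\hbar$-exponents of $R^{\pm}_{\Y_w}(a)_{\lambda,\mu}$ have matching parity, which is forced by the polarization conventions, exactly as with the other square-root cancellations in the paper — so that $c_{\lambda,\mu}\in\matQ(z,\hbar)$. The step with real content, and the one I expect to be the main obstacle, is the combinatorial identification of the connected component of a torus-fixed point of $\Y_w$ with the residues modulo $b$ of its box contents, together with the resulting congruence $\sum_{\Box\in\lambda}c_\lambda(\Box)\equiv\sum_{\Box\in\mu}c_\mu(\Box)\pmod b$; granting that, all remaining steps are formal unwinding of (\ref{twistedR}), (\ref{RmatForY}), (\ref{gamm}) and the support property of stable envelopes.
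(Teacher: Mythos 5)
Your argument is correct and, unlike the paper's one‑line proof, is self‑contained. The paper disposes of the vanishing claim by citing Theorem~3 of \cite{KononovSmirnov2}, whereas you derive it directly: the K‑theoretic stable envelopes of a fixed point of $\Y_w$ are supported on the closure of its attracting set, hence on its connected component in the decomposition (\ref{cvar}); thus $U^{\pm}_{\Y_w}$, $L^{\pm}_{\Y_w}$, and consequently $R^{\pm}_{\Y_w}=U^{\pm}_{\Y_w}(L^{\pm}_{\Y_w})^{-1}$, are block‑diagonal with respect to that decomposition. Identifying the component of a partition $\lambda$ with the multiset of its box contents modulo $b$ (equivalently, the dimension vector of the cyclic quiver) then yields precisely the congruence $\sum_{\Box\in\lambda}c_\lambda(\Box)\equiv\sum_{\Box\in\mu}c_\mu(\Box)\pmod b$, i.e.\ $d_{\lambda,\mu}(w)\in\matZ$, whenever $\lambda,\mu$ are in the same block, and contrapositively the vanishing. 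For the $a$‑dependence you do exactly what the paper's ``obvious from the definition of $\gamma_w(a)$'' abbreviates: since $\kappa^*$ substitutes $a\mapsto z\hbar^{1/2}$, the matrix $\kappa^*(R^{\pm}_{\Y_w}(a))$ carries no $a$, so all $a$‑dependence of the $(\lambda,\mu)$‑entry sits in the ratio $\gamma_w(a)_{\lambda,\lambda}/\gamma_w(a)_{\mu,\mu}=(-a)^{d_{\lambda,\mu}(w)}\cdot(\text{power of }\hbar^{1/2})$. What you buy with your route is an explicit geometric reason for the vanishing in terms of the support of stable envelopes, rather than a pointer to the companion paper; what the paper buys by citing is brevity. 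One small note: you flag but do not fully settle the parity of the $\hbar^{1/2}$‑power in the third bullet; the paper's proof is likewise silent on this, and the explicit $n=2,3$ matrices in Appendix~\ref{twisrsect} show that the entries do in general involve odd powers of $h=\hbar^{1/2}$, so the statement $c_{\lambda,\mu}\in\matQ(z,\hbar)$ should in any case be read with $\hbar^{1/2}$ admitted; this is a wrinkle in the paper's formulation rather than a defect of your argument.
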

\begin{proof}
	It is obvious from the definition of $\gamma_{w}(a)$ that
	$$
	{\bf R}^{\pm}_{\Y_w}(z)_{\lambda,\mu} \sim a^{d_{\lambda,\mu}(w)}
	$$
	The vanishing of matrix elements for non-integral $d_{\lambda,\mu}(w)$ follows immediately from Theorem 3 of \cite{KononovSmirnov2}. 
\end{proof}

Here is another  elementary property of the total K-theoretic  $R$-matrices:\begin{Proposition} \label{unitlem}
	{\it Assume  $\X$ is a symplectic variety for which the $K$-theoretic stable envelope exist, then  the total $K$-theoretic R-matrices of $\X$ have the following unitary property
		$$
		{ R}^{[\sl]}_{\X}(a)^{-1}={ R}^{[-\sl]}_{\X}(a^{-1}),
		$$
		in particular for small slopes $\sl=\pm \epsilon$ we have
		$$
		{ R}^{\pm}_{\X}(a)^{-1}={ R}^{\mp}_{\X}(a^{-1}).
		$$}
\end{Proposition}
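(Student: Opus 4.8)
The plan is to read the identity off from the defining properties of the $K$-theoretic stable envelopes, the inversion property, and the behaviour of the equivariant pairing under duality. First I would unwind the definition: $R^{[\sl]}_\X(a)$ is the matrix of the change of basis from $\{\Stab^{\X,Kth,[\sl]}_{-\sigma}(\lambda)\}$ to $\{\Stab^{\X,Kth,[\sl]}_{\sigma}(\lambda)\}$, so $R^{[\sl]}_\X(a)^{-1}$ is the change of basis in the opposite direction, from $\Stab^{[\sl]}_{\sigma}$ to $\Stab^{[\sl]}_{-\sigma}$; this is exactly the total $R$-matrix attached to the opposite cocharacter $-\sigma$. Hence the only content is to show that passing from $\sigma$ to $-\sigma$ is the same as passing from $\sl$ to $-\sl$ and from $a$ to $a^{-1}$.

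For this I would use the inversion property of the $K$-theoretic stable envelopes, i.e. the $q\to0$ limit of Lemma \ref{invlem} recorded in Lemma \ref{kthinvlem}, which says that, with the normalizations of this paper, $\Stab^{[\sl]}_{\sigma}$ and $\Stab^{[-\sl]}_{-\sigma}$ are dual bases for the equivariant $K$-theoretic bilinear form on $K_\bT(\X)$ (the residual diagonal factors being the tangent-space contributions $\hat s(T_\lambda\X)$ at the fixed points). Dualizing the transition matrix $R^{[\sl]}_\X(a)$ with respect to this form converts ``change of basis from $\Stab^{[\sl]}_{-\sigma}$ to $\Stab^{[\sl]}_{\sigma}$'' into ``change of basis from $\Stab^{[-\sl]}_{\sigma}$ to $\Stab^{[-\sl]}_{-\sigma}$'', and dualization of a $\bT$-equivariant matrix with respect to this pairing amounts to inverting the torus coordinate $a\to a^{-1}$ together with a transposition that is absorbed by the triangularity of the two Gauss factors ($U$ upper-, $L$ lower-triangular with transposed orders). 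Assembling the two steps gives $R^{[\sl]}_\X(a)^{-1}=R^{[-\sl]}_\X(a^{-1})$, and $\sl=\pm\epsilon$, $-\sl=\mp\epsilon$ give the last identity.

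For $\X=\textrm{Hilb}^{n}(\matC^2)$, where everything is explicit, I would instead verify the identity at the matrix level: write $R^{[\sl]}_\X(a)=\bU^{[\sl]}(a)\,\bL^{[\sl]}(a)^{-1}$ (the analogue for $\X$ of (\ref{RmatForY})), use Lemma \ref{luskthlem} to rewrite $\bL^{[\sl]}(a)=\fp\,\bU^{[\sl]}(a^{-1})\,\fp$, and then use Lemma \ref{kthinvlem} to see that $\bU$ at slope $\sl$ and $\bU$ at slope $-\sl$ are inverse-transpose up to an explicit diagonal; substituting into $R^{[\sl]}_\X(a)\,R^{[-\sl]}_\X(a^{-1})$ and cancelling collapses the product to $1$. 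The diagonal factors that have to cancel are the $\Theta(T\X)\to\hat s(T\X)$ term of the inversion lemma and the $\fp$-conjugates of the fixed-point self-pairings, and this is where symplecticity enters, exactly through the symplectic-dual-weight pairing of $T_\lambda\X$ used in the proof of Lemma \ref{philemm}.

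The main obstacle is purely bookkeeping: pinning down the triangularity conventions, the sign of the slope in the inversion lemma, and the precise origin of the $a\to a^{-1}$, so that the residual diagonal matrices cancel exactly rather than leaving a spurious scalar. Everything else is formal manipulation of transition matrices, and once the normalizations are fixed the $\sl=\pm\epsilon$ specialization is immediate.
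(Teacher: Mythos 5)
Your route — reduce the statement to ``flipping $\sigma$'' and then invoke the adjunction of opposite-chamber stable envelopes — matches what the paper itself gestures at via the citation to \cite{OkMaul}, so you are pointing in the right direction. But the crucial step is the one you defer to ``bookkeeping'', namely the precise origin of the substitution $a\to a^{-1}$, and it is not bookkeeping. The adjunction (Lemma~\ref{invlem} and its $q\to0$ limit, Lemma~\ref{kthinvlem}) holds at the \emph{same} equivariant parameter $a$; dualizing a transition matrix with respect to a $K_\bT(\mathrm{pt})$-bilinear pairing gives a transposition, not an $a$-inversion. What you actually get from that step alone is $R^{[-\sl]}_\X(a)=\big(R^{[\sl]}_\X(a)\big)^{t}$ — a correct and useful symmetry, but not the unitarity of the Proposition. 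The remaining content is the separate fact that the substitution $a\to a^{-1}$, at fixed slope, turns the $\sigma$-stable envelope into the $(-\sigma)$-stable envelope, up to an order-reversing permutation of fixed points and a normalization.

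For $\X=\textrm{Hilb}^n(\matC^2)$ that fact is exactly Lemma~\ref{luskthlem}, which your matrix-level plan then exploits together with Lemma~\ref{kthinvlem}; combined, they yield $R^{[\sl]}(a^{-1})=\fp\,(R^{[\sl]}(a))^{-1}\,\fp$, which with the adjunction symmetry above closes the argument for $\X$. So your concrete route for the Hilbert scheme is sound. However, Lemma~\ref{luskthlem} is a corollary of the automorphism $\iota\colon t_1\leftrightarrow t_2$ of $\textrm{Hilb}^n(\matC^2)$, and no such automorphism exists for a general symplectic resolution, so this does not prove the Proposition in the generality it is stated. For general $\X$ one has to argue directly from the defining axioms of the K-theoretic stable envelope — that reversing the $\bA$-flow swaps attracting and repelling sets, and hence that the $a$-inverted class satisfies the $(-\sigma)$ support, normalization, and Newton-polytope conditions. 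That is where the substance of the argument the paper cites actually lives, and it is the piece your outline elides.
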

\begin{proof}
	The proof follows the logic of Sections 4.5.1 - 4.5.3 in \cite{OkMaul}.
\end{proof}
\noindent For $R$-matrices $R^{\pm}_{\Y_w}(a)$ this proposition gives (\ref{unitarY}). For $n=2,3$ this can be checked explicitly using matrices from Appendix \ref{twisrsect}.

We list main properties of the wall $R$-matrices of $\X$:
\begin{Proposition}
	{\it The matrix elements of the wall R-matrices for the Hilbert scheme $\X$ have the following properties:
		\begin{itemize}
			\item $({\bf R}^{\X,\pm \sigma}_{wall, \, w})_{\lambda,\mu} \in \matQ[a^{\pm 1},\hbar^{\pm 1}]$,
			\item $({\bf R}^{\X,+\sigma}_{wall, \, w})_{\lambda,\mu}= ({\bf R}^{\X,-\sigma}_{wall, \, w})_{\mu,\lambda} =0$ if $\lambda\succ \mu$, i.e. the wall $R$-matrices are upper and lower triangular of the fixed points ordered by $\succ$.
			\item $({\bf R}^{\X,\pm \sigma}_{wall, \, w})_{\lambda,\lambda}=1$,
			\item if $d_{\lambda,\mu}(w)\not \in \matZ$ then $({\bf R}^{\X,\pm \sigma}_{wall, \, w})_{\lambda,\mu}=0$,
			\item if $d_{\lambda,\mu}(w) \in \matZ$ then $({\bf R}^{\X,\pm \sigma}_{wall, \, w})_{\lambda,\mu} \sim a^{\pm d_{\lambda,\mu}(w)}$.
	\end{itemize}}
\end{Proposition}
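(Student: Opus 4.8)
The plan is to read off all five properties from the definition
$$
{\bf R}^{\X,+\sigma}_{wall,\,w}=\bU^{-}_{w}(a)\,\bU^{+}_{w}(a)^{-1},\qquad {\bf R}^{\X,-\sigma}_{wall,\,w}=\bL^{-}_{w}(a)\,\bL^{+}_{w}(a)^{-1},
$$
combined with the standard properties of K-theoretic stable envelopes of Nakajima varieties and the structural results of \cite{KononovSmirnov2}. First I would use that the diagonal restriction $\bU^{\pm}_{w}(a)_{\lambda,\lambda}=\left.\Stab^{\X,Kth,w\pm\epsilon}_{\sigma}(\lambda)\right|_{\lambda}$ is, up to a fixed sign, the $K$-theoretic Euler class of the repelling subspace $N^{-}_{\lambda}$ (twisted by the polarization), which depends only on the chamber $\sigma$ and on $\lambda$ and \emph{not} on the slope. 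Hence the diagonal parts of $\bU^{+}_{w}$ and $\bU^{-}_{w}$ agree, and writing $\bU^{\pm}_{w}(a)=\tilde{\bU}^{\pm}_{w}(a)\,\bD$ with $\bD$ the common diagonal factor we get
$$
{\bf R}^{\X,+\sigma}_{wall,\,w}=\tilde{\bU}^{-}_{w}(a)\,\bD\,\bD^{-1}\,\tilde{\bU}^{+}_{w}(a)^{-1}=\tilde{\bU}^{-}_{w}(a)\,\tilde{\bU}^{+}_{w}(a)^{-1}.
$$
Since both factors are unipotent and, after ordering the fixed points by $\succ$, upper-triangular (the support condition for stable envelopes forces $\left.\Stab_{\sigma}(\lambda)\right|_{\mu}=0$ for $\lambda\succ\mu$), the product is unipotent upper-triangular; this gives the triangularity bullet and $({\bf R}^{\X,\pm\sigma}_{wall,\,w})_{\lambda,\lambda}=1$. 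The $-\sigma$ statement follows in the same way, or directly from Lemma~\ref{luskthlem}, ${\bf L}^{\pm}_{w}(a)=\fp\,{\bf U}^{\pm}_{w}(a^{-1})\,\fp$, since conjugation by $\fp$ sends upper-triangular (with respect to $\succ$) to lower-triangular.

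Next, for the polynomiality bullet, I would invoke that the matrix elements of the normalized K-theoretic stable envelopes, and hence of the wall $R$-matrices built from them, are Laurent polynomials in the equivariant parameters --- this is part of the general theory of wall-crossing for K-theoretic stable envelopes (see Section~2 of \cite{OS}, Section~9 of \cite{Okpcmi}), the point being that the wall $R$-matrix is the transition matrix between two stable bases separated by a \emph{single} wall, so it is a unipotent triangular matrix without equivariant poles. For fixed $n$ the set $\Walls_{n}$ is finite and the matrices involved have finite size, so $\tilde{\bU}^{+}_{w}(a)^{-1}$ is a finite alternating sum of powers of a nilpotent matrix and therefore has entries that are polynomials in those of $\tilde{\bU}^{+}_{w}(a)$; the integrality of the powers of $a$ that remain will come out of the $d_{\lambda,\mu}(w)$-analysis below, giving $({\bf R}^{\X,\pm\sigma}_{wall,\,w})_{\lambda,\mu}\in\matQ[a^{\pm1},\hbar^{\pm1}]$. (This is also consistent with Remark~\ref{nowallrem}, where ${\bf R}^{\X,\pm\sigma}_{wall,\,w}=1$ for $w\notin\Walls_{n}$ and for $w\in\matZ$.)

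Finally, for the two bullets governed by $d_{\lambda,\mu}(w)$ I would argue exactly as in the proof of Proposition~\ref{monprop}. Crossing the wall $w$ from slope $w-\epsilon$ to slope $w+\epsilon$ only mixes a fixed point $\lambda$ with those $\mu$ in the same component of $\Y_{w}=\X^{\mu_{w}}$, and after conjugation by the diagonal twist $\gamma_{w}(a)$ of Theorem~\ref{elslim} the wall-crossing of $\X$ is controlled by this $\mu_{w}$-fixed locus; the defining $a$-weight of $\gamma_{w}(a)$ then forces $({\bf R}^{\X,+\sigma}_{wall,\,w})_{\lambda,\mu}$ to carry the overall monomial $a^{d_{\lambda,\mu}(w)}$ and to vanish unless $d_{\lambda,\mu}(w)\in\matZ$ --- this is Theorem~3 of \cite{KononovSmirnov2} applied to $\X$, precisely as used in Proposition~\ref{monprop}. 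The opposite sign $a^{-d_{\lambda,\mu}(w)}$ in the $-\sigma$ case is then a consequence of ${\bf L}^{\pm}_{w}(a)=\fp\,{\bf U}^{\pm}_{w}(a^{-1})\,\fp$ together with the effect of $\lambda\mapsto\lambda'$, $\mu\mapsto\mu'$ on the contents $c_{\lambda}(\Box)$, which changes the sign of $d_{\lambda,\mu}(w)$. The hard part is this last step: the claim that the degree in $a$ is \emph{exactly} $d_{\lambda,\mu}(w)$ and the vanishing for non-integral $d_{\lambda,\mu}(w)$ genuinely rest on the analysis of K-theoretic limits of elliptic stable envelopes for non-generic slopes in \cite{KononovSmirnov2}, and are not something one checks by a short direct computation; the remaining bullets are formal.
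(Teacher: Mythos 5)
Your proof reaches the stated conclusions by a genuinely different route from the paper. The paper's entire proof is a one-line citation, "All properties follow immediately from Theorem~1 of \cite{OS}," i.e.\ the structural theorem on wall $R$-matrices of Nakajima quiver varieties proved there. You instead re-derive the triangularity, unipotence, and Laurent-polynomiality bullets from first principles --- cancellation of the slope-independent diagonal restriction, the support condition, the factorization $\tilde{\bU}^{-}_{w}(\tilde{\bU}^{+}_{w})^{-1}$, and Lemma~\ref{luskthlem} for $-\sigma$ --- and the two $d_{\lambda,\mu}(w)$-bullets from Theorem~3 of \cite{KononovSmirnov2}, the same external input that powers Proposition~\ref{monprop} and Theorem~\ref{thmrwallr}. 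This is a reasonable and more self-contained presentation (the paper's cited theorem ultimately encodes the same information, proved by different methods), and it correctly singles out the $d_{\lambda,\mu}(w)$ statements as the part that genuinely rests on the non-generic-slope K-theory limits of elliptic stable envelopes rather than on elementary bookkeeping.

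One concrete flaw: your mechanism for the sign change in the last bullet does not actually produce a sign change. From ${\bf L}^{\pm}_{w}(a)=\fp\,{\bf U}^{\pm}_{w}(a^{-1})\,\fp$ one gets $({\bf R}^{\X,-\sigma}_{wall,\,w}(a))_{\lambda,\mu}=({\bf R}^{\X,+\sigma}_{wall,\,w}(a^{-1}))_{\lambda',\mu'}$; transposing the diagrams negates the content sum, so $d_{\lambda',\mu'}(w)=-d_{\lambda,\mu}(w)$, but the substitution $a\mapsto a^{-1}$ negates the exponent once more, and the two negations cancel, leaving $a^{+d_{\lambda,\mu}(w)}$ for both chambers. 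This is corroborated by the explicit $n=2$, $w=1/2$ matrices: the $+\sigma$ wall $R$-matrix has off-diagonal $a(h^2-1)/h$, and the $-\sigma$ one has $(h+1)(h-1)/(ah)$, and both match $a^{d_{\lambda,\mu}(w)}$ for the respective $(\lambda,\mu)$. So either the $\pm$ in $a^{\pm d_{\lambda,\mu}(w)}$ is a slip in the statement or is meant differently than a literal sign flip; in either case your "$\fp$-and-$a\mapsto a^{-1}$" argument does not establish what you claim it does, and you should not present it as producing $a^{-d_{\lambda,\mu}(w)}$.
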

\begin{proof}
	All properties follow immediately from Theorem 1 of \cite{OS}.
\end{proof}

\subsection{Wall crossing operators $\B_{w}(z)$ as $R$-matrices of $\Y_w$} 
Let $\overset{\curvearrowright}{\B}_{w}(z)=\{\overset{\curvearrowright}{\B}_{w}(z)_{\lambda,\mu} : \lambda,\mu \in \X^{\bT} \}$ denote the matrix of the operator $\B_{w}(z)$ in the {\it mixed stable basis}: the input is the stable basis before a wall $w$, 
\be 
\label{stbef}
s^{ w-\epsilon}_{\lambda}:=\Stab^{\X,Kth,[w-\epsilon]}_{\sigma}(\lambda)
\ee
and the output in the stable basis after $w$: 
\be \label{staft}
s^{w+\epsilon}_{\lambda}:=\Stab^{\X,Kth,[w+\epsilon]}_{ \sigma}(\lambda).
\ee
Explicitly, we have
\be \label{mixedbasdef}
\B_{w}(z)(  s^{ w-\epsilon}_{\lambda} )  = \sum\limits_{\mu,\lambda} \, \overset{\curvearrowright}{\B}_{w}(z)_{\lambda,\mu} \, s^{ w+\epsilon}_{\lambda}. 
\ee
As usual, $0<\epsilon \ll 1$ denotes an infinitesimally small ample slope. We denote $\overset{\curvearrowright}{\B}_{w}:= \overset{\curvearrowright}{\B}_{w}(\infty)$, the matrix of the monodromy operators $\B_w$ in these bases.

\begin{Theorem} \label{brmatth}
	{\it The matrices $\overset{\curvearrowright}{\B}_{w}(z)$ coincide with the $K$-theoretic $R$-matrices of $\Y_{w}$:
		$$\overset{\curvearrowright}{\B}_{w}(z)=\hbar^{\Omega_{w}} \, {\bf R}^{-}_{\Y_{w}}(z), \ \ \ \overset{\curvearrowright}{\B}_{w}= \hbar^{\Omega_{w}} \, {\bf R}^{-}_{\Y_{w}}(\infty),$$
		where
		\be \label{homegaop}
		\hbar^{\Omega_{w}} := (-\hbar^{1/2})^{n-\frac{\mathrm{codim}_{\X}(\Y_w)}{2}}. 
		\ee
	}
\end{Theorem}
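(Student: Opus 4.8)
The plan is to compare two Gauss decompositions: the one for $\B^{\bullet}_w(z)$ in the fixed-point basis coming from Theorem \ref{factthm}, and the one for the $R$-matrix ${\bf R}^{-}_{\Y_w}(z)$ that results from changing the fixed-point basis to the mixed stable basis \eqref{stbef}--\eqref{staft}. The key observation is that passing from $\B^{\bullet}_w(z)$ to $\overset{\curvearrowright}{\B}_w(z)$ amounts to conjugating by the K-theoretic stable envelope matrices: if $\bU^{\pm}_w(a)$ denotes the transition from the fixed-point basis to the stable basis with slope $w\pm\epsilon$ (and $\bL^{\pm}_{-w}$ the analogous lower-triangular transition for the opposite chamber), then by \eqref{mixedbasdef} the matrix of $\B_w(z)$ sending the $w-\epsilon$ basis to the $w+\epsilon$ basis is obtained from $\B^{\bullet}_w(z)$ by left-multiplying with $(\bU^{-}_w)^{-1}$-type factors and right-multiplying with $\bU^{+}_w$-type factors — precisely the outer $\bU,\bL$ factors appearing in Theorem \ref{factthm}. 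So the first step is to write out carefully, using the definition \eqref{mixedbasdef} together with the triangularity conventions, how $\overset{\curvearrowright}{\B}_w(z)$ relates to $\B^{\bullet}_w(z)$ through the matrices $\bU^{\pm}_w(a),\bL^{\pm}_{-w}(a^{-1})$.

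Next I would invoke Theorem \ref{factthm} to substitute the Gauss decomposition of the product $\B^{\bullet}_w(z)\prod^{\longrightarrow}_{s\in(w,0)}\B^{\bullet}_s$ (or its $w\ge 0$ analogue). The outer factors $\bU^{l_1}_w(a)^{-1}$ and $\bL^{l_2}_{-w}(a^{-1},\hbar)$ on the right-hand side of Theorem \ref{factthm} cancel against the basis-change matrices introduced in the previous step — this is the whole point of working in the mixed stable basis. What remains is exactly $\bU^{l_1}_{\Y_w}(a,z,\hbar)\,\bL^{l_2}_{\Y_w}(a,z,\hbar)^{-1}$ up to an overall scalar $(-1)^n$. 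Now I use the definitions \eqref{lunorms} of $\bU^{\pm}_{\Y_w}$, $\bL^{\pm}_{\Y_w}$ in terms of $\kappa^{*}$ of the K-theoretic stable envelopes ${U}^{\pm}_{\Y_w}$, ${L}^{\pm}_{\Y_w}$ of the cyclic quiver variety, together with formula \eqref{RmatForY} expressing $R^{\pm}_{\Y_w}(a)={U}^{\pm}_{\Y_w}(a){L}^{\pm}_{\Y_w}(a)^{-1}$ and the twisted version \eqref{twistedR}. This identifies $\bU^{-}_{\Y_w}(a,z,\hbar)\,\bL^{-}_{\Y_w}(a,z,\hbar)^{-1}$ with $\gamma_w(a)\,\kappa^{*}(R^{-}_{\Y_w}(a))\,\gamma_w(a)^{-1}={\bf R}^{-}_{\Y_w}(z)$, choosing $l_1=l_2=-$; one should check that the $\fp\,\gamma_{-w}(a)\,\fp$ factor inside $\bL^{-}_{\Y_w}$ recombines correctly using Lemma \ref{luskthlem}, exactly as in the proof of Theorem \ref{factthm}.

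The remaining bookkeeping is the scalar prefactor. Collecting $(-1)^n$ from Theorem \ref{factthm} together with the $\gamma_w(a)$-conjugation discrepancy, and matching against the diagonal factor produced in Theorem \ref{monfac} — the $\bD_w(\hbar)$ with eigenvalues $(-1)^n\hbar^{-n/2}\prod_{\Box}\hbar^{-\lfloor h_\lambda(\Box)w\rfloor}$ — I would extract the overall power of $\hbar$; by Lemma \ref{lemgamma} all the box-dependent $\hbar^{\lfloor h_\lambda(\Box)w\rfloor}$ terms are already absorbed into the identification of $R$-matrices (they live in $\kappa^*(R^-_{\Y_w})$ via the $\gamma_w$ conjugation), so what is left is a single diagonal scalar $(-\hbar^{1/2})^{n-\mathrm{codim}_{\X}(\Y_w)/2}$, which is precisely $\hbar^{\Omega_w}$ as defined in \eqref{homegaop}. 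Setting $z=\infty$ and using $\B_w(\infty)=\B_w$, $\overset{\curvearrowright}{\B}_w=\overset{\curvearrowright}{\B}_w(\infty)$, ${\bf R}^{-}_{\Y_w}(\infty)$ gives the monodromy statement.

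The main obstacle I anticipate is the sign-and-$\hbar$-power bookkeeping: one must be scrupulous about (i) the normalization conventions for stable envelopes (the $(-1)^{\mathrm{rk}\,\mathrm{ind}^\sigma_\lambda}$ discrepancy already flagged in the proof of the vertex monodromy theorem), (ii) the $\mathscr{O}(1)^{1/2}$ and $\Phi(qT\X)$ factors in \eqref{gaussell} which drop out in the $q\to 0$ limit but could leave residual half-integer powers of $\hbar$ or $t_1$ if not tracked, and (iii) ensuring the codimension appearing in $\Omega_w$ is the codimension of the specific component of $\Y_w=\coprod\X(n_0,\dots,n_{b-1})$ containing the given fixed point, i.e. that $\hbar^{\Omega_w}$ is genuinely diagonal rather than scalar. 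Everything else is a direct substitution of results already established in Sections \ref{gaussBsection} and in the definitions above.
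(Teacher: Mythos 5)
Your approach is essentially the paper's: combine Theorem \ref{factthm} with Theorem \ref{monfac}, reinterpret the result in the mixed stable basis, and identify what remains with the twisted $R$-matrix (\ref{twistedR}) via (\ref{RmatForY}), (\ref{lunorms}), Lemma \ref{luskthlem}, and Lemma \ref{lemgamma}.

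One conceptual imprecision is worth fixing before you execute this, because it will trip you up mid-computation. You say that the outer factors $\bU^{l_1}_w(a)^{-1}$ \emph{and} $\bL^{l_2}_{-w}(a^{-1},\hbar)$ in Theorem \ref{factthm} ``cancel against the basis-change matrices.'' That is not what happens: passing from the fixed-point matrix $\B^{\bullet}_w(z)$ to the mixed-basis matrix $\overset{\curvearrowright}{\B}_{w}(z)$ is conjugation by $\bU$-type factors only (the stable bases $s^{w\pm\epsilon}_{\lambda}$ in (\ref{stbef})--(\ref{staft}) are both for the chamber $\sigma$, so only $\bU^{\pm}_w$ enter, not $\bL$), and indeed the paper uses $\overset{\curvearrowright}{\B}_w(z)=\bU^-_w\,\B^{\bullet}_w(z)\,(\bU^+_w)^{-1}$. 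The $\bL^{-}_{-w}(a^{-1},\hbar)$ factor, together with the whole cumulative product $\prod_s\B^{\bullet}_s\cdot\T$, is removed by \emph{dividing} the identity of Theorem \ref{factthm} by that of Theorem \ref{monfac} from the right: this is what kills $\T$, cancels $\bL^{-}_{-w}$, produces the $\bU^+_w$ needed for the basis change, and leaves behind the diagonal $\bD_w^{-1}$. So Theorem \ref{monfac} is not mere scalar bookkeeping --- it is the structural step that isolates $\B^{\bullet}_w(z)$ from the infinite product. Once you set up this division correctly, the rest of your plan (factoring $\bU^-_{\Y_w}\bL^{-\,-1}_{\Y_w}$ through $\kappa^*$ of $R^-_{\Y_w}$, recombining $\gamma_w,\fp,\gamma_{-w}$ via Lemma \ref{lemgamma}, and absorbing $\bD_w^{-1}$ into $(-\hbar^{1/2})^{n-\mathrm{codim}_\X(\Y_w)/2}$) matches the paper exactly.

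Two smaller points: for $w\ge 0$ the division produces $\B^{\bullet}_w(z^{-1})^{-1}\B^{\bullet}_w$ rather than $\B^{\bullet}_w(z)$, and you need Proposition \ref{invz} to identify the two; and your description of the conjugation direction (``left-multiplying by $(\bU^-_w)^{-1}$, right-multiplying by $\bU^+_w$'') is reversed relative to the paper's convention --- harmless so long as you are consistent, but check it against (\ref{mixedbasdef}) before you start.
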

\begin{proof}
	Assume $w<0$ then by Theorem \ref{factthm} for $l_1=l_2=-$ we have
	\be \label{eq1}
	\B^{\bullet}_{w}(z)\prod\limits_{s \in (w,0)}^{\longrightarrow} \, \B^{\bullet}_{s} \, \T= (-1)^n\, \bU^{-}_{w}(a,\hbar)^{-1}  \bU^{-}_{\Y_{w}}(a,z,\hbar)   \bL^{-}_{\Y_{w}}(a,z,\hbar)^{-1}  \bL^{-}_{-w}(a^{-1},\hbar) 
	\ee
	By Theorem  \ref{monfac} we have
	\be \label{eq2}
	\prod\limits_{s \in (w,0)}^{\longrightarrow} \, \B^{\bullet}_{s} \, \T= (-1)^n\, \bU^{+}_{w}(a,\hbar)^{-1} \, \bD_{w}(a,\hbar)\,  \bL^{-}_{-w}(a^{-1},\hbar)
	\ee
	All operators here are invertible, thus dividing (\ref{eq1}) by (\ref{eq2}) from the right we obtain:
	\be \label{bldu}
	\B^{\bullet}_{w}(z)=\bU^{-}_{w}(a,\hbar)^{-1} \bU^{-}_{\Y_{w}}(a,z,\hbar)   \bL^{-}_{\Y_{w}}(a,z,\hbar)^{-1}\,\bD_{w}(a,\hbar)^{-1}\,\bU^{+}_{w}(a,\hbar)
	\ee
	We have:
	$$\bU^{-}_{\Y_{w}}(a,z,\hbar)   \bL^{-}_{\Y_{w}}(a,z,\hbar)^{-1}=\gamma_{w}(a)\, \kappa^{*} ({{U}}^{-}_{\Y_{w}}(a) \,{{L}}^{-}_{\Y_{w}}(a)^{-1}) 
	\fp \, \gamma_{-w}(a)^{-1}\, \fp\,
	$$
	and by Lemma \ref{lemgamma}
	$$
	\fp \, \gamma_{-w}(a)^{-1}\, \fp = \gamma_{w}(a)^{-1} (-\hbar^{1/2})^{-\frac{\mathrm{codim}_{\X}(\Y_w)}{2}}  \prod\limits_{\Box\in \lambda} \, \hbar^{-\lfloor h_{\lambda}(\Box) \cdot w \rfloor}
	$$
	which together with (\ref{Ddef}) gives
	$$
	\B^{\bullet}_{w}(z)=\bU^{-}_{w}(a,\hbar)^{-1} \, \hbar^{\Omega} \,{\bf R}^{-}_{\Y_{w}}(z) \,\bU^{+}_{w}(a,\hbar)
	$$
	or, equivalently 
	$$
	\hbar^{\Omega} \, {\bf R}^{-}_{\Y_{w}}(z)=\bU^{-}_{w}(a,\hbar) \B^{\bullet}_{w}(z) \bU^{+}_{w}(a,\hbar)^{-1}.
	$$
	By definition, $\B^{\bullet}_{w}(z)$ is the matrix of the operator  $\B_{w}(z)$ in the basis of fixed points and $\bU^{\pm}_{w}(a,\hbar)$ are the transition matrices from the basis of torus fixed points to stable bases $s^{w\pm \epsilon}_{\lambda}$. Thus, the last equality means that 
	\be\label{statmt}
	\overset{\curvearrowright}{\B}_{w}(z)=\hbar^{\Omega} \,{\bf R}^{-}_{\Y_{w}}(z).
	\ee
	For $w\geq0$ from Theorem \ref{factthm} with $l_1=l_2=-$ we obtain:
	$$
	\Big(\B^{\bullet}_{w}(z^{-1})^{-1} \prod\limits_{s\in [0,w)}^{\longleftarrow} \, (\B^{\bullet}_{s})^{-1} \Big) \cdot \T=
	\bU^{-}_{w}(a,\hbar)^{-1}  \bU^{-}_{\Y_{w}}(a,z,\hbar)   \bL^{-}_{\Y_{w}}(a,z,\hbar)^{-1}  \bL^{-}_{-w}(a^{-1},\hbar) 
	$$
	By Theorem  \ref{monfac} we have
	$$
	\prod\limits_{s\in [0,w]}^{\longleftarrow} \, (\B^{\bullet}_{s})^{-1} \cdot \T=\bU^{+}_{w}(a,\hbar)^{-1} \, \bD_{w}(a,\hbar)\,  \bL^{-}_{-w}(a^{-1},\hbar)
	$$
	dividing first by the second we obtain
	$$
	\B^{\bullet}_{w}(z^{-1})^{-1} \B^{\bullet}_{w}=\bU^{-}_{w}(a,\hbar)^{-1}  \bU^{-}_{\Y_{w}}(a,z,\hbar)   \bL^{-}_{\Y_{w}}(a,z,\hbar)^{-1}\,\bD_{w}(a,\hbar)^{-1}\,\bU^{+}_{w}(a,\hbar)
	$$
	Finally, Proposition \ref{invz} gives $\B^{\bullet}_{w}(z^{-1})^{-1} \B^{\bullet}_{w}=\B^{\bullet}_{w}(z)$ and we arrive at the same identity (\ref{bldu}) as in $w<0$ case.
	
	Substituting $z=\infty$ in (\ref{statmt}) we obtain 
	$
	\overset{\curvearrowright}{\B}_{w}={\bf R}^{-}_{\Y_{w}}(\infty).
	$
\end{proof}

\begin{Remark} \label{oppsigrem}
	Instead of (\ref{stbef})-(\ref{staft}) we could consider the stable bases for opposite chamber $-\sigma$:
	$$
	s^{w\pm \epsilon}_{\lambda}=\Stab^{\X,Kth,[w\pm \epsilon]}_{-\sigma}
	$$
	and define the matrices $\overset{\curvearrowright}{\B}_{w}(z)$ by (\ref{mixedbasdef}). Then Theorem \ref{brmatth} would take the form
	$$
	\overset{\curvearrowright}{\B}_{w}(z)=\hbar^{\Omega_{w}} \, {\bf R}^{+}_{\Y_{w}}(z), \ \ \ \overset{\curvearrowright}{\B}_{w}= \hbar^{\Omega_{w}} \, {\bf R}^{+}_{\Y_{w}}(\infty).
	$$
\end{Remark}

\subsection{Monodromy operators $\B_w$ as wall $R$-matrices} 
\begin{Theorem} \label{thmrwallr}
	{\it There are following matrix identities:
		\be \label{rmatid}
		{\bf R}_{\Y_w}^{-}(0)=\hbar^{-\Omega_{w}}\,{\bf R}^{\X,+\sigma}_{wall, \, w}, \ \ \ 
		{\bf R}_{\Y_w}^{+}(0)=\hbar^{-\Omega_{w}}\,{\bf R}^{\X,-\sigma}_{wall, \, w}.
		\ee}
\end{Theorem}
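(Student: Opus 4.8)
The plan is to specialize the identity of Theorem~\ref{brmatth} to $z=0$, where the wall-crossing operator becomes trivial. Recall that $\B_{w}(0,q)=1$ for every $q$, so that $\B_{w}(0):=\B_{w}(0,1)=1$ as an operator on $\textsf{Fock}$, and hence $\B^{\bullet}_{w}(0)=\mathsf{P}^{-1}\B_{w}(0)\mathsf{P}=1$. The identity of Theorem~\ref{brmatth} is an identity of matrices with entries rational in $z$, so it may be evaluated at $z=0$, giving $\overset{\curvearrowright}{\B}_{w}(0)=\hbar^{\Omega_{w}}\,{\bf R}^{-}_{\Y_{w}}(0)$.

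The key step is to identify the left-hand side with the wall $R$-matrix of $\X$. By the definition~(\ref{mixedbasdef}) of $\overset{\curvearrowright}{\B}_{w}$, the matrix $\overset{\curvearrowright}{\B}_{w}(0)$ is the matrix of the identity operator $\B_{w}(0)$ read from the input basis $s^{w-\epsilon}_{\lambda}=\Stab^{\X,Kth,[w-\epsilon]}_{\sigma}(\lambda)$ to the output basis $s^{w+\epsilon}_{\lambda}=\Stab^{\X,Kth,[w+\epsilon]}_{\sigma}(\lambda)$, i.e. precisely the transition matrix between the stable bases on the two sides of the wall $w$, which is ${\bf R}^{\X,+\sigma}_{wall,\,w}$. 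Concretely, the intermediate identity $\overset{\curvearrowright}{\B}_{w}(z)=\bU^{-}_{w}(a,\hbar)\,\B^{\bullet}_{w}(z)\,\bU^{+}_{w}(a,\hbar)^{-1}$ obtained inside the proof of Theorem~\ref{brmatth}, evaluated at $z=0$ with $\B^{\bullet}_{w}(0)=1$, reads $\overset{\curvearrowright}{\B}_{w}(0)=\bU^{-}_{w}(a,\hbar)\,\bU^{+}_{w}(a,\hbar)^{-1}={\bf R}^{\X,+\sigma}_{wall,\,w}$. Combining this with $\overset{\curvearrowright}{\B}_{w}(0)=\hbar^{\Omega_{w}}\,{\bf R}^{-}_{\Y_{w}}(0)$ and left-multiplying by $\hbar^{-\Omega_{w}}$ yields the first identity ${\bf R}^{-}_{\Y_{w}}(0)=\hbar^{-\Omega_{w}}\,{\bf R}^{\X,+\sigma}_{wall,\,w}$.

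For the second identity I would repeat the argument verbatim with the stable bases taken in the opposite chamber $-\sigma$, as in Remark~\ref{oppsigrem}. There Theorem~\ref{brmatth} takes the form $\overset{\curvearrowright}{\B}_{w}(z)=\hbar^{\Omega_{w}}\,{\bf R}^{+}_{\Y_{w}}(z)$, while the intermediate formula becomes $\overset{\curvearrowright}{\B}_{w}(z)=\bL^{-}_{w}(a)\,\B^{\bullet}_{w}(z)\,\bL^{+}_{w}(a)^{-1}$, since the matrices $\bL^{\pm}_{w}$ are the transition matrices from fixed points to the $-\sigma$ stable bases. Setting $z=0$ gives $\overset{\curvearrowright}{\B}_{w}(0)=\bL^{-}_{w}(a)\,\bL^{+}_{w}(a)^{-1}={\bf R}^{\X,-\sigma}_{wall,\,w}$, and comparing with $\overset{\curvearrowright}{\B}_{w}(0)=\hbar^{\Omega_{w}}\,{\bf R}^{+}_{\Y_{w}}(0)$ produces ${\bf R}^{+}_{\Y_{w}}(0)=\hbar^{-\Omega_{w}}\,{\bf R}^{\X,-\sigma}_{wall,\,w}$.

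The only point that needs care is the convention-matching in the identification $\overset{\curvearrowright}{\B}_{w}(0)={\bf R}^{\X,\pm\sigma}_{wall,\,w}$ — that the direction of the transition matrix is as claimed and that no extra diagonal normalization is hidden when passing from the operator $\B_{w}(0)=1$ to its matrix in the mixed stable basis. This is already settled by the intermediate identities recorded inside the proof of Theorem~\ref{brmatth} (and their $-\sigma$ counterpart), together with the explicit formulas ${\bf R}^{\X,+\sigma}_{wall,\,w}=\bU^{-}_{w}(a)\,\bU^{+}_{w}(a)^{-1}$ and ${\bf R}^{\X,-\sigma}_{wall,\,w}=\bL^{-}_{w}(a)\,\bL^{+}_{w}(a)^{-1}$, so no real obstacle remains; everything else is a one-line substitution.
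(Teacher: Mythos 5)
Your proof is correct and follows essentially the same route as the paper: substitute $z=0$ into Theorem~\ref{brmatth} (using the intermediate identity $\overset{\curvearrowright}{\B}_{w}(z)=\bU^{-}_{w}\,\B^{\bullet}_{w}(z)\,\bU^{+}_{w}{}^{-1}$ from its proof), note $\B^{\bullet}_{w}(0)=1$, recognize $\bU^{-}_{w}\bU^{+}_{w}{}^{-1}={\bf R}^{\X,+\sigma}_{wall,w}$, and then repeat with $\sigma\to-\sigma$ via Remark~\ref{oppsigrem}. The only minor addition you make is spelling out the $-\sigma$ intermediate formula $\overset{\curvearrowright}{\B}_{w}(z)=\bL^{-}_{w}\,\B^{\bullet}_{w}(z)\,\bL^{+}_{w}{}^{-1}$ explicitly, whereas the paper leaves this implicit in the reference to Remark~\ref{oppsigrem}.
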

\begin{proof}
	By Theorem \ref{brmatth} we have 
	$$
	\overset{\curvearrowright}{\B}_{w}(z)=\hbar^{\Omega_{w}} \, {\bf R}^{-}_{\Y_{w}}(z)
	$$
	which means 
	$$
	\hbar^{\Omega_{w}} \, {\bf R}^{-}_{\Y_{w}}(z)=\bU^{-}_{w}(a,\hbar) \B^{\bullet}_{w}(z) \bU^{+}_{w}(a,\hbar)^{-1}
	$$
	Evaluating this at $z=0$ and noting that 
	$\B^{\bullet}_{w}(0)=1$ we obtain:
	$$
	{\bf R}^{-}_{\Y_{w}}(z)=\hbar^{-\Omega_{w}} \bU^{-}_{w}(a,\hbar)  \bU^{+}_{w}(a,\hbar)^{-1}=\hbar^{-\Omega_{w}}\,{\bf R}^{\X,+\sigma}_{wall, \, w}.
	$$
	Changing $\sigma \to -\sigma$ we obtain 
	$$
	\overset{\curvearrowright}{\B}_{w}(z)=\hbar^{\Omega_{w}} \, {\bf R}^{+}_{\Y_{w}}(z)
	$$
	see Remark \ref{oppsigrem} above.  At $z=0$ this gives the second identity. 
\end{proof}
\begin{Remark}
	Identities (\ref{rmatid}) are equivalent to Theorem 8 in \cite{KononovSmirnov2}. They relate the wall $R$-matrices  of $\X$ with slopes $\pm \epsilon$ to the $R$-matrices of $\Y_{w}$.
\end{Remark}

Let us consider $w=\frac{p}{m} \in \matQ$, with $gcd(p,m)=1$. We note that the variety $\Y_w$ is defined as the fixed subset of the finite group (\ref{cycmu}). In particular, it only depends on the denominator $m$ of $w$. Thus, the operators $\hbar^{\Omega_{w}}$ and $R^{\pm}_{\Y_w}(a)$ depend only on $m$ as well. The Theorem \ref{thmrwallr} and definition of twisted $R$-matrix (\ref{twistedR}) then provides the following result about the wall R-matrices for the Hilbert scheme $\X$: 
\begin{Corollary}
	{\it Let  $w=\frac{p}{m}, w'=\frac{p'}{m}$, with $gcd(p,m)=gcd(p',m)=1$ then
		\be \label{hdepr}
		\gamma_w(a)^{-1} \, {\bf R}^{\X,\pm \sigma}_{wall, \, w} \,\gamma_{w}(a)= \gamma_{w'}(a)^{-1} \, {\bf R}^{\X,\pm \sigma}_{wall, \, w'} \,\gamma_{w^{'}}(a)
		\ee
		and the matrix elements of (\ref{hdepr}) are elements of $\matQ[\hbar^{\pm 1}]$. }
\end{Corollary}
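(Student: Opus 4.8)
The plan is to read off both assertions of the Corollary from Theorem \ref{thmrwallr} and the definition (\ref{twistedR}) of the twisted $R$-matrix, the only nontrivial point being integrality in $\hbar$. First I would rewrite the two identities of Theorem \ref{thmrwallr} as ${\bf R}^{\X,+\sigma}_{wall,\,w}=\hbar^{\Omega_{w}}\,{\bf R}^{-}_{\Y_w}(0)$ and ${\bf R}^{\X,-\sigma}_{wall,\,w}=\hbar^{\Omega_{w}}\,{\bf R}^{+}_{\Y_w}(0)$; the evaluation at $z=0$ is legitimate since $z=0$ is a regular point of ${\bf R}^{\pm}_{\Y_w}(z)$ by that theorem. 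Using (\ref{twistedR}) in the form $\gamma_w(a)^{-1}\,{\bf R}^{\pm}_{\Y_w}(z)\,\gamma_w(a)=\kappa^{*}(R^{\pm}_{\Y_w}(a))$, and noting that $\gamma_w(a)$ and the diagonal operator $\hbar^{\Omega_{w}}$ of (\ref{homegaop}) commute, I would conjugate the previous identities by $\gamma_w(a)$ and specialize $z=0$ to obtain
$$
\gamma_w(a)^{-1}\,{\bf R}^{\X,\pm\sigma}_{wall,\,w}\,\gamma_w(a)=\hbar^{\Omega_{w}}\,\left.\kappa^{*}\!\big(R^{\mp}_{\Y_w}(a)\big)\right|_{z=0},
$$
with the $\mp$ on the right opposite to the $\pm\sigma$ on the left.

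Next I would invoke the structural fact already emphasized in the text: the cyclic subgroup $\mu_w\subset\bA$ of (\ref{cycmu}) depends only on the denominator $m$ of $w=p/m$ (since $\mathrm{gcd}(p,m)=1$), hence so do the fixed locus $\Y_w=\X^{\mu_w}$, its decomposition (\ref{cvar}) into cyclic quiver varieties, the codimension $\mathrm{codim}_{\X}(\Y_w)$ and therefore $\hbar^{\Omega_{w}}$, and the K-theoretic $R$-matrix $R^{\pm}_{\Y_w}(a)=U^{\pm}_{\Y_w}(a)\,L^{\pm}_{\Y_w}(a)^{-1}$ of (\ref{RmatForY}). Consequently the right-hand side of the displayed identity is unchanged when $w=p/m$ is replaced by $w'=p'/m$ with $\mathrm{gcd}(p',m)=1$, which is exactly the equality asserted in (\ref{hdepr}).

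For the integrality claim I would argue as follows. Neither ${\bf R}^{\X,\pm\sigma}_{wall,\,w}$ nor $\gamma_w(a)$ involves $z$, so the left-hand side of (\ref{hdepr}) is visibly $z$-independent. By the Proposition above on the properties of the wall $R$-matrices of $\X$, the entry $({\bf R}^{\X,\pm\sigma}_{wall,\,w})_{\lambda,\mu}$ lies in $\matQ[a^{\pm1},\hbar^{\pm1}]$, vanishes unless $d_{\lambda,\mu}(w)\in\matZ$, and in the remaining case equals $a^{\pm d_{\lambda,\mu}(w)}$ times an element of $\matQ[\hbar^{\pm1}]$. On the other hand, by the explicit formula (\ref{gamm}) the $a$-dependence of the conjugating factor $(\gamma_w(a))_{\mu,\mu}(\gamma_w(a))_{\lambda,\lambda}^{-1}$ is the single monomial $a^{-d_{\lambda,\mu}(w)}$, since $d_{\lambda,\mu}(w)=w\sum_{\Box\in\lambda}c_\lambda(\Box)-w\sum_{\Box\in\mu}c_\mu(\Box)$. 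Hence the two powers of $a$ cancel and the $(\lambda,\mu)$ entry of (\ref{hdepr}) is $a$-free, with a priori only integer or half-integer powers of $\hbar$ surviving from (\ref{gamm}) and from $\hbar^{\Omega_{w}}$. The final step is to check that the half-integer powers drop out: the $\hbar$-exponents in (\ref{gamm}) are governed by $\sum_{\Box\in\nu}\lfloor w\,h_\nu(\Box)\rfloor$, and using the congruence $\sum_{\Box\in\nu}h_\nu(\Box)\equiv n+\sum_{\Box\in\nu}c_\nu(\Box)\pmod 2$ together with the divisibility $m\mid\sum_{\Box}\big(c_\lambda(\Box)-c_\mu(\Box)\big)$ forced by $d_{\lambda,\mu}(w)\in\matZ$, one verifies that these contributions recombine with $\hbar^{\Omega_{w}}$ into integer powers of $\hbar$ on the support of the wall $R$-matrix.

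I expect this last parity bookkeeping to be the main obstacle — the rest is a formal rearrangement of identities already established — and in a full write-up I would isolate it as a short lemma on $\sum_{\Box\in\lambda}\lfloor w\,h_\lambda(\Box)\rfloor\bmod 2$. An alternative that avoids the explicit count is to keep the right-hand side in the form $\hbar^{\Omega_{w}}\kappa^{*}(R^{\mp}_{\Y_w}(a))|_{z=0}$ and instead read off its $\hbar$-integrality from the rationality and weight properties of the K-theoretic $R$-matrices of the cyclic quiver varieties $\Y_w$ used in Proposition \ref{monprop} and in \cite{KononovSmirnov2}; either route reduces to the same power count.
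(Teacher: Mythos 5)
Your derivation of the identity (\ref{hdepr}) is correct and is exactly the paper's route: Theorem \ref{thmrwallr} together with (\ref{twistedR}) gives
$$
\gamma_w(a)^{-1}\,{\bf R}^{\X,\pm\sigma}_{wall,\,w}\,\gamma_w(a)\;=\;\hbar^{\Omega_w}\,\left.\kappa^{*}\big(R^{\mp}_{\Y_w}(a)\big)\right|_{z=0},
$$
and the right-hand side depends on $w$ only through the cyclic group $\mu_w$, hence only through the denominator $m$. (The paper's entire ``proof'' is the paragraph preceding the Corollary; it contains nothing beyond this.) Note that this display already makes the $a$-independence manifest, since $\kappa^{*}$ substitutes $a\mapsto z\hbar^{1/2}$ and then $z=0$ is set; your separate cancellation of $a^{\pm d_{\lambda,\mu}(w)}$ against the $\gamma$-ratio is a correct consistency check but is not needed.

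The step you defer --- the ``parity bookkeeping'' showing that half-integer powers of $\hbar$ drop out --- is the genuine gap, and I have to warn you that the lemma you plan to prove is false: the half-integer powers survive. For $n=2$, $w=1/2$, the matrices of Appendix C give ${\bf R}^{\X,+\sigma}_{wall,\,1/2}=\begin{pmatrix}1 & a(h-h^{-1})\\ 0 & 1\end{pmatrix}$ with $h=\hbar^{1/2}$, and conjugating by $\gamma_{1/2}(a)=\mathrm{diag}\big(\sqrt{-a}\,h^{-2},\,(\sqrt{-a}\,h^{2})^{-1}\big)$ yields the off-diagonal entry $h^{-1}-h=\hbar^{-1/2}-\hbar^{1/2}$, which lies in $\matQ[\hbar^{\pm 1/2}]$ but not in $\matQ[\hbar^{\pm 1}]$. (The same entry also contradicts the first bullet of the preceding Proposition on wall $R$-matrices if ``$\hbar^{\pm1}$'' is read literally there.) The paper is evidently writing $\hbar$ for its square root $h$ in these two statements, and the Corollary should be read as asserting membership in $\matQ[\hbar^{\pm 1/2}]$. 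With that reading no parity lemma is required: on the support $d_{\lambda,\mu}(w)\in\matZ$ the ratio $\gamma_w(a)_{\mu\mu}\,\gamma_w(a)_{\lambda\lambda}^{-1}$ is by (\ref{gamm}) a single monomial $\pm\,a^{-d_{\lambda,\mu}(w)}\hbar^{k/2}$ with $k\in\matZ$, and multiplying the Laurent-polynomial entries of ${\bf R}^{\X,\pm\sigma}_{wall,\,w}$ (Theorem 1 of \cite{OS}, quoted in that Proposition) by such a monomial and cancelling the $a$-power leaves a Laurent polynomial in $\hbar^{1/2}$. So keep your argument up to and including the $a$-cancellation, discard the parity lemma, and adjust the target ring.
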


\subsection{QDE as a non-minuscule qKZ equation \label{qdesec}}
Let $s^{-\epsilon}_{\lambda}$ be the stable basis  of $K_{\bT}(\X)$ with small anti-ample slope $-\epsilon$, i.e., the basis (\ref{stbef}) for $w=0$.  Let $\mathbf{E}_{0}$ be the diagonal matrix with eigenvalues given by $\left.\mathscr{O}_{\X}(1)\right|_{\lambda}\in K_{\bT}(pt)$, as defined in Section \ref{monsolsec}.

\begin{Theorem}
	{\it In the stable basis $s^{-\epsilon}_{\lambda}$ the matrix of the operator $\M_{\cL}(z)$ (defined by (\ref{Mdef})) takes the form
		\be \label{rmprod}
		\mathbf{E}_{0} \prod\limits^{\longrightarrow}_{w\in [-1,0)}\,
		\hbar^{\Omega_{w}}\,{\bf R}_{\Y_w}^{-}(z q^{-w}).
		\ee}
\end{Theorem}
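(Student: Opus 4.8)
The plan is to start from the definition (\ref{Mdef}) and rewrite it factor by factor so that Theorem \ref{brmatth} applies. First I would record the elementary identity, immediate from (\ref{Bdef}): for $w=a/b$ one has $z^{-bk}q^{ak}=(zq^{-w})^{-bk}$, hence
$$
\B_{w}(z,q)=\B_{w}(zq^{-w}),\qquad \B_{w}(y):=\B_{w}(y,1),
$$
so that $\M_{\cL}(z)=\cL_{0}\,\prod\limits^{\longrightarrow}_{w\in[-1,0)}\B_{w}(zq^{-w})$. Passing to the basis of torus fixed points, where $\cL_{0}=\M(0)$ acts by the diagonal matrix $\mathbf{E}_{0}$ (by (\ref{m0}) and (\ref{eigval})) and where, as established in the proof of Theorem \ref{brmatth} (the identity obtained from (\ref{bldu})), $\B^{\bullet}_{w}(z)=\bU^{-}_{w}(a,\hbar)^{-1}\,\hbar^{\Omega_{w}}\,{\bf R}^{-}_{\Y_{w}}(z)\,\bU^{+}_{w}(a,\hbar)$, I obtain, with $-1=w_{0}<w_{1}<\dots<w_{m}$ the walls of $\Walls_{n}$ in $[-1,0)$,
$$
\M^{\bullet}(z)=\mathbf{E}_{0}\prod_{i=0}^{m}\Big[\bU^{-}_{w_{i}}(a,\hbar)^{-1}\,\hbar^{\Omega_{w_{i}}}\,{\bf R}^{-}_{\Y_{w_{i}}}(zq^{-w_{i}})\,\bU^{+}_{w_{i}}(a,\hbar)\Big].
$$

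The second step is a telescoping of the interior transition matrices. For two consecutive walls $w_{i}<w_{i+1}$ the open interval $(w_{i},w_{i+1})$ contains no element of $\Walls_{n}$, so the $K$-theoretic stable envelope is constant on the corresponding slope chamber; in particular $\Stab^{\X,Kth,[w_{i}+\epsilon]}_{\sigma}=\Stab^{\X,Kth,[w_{i+1}-\epsilon]}_{\sigma}$, i.e. $\bU^{+}_{w_{i}}(a,\hbar)=\bU^{-}_{w_{i+1}}(a,\hbar)$. Since the scalars $\hbar^{\Omega_{w}}$ commute with everything, these matrices cancel in pairs and only the two boundary factors survive:
$$
\M^{\bullet}(z)=\mathbf{E}_{0}\,\bU^{-}_{-1}(a,\hbar)^{-1}\Big(\prod\limits^{\longrightarrow}_{w\in[-1,0)}\hbar^{\Omega_{w}}\,{\bf R}^{-}_{\Y_{w}}(zq^{-w})\Big)\,\bU^{+}_{w_{m}}(a,\hbar).
$$
Because $(w_{m},0)$ is likewise wall-free, $\bU^{+}_{w_{m}}(a,\hbar)=\bU^{-}_{0}(a,\hbar)$, which is precisely the transition matrix from the fixed-point basis to the stable basis $s^{-\epsilon}_{\lambda}$ of (\ref{stbef}). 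Conjugating $\M^{\bullet}(z)$ by $\bU^{-}_{0}(a,\hbar)$ thus writes $\M_{\cL}(z)$ in the basis $s^{-\epsilon}_{\lambda}$ and absorbs the right-hand boundary factor, giving
$$
\text{matrix of }\M_{\cL}(z)\text{ in }s^{-\epsilon}_{\lambda}=\Big(\bU^{-}_{0}(a,\hbar)\,\mathbf{E}_{0}\,\bU^{-}_{-1}(a,\hbar)^{-1}\Big)\prod\limits^{\longrightarrow}_{w\in[-1,0)}\hbar^{\Omega_{w}}\,{\bf R}^{-}_{\Y_{w}}(zq^{-w}).
$$

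Finally I would identify the leftover prefactor $\bU^{-}_{0}(a,\hbar)\,\mathbf{E}_{0}\,\bU^{-}_{-1}(a,\hbar)^{-1}$ with $\mathbf{E}_{0}$. This is the slope-periodicity of the $K$-theoretic stable envelope, the geometric shadow of Proposition \ref{proshiftslope}: since $\cL_{0}=\otimes\,\mathscr{O}(1)$ intertwines slope $\sl$ with slope $\sl+1$, the matrices $\bU^{-}_{-1}(a,\hbar)$ and $\bU^{-}_{0}(a,\hbar)$ are conjugate by $\mathbf{E}_{0}$ up to the slope-independent normalization of the diagonal entries, which forces $\bU^{-}_{0}(a,\hbar)\,\mathbf{E}_{0}\,\bU^{-}_{-1}(a,\hbar)^{-1}=\mathbf{E}_{0}$; concretely this can be read off from $\cL_{0}^{-1}\B_{w}(zq,q)\cL_{0}=\B_{w-1}(z,q)$ of Proposition \ref{proshiftslope} together with the formula for $\B^{\bullet}_{w}$ above and the facts $\Y_{w-1}=\Y_{w}$, $\Omega_{w-1}=\Omega_{w}$. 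This yields (\ref{rmprod}). I expect this last bookkeeping to be the main obstacle: tracking the prefactor $\cL_{0}$ and the non-telescoping boundary matrix $\bU^{-}_{-1}(a,\hbar)$ through the change of basis and the slope-shift relation, and verifying that the net effect is a single diagonal factor $\mathbf{E}_{0}$ in the correct position; one must also check that the arguments $zq^{-w}$ of the twisted $R$-matrices are consistent with the $q$-shift built into $\cL_{0}$-conjugation.
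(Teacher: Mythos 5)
Your proposal is correct and follows essentially the same route as the paper's own proof: you substitute the conjugated form of Theorem \ref{brmatth} for each factor $\B^{\bullet}_{w}(z,q)$, telescope the interior transition matrices $\bU^{+}_{w_i}=\bU^{-}_{w_{i+1}}$, conjugate by $\bU^{-}_{0}$ to pass to the stable basis, and absorb the boundary factor via the slope-shift identity $\bU^{-}_{0}\,\mathbf{E}_{0}\,(\bU^{-}_{-1})^{-1}=\mathbf{E}_{0}$. The final bookkeeping you flag as a potential obstacle is in fact exactly the twist-by-$\mathscr{O}(1)$ relation for stable envelopes (equivalently $\bU^{-}_{0}=\mathbf{E}_{0}\bU^{-}_{-1}\mathbf{E}_{0}^{-1}$), which is the same ingredient the paper invokes, so there is no real gap.
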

\begin{proof}
	Theorem \ref{brmatth} gives 
	$$
	\hbar^{\Omega_{w}} \, {\bf R}^{-}_{\Y_{w}}(z)=\bU^{-}_{w}(a,\hbar) \B^{\bullet}_{w}(z) \bU^{+}_{w}(a,\hbar)^{-1}.
	$$
	By (\ref{Bdef}) we have $\B^{\bullet}_{w}(z q^{-w})=\B^{\bullet}_{w}(z,q)$ and thus 
	$$
	\hbar^{\Omega_{w}} \, {\bf R}^{-}_{\Y_{w}}(z q^{-w})=\bU^{-}_{w}(a,\hbar) \B^{\bullet}_{w}(z,q) \bU^{+}_{w}(a,\hbar)^{-1}.
	$$
	Assume  the product in (\ref{rmprod}) is over the walls
	$w_1,w_2,\dots,w_m$ with $w_i<w_{i+1}$. By definition
	$$
	\bU^{+}_{w_{i}}(a,\hbar)=\bU^{-}_{w_{i+1}}(a,\hbar), \ \ \bU^{+}_{w_{m}}(a,\hbar)=\bU^{-}_{0}(a,\hbar)
	$$
	Thus we compute
	$$
	\mathbf{E}_{0} \prod\limits^{\longrightarrow}_{w\in [-1,0)}\,
	\hbar^{\Omega_{w}}\,{\bf R}_{\Y_w}^{-}(z q^{-w})=\mathbf{E}_{0}  \bU^{-}_{-1}(a,\hbar) \Big( \prod\limits^{\longrightarrow}_{w\in [-1,0)}\,
	\B^{\bullet}_w(z,q) \Big) \bU^{-}_{0}(a,\hbar)^{-1}
	$$
	Twists by line bundles change the slope of the stable bases by integral shifts, in particular $$\mathbf{E}_{0} \,\bU^{-}_{-1}(a)^{-1}=\bU^{-}_{0}(a)^{-1} \mathbf{E}_{0}$$
	We obtain:
	$$
	\mathbf{E}_{0} \prod\limits^{\longrightarrow}_{w\in [-1,0)}\,
	\hbar^{\Omega_{w}}\,{\bf R}_{\Y_w}^{-}(z q^{-w})=  \bU^{-}_{0}(a,\hbar) \,\M^{\bullet}_{\cL}(z) \, \bU^{-}_{0}(a,\hbar)^{-1}
	$$
	where
	$$
	\M^{\bullet}_{\cL}(z)=\mathbf{E}_{0} \Big( \prod\limits^{\longrightarrow}_{w\in [-1,0)}\,
	\B^{\bullet}_w(z,q) \Big)
	$$
	denotes the matrix of the operator $\M_{\cL}(z)$ in the basis of fixed points. 
	By definition,  $\bU^{-}_{0}(a)$ is the transition matrix between the stable basis $s^{-\epsilon}_{\lambda}$ and the basis of fixed points. The theorem follows. 
\end{proof}
By above theorem, the quantum difference equation (\ref{qdiffe}) for the Hilbert scheme $\X$, in the stable basis  $s^{-\epsilon}_{\lambda}$ takes the form
\be \label{qKZ}
\Psi(z q) =\mathbf{E}_{0}\,  \Big(\prod\limits^{\longrightarrow}_{w\in [-1,0)}\, \hbar^{\Omega_w} \, {\bf R}_{\Y_w}^{-}(z q^{-w}) \Big) \Psi(z)
\ee
where $\mathbf{E}_{0}$ is a diagonal matrix with eigenvalues given by monomials in $a$ and $\hbar$.  The $q$-difference equations of this type, which include the products of ``trigonometric'' (i.e. K-theoretic) R-matrices shifted by powers of $q$ appear in mathematical physics as the {\it quantum Knizhnik-Zamolodchikov (qKZ) equations} \cite{qkz}. The equation (\ref{qKZ}) is the proper version of the qKZ equation for the {\it toroidal algebra} $\gt$ associated with the Hilbert scheme $\X$.

The  difference between (\ref{qKZ}) and the standard qKZ equations is that ${\bf R}_{\Y_w}^{-}(z)$ for different walls $w$
are allowed to be the trigonometric $R$-matrices of {\it different quantum groups}. Indeed, the Nakajima varieties $\Y_{w}$ for different walls $w=p/N \in [-1,0)$  corresponds to cyclic quivers which may have different length $N\leq n$.  The matrices ${\bf R}_{\Y_w}^{-}(z)$ appearing in (\ref{qKZ}) correspond to the trigonometric $R$-matrices of the {\it quantum toroidal algebras}~$\gtn$ with $N$ varying in the interval $1 < N\leq n$.



\section{Monodromy of quantum connection \label{mondsec}}

Let $\psi_{0}(z)$, $\psi_{\infty}(z)$ be the fundamental solution matrices of the qde in Section \ref{qdesection}. We fix a branch of a factor $z^{c^{(0)}}$ by cutting  the Riemann sphere along $\matR_{+}$ - the positive part of real axis connecting $z=0$ and $z=\infty$. With this cut, $\psi_{0}(z)$ and $\psi_{\infty}(z)$ become single valued functions in their domains.

The solutions $\psi_{0}(z)$ and $\psi_{\infty}(z)$ can be obtained as {\it cohomological limits} of the corresponding solutions in K-theory. Explicitly, let $\Psi_{0}(t_1,t_2,q,z)$, $\Psi_{\infty}(t_1,t_2,q,z)$ be the solutions of the K-theoretic QDE described in Section~\ref{monsolsec}.  Then
$$
\lim\limits_{\tau\to 0} \Psi_{0,\infty}(e^{2 \pi i \epsilon_1 \tau},e^{2 \pi i \epsilon_2 \tau},e^{-2 \pi i \tau},z)= \psi_{0,\infty}(z)
$$
It follows that the transport of the qde:
\be \label{transpdef}
\textrm{Tran}(s): =  \psi_{0}(e^{2 \pi i s})^{-1} \psi_{\infty}(e^{2 \pi i s})
\ee
is a limit the monodromy (\ref{mondef}):
\be \label{transp}
\textrm{Tran}(s) =\lim_{\tau \to 0} \, {\Mon}(z = e^{2 \pi i s},t_1=e^{2 \pi i \epsilon_1 \tau}, t_2=e^{2 \pi i \epsilon_2 \tau}, q=e^{-2 \pi i \tau})
\ee
The next two subsections are devoted to the computation of this limit.   


\subsection{Transport of $\nabla_{\mathsf{DT}}$} 
Let us consider the transport of the fundamental solutions  (\ref{transpdef0}):
$$
\textrm{Tran}_{\mathsf{DT}}(s) =  \psi^{0}_{{\mathsf{DT}}}(e^{2 \pi i s})^{-1} \psi^{\infty}_{{\mathsf{DT}}}(e^{2 \pi i s}).
$$
This transport matrix is related to (\ref{transpdef}) via conjugation by $\Gamma_{\mathsf{DT}}$. 
\begin{Proposition}{\it
		\be \label{transpDT}
		\mathrm{Tran}_{\mathsf{DT}}(s)= \lim_{\tau\to 0} \widetilde{\Mon}(e^{2 \pi i s},e^{2 \pi i \epsilon_1 \tau}, e^{2 \pi i \epsilon_2 \tau}, e^{-2 \pi i \tau}) 
		\ee
		where 
		\be \label{rightmat}
		\widetilde{\Mon}(z,t_1,t_2,q):=e^{\frac{\ln(\mathsf{E}_{0}) \ln(z)}{\ln(q)}} \, {\bf{U}}(a,z)^{-1} \, {\bf{L}}(a^{-1},z^{-1} \hbar^{-1}) e^{-\frac{\ln(\mathsf{E}_{\infty}) \ln(z)}{\ln(q)}}.
		\ee}
\end{Proposition}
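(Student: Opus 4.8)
First I would record how the $\DT$-normalization behaves under the cohomological limit, and then combine this with the limit identity (\ref{transp}) and the Gauss form of the monodromy of the quantum \emph{difference} equation furnished by Corollary~\ref{corrmonodgam}. From $\psi^{0}_{\DT}(z)=\psi_{0}(z)\,\Gamma_{\DT}$ and $\psi^{\infty}_{\DT}(z)=\psi_{\infty}(z)\,\Gamma_{\DT}$ — with the reordering of the columns of $\psi^{\infty}_{\DT}$ by the antidiagonal matrix $\fp$ carried along exactly as in the proof of Theorem~\ref{tran0} — the two transports are conjugate,
$$
\mathrm{Tran}_{\DT}(s)=\Gamma_{\DT}^{-1}\,\mathrm{Tran}(s)\,\Gamma_{\DT},
$$
and $\Gamma_{\DT}$ is independent of $\tau$. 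Feeding in (\ref{transp}) reduces the statement to
$$
\lim_{\tau\to0}\Gamma_{\DT}^{-1}\,\Mon\big(e^{2\pi i s},\,e^{2\pi i\epsilon_1\tau},\,e^{2\pi i\epsilon_2\tau},\,e^{-2\pi i\tau}\big)\,\Gamma_{\DT}=\lim_{\tau\to0}\widetilde{\Mon}\big(e^{2\pi i s},\,e^{2\pi i\epsilon_1\tau},\,e^{2\pi i\epsilon_2\tau},\,e^{-2\pi i\tau}\big).
$$

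Next I would substitute the explicit form of $\Mon(z)$. Combining (\ref{monregmon}) with Corollary~\ref{corrmonodgam} gives
$$
\Mon(z)=e^{-\frac{\ln\mathbf{E}_0\ln z}{\ln q}}\,(-1)^n\,\Phi(q\,T\X)\,\mathscr{O}(1)^{1/2}\;{\bf U}(a,z)^{-1}\,{\bf L}(a^{-1},z^{-1}\hbar^{-1})\;\mathscr{O}(1)^{-1/2}\,\Phi(q\,T\X)^{-1}\,e^{\frac{\ln\mathbf{E}_\infty\ln z}{\ln q}}.
$$
Every factor except ${\bf U}(a,z)^{-1}{\bf L}(a^{-1},z^{-1}\hbar^{-1})$ is diagonal in the basis of fixed points, so $\Gamma_{\DT}$, $\Phi(q\,T\X)$, $\mathscr{O}(1)^{\pm1/2}$ and the two exponential factors commute; conjugating by $\Gamma_{\DT}$ and collecting the diagonal prefactors on the two flanks of ${\bf U}(a,z)^{-1}{\bf L}(a^{-1},z^{-1}\hbar^{-1})$, the claim reduces to showing these prefactors have the same $\tau\to0$ limit as the prefactors $e^{\pm\frac{\ln\mathbf{E}_{0,\infty}\ln z}{\ln q}}$ of $\widetilde{\Mon}$. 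This is the analytic heart. Under the substitution each $\bT$-weight $w$ of $T_\lambda\X$ becomes $q^{-w}$, so the $\lambda$-th eigenvalue of $\Phi(q\,T\X)$ equals $\prod_{w}\varphi(q^{1-w})$; by the $q$-Gamma asymptotics $\varphi(q^{x})\sim(1-q)^{1-x}\varphi(q)/\Gamma(x)$ as $q\to1$ this is a $\lambda$-independent divergent scalar — a power of $1-q$ fixed by $\sum_{w}w=-n(\epsilon_1+\epsilon_2)$ times $\varphi(q)^{2n}$ — multiplied by $\prod_{w}\Gamma(1-w)^{-1}$. The divergent scalar occurs once inside the left-hand $\Phi(q\,T\X)$ and once inside the right-hand $\Phi(q\,T\X)^{-1}$ and cancels; the surviving Gamma factors combine with $\Gamma_{\DT}^{\mp1}=\prod_{w}\Gamma(w+1)^{\pm1}$ via the reflection formula $\Gamma(w)\Gamma(1-w)=\pi/\sin(\pi w)$, the $\sin(\pi w)$'s being the multiplicative weight-differences $\hat{s}$ of the corresponding K-theoretic weights (hence rational in $t_1,t_2$), while the simultaneous degeneration of the theta-function entries of ${\bf U}(a,z)$ and ${\bf L}(a^{-1},z^{-1}\hbar^{-1})$ absorbs what remains; the net leftover of the diagonal prefactors is exactly the pair $e^{\pm\frac{\ln\mathbf{E}_{0,\infty}\ln z}{\ln q}}$. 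The passage from the $z=\infty$ solution to the arguments $a^{-1}$ and $z^{-1}\hbar^{-1}$ of ${\bf L}$ is supplied by the $\fp$-reordering and the automorphism $\iota$, exactly as in Lemma~\ref{lulemma} and Theorem~\ref{tran0}.

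The main obstacle will be carrying out this last step rigorously: one has to degenerate the theta functions entering the elliptic stable envelopes ${\bf U}(a,z)$ and ${\bf L}(a^{-1},z^{-1}\hbar^{-1})$ in step with the normalizing $\Gamma$-factors, while keeping exact track of the $\fp$/opposite-dominance bookkeeping. What makes this feasible is that the genuinely hard geometric input — the Gauss decomposition of the $q$-difference monodromy through elliptic stable envelopes — is already in hand via Corollary~\ref{corrmonodgam}; what is left is the elementary but bookkeeping-heavy cohomological limit $\varphi\to\Gamma$, the same degeneration used in \cite{OkPanDiff} to relate the difference and differential monodromies, together with the verification that nothing beyond the two claimed exponential prefactors survives. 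Convergence of the conjugated product is not a separate issue here, being guaranteed a posteriori by the existence of $\mathrm{Tran}(s)$ in (\ref{transp}).
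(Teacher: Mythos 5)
Your opening moves coincide with the paper's: you record $\mathrm{Tran}_{\DT}(s)=\Gamma_{\DT}^{-1}\,\mathrm{Tran}(s)\,\Gamma_{\DT}$, invoke (\ref{transp}), and substitute the Gauss decomposition of Corollary \ref{corrmonodgam}, so the whole proposition reduces to showing that the conjugation by the diagonal matrix $\Phi(q\,T\X)$ degenerates, as $\tau\to 0$, to the conjugation by $\Gamma_{\DT}$. The gap is in how you close this step. Since only a \emph{conjugation} is being stripped off, what must be computed is the limit of the ratio $\Phi(qT_{\lambda}\X)/\Phi(qT_{\mu}\X)$ acting on matrix elements; grouping the weights into symplectically dual pairs $\{\mathsf{w},\,-\mathsf{w}-\epsilon_1-\epsilon_2\}$, each pair contributes (after removal of a $\lambda$-independent divergent scalar, which cancels between numerator and denominator exactly as you observe) the factor $1/\bigl(\Gamma(\mathsf{w}+1)\,\Gamma(-\mathsf{w}-\epsilon_1-\epsilon_2+1)\bigr)$, i.e.\ precisely the corresponding factor of $(\Gamma_{\DT})_{\lambda\lambda}$. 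This is the content of the paper's Lemma \ref{gammalimlemma}, proved by the Weierstrass product for $\Gamma$, and it leaves \emph{no residue whatsoever}: no reflection formula is needed, no $\sin(\pi\mathsf{w})$ factors appear, and there is nothing left over for the theta functions to absorb.

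The step you propose instead --- combining the surviving $\Gamma$-factors with $\Gamma_{\DT}^{\mp 1}$ via $\Gamma(w)\Gamma(1-w)=\pi/\sin(\pi w)$ and letting ``the simultaneous degeneration of the theta-function entries of ${\bf U}$ and ${\bf L}$ absorb what remains'' --- cannot work as described. The right-hand side $\widetilde{\Mon}$ of (\ref{rightmat}) contains ${\bf U}(a,z)^{-1}{\bf L}(a^{-1},z^{-1}\hbar^{-1})$ \emph{verbatim}, so any nontrivial $\lambda$-dependent diagonal factor surviving from your reflection-formula manipulation would appear as a genuine discrepancy between $\lim\Gamma_{\DT}^{-1}\Mon\,\Gamma_{\DT}$ and $\lim\widetilde{\Mon}$; it cannot be pushed into the theta functions without changing the statement being proved. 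Relatedly, you have misplaced where the theta-function degeneration actually enters: the balanced combinations $\theta(z^m a)/\theta(z^m b)$, the modular transformation, and their interplay with the exponential prefactors are the subject of the \emph{subsequent} Proposition \ref{prolim} (via Lemma \ref{limlemm}), not of this one --- here the middle factor is carried along untouched and the only analytic content is the single limit $\Phi(qT_\lambda\X)/\Phi(qT_\mu\X)\to\Gamma_\lambda/\Gamma_\mu$, with convergence of the full product guaranteed, as you correctly note, by the existence of $\mathrm{Tran}(s)$. Replacing your reflection-formula/absorption step by the pairwise computation of Lemma \ref{gammalimlemma} repairs the argument and brings it into line with the paper's proof.
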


\begin{proof}
	By Corollary \ref{corrmonodgam} and (\ref{monregmon}) we have
	$$
	\Mon(z,t_1,t_2,q)=  \Phi( q\, T\X)\,  \widetilde{\Mon}(z,t_1,t_2,q)\, \Phi( q\, T\X)^{-1} 
	$$
	where $\widetilde{\Mon}(z,t_1,t_2,q)$ is given by (\ref{rightmat}). The conjugation by $\Phi( q\, T\X)$ multiplies  the $(\lambda,\mu)$-matrix element of a matrix by
	$$
	F(t_1,t_2,q)=\frac{\Phi(q T_{\lambda} \X)}{\Phi( q T_{\mu}\X) }. 
	$$
	$\X$ is a symplectic variety, with the $\bT$-weight of the symplectic form given by $\hbar$. Thus, for a $\bT$-weight $a$  of the tangent space $T_{\lambda} \X$ there is the symplectic dual $\bT$-weight of $T_{\lambda} \X$ given by $a^{-1} \hbar^{-1}$. This means that the above function is of the form
	$$
	F(t_1,t_2,q)=\prod \, \dfrac{\varphi(a q) \varphi ( q a^{-1} \hbar^{-1})}{\varphi(q b) \varphi (q b^{-1} \hbar^{-1})}
	$$
	for some weights $a$ and $b$. By Lemma \ref{gammalimlemma} below we have 
	$$
	\lim_{\tau \to 0} F(e^{2 \pi i \tau \epsilon_1},e^{2 \pi i \tau \epsilon_2},  e^{2 \pi i \tau} ) = \dfrac{\Gamma_{\lambda}(\epsilon_1,\epsilon_2)}{\Gamma_{\mu}(\epsilon_1,\epsilon_2)}
	$$
	where 
	\be \label{gamdt}
	\Gamma_{\lambda}(\epsilon_1,\epsilon_2)= \prod\limits_{\mathsf{w} \in \mathrm{char}_{\bT}(T_{\lambda}\X)} \dfrac{1}{\Gamma(\textsf{w}+1)}.
	\ee
	which are exactly the eigenvalues of the diagonal matrix $\Gamma_{\mathsf{DT}}$. 
	We conclude that the limits in (\ref{transp}) and in (\ref{transpDT}) are related via the conjugation by $\Gamma_{\mathsf{DT}}$. The proposition follows. 
\end{proof}


\begin{Lemma} \label{gammalimlemma}
	{\it 
		Let us consider a function of the form
		$$
		f(a,b,\hbar,q)=\dfrac{\varphi(a q) \varphi ( q a^{-1} \hbar^{-1})}{\varphi(q b) \varphi (q b^{-1} \hbar^{-1})},
		$$
		then, for generic values of $\alpha,\beta,h$ we have
		\be \label{limauxil}
		\lim\limits_{\tau \to 0} f(e^{2\pi i \tau \alpha},e^{2\pi i \tau \beta},e^{2\pi i \tau h},e^{2\pi i \tau}) = \dfrac{\Gamma(\beta +1)\Gamma(-\beta -h +1)}{\Gamma(\alpha +1)\Gamma(-\alpha -h +1)}.
		\ee}
\end{Lemma}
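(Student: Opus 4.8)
The plan is to reduce everything to the $q$-Gamma function and then pass to the classical Gamma function in the $q\to 1$ limit. Write $q=e^{2\pi i\tau}$ throughout, so that under the specialization in~(\ref{limauxil}) one has exactly $a=q^{\alpha}$, $b=q^{\beta}$, $\hbar=q^{h}$, and each of the four Euler factors becomes a $q$-Pochhammer symbol evaluated at an integer-shifted power of $q$:
$$
\varphi(aq)=(q^{1+\alpha};q)_{\infty},\qquad \varphi(qa^{-1}\hbar^{-1})=(q^{1-\alpha-h};q)_{\infty},
$$
$$
\varphi(qb)=(q^{1+\beta};q)_{\infty},\qquad \varphi(qb^{-1}\hbar^{-1})=(q^{1-\beta-h};q)_{\infty}.
$$
Here we work in the regime $|q|<1$ in which the products converge, i.e. $\tau$ tends to $0$ from the appropriate half-plane (as is already implicit in the main text, where $|q|<1$ is assumed).

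Next I would recall the $q$-Gamma function $\Gamma_{q}(x)=(1-q)^{1-x}(q;q)_{\infty}/(q^{x};q)_{\infty}$, equivalently $(q^{x};q)_{\infty}=(1-q)^{1-x}(q;q)_{\infty}\,\Gamma_{q}(x)^{-1}$, and substitute it for each of the four factors above. In the resulting expression for $f$ the two copies of $(q;q)_{\infty}$ in the numerator cancel the two in the denominator, and the prefactors $(1-q)^{\bullet}$ cancel as well: the two exponents contributed by the numerator arguments sum to $-\alpha+(\alpha+h)=h$, and the two from the denominator sum to $-\beta+(\beta+h)=h$, so they match. What remains is the purely $q$-Gamma expression
$$
f=\frac{\Gamma_{q}(1+\beta)\,\Gamma_{q}(1-\beta-h)}{\Gamma_{q}(1+\alpha)\,\Gamma_{q}(1-\alpha-h)}.
$$
Letting $\tau\to 0$, i.e. $q\to 1$, and using the classical limit $\Gamma_{q}(x)\to\Gamma(x)$ then gives the right-hand side of~(\ref{limauxil}); this limit is valid precisely for arguments avoiding the non-positive integers, which is exactly where the hypothesis that $\alpha,\beta,h$ be generic is used.

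The only technical point that needs care is that here $q\to 1$ through complex values inside the unit disk (dictated by $\tau\to 0$), not along $(0,1)$. I would handle this either by invoking a complex-variable version of the $q\to 1$ asymptotics of $(q;q)_{\infty}$ and $\Gamma_{q}$, or, more elementarily, by noting that the ratio of $q$-Gammas displayed above is an absolutely convergent product whose $n$-th factor tends termwise to the corresponding factor of the $\Gamma$-ratio, with uniform convergence on compact subsets of the space of $(\alpha,\beta,h)$ away from the polar loci. Either way this is routine; the genuine content of the lemma is the cancellation of the divergent prefactors, which occurs because in each of the numerator and the denominator the two Euler arguments are ``symplectically dual'' — their exponents add up to the $\alpha$- (resp. $\beta$-) independent constant $2-h$, mirroring the symplectic pairing of $\bT$-weights on $T_{\lambda}\X$ used in the proof of the preceding Proposition.
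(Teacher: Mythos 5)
Your proof is correct, and it takes a genuinely different route from the paper's. The paper works directly with the Euler product: it takes the $\tau\to 0$ limit factor by factor, obtaining the numerical infinite product $\prod_{n\geq 0}\frac{(\alpha+1+n)(-\alpha-h+1+n)}{(\beta+1+n)(-\beta-h+1+n)}$, and then identifies this with the stated ratio of Gamma functions by means of the Weierstrass product expansion $\Gamma(x)^{-1}=x e^{\gamma x}\prod_{n\geq 1}(1+x/n)e^{-x/n}$ — the Weierstrass formula simultaneously certifies convergence of the limiting product and evaluates it. You instead package each $\varphi$-factor into a $q$-Gamma function, observe that the divergent prefactors $(q;q)_{\infty}^{2}$ and $(1-q)^{h}$ cancel between numerator and denominator (your computation of the exponents is right: $-\alpha+(\alpha+h)=h=-\beta+(\beta+h)$), and then invoke $\Gamma_{q}\to\Gamma$. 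Your route makes the mechanism of the cancellation — the "balanced" pairing of arguments summing to $2-h$ — structurally visible, and it connects the lemma to standard $q$-special-function theory; the paper's route is more self-contained, needing only the classical Weierstrass product. Both arguments face the same analytic subtlety, namely interchanging the limit with an infinite product as $q\to 1$ along a complex direction inside the unit disk; the paper handles this somewhat informally (it computes the limit "formally" and then verifies that the limiting product converges), while you flag the issue explicitly and sketch a termwise/uniform-convergence resolution, which is at least as careful. No gap.
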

\begin{proof}
	Explicitly, we have:
	$$
	f(a,b,\hbar,q)=\prod\limits_{n=0}^{\infty}\dfrac{(1-a q^{n+1}) (1-a^{-1} \hbar^{-1} q^{n+1})}{(1-b q^{n+1}) (1-b^{-1} \hbar^{-1} q^{n+1})},
	$$
	and formally computing the limit (\ref{limauxil}) we obtain
	\be \label{prodexis}
	\lim\limits_{\tau \to 0} f(e^{2\pi i \tau \alpha},e^{2\pi i \tau \beta},e^{2\pi i \tau h},e^{2\pi i \tau}) = \prod\limits_{n=0}^{\infty} \dfrac{(\alpha +1+n)(-\alpha -h+1+n) }{(\beta +1+n)(-\beta -h+1+n)},
	\ee
	were we assume that the last infinite product exists. To show the convergence of the product, we recall the Weierstrass product expansion of the Gamma function:
	$$
	\Gamma(x)=\dfrac{e^{-\gamma x}}{x} \prod\limits_{n=1}^{\infty} \Big(1+\dfrac{x}{n}\Big)^{-1} e^{x/n},
	$$
	which holds for all complex $x$ except non-positive integers. Using this formula, after elementary manipulations, for generic $\alpha, \beta$ and $h$ we obtain
	$$
	\dfrac{\Gamma(\beta+1)\Gamma(-\beta-h+1)}{\Gamma(\alpha+1)\Gamma(-\alpha-h+1)}=\prod\limits_{n=0}^{\infty} \dfrac{(\alpha +1+n)(-\alpha -h+1+n) }{(\beta +1+n)(-\beta -h+1+n)}.
	$$
	This proves that the product (\ref{prodexis}) exists for generic values of the parameters and also proves proves (\ref{limauxil}).
\end{proof}

\subsection{Cohomological limits of balanced functions} 
Let us denote
$$
z=e^{2 \pi i \xi}, \ \  q=e^{2 \pi i \tau}, \ \ \hbar = e^{2 \pi i h},
$$
and consider the Jacobi theta function
$$
\vartheta(\xi,\tau)=\sum\limits_{n\in \matZ}\, e^{\pi i n^2 \tau +2 \pi i n \xi} =\sum\limits_{n\in \matZ} \, q^{n^2/2} z^{n}.
$$
This function is related to the $q$-theta function (\ref{thetafun})
via 
\be \label{relthe}
\vartheta\Big(\xi -\frac{\tau}{2}+\frac{1}{2},\tau \Big)=-z^{1/2} \varphi(q) \theta(z).
\ee
The modular transformation of $\vartheta(\xi,\tau)$ is described by the formula
\be \label{modtr}
\vartheta(\xi,\tau)=\frac{i}{\sqrt{\tau}}\, e^{-\frac{\pi i \xi^{2}}{\tau}} \vartheta\Big(\frac{\xi}{\tau},-\frac{1}{\tau}\Big).
\ee

\begin{Lemma} \label{limlemm}
	{\it Let us consider the function
		$$
		\tilde{f}(z,a,b,q)=e^{m \frac{\ln(z) \ln(a/b)}{\ln(q)}} {f}(z,a,b,q), \ \ \ {f}(z,a,b,q)=\frac{\theta(z^m a )}{\theta(z^m b )}
		$$
		with $m\in \matZ$.  Then, for generic $s\in \matR$, the limit
		$$
		\lim\limits_{\tau \to 0} \,  \tilde{f}(e^{2 \pi i s},e^{2 \pi i \epsilon_1 \tau}, e^{2 \pi i \epsilon_2 \tau},e^{-2 \pi i \tau}) 
		$$
		as $\tau$ approaches $0$  along the positive part of the imaginary axis, exists and is equal to the limit
		$$
		\lim_{q\to 0} f(q^{s},e^{2 \pi i \epsilon_1},e^{2 \pi i \epsilon_2 },q).
		$$
	}
\end{Lemma}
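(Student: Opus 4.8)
The plan is to reduce both sides to the leading asymptotics of Jacobi theta functions and check that these agree. First I would rewrite $\theta(z^m a)$ and $\theta(z^m b)$ through the Jacobi theta $\vartheta(\,\cdot\,,\tau)$ by means of (\ref{relthe}); in the balanced ratio $f=\theta(z^m a)/\theta(z^m b)$ the infinite products $\varphi(q)$ cancel, leaving $\vartheta(\eta_a,\tau)/\vartheta(\eta_b,\tau)$ times a harmless power of $a/b$, where $\eta_a=ms+\tfrac12-\tfrac{\tau}{2}+\epsilon_1\tau$, $\eta_b=ms+\tfrac12-\tfrac{\tau}{2}+\epsilon_2\tau$, so that $\eta_a-\eta_b=(\epsilon_1-\epsilon_2)\tau=O(\tau)$ while $\eta_a,\eta_b\to ms+\tfrac12$. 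The point of sending $\tau\to0$ along the imaginary axis is that, after applying the modular transformation (\ref{modtr}) to each Jacobi theta, the new modular parameter $-1/\tau$ runs off to $i\infty$, so the nome of the transformed theta tends to $0$ and its series $\sum_n(\,\cdot\,)^{n^2/2}e^{2\pi in(\,\cdot\,)}$ degenerates to a single dominant term.

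Carrying out this localization: in (\ref{modtr}) the prefactors $i/\sqrt{\tau}$ cancel in the ratio, and using $\eta_a^2-\eta_b^2=(\epsilon_1-\epsilon_2)\tau(\eta_a+\eta_b)$ one gets
$$
\frac{\vartheta(\eta_a,\tau)}{\vartheta(\eta_b,\tau)}=e^{-\pi i(\epsilon_1-\epsilon_2)(\eta_a+\eta_b)}\;\frac{\vartheta(\eta_a/\tau,\,-1/\tau)}{\vartheta(\eta_b/\tau,\,-1/\tau)},
$$
whose Gaussian prefactor has the finite limit $e^{-\pi i(\epsilon_1-\epsilon_2)(2ms+1)}$. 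For generic $s$ the number $ms+\tfrac12$ is not a half-integer, so $\vartheta(\eta_a/\tau,-1/\tau)$ is asymptotic along the ray to its term of index $n_0=\lfloor ms\rfloor+1$ (the nearest integer to $ms+\tfrac12$), and the same index $n_0$ dominates $\vartheta(\eta_b/\tau,-1/\tau)$ since $\eta_a-\eta_b=O(\tau)$; hence the common, possibly divergent, leading factors cancel in the ratio and what remains tends to $e^{2\pi in_0(\eta_a-\eta_b)/\tau}=e^{2\pi in_0(\epsilon_1-\epsilon_2)}$. Assembling the surviving power of $a/b$ (which tends to $1$), the Gaussian limit, the term-ratio limit, and the prefactor $e^{m\ln z\ln(a/b)/\ln q}$ built into $\tilde{f}$, all the $z$-dependence cancels, and the left-hand limit collapses to an explicit half-integral power of $a/b=t_1/t_2$ depending only on $\lfloor ms\rfloor$; a short computation identifies it with $(t_1/t_2)^{-\lfloor ms\rfloor-1/2}$ (up to fixing the overall sign against the conventions of (\ref{relthe}) and (\ref{modtr})).

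For the right-hand side I would compute $\lim_{q\to0}\theta(q^{ms}a)/\theta(q^{ms}b)$ directly. Writing $ms=\lfloor ms\rfloor+\{ms\}$ and using the quasi-periodicity $\theta(qx)=q^{-1/2}x^{-1}\theta(x)$ of the $q$-theta function to strip off the factor $q^{\lfloor ms\rfloor}$, one is left with $\theta(q^{\{ms\}}a)=\varphi(q^{1+\{ms\}}a)\,\hat{s}(q^{\{ms\}}a)\,\varphi(q^{1-\{ms\}}a^{-1})$ with $\{ms\}\in(0,1)$; here both $\varphi$-factors tend to $1$ and $\hat{s}(q^{\{ms\}}a)\sim-(q^{\{ms\}}a)^{-1/2}$ as $q\to0$, so $\theta(q^{ms}a)\sim(\text{a power of }q\text{ depending only on }ms)\cdot a^{-\lfloor ms\rfloor-1/2}$ and therefore $\theta(q^{ms}a)/\theta(q^{ms}b)\to(a/b)^{-\lfloor ms\rfloor-1/2}=(t_1/t_2)^{-\lfloor ms\rfloor-1/2}$, matching the left-hand limit. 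The one substantive point — everything else being bookkeeping of quadratic exponents — is recognizing that the prefactor $e^{m\ln z\ln(a/b)/\ln q}$ in the definition of $\tilde{f}$ is exactly what cancels the $z$-dependent part of the Gaussian produced by the modular transformation; the secondary technical issue is making the ``single dominant term'' step uniform in $\tau$ along the ray, which is where the genericity of $s$ (so that $n_0$ is unambiguous and separated from competing indices) is used.
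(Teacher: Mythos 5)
Your overall strategy is the same as the paper's — rewrite the $\theta$-ratio in terms of the Jacobi theta via~(\ref{relthe}) and apply the modular transformation~(\ref{modtr}) — but you then diverge: the paper applies~(\ref{relthe}) a second time, in reverse, to recognize the transformed $\vartheta$-ratio as $\theta(\hbar q^s)/\theta(q^s)$ with the new nome $q=e^{-2\pi i/\tau}\to 0$, so that the exponential prefactors cancel and the two sides are identified \emph{before} any limit is evaluated. You instead isolate a single dominant term on each side and compute both limits explicitly. This is a legitimate alternative, and your evaluation of the right-hand limit as $(t_1/t_2)^{-\lfloor ms\rfloor-1/2}$ is correct.

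The problem is the assembly on the left, which is the heart of the proof and which you hand-wave. Tracking the $(a/b)$-weights as you state them: the prefactor contributes $(a/b)^{-ms}$, your Gaussian contributes $(a/b)^{-ms-1/2}$, and your dominant-term ratio contributes $(a/b)^{\lfloor ms\rfloor+1}$. Their product is $(a/b)^{\lfloor ms\rfloor+\frac12-2ms}$, which still depends on $\{ms\}$ and does not equal $(a/b)^{-\lfloor ms\rfloor-\frac12}$. This is not something "fixing the overall sign" can repair, since a global sign does not turn the exponent $-2ms+\lfloor ms\rfloor$ into $-\lfloor ms\rfloor$. The source of the mismatch is that the Lemma substitutes $q=e^{-2\pi i\tau}$, whereas the conventions of~(\ref{relthe})--(\ref{modtr}) use $q=e^{2\pi i\tau}$; the correct modular parameter to feed into~(\ref{relthe}) is therefore $-\tau$. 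This flips the $\mp\frac{\tau}{2}$ shift in $\eta_a$, flips the sign of the exponent in the Gaussian to $e^{+\pi i(\epsilon_1-\epsilon_2)(2ms+1)}$, and sends the transformed argument to $-\eta_a/\tau$ so that the dominant-term ratio is $e^{-2\pi i n_0(\epsilon_1-\epsilon_2)}=(a/b)^{-\lfloor ms\rfloor-1}$. With these signs the prefactor \emph{cancels} (rather than reinforces) the $ms$-weight of the Gaussian, leaving $(a/b)^{1/2}$, and multiplying by the term ratio gives exactly $(a/b)^{-\lfloor ms\rfloor-\frac12}$. You need to either fix these signs and carry out the cancellation honestly, or follow the paper and convert the transformed $\vartheta$-ratio back into a $\theta$-ratio via the reverse of~(\ref{relthe}), which sidesteps the dominant-term bookkeeping entirely.
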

\begin{proof}
	For simplicity, let us consider a function of the form
	$$
	\tilde{f}(z,\hbar,q)=e^{\frac{\ln(z)\ln(\hbar)}{\ln(q)}} \dfrac{\theta(z \hbar)}{\theta(z)}
	$$
	i.e., we consider (\ref{monfact}) $m=1$, $a=\hbar$, $b=1$. For the general values  the calculation is the same. Then
	$$
	\tilde{f}(e^{2 \pi i s}, e^{2 \pi i h \tau},e^{-2 \pi i \tau})= e^{-2 \pi i s h}   \dfrac{\theta(e^{2\pi i (s - h \tau) }) }{\theta(e^{2 \pi i s} )}.   
	$$
	As $s$ is generic, we may assume that $s\not\in \matZ$, and $\theta(e^{2 \pi i s} )\neq 0$.
	
	Applying the modular transform using (\ref{relthe}) and (\ref{modtr}) we find
	$$
	\dfrac{\theta(e^{2\pi i (s-h\tau)} ) }{\theta(e^{2 \pi i s} )} = e^{2\pi i h (s+1/2)}  \frac{\vartheta\Big(\frac{s}{\tau}-h-\frac{1}{2}+\frac{1}{2\tau},-\frac{1}{\tau}\Big)}{\vartheta\Big(\frac{s}{\tau}-\frac{1}{2} +\frac{1}{2\tau},-\frac{1}{\tau}\Big)}.
	$$
	Let us denote new modular parameter by  $q=e^{-2 \pi i /\tau}$. If $\tau \to 0$ along the positive part of the imaginary axis then $q\to 0$. Thus
	$$
	\lim\limits_{\tau \to 0} \frac{\vartheta\Big(\frac{s}{\tau}-h-\frac{1}{2}+\frac{1}{2\tau},-\frac{1}{\tau}\Big)}{\vartheta\Big(\frac{s}{\tau}-\frac{1}{2} +\frac{1}{2\tau},-\frac{1}{\tau}\Big)}= e^{- \pi i h} \lim\limits_{q\to 0} \frac{\theta( \hbar   q^{s} )}{ \theta( q^{s})}.
	$$
	Combining all these terms together we find
	$$
	\lim\limits_{\tau \to 0} \tilde{f}(e^{2 \pi i z}, e^{2 \pi i h \tau},e^{-2 \pi i \tau})=\lim\limits_{q\to 0} \frac{\theta( \hbar   q^{s} )}{\theta( q^{s})}= \lim\limits_{q\to 0} f( q^{s},\hbar,q).
	$$
\end{proof}
This lemma relates $\tau\to 0$ limits of the elliptic functions to $q\to0$ limits. In particular for the transport (\ref{transpDT}) we obtain the following result:
\begin{Proposition} \label{prolim}
	{\it For $s\in \matR\setminus \Walls_n$, the transport of the quantum differential equation is determined by the following limit:
		\be \label{tranlim2}
		\mathrm{Tran}_{\mathsf{DT}}(s)= \lim\limits_{q\to 0} {\bf R}^{Ell}(q^s,e^{2 \pi i \epsilon_1},e^{2 \pi i \epsilon_2},q).
		\ee
		where 
		$$
		{\bf R}^{Ell}(z,t_1,t_2,q)={\bf{U}}(a,z)^{-1} \, {\bf{L}}(a^{-1},z^{-1} \hbar^{-1}).
		$$}
	\end{Proposition}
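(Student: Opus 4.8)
The starting point is the preceding Proposition, which in \eqref{transpDT}--\eqref{rightmat} already expresses $\mathrm{Tran}_{\mathsf{DT}}(s)$ as the $\tau\to 0$ limit of the matrix $\widetilde{\Mon}(z,t_1,t_2,q)=e^{\ln(\mathsf{E}_0)\ln z/\ln q}\,{\bf R}^{Ell}(z,t_1,t_2,q)\,e^{-\ln(\mathsf{E}_\infty)\ln z/\ln q}$ evaluated at $z=e^{2\pi i s}$, $t_j=e^{2\pi i\epsilon_j\tau}$, $q=e^{-2\pi i\tau}$, where ${\bf R}^{Ell}(z,t_1,t_2,q)={\bf U}(a,z)^{-1}{\bf L}(a^{-1},z^{-1}\hbar^{-1})$ is precisely the matrix occurring on the right-hand side of \eqref{tranlim2}. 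So the whole task is to trade this ``$\tau\to 0$ with $z$ on the unit circle and $t_j\to 1$'' limit for the ``$q\to 0$ with $z=q^s$ and $t_j$ fixed generic'' limit in \eqref{tranlim2}; the bridge between the two regimes is Lemma~\ref{limlemm}, and the plan is to invoke it entry by entry and factor by factor.

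First I would unwind the matrix elements. By the explicit theta-function formulas for the elliptic stable envelope of $\X=\mathrm{Hilb}^n(\matC^2)$ in \cite{SmirnovEllipticHilbert}, every entry of ${\bf U}(a,z)$ and of ${\bf L}(a,z)$ --- hence, after the triangular (finite, rational) elimination producing ${\bf U}(a,z)^{-1}{\bf L}(a^{-1},z^{-1}\hbar^{-1})$ --- is a finite sum of products of ratios of $q$-theta functions of the shape $\theta(z^m a^k\hbar^l)$ with $m,k,l\in\matZ$, while the diagonal matrices $\mathsf{E}_0,\mathsf{E}_\infty$ act by monomials in $t_1,t_2$ (for $\mathsf{E}_0$ by $\prod_{(i,j)\in\lambda}t_1^{i-1}t_2^{j-1}$ as in \eqref{eigval}, and symmetrically for $\mathsf{E}_\infty$ with the opposite stability). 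The key claim to establish is that, after multiplication by the prefactors $e^{\pm\ln(\mathsf{E})\ln z/\ln q}$, the $(\lambda,\mu)$ entry of $\widetilde{\Mon}$ can be grouped into a finite sum of finite products of factors $\tilde f(z,a,b,q)=e^{m\ln z\ln(a/b)/\ln q}\,\theta(z^m a)/\theta(z^m b)$ of precisely the form treated in Lemma~\ref{limlemm} (with $a,b$ the relevant monomials in $t_1,t_2$). This balancing is forced: $\widetilde{\Mon}$ is the conjugate of the $q$-periodic matrix $\Mon$ by the $z$-independent diagonal matrix $\Phi(qT\X)$, so $\widetilde{\Mon}(zq)=\widetilde{\Mon}(z)$, i.e. each entry is an elliptic function of $z$; and an elementary computation shows that $\tilde f(z,a,b,q)$ is exactly the elliptic completion of $\theta(z^m a)/\theta(z^m b)$ (the quasi-period $(a/b)^{-m}$ of the theta quotient is cancelled by the quasi-period $(a/b)^{m}$ of the exponential), so the exponents $\ln(\mathsf{E}_0)_{\lambda\lambda}$ and $\ln(\mathsf{E}_\infty)_{\mu\mu}$ must distribute over the theta quotients in entry $(\lambda,\mu)$ precisely as the $m\ln z\ln(a/b)/\ln q$ terms; alternatively this can simply be read off the formulas in \cite{SmirnovEllipticHilbert}.

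Having written the $(\lambda,\mu)$ entry of $\widetilde{\Mon}$ as $\sum_{\mathrm{fin}}\prod_i\tilde f_i(z,a_i,b_i,q)$, I would apply Lemma~\ref{limlemm} to each factor. For $s\in\matR\setminus\Walls_n$ --- a genericity hypothesis which, exactly as in Lemma~\ref{limlemm}, also prevents cancellations of zeros of the theta functions and in particular guarantees that $\lim_{q\to0}{\bf R}^{Ell}(q^s,\dots,q)$ itself exists --- one has $\lim_{\tau\to0}\tilde f_i(e^{2\pi i s},e^{2\pi i\epsilon_1\tau},e^{2\pi i\epsilon_2\tau},e^{-2\pi i\tau})=\lim_{q\to0}f_i(q^s,e^{2\pi i\epsilon_1},e^{2\pi i\epsilon_2},q)$, where $f_i=\theta(z^m a_i)/\theta(z^m b_i)$ is the prefactor-free part. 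Since there are only finitely many entries, summands and factors, the limit of a product is the product of the limits, so $\lim_{\tau\to0}\widetilde{\Mon}_{\lambda,\mu}$ equals the same finite sum of products of the $\lim_{q\to0}f_i(q^s,\dots)$; reassembling, this is $\lim_{q\to0}\bigl({\bf U}(a,q^s)^{-1}{\bf L}(a^{-1},q^{-s}\hbar^{-1})\bigr)_{\lambda,\mu}=\lim_{q\to0}{\bf R}^{Ell}(q^s,e^{2\pi i\epsilon_1},e^{2\pi i\epsilon_2},q)_{\lambda,\mu}$, and together with \eqref{transpDT} this yields \eqref{tranlim2}. I expect the main obstacle to be the bookkeeping in the second step: one must be sure that the passage from the stable-envelope matrices to ${\bf U}^{-1}{\bf L}$ leaves every entry a ratio of products of $q$-theta functions of the special form $\theta(z^m a^k\hbar^l)$ --- so that no genuinely non-elliptic, or $z$-independent, theta factor survives --- and that the prefactor exponents split over those factors exactly as Lemma~\ref{limlemm} requires; once the $q$-ellipticity of $\widetilde{\Mon}$ (or the explicit formulas of \cite{SmirnovEllipticHilbert}) is used to pin down this shape, the rest is a finite, routine computation, along with the elementary check that $s\notin\Walls_n$ legitimizes all applications of Lemma~\ref{limlemm} simultaneously.
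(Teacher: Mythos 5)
Your proposal follows essentially the same route as the paper's proof: both start from the preceding Proposition reducing $\mathrm{Tran}_{\DT}(s)$ to the $\tau\to 0$ limit of $\widetilde{\Mon}$, both appeal to the fact that the entries of the stable-envelope matrices (and hence of ${\bf R}^{Ell}$) are balanced in $z$ so that the exponential prefactors from $\mathsf{E}_0,\mathsf{E}_\infty$ complete each theta quotient to a $q$-periodic factor of exactly the shape of Lemma~\ref{limlemm}, and both apply that lemma termwise and extract the genericity condition $s\in\matR\setminus\Walls_n$ from the bound $0<m\le n$ on the exponents appearing in the explicit formulas of \cite{SmirnovEllipticHilbert}. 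The one point you flag as a potential obstacle — that inverting ${\bf U}$ could spoil the balanced form — is also left implicit in the paper, which simply asserts that ${\bf R}^{Ell}$ is balanced in $z$, so this is not a divergence from the published argument.
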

	\begin{proof}
		We have
		$$
		\widetilde{\Mon}(z,t_1,t_2,q):=e^{\frac{\ln(\mathsf{E}_{0}) \ln(z)}{\ln(q)}} \,{\bf R}^{Ell}(z,t_1,t_2,q)\,  e^{-\frac{\ln(\mathsf{E}_{\infty}) \ln(z)}{\ln(q)}}.
		$$
		Recall that the matrix elements of  ${\bU}(a,z)$ and $\bL(a,z)$  are balanced in $z$, see Definition 1 in \cite{KononovSmirnov1}. This means that they depend on $z$ through a combination of theta functions of the form
		\be \label{bal}
		\frac{\theta(z^m a )}{\theta(z^m b )}, \ \ m\in \matZ 
		\ee
		where $a$ and $b$ denote  monomials in the equivariant parameters $t_1,t_2$. For the Hilbert scheme $\X$ we know these functions explicitly \cite{SmirnovEllipticHilbert}.
		
		We see that the matrix elements of ${\bf R}^{Ell}(z,t_1,t_2,q)$ are also balanced in~$z$,  and the matrix elements of the monodromy matrix $\widetilde{\Mon}(z,t_1,t_2,q)$
		depend on $z$ via combinations (\ref{bal}) and exponential factors of the form 
		$$
		e^{\frac{\ln(a) \ln(z)}{\ln(q)}}
		$$
		where $a$ denotes a monomial in $t_1,t_2$. 
		As $\widetilde{\Mon}(z,t_1,t_2,q)$ is a $q$-periodic function in $z$, these exponential factors complete factors (\ref{bal}) to  $q$-periodic functions. In other words, the matrix elements of $\widetilde{\Mon}(z,t_1,t_2,q)$  only on $z$ via the functions of the form
		\be \label{monfact}
		\tilde{f}(z,t_1,t_2,q)= e^{m \frac{\ln(z) \ln(a/b)}{\ln(q)}} \frac{\theta(z^m a )}{\theta(z^m b )},
		\ee
		so that $\tilde{f}(z q ,t_1,t_2,q)=\tilde{f}(z,t_1,t_2,q)$.  
		
		Let us consider limit (\ref{transpDT}) of $\widetilde{\Mon}(z,t_1,t_2,q)$ with $\tau$ approaching $0$ along the positive part of the imaginary axis. By Lemma \ref{limlemm} this limit exists and is equal to (\ref{tranlim2}) for generic values of $s$. 
		
		Finally, arguing as in the proof of Lemma \ref{limlemm}, we see that the limits exists for those $s\in \matR$ with $m s \not \in \matZ$, for all $m$ appearing as exponents in (\ref{monfact}) for all matrix elements of $\widetilde{\Mon}(z,t_1,t_2,q)$. From the explicit formulas for the elliptic stable envelope classes \cite{SmirnovEllipticHilbert} we know that the exponents for $\X=\textrm{Hilb}^{n}(\matC^2)$ are bounded by
		$0<m\leq n$. This means that the limit exists for all $s \in \matR\setminus \Walls_n$.
		
	\end{proof}

	Combining Proposition \ref{prolim} with results of Theorems \ref{limmon} and \ref{monfac} we obtain a  representation-theoretic (in terms of $\B_w \in \gt$) and an algebro-geometric (in terms of the K-theoretic stable envelopes) descriptions of the transport.
	\begin{Theorem} \label{transtheor}
		{\it The transport of the quantum connection $\nabla_{\mathsf{DT}}$ from $z=0$ to $z=\infty$ along a line $\gamma_s$ intersecting $|z|=1$ at a non-singular  point $z=e^{2 \pi i s}$ equals
			$$
			\mathrm{Tran}_{\mathsf{DT}}(s)=  \left\{\begin{array}{ll}
			\prod\limits_{w\in (0,s)}^{\longleftarrow}\, (\B^{\bullet}_w)^{-1} \cdot \,  \T   & s \geq 0, \\
			\prod\limits^{\longrightarrow}_{w\in (s,0)}\, \B^{\bullet}_{w} \, \cdot  \T, & s<0.
			\end{array}\right.
			$$
			These matrices have the following Gauss decomposition
			$$
			\mathrm{Tran}_{\mathsf{DT}}(s)=  \bU^{\pm}_{s}(a)^{-1} \bD_{s}(\hbar)\,  \bL^{\pm}_{-s}(a^{-1})
			$$
			where $\bD_{s}(\hbar)$ is diagonal matrix (\ref{Ddef}) and  $\bU^{\pm}_{s}(a),\bL^{\pm}_{s}(a)$ denote the matrices of expansions of the K-theoretic stable envelope classes in the basis of fixed points (\ref{defrestr}).
		}
	\end{Theorem}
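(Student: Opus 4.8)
The plan is to obtain both assertions by assembling three facts already established above: Proposition~\ref{prolim}, which presents $\mathrm{Tran}_{\mathsf{DT}}(s)$ as a $q\to0$ degeneration of the elliptic $R$-matrix ${\bf R}^{Ell}$; Theorem~\ref{limmon}, which evaluates that degeneration as an ordered product of the operators $\B^{\bullet}_w$; and Theorem~\ref{monfac}, which repackages that product as a Gauss factorisation through the K-theoretic stable envelopes of $\X$.

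First I would quote Proposition~\ref{prolim}: for $s\in\matR\setminus\Walls_n$ one has $\mathrm{Tran}_{\mathsf{DT}}(s)=\lim_{q\to0}{\bf R}^{Ell}(q^{s},e^{2\pi i\epsilon_1},e^{2\pi i\epsilon_2},q)$, where ${\bf R}^{Ell}(z,t_1,t_2,q)={\bf U}(a,z)^{-1}{\bf L}(a^{-1},z^{-1}\hbar^{-1})$. By Corollary~\ref{corrmonodgam} the matrix ${\bf R}^{Ell}(z)$ coincides with $\Mon^{reg}(z)$ up to the sign $(-1)^{n}$ and conjugation by the diagonal operators $\Phi(q\,T\X)$ and $\mathscr{O}(1)^{1/2}$. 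Since $\Phi(q\,T\X)\to 1$ as $q\to0$, and since the residual sign and diagonal conjugation match up with the normalisation conventions built into $\mathrm{Tran}_{\mathsf{DT}}$ via the $\Gamma_{\mathsf{DT}}$-twist (the same mechanism as in the proof of Proposition~\ref{prolim} together with Lemma~\ref{gammalimlemma}), putting $z=q^{s}$ and letting $q\to0$ identifies $\mathrm{Tran}_{\mathsf{DT}}(s)$ with the $q\to0$ limit of $\Mon^{reg}(z\,q^{s})$ evaluated at $z=1$.

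Next I would apply Theorem~\ref{limmon} at base point $z=1$: for $s\ge0$ the limit equals $\B^{\bullet}_{s}(1)^{-1}\prod^{\longleftarrow}_{w\in[0,s)}(\B^{\bullet}_{w})^{-1}\cdot\T$, and for $s<0$ it equals $\B^{\bullet}_{s}(1)\prod^{\longrightarrow}_{w\in(s,0)}\B^{\bullet}_{w}\cdot\T$. Because $s\notin\Walls_n$ the ``diagonal'' wall factor $\B^{\bullet}_{s}(\cdot)$ is the identity (a wall-crossing operator $\B_w(z)$ is nontrivial only when $w$ is a wall), so only the interior walls survive. The genuinely fiddly point — and the step I expect to be the main obstacle — is the endpoint $w=0$: it is an integer wall at which $\B_0=(t_1t_2)^{n}$ is a central scalar, and one must check that its contribution, together with the sign $(-1)^{n}$ and the $\mathscr{O}(1)^{\pm1/2}$-conjugation left over from the first step, combine so that the product may be rewritten over the open interval $(0,s)$. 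This is a consistency check on the overall normalisation rather than a new idea; granting it, one obtains the first displayed formula.

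Finally, for the Gauss decomposition I would invoke Theorem~\ref{monfac} with $w=s$. Since $s\notin\Walls_n$ we have $\Y_s=\X^{\bT}$, so by Remark~\ref{nowallrem} the slopes $s\pm\epsilon$ yield the same K-theoretic stable envelopes, which legitimises the notation $\bU^{\pm}_{s}$, $\bL^{\pm}_{-s}$; and the closed endpoint $[0,s]$ appearing in Theorem~\ref{monfac} may be replaced by the open $(0,s)$ exactly as in the previous step. This gives $\mathrm{Tran}_{\mathsf{DT}}(s)=\bU^{\pm}_{s}(a)^{-1}\,\bD_{s}(\hbar)\,\bL^{\pm}_{-s}(a^{-1})$ with $\bD_{s}(\hbar)$ the diagonal matrix~(\ref{Ddef}); the upper/lower triangularity with respect to $\succ$ is intrinsic to the stable-envelope formalism, so this is a genuine Gauss factorisation. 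In summary the argument is ``citations plus normalisation bookkeeping'': Proposition~\ref{prolim} supplies the input, Theorems~\ref{limmon} and~\ref{monfac} supply the two outputs, and the only real work lies in reconciling the prefactors $(-1)^{n}$, the $\hbar$-powers inside $\bD_s(\hbar)$, the $\mathscr{O}(1)^{\pm1/2}$-twist and the integer-wall scalar $(t_1t_2)^{n}$ so as to land exactly on the stated normalisation.
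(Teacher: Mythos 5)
Your proposal follows the paper's own route exactly: the paper derives Theorem~\ref{transtheor} by the single line ``Combining Proposition~\ref{prolim} with results of Theorems~\ref{limmon} and~\ref{monfac} we obtain\ldots'' immediately before the statement, with no further proof supplied. The normalisation bookkeeping you flag (the $w=0$ scalar $\B_0=(t_1t_2)^n$, the sign $(-1)^n$, and the $\mathscr{O}(1)^{\pm 1/2}$ conjugation) is likewise left implicit by the paper, so your treatment is at the same level of detail and uses the same three ingredients in the same way.
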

	
	\noindent
	\begin{Remark}
		Note that in the above theorem $s \not \in \Walls_n$ and thus 
		$$
		\bU^{+}_{s}(a)=\bU^{-}_{s}(a), \ \ \ \bL^{+}_{s}(a)=\bL^{-}_{s}(a).
		$$
	\end{Remark}
	
	\begin{Remark}
		We need to clarify the ambiguity for choosing a path $\gamma_s$ for values of $s$ which differ by $2\pi$. Let 
		$\lfloor s \rfloor$ the the integral part of $s$. Then $\gamma_s$ is homotopy equivalent to a path which winds around $z=0$  counterclockwise $\lfloor s \rfloor$ times and then proceeds to $z=\infty$ intersecting $|z|=1$ at 
		$e^{2 \pi i s}$. Fig. \ref{monodromies} below illustrates this situation.  As $z=0$ is a singular point of qde, winding around this point amounts in a non-trivial monodromy and thus $\mathrm{Tran}_{\DT}(s) \neq \mathrm{Tran}_{\DT}(s + 2\pi)$. 
	\end{Remark}

	\begin{figure}[H]
		\centering
		\includegraphics[width=10.5cm]{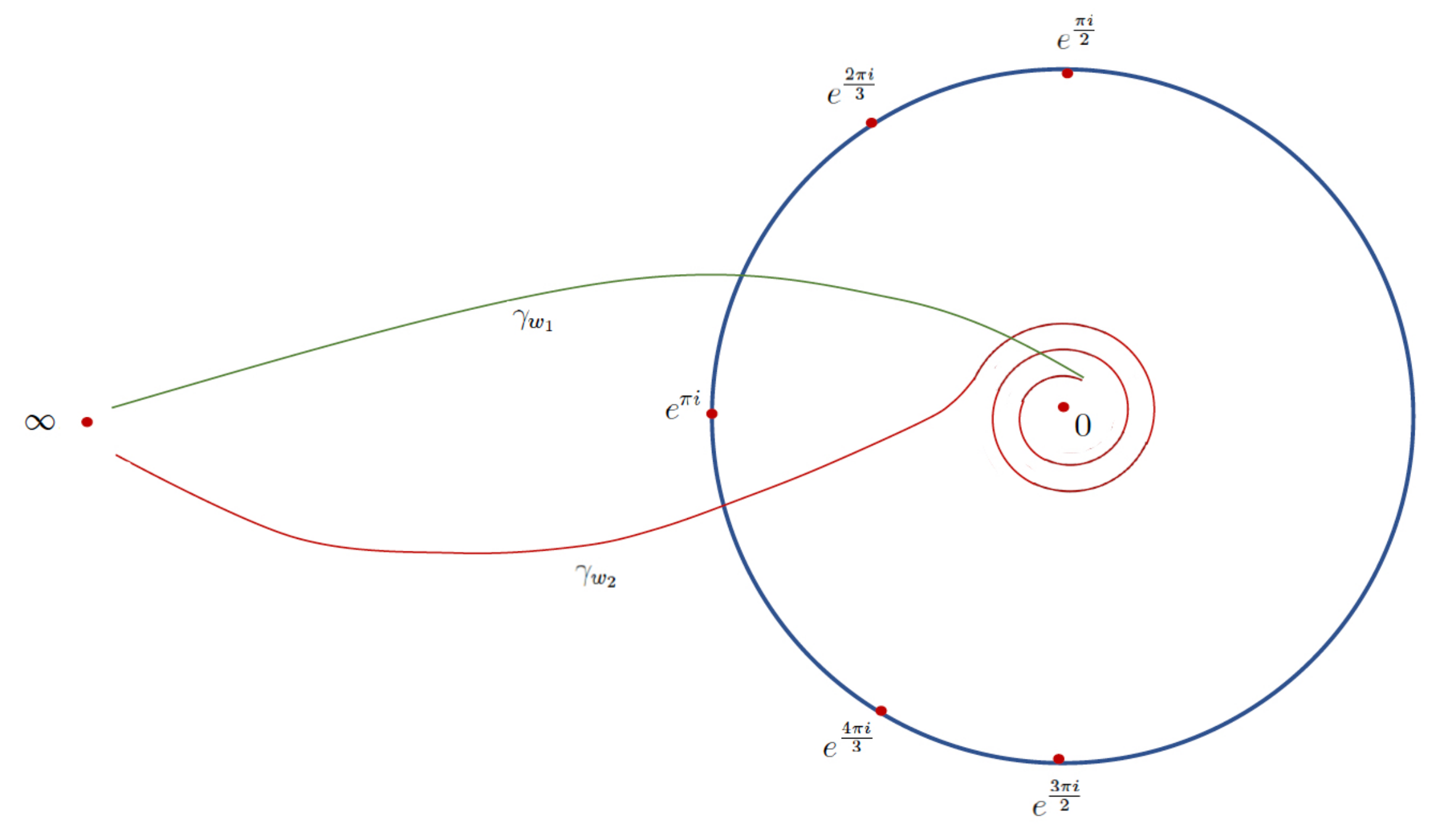}
		\caption{\label{monodromies} Example of two paths $\gamma_{w_1},\gamma_{w_2}$ with $\lfloor w_1\rfloor =0$ and $\lfloor w_2\rfloor =2$. }
	\end{figure}

\subsection{Transport for $s=0$ \label{tmatsec}}
For $s=0$ Theorem \ref{transtheor} gives
$$
\mathrm{Tran}_{\mathsf{DT}}(0)=\T. 
$$
Comparing definition (\ref{tmatdef}) with Theorem \ref{tran0}
we obtain that $\textsf{H}^{*}=\textsf{P}^{*}$ and we arrive at the following result:
\begin{Theorem} \label{t0th}
	
\indent 
{\it 
		\begin{itemize}
			\item The operator ${\bf M}(\infty)$ is diagonal in the basis of Macdonald polynomials with substitution $P^{\ast}_{\lambda}=\left.P_{\lambda}\right|_{p_{i}=-p_i}$. 
			\item The transport of the connection $\nabla_{\mathsf{DT}}$ from $z=0$ to $z=\infty$ along $\matR_{+}$ maps the eigenvectors of the operator 
			${\bf M}(0)$ to the eigenvectors of the operator ${\bf M}(\infty)$.
		\end{itemize}
	}
\end{Theorem}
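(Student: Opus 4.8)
The plan is to obtain both assertions by comparing two independent computations of the transport matrix $\mathrm{Tran}_{\mathsf{DT}}(0)$. Specializing Theorem \ref{transtheor} to $s=0$, both ordered products over walls in $(0,s)$ and in $(s,0)$ become empty, so that theorem gives $\mathrm{Tran}_{\mathsf{DT}}(0)=\T$, which by the definition (\ref{tmatdef}) is $\mathsf{P}^{-1}\mathsf{H}^{\ast}$, where $\mathsf{H}^{\ast}$ is the matrix of eigenvectors of $\M(\infty)$ normalized by $\M(\infty)\,\mathsf{H}^{\ast}=\mathsf{H}^{\ast}\,\mathbf{E}_{\infty}$. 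On the other hand Theorem \ref{tran0} gives $\mathrm{Tran}_{\mathsf{DT}}(0)=\mathsf{P}^{-1}\mathsf{P}^{\ast}$ with $\mathsf{P}^{\ast}=(-1)^{l}\,\mathsf{P}\,\fp$, i.e. the matrix whose $\lambda$-th column is $P^{\ast}_{\lambda}=\left.P_{\lambda}\right|_{p_i=-p_i}$ in the opposite dominance order. These two expressions describe the same operator: equation (\ref{transp}) and identity (\ref{transpDT}) identify the $\tau\to 0$ limit of the K-theoretic monodromy with the cohomological transport normalized by $\Gamma_{\mathsf{DT}}$, exactly as in Section \ref{qdesection}. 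Cancelling $\mathsf{P}^{-1}$ on the left gives $\mathsf{H}^{\ast}=\mathsf{P}^{\ast}$, which is precisely what Section \ref{tmatsec} was meant to establish.

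Both bullets are then formal consequences. For the first, substituting $\mathsf{H}^{\ast}=\mathsf{P}^{\ast}$ into $\M(\infty)\,\mathsf{H}^{\ast}=\mathsf{H}^{\ast}\,\mathbf{E}_{\infty}$ yields $\M(\infty)\,\mathsf{P}^{\ast}=\mathsf{P}^{\ast}\,\mathbf{E}_{\infty}$, so each column $P^{\ast}_{\lambda}$ of $\mathsf{P}^{\ast}$ is an eigenvector of $\M(\infty)$ with eigenvalue the corresponding diagonal entry of $\mathbf{E}_{\infty}$; that is, $\M(\infty)$ is diagonal in the basis $\{P^{\ast}_{\lambda}\}$. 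For the second, $\M(0)=\cL_{0}$ is diagonal in the Macdonald basis $\{P_{\lambda}\}$ by (\ref{eigval}), so the eigenvectors of $\M(0)$ are the columns of $\mathsf{P}$, while by the first bullet those of $\M(\infty)$ are the columns of $\mathsf{P}^{\ast}$; since $\mathrm{Tran}_{\mathsf{DT}}(0)=\mathsf{P}^{-1}\mathsf{P}^{\ast}$, read in the fixed-point basis this matrix carries the $\lambda$-th eigenvector of $\M(0)$ onto the coordinate vector of $P^{\ast}_{\lambda}$, an eigenvector of $\M(\infty)$ --- which restates $\mathsf{H}^{\ast}=\mathsf{P}^{\ast}$ in terms of the transport operator.

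I expect no genuine obstacle: all the hard content is already packaged in Theorem \ref{transtheor} (which rests on the Gauss-decomposition Theorems \ref{factthm} and \ref{monfac}, Corollary \ref{corrmonodgam}, and the mirror-symmetry Conjecture \ref{conjmir}) and in the cited Okounkov--Pandharipande Theorem \ref{tran0}. The one thing requiring care is bookkeeping of normalizations and orderings: one must keep track of the diagonal twist $\Gamma_{\mathsf{DT}}$, of the transposition matrix $\fp$, and of the opposite dominance ordering used for the $z=\infty$ fundamental solution, so that ``$\mathsf{P}^{\ast}$ as in Theorem \ref{tran0}'' is literally the matrix of the vectors $P^{\ast}_{\lambda}$ in the order in which the columns of $\mathrm{Tran}_{\mathsf{DT}}(0)$ are indexed, legitimizing the cancellation that produces $\mathsf{H}^{\ast}=\mathsf{P}^{\ast}$.
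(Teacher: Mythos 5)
Your argument is exactly the paper's: Section~\ref{tmatsec} combines Theorem~\ref{transtheor} at $s=0$ (where the wall products over $(0,s)$ or $(s,0)$ are empty, giving $\mathrm{Tran}_{\mathsf{DT}}(0)=\T=\mathsf{P}^{-1}\mathsf{H}^{\ast}$) with Theorem~\ref{tran0} ($\mathrm{Tran}_{\mathsf{DT}}(0)=\mathsf{P}^{-1}\mathsf{P}^{\ast}$) and cancels $\mathsf{P}^{-1}$ to conclude $\mathsf{H}^{\ast}=\mathsf{P}^{\ast}$, from which both bullets follow. Your only additions are to spell out why the two bullets are consequences of $\mathsf{H}^{\ast}=\mathsf{P}^{\ast}$, which the paper leaves implicit, so this is the same approach.
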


\subsection{The fundamental group $\pi_{1}(\mathbb{P}^{1}\setminus\sing,0_{+})$ \label{loopmondrs}} 
In this section we compute the representation of the fundamental group $\pi_{1}(\mathbb{P}^{1}\setminus\sing,0)$. As $z=0$ is also a singularity of the quantum differential equation we consider the fundamental group $\pi_{1}(\mathbb{P}^{1}\setminus\sing,0_{+})$ where $0_{+}$ is a point infinitesimally close to $z=0$. More specifically (the relations in the group will depend on this choice) we choose $0_{+}$ above the cut along $\matR_{+}$, see Fig. \ref{monodex}.    This group is well defined.

We choose the  following generators of this group:
\begin{itemize}
	\item $\gamma_{0}$ is a counterclockwise oriented loop based at $0_{+}$ around the singularity $z=0$,
	\item $\gamma_{\infty}$ is a clockwise oriented loop based at $0_{+}$ around $z=\infty$ chosen as follows: first is goes from $0_{+}$ to $\infty$ along the $\matR_{+}$, then  around $z=\infty$ and goes back to $0_{+}$ along $\matR_{+}$. 
	\item $\gamma_{w}$ for $w=\frac{a}{b}$ is a counterclockwise oriented loop based at $0_{+}$ around singularity located at the root of unity
	$$
	\zeta = e^{\frac{2 \pi i a}{b}}
	$$
	which \underline{do not intersect} $\matR_{+}$.  It is clear that these loops are labeled by $w\in \Walls_n$ with $0<w<1$.
\end{itemize}
With this choice of the generators, $\pi_{1}(\mathbb{P}^{1}\setminus\sing,0_{+})$ is isomorphic to the following group:
$$
\Big\langle \gamma_{0}, \gamma_{w}, \gamma_{\infty}  , \ \ w \in \Walls_{n}, \ \ 0<w<1 \Big\rangle \Big/ \Big(\gamma_0 \prod\limits_{w \in (0,1)}^{\longleftarrow} \gamma_w = \gamma_{\infty}\Big)
$$
For $n=3$ this loop are shown in Fig \ref{monodex}. The relation satisfied by $\gamma_{w}$ is explained in Fig \ref{compex}. 

\begin{figure}[H]
	\centering
	\includegraphics[width=13.5cm]{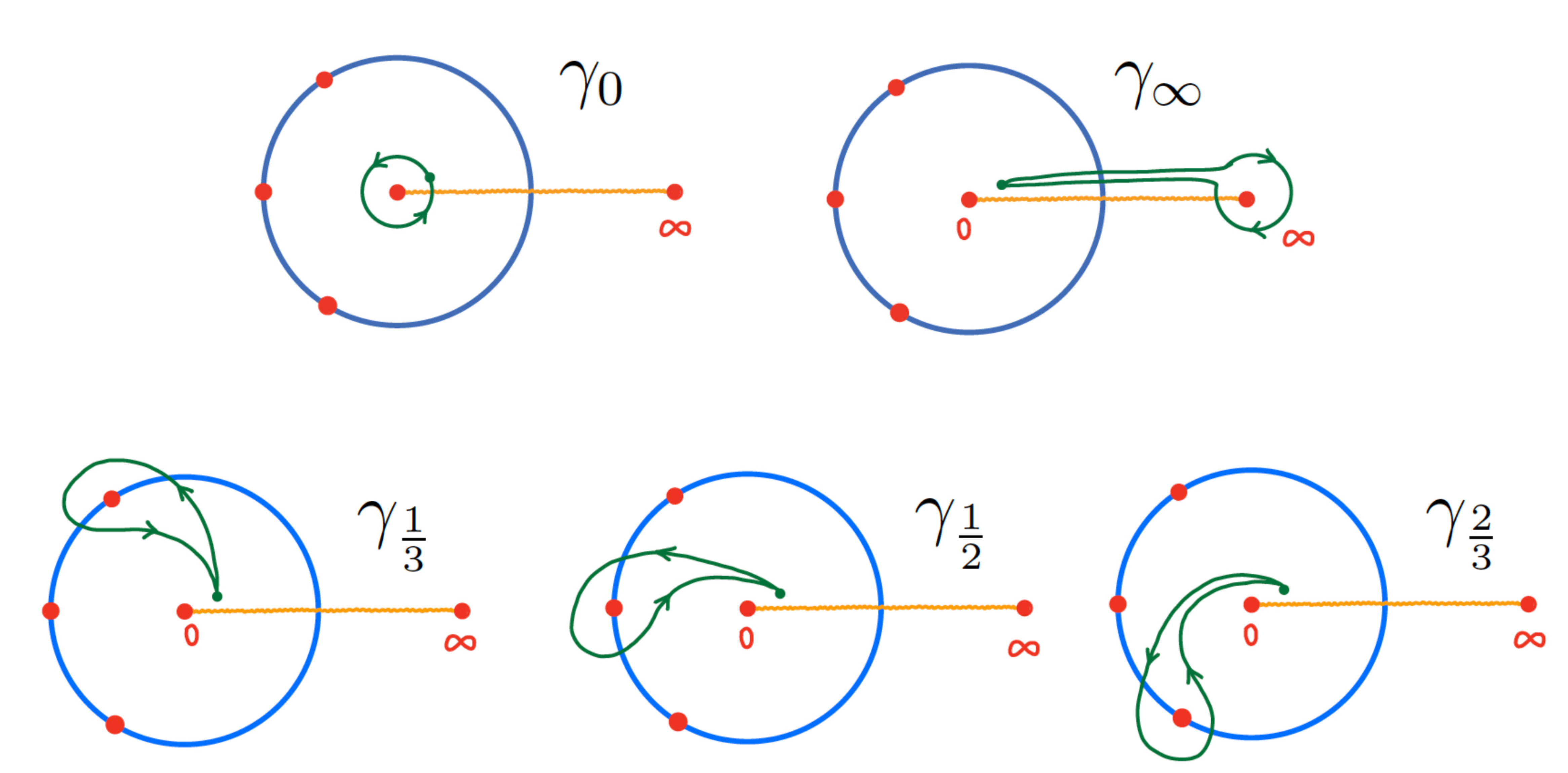}
	\caption{\label{monodex} Generators of $\pi_{1}(\mathbb{P}^{1}\setminus\sing,0_{+})$  for $n=3$.
	}
\end{figure}

\begin{Example}
For $n=3$ the examples of compositions of generators of $\pi_{1}(\mathbb{P}^{1}\setminus\sing,0_{+})$ for $n=3$:
\begin{figure}[H]
	\centering
	\includegraphics[width=14cm]{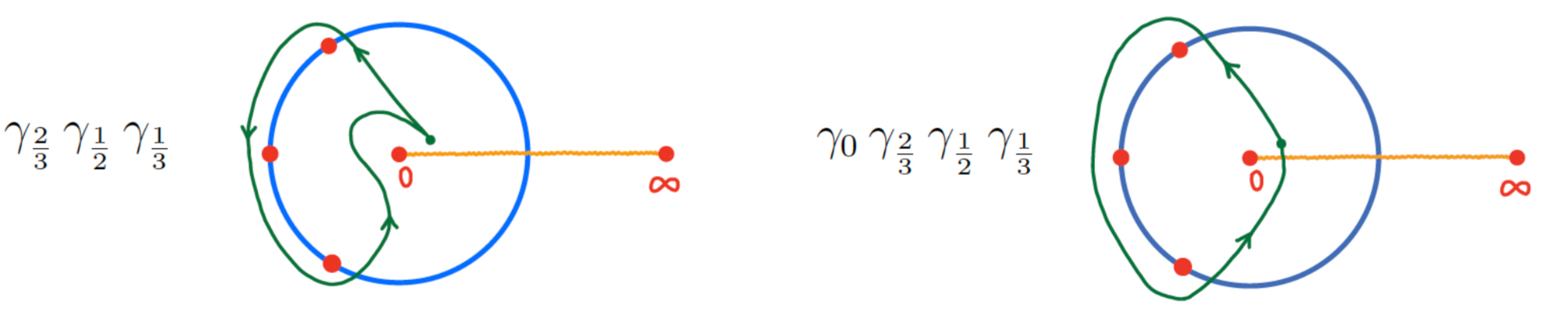}
	\caption{\label{compex} Generators of $\pi_{1}(\mathbb{P}^{1}\setminus\sing,0_{+})$  for $n=3$.
	}
\end{figure}

\noindent
It is clear that the loop representing $\gamma_{0} \gamma_{\frac{2}{3}} \gamma_{\frac{1}{2}} \gamma_{\frac{1}{3}}$, in the above figure, is homotopy equivalent to $\gamma_{\infty}$ in Fig. \ref{monodex} which gives the relation in the fundamental group
$$
\gamma_{0} \gamma_{\frac{2}{3}} \gamma_{\frac{1}{2}} \gamma_{\frac{1}{3}}=\gamma_{\infty}.
$$
\end{Example}

\subsection{Fock representation of the fundamental group}
The transformation properties of $\psi^{0}_{\mathsf{DT}}(z)$ in the neighborhood of $z=0$ are completely determined by the factor $z^{c^{(0)}}$ in (\ref{solzer}). Going around $\gamma_{0}$ amounts in the transformation
$$
\psi^0_{\mathsf{DT}}(\gamma_{0} \cdot z) = \psi^{0}_{\mathsf{DT}}(z)\, \mathsf{E}_{0}
$$
where $\mathsf{E}_{0}$ is the diagonal matrix with eigenvalues
$$
e^{2\pi i c^{(0)}_{\lambda}}=\left.\mathscr{O}(1)\right|_{\lambda}=\prod\limits_{(i,j) \in \lambda} t_1^{i-1} t_2^{j-1}.
$$
Thus 
$$
\nabla_{\DT}(\gamma_{0})=\mathsf{E}_{0}.
$$
Similarly, the fundamental solution of the qde in the neighborhood of $z=\infty$  transforms as
$$
\psi^{\infty}_{\mathsf{DT}}(z) \to \psi^{\infty}_{\mathsf{DT}}(z) \mathsf{E}_{\infty}
$$
if we go around a small loop containing $z=\infty$. This means that 
$$
\nabla_{\DT}(\gamma_{\infty}) = \T \,\mathsf{E}_{\infty}\, \T^{-1}
$$
where $\T=\mathrm{Tran}_{\mathsf{DT}}(0)$ denotes the transport of $\nabla_{\mathsf{DT}}$
from $z=0$ to $z=\infty$ along $\matR_{+}$. Theorem \ref{tran0} describes the matrix $\T$ explicitly. 

\begin{Proposition} \label{loopmnth}
	{\it For $w\in \Walls_{n}$ with $0<w<1$ we have
		$$
		\nabla_{\DT}(\gamma_w)=\B^{\bullet}_{w}
		$$
		where $\B^{\bullet}_{w}$ is the matrix of the operator (\ref{monodromyB}) in the basis of the Macdonald polynomials. 
	}
\end{Proposition}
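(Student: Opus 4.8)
\textbf{The plan} is to deduce the statement from the description of the transport $\mathrm{Tran}_{\DT}(s)$ already obtained in Theorem~\ref{transtheor}, by identifying the loop $\gamma_w$ with a composition of two of the transport paths. First I would do the topological reduction. Fix a small $\epsilon>0$ and let $\gamma_{w\pm\epsilon}$ be the paths from $0$ to $\infty$ crossing $\{|z|=1\}$ at $e^{2\pi i(w\pm\epsilon)}$ that are used to define $\mathrm{Tran}_{\DT}(w\pm\epsilon)$. Because $0<w<1$ we have $\lfloor w\rfloor=0$, so neither path winds around $z=0$ and neither meets $\matR_{+}$; for $\epsilon$ small the only element of $\sing$ lying in the thin angular sector between $\gamma_{w-\epsilon}$ and $\gamma_{w+\epsilon}$ is the root of unity $\zeta=e^{2\pi i w}$. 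Hence the loop based at $0_{+}$ that runs out along $\gamma_{w-\epsilon}$ and returns along the reverse of $\gamma_{w+\epsilon}$ stays in $\mathbb{P}^1\setminus\sing$, avoids $\matR_{+}$, and encircles $\zeta$ exactly once; comparing orientations, it represents the generator $\gamma_w$.

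Second, I would convert this into transport matrices. Continuing the fundamental solution $\psi^{0}_{\DT}$ along this loop: by the defining relation~(\ref{transpdef0}), after the first half the continued germ near $z=\infty$ is $\psi^{\infty}_{\DT}$ times a constant built from $\mathrm{Tran}_{\DT}(w-\epsilon)$, and continuing along the second half turns $\psi^{\infty}_{\DT}$ back into $\psi^{0}_{\DT}$ times the other transport matrix. Keeping track of left/right placement — using that the monodromy acts from the right, so that $\nabla_{\DT}$ is an antihomomorphism — this gives
\[
\nabla_{\DT}(\gamma_w)=\mathrm{Tran}_{\DT}(w-\epsilon)\,\mathrm{Tran}_{\DT}(w+\epsilon)^{-1}.
\]
Third, I would plug in Theorem~\ref{transtheor}. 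For $w\in\Walls_n$ with $0<w<1$, the walls in $(0,w+\epsilon)$ are precisely those in $(0,w-\epsilon)$ together with $w$ itself, and in the ordered product $\prod_{v\in(0,s)}^{\longleftarrow}(\B^{\bullet}_v)^{-1}$ the new factor $(\B^{\bullet}_w)^{-1}$ is inserted at the far left, since $w$ exceeds every other wall involved. Thus $\mathrm{Tran}_{\DT}(w+\epsilon)=(\B^{\bullet}_w)^{-1}\,\mathrm{Tran}_{\DT}(w-\epsilon)$, so $\mathrm{Tran}_{\DT}(w+\epsilon)^{-1}=\mathrm{Tran}_{\DT}(w-\epsilon)^{-1}\,\B^{\bullet}_w$, and substituting,
\[
\nabla_{\DT}(\gamma_w)=\mathrm{Tran}_{\DT}(w-\epsilon)\,\mathrm{Tran}_{\DT}(w-\epsilon)^{-1}\,\B^{\bullet}_w=\B^{\bullet}_w .
\]
By the definition $\B^{\bullet}_{w}=\mathsf{P}^{-1}\B_{w}\mathsf{P}$ used since Section~\ref{monsolsec}, this is exactly the matrix of the monodromy operator~(\ref{monodromyB}) in the basis of Macdonald polynomials, which is the claim.

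The step I expect to be the real obstacle is the orientation and ordering bookkeeping in the first two steps: one must verify that the placement of $0_{+}$ above the cut along $\matR_{+}$, the counterclockwise orientation chosen for $\gamma_w$, and the right-action convention for $\nabla_{\DT}$ combine to yield $\B^{\bullet}_{w}$ rather than $(\B^{\bullet}_{w})^{-1}$; this is precisely the content of fixing the ordered identity $\nabla_{\DT}(\gamma_w)=\mathrm{Tran}_{\DT}(w-\epsilon)\,\mathrm{Tran}_{\DT}(w+\epsilon)^{-1}$. Once that is nailed down, the rest is a direct substitution of Theorem~\ref{transtheor}, and the analogous computations $\nabla_{\DT}(\gamma_0)=\mathsf{E}_0$, $\nabla_{\DT}(\gamma_\infty)=\T\,\mathsf{E}_\infty\,\T^{-1}$ together with the group relation $\gamma_0\prod^{\longleftarrow}_{w\in(0,1)}\gamma_w=\gamma_\infty$ provide a consistency check on the signs.
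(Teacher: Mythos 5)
Your proposal is correct and takes essentially the same approach as the paper: the paper's proof also picks $s<w<s'$ with $w$ the only wall between them, writes $\nabla_{\DT}(\gamma_w)=\mathrm{Tran}_{\DT}(s)\,\mathrm{Tran}_{\DT}(s')^{-1}$ (which the paper dismisses as ``by definition''), and then substitutes Theorem~\ref{transtheor}. You merely make explicit the topological identification of $\gamma_w$ with the concatenation of the two transport paths and the ordered-product bookkeeping that the paper leaves to the reader.
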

\begin{proof}
	Let us consider $s',s \in \matR$ so that there is only one element $w\in \Walls_{n}$ such that $1>s'>w>s>0$. Then, by definition,
	$$
	\nabla_{\DT}(\gamma_w) = \textrm{Tran}(s)\, \textrm{Tran}(s')^{-1},
	$$
	The proposition follows from Theorem \ref{transtheor}. 
\end{proof}
Combining all this together we arrive at the following result 
\begin{Theorem} \label{nomodtheor}
	{\it The representation (\ref{defmonod}) in generators is given by
		$$
		\nabla_{\mathsf{DT}}: \ \ \gamma_{0} \to \mathsf{E}_{0}, \ \ \gamma_{\infty}\to  \T \,\mathsf{E}_{\infty}\, \T^{-1}, \ \ \gamma_{w}\to \B^{\bullet}_{w}.
		$$}
\end{Theorem}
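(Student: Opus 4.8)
The plan is to assemble Theorem \ref{nomodtheor} directly from the pieces already established, since each generator has been handled individually. First I would recall that the monodromy antihomomorphism $\nabla_{\mathsf{DT}}$ is determined by its values on the generators $\gamma_{0},\gamma_{w}$ ($w\in\Walls_n$, $0<w<1$), $\gamma_{\infty}$ of $\pi_{1}(\mathbb{P}^1\setminus\sing,0_{+})$ described in Section \ref{loopmondrs}, subject only to the single relation $\gamma_0\prod_{w\in(0,1)}^{\longleftarrow}\gamma_w=\gamma_\infty$. So the proof splits into (i) computing $\nabla_{\mathsf{DT}}$ on each generator and (ii) verifying compatibility with the defining relation.

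For step (i): the value on $\gamma_0$ follows from the local form \eqref{solzer} of $\psi^{0}_{\mathsf{DT}}(z)$ near $z=0$ — analytic continuation around $z=0$ multiplies the factor $z^{c^{(0)}}$ on the right by the diagonal matrix $\mathsf{E}_0$ with eigenvalues $e^{2\pi i c^{(0)}_\lambda}=\left.\mathscr{O}(1)\right|_\lambda$, giving $\nabla_{\mathsf{DT}}(\gamma_0)=\mathsf{E}_0$. The value on $\gamma_\infty$ is obtained analogously from the local form \eqref{infsol} near $z=\infty$: a small loop around $\infty$ multiplies $\psi^{\infty}_{\mathsf{DT}}$ on the right by $\mathsf{E}_\infty$, and since $\gamma_\infty$ is conjugated by the transport $\T=\mathrm{Tran}_{\mathsf{DT}}(0)$ along $\matR_{+}$ (Theorem \ref{tran0} and Theorem \ref{t0th}), one gets $\nabla_{\mathsf{DT}}(\gamma_\infty)=\T\,\mathsf{E}_\infty\,\T^{-1}$. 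The value on $\gamma_w$ is exactly Proposition \ref{loopmnth}: choosing $s,s'$ with a single wall $w$ strictly between them, $\gamma_w$ is realized as $\textrm{Tran}(s)\,\textrm{Tran}(s')^{-1}$, and Theorem \ref{transtheor} gives the telescoping product of $\B^{\bullet}_\bullet$-matrices collapsing to $\B^{\bullet}_w$.

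For step (ii): I would check that these assignments respect the relation. Applying the antihomomorphism property to $\gamma_0\prod_{w\in(0,1)}^{\longleftarrow}\gamma_w=\gamma_\infty$ yields (reversing order, as in the Remark following \eqref{defmonod}) $\nabla_{\mathsf{DT}}(\gamma_\infty)=\big(\prod_{w\in(0,1)}^{\longrightarrow}\B^{\bullet}_w\big)\,\mathsf{E}_0$; one must confirm this equals $\T\,\mathsf{E}_\infty\,\T^{-1}$. This is precisely the content of Theorem \ref{transtheor} applied at $s$ slightly above $1$ together with Theorem \ref{monfac} and the relation $\mathsf{E}_0=\B_0=(t_1t_2)^n\cdot\mathrm{Id}$; alternatively it follows by comparing $\mathrm{Tran}_{\mathsf{DT}}(1^{-})$ with $\mathrm{Tran}_{\mathsf{DT}}(0^{+})=\T$, noting that transporting across all singularities in $(0,1)$ and then the one at $z=0$ amounts to the full loop around $\infty$. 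The one genuinely delicate point — the main obstacle — is bookkeeping the orientation/order conventions consistently: $\nabla_{\mathsf{DT}}$ is an \emph{anti}homomorphism, the products $\prod^{\longrightarrow}$ and $\prod^{\longleftarrow}$ are ordered by increasing $w$ in opposite directions, and the base point $0_+$ sits above the cut along $\matR_+$, so one must be careful that the telescoping in Proposition \ref{loopmnth} and the relation-check both use the same conventions as Theorem \ref{transtheor}. Once the conventions are pinned down, assembling the three values into the stated representation is immediate.
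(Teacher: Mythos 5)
Your step (i) reproduces the paper's proof almost verbatim: $\nabla_{\mathsf{DT}}(\gamma_0)=\mathsf{E}_0$ from the monodromy of the factor $z^{c^{(0)}}$ in (\ref{solzer}), $\nabla_{\mathsf{DT}}(\gamma_\infty)=\T\,\mathsf{E}_\infty\,\T^{-1}$ from the local form (\ref{infsol}) near $\infty$ conjugated by the transport $\T=\mathrm{Tran}_{\mathsf{DT}}(0)$, and $\nabla_{\mathsf{DT}}(\gamma_w)=\B^{\bullet}_w$ from Proposition~\ref{loopmnth}. That is exactly the paper's argument, and it suffices.

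Your step (ii) is logically superfluous and contains an error. The map $\nabla_{\mathsf{DT}}$ of (\ref{defmonod}) is \emph{a priori} a well-defined antihomomorphism on $\pi_1$ (it is a monodromy representation), so its values on generators automatically respect the relation; there is nothing to verify. The paper runs the implication in the opposite direction: it uses Theorem~\ref{nomodtheor} and the relation $\gamma_0\prod^{\longleftarrow}_{w\in(0,1)}\gamma_w=\gamma_\infty$ to give a second proof of Theorem~\ref{t0th}. Your attempt to check the relation also invokes ``$\mathsf{E}_0=\B_0=(t_1 t_2)^n\cdot\mathrm{Id}$'', which is false: $\mathsf{E}_0$ is the diagonal matrix with eigenvalue $\left.\mathscr{O}(1)\right|_\lambda=\prod_{(i,j)\in\lambda}t_1^{i-1}t_2^{j-1}$ at $\lambda$ (the matrix of $\cL_0$ in the fixed-point basis), while $\B_0=(t_1 t_2)^n$ is the scalar monodromy operator at $w=0$; these are distinct operators. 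Since step (ii) is not needed, this does not invalidate your proof, but the identification should be dropped.
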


\subsection{Relation in $\pi_{1}(\mathbb{P}^{1}\setminus\sing,0_{+})$ } 
The generators of $\pi_{1}(\mathbb{P}^{1} \setminus \textrm{Sing},0_{+})$ are subject to the following relation
$$
\gamma_0 \prod\limits_{w \in (0,1)}^{\longleftarrow}\, \gamma_w = \gamma_{\infty}.
$$
Applying the anti-homomorphism $\nabla_{\DT}$ we obtain
$$
\Big(\prod\limits_{w\in (0,1)}^{\longrightarrow} \B^{\bullet}_{w} \Big)\textsf{E}_{0}  \,= \T \, \textsf{E}_{\infty}\, \T^{-1}.
$$
The left side of this equation is simply the matrix of the operator ${\bf M}(\infty)$ in the basis of fixed points. Thus, the last relation says that $\T$ maps the eigenvectors of ${\bf M}(0)$ to the eigenvectors of ${\bf M}(\infty)$ which gives another proof of Theorem \ref{t0th}. 

\section{Appendices}
\appendix
\section{Jack Polynomials \label{appjack}}

Let $j^{(\beta)}_{\lambda}$ denote the standard Jack polynomial with parameter $\beta$. We denote 
$$
n^{\beta}_{\lambda}=\prod\limits_{\Box\in \lambda} (l_{\lambda}(\Box) \beta^{-1} +a_{\lambda}(\Box)+1  )
$$
where
$$
a_{\lambda}(\Box)= \lambda_{i}-j, \ \ \ l_{\lambda}(\Box)= \lambda'_{j}-i
$$
denote the standard arm and leg lengths of a box $\Box=(i,j)$ in $\lambda$ and $\lambda'$ is the transposed Young diagram. Let
$$
{\bf j}^{(\beta)}_{\lambda}=n^{\beta}_{\lambda}\,j^{(\beta)}_{\lambda}
$$
and
$$
J_{\lambda}= (-\epsilon_1)^{|\lambda|}\, \left.{\bf j}^{(-\epsilon_1/\epsilon_2)}_{\lambda}\right|_{p_i=-\epsilon_2 p_i}.
$$
The Jack polynomials normalized in this way correspond to the basis of torus fixed points in equivariant cohomology of $\textrm{Hilb}^{n}(\matC^2)$ under isomorphism (\ref{FockDef}). These polynomials form an eigenbasis of  ${\mathbf{m}}(0)$.

Here are the first several examples of Jack polynomials normalized this way:

\noindent
{$n=1$:}
$$
J_{[1]}=\epsilon_{{1}}\epsilon_{{2}}p_{{1}}
$$

\noindent{$n=2$:}
$$
J_{[2]}={\epsilon_{{1}}}^{2}{\epsilon_{{2}}}^{2}{p_{{1}}}^{2}+\epsilon_{{1}}{
	\epsilon_{{2}}}^{2}p_{{2}}, \ \ \ J_{[1,1]}={\epsilon_{{1}}}^{2}{\epsilon_{{2}}}^{2}{p_{{1}}}^{2}+{\epsilon_{{1}}}
^{2}\epsilon_{{2}}p_{{2}}
$$

\noindent{$n=3$:}
$$
\begin{array}{l}
J_{[3]}={\epsilon_{{1}}}^{3}{\epsilon_{{2}}}^{3}{p_{{1}}}^{3}+3\,{\epsilon_{{1
}}}^{2}{\epsilon_{{2}}}^{3}p_{{1}}p_{{2}}+2\,\epsilon_{{1}}{\epsilon_{
		{2}}}^{3}p_{{3}}\\
\\
J_{[2,1]}={\epsilon_{{1}}}^{3}{\epsilon_{{2}}}^{3}{p_{{1}}}^{3}+ \left( \epsilon
_{{1}}+\epsilon_{{2}} \right) {\epsilon_{{1}}}^{2}{\epsilon_{{2}}}^{2}
p_{{2}}p_{{1}}+{\epsilon_{{2}}}^{2}p_{{3}}{\epsilon_{{1}}}^{2}\\
\\
J_{[1,1,1]}={\epsilon_{{1}}}^{3}{\epsilon_{{2}}}^{3}{p_{{1}}}^{3}+3\,{\epsilon_{{1
}}}^{3}{\epsilon_{{2}}}^{2}p_{{1}}p_{{2}}+2\,{\epsilon_{{1}}}^{3}
\epsilon_{{2}}p_{{3}}
\end{array}
$$
The transition matrix from the basis $p_{\lambda}$ to the basis of $J_{\lambda}$ has the form:

\noindent
{$n=2$:}
$$
\textsf{J}= \left[ \begin {array}{cc} \epsilon_{{1}}{\epsilon_{{2}}}^{2}&{
	\epsilon_{{1}}}^{2}\epsilon_{{2}}\\ \noalign{\medskip}{\epsilon_{{1}}}
^{2}{\epsilon_{{2}}}^{2}&{\epsilon_{{1}}}^{2}{\epsilon_{{2}}}^{2}
\end {array} \right] 
$$

\noindent
{$n=3$:}
$$
\textsf{J}=  \left[ \begin {array}{ccc} 2\,\epsilon_{{1}}{\epsilon_{{2}}}^{3}&{
	\epsilon_{{1}}}^{2}{\epsilon_{{2}}}^{2}&2\,{\epsilon_{{1}}}^{3}
\epsilon_{{2}}\\ \noalign{\medskip}3\,{\epsilon_{{2}}}^{3}{\epsilon_{{
			1}}}^{2}& \left( \epsilon_{{1}}+\epsilon_{{2}} \right) {\epsilon_{{1}}
}^{2}{\epsilon_{{2}}}^{2}&3\,{\epsilon_{{2}}}^{2}{\epsilon_{{1}}}^{3}
\\ \noalign{\medskip}{\epsilon_{{2}}}^{3}{\epsilon_{{1}}}^{3}&{
	\epsilon_{{2}}}^{3}{\epsilon_{{1}}}^{3}&{\epsilon_{{2}}}^{3}{\epsilon_
	{{1}}}^{3}\end {array} \right] 
$$
These matrices provide initial values of the fundamental solutions of the quantum differential equations normalized as in (\ref{solzernorm}).
\section{Macdonald polynomials \label{macappen}}
Here is the first several polynomials in the normalization we use (see Section 2.2 in \cite{GorskyNegut} for definitions):

\noindent
$\underline{n=1}$:
$$
P_{[1]}=p_{{1}}.
$$

\noindent
$\underline{n=2}$:
$$
P_{[1,1]}={\frac { \left( 1+t_{{2}} \right) {p_{{1}}}^{2}}{2 t_{{2}}}}+{
	\frac{ \left( t_{{2}}-1 \right) p_{{2}}}{ 2 t_{{2}}}},
$$

$$
P_{[2]}={\frac{ \left( 1+t_{{1}} \right) {p_{{1}}}^{2}}{2 t_{{1}}}}+{
	\frac{ \left( t_{{1}}-1 \right) p_{{2}}}{2 t_{{1}}}}.
$$
The matrix $\mathsf{P}$, by definition, is such that its $\lambda$-th column is a vector $P_{\lambda}$ in basis $p_{\mu}$. From above formulas we find:
$$
\mathsf{P}= \left[ \begin {array}{cc} {\frac {1+t_{{2}}}{2 t_{{2}}}}&{
	\frac{1+t_{{1}}}{2 t_{{1}}}}\\ \noalign{\medskip}{\frac {t_{{2}}-1
	}{2 t_{{2}}}}&{\frac{t_{{1}}-1}{2 t_{{1}}}}\end {array} \right] 
$$
We  compute 
$$
(-1)^l= \left[ \begin {array}{cc} 1&0\\ \noalign{\medskip}0&-1\end {array}
\right], \ \ \ \fp= \left[ \begin {array}{cc} 0&1\\ \noalign{\medskip}1&0\end {array}
\right]
$$
and thus
$$
\mathsf{P}^{\ast}:=(-1)^{l}\, \mathsf{P}\, \fp = \left[ \begin {array}{cc} 1/2\,{\frac {1+t_{{1}}}{t_{{1}}}}&1/2\,{
	\frac {1+t_{{2}}}{t_{{2}}}}\\ \noalign{\medskip}-1/2\,{\frac {t_{{1}}-
		1}{t_{{1}}}}&-1/2\,{\frac {t_{{2}}-1}{t_{{2}}}}\end {array} \right]. 
$$

\noindent
$\underline{n=3}$:
$$
\begin{array}{l}
P_{[1,1,1]}={\dfrac { \left( {t_{{2}}}^{2}+t_{{2}}+1 \right)  \left( 1+t_{{2}}
		\right) {p_{{1}}}^{3}}{6 {t_{{2}}}^{3}}}+{\dfrac { \left( t_{{2}}-1
		\right)  \left( {t_{{2}}}^{2}+t_{{2}}+1 \right) p_{{2}}p_{{1}}}{2 {t_{{
					2}}}^{3}}}+{\dfrac { \left( t_{{2}}-1 \right) ^{2} \left( 1+t_{{2}
		} \right) p_{{3}}}{3 {t_{{2}}}^{3}}},\\
P_{[2,1]}={\dfrac { \left( t_{{1}}t_{{2}}+2\,t_{{1}}+2\,t_{{2}}+1 \right) {p
			_{{1}}}^{3}}{6 t_{{1}}t_{{2}}}}+{\dfrac { \left( t_{{1}}t_{{2}}-1
		\right) p_{{2}}p_{{1}}}{2 t_{{1}}t_{{2}}}}+{\dfrac { \left( t_{{1}}
		-1 \right)  \left( t_{{2}}-1 \right) p_{{3}}}{3 t_{{1}}t_{{2}}}},\\
P_{[3]}={\dfrac { \left( 1+t_{{1}} \right)  \left( {t_{{1}}}^{2}+t_{{1}}+1
		\right) {p_{{1}}}^{3}}{6 {t_{{1}}}^{3}}}+{\dfrac { \left( t_{{1}}-1
		\right)  \left( {t_{{1}}}^{2}+t_{{1}}+1 \right) p_{{2}}p_{{1}}}{{2 t_{{
					1}}}^{3}}}+{\dfrac { \left( t_{{1}}-1 \right) ^{2} \left( 1+t_{{1}
		} \right) p_{{3}}}{3 {t_{{1}}}^{3}}}.
\end{array}
$$

$$
\mathsf{P}= \left[ \begin {array}{ccc} {\frac{ \left( {t_{{2}}}^{2}+t_{{2}}
		+1 \right)  \left( 1+t_{{2}} \right) }{6 {t_{{2}}}^{3}}}&{\frac {t_
		{{1}}t_{{2}}+2\,t_{{1}}+2\,t_{{2}}+1}{6 t_{{1}}t_{{2}}}}&{\frac {
		\left( 1+t_{{1}} \right)  \left( {t_{{1}}}^{2}+t_{{1}}+1 \right) }{6 {t
			_{{1}}}^{3}}}\\ \noalign{\medskip}{\frac { \left( t_{{2}}-1
		\right)  \left( {t_{{2}}}^{2}+t_{{2}}+1 \right) }{2 {t_{{2}}}^{3}}}&
{\frac{t_{{1}}t_{{2}}-1}{2 t_{{1}}t_{{2}}}}&{\frac{ \left( t_{{
				1}}-1 \right)  \left( {t_{{1}}}^{2}+t_{{1}}+1 \right) }{2 {t_{{1}}}^{3}}
}\\ \noalign{\medskip}{\frac{ \left( t_{{2}}-1 \right) ^{2}
		\left( 1+t_{{2}} \right) }{3 {t_{{2}}}^{3}}}&{\frac{ \left( t_{{1
		}}-1 \right)  \left( t_{{2}}-1 \right) }{3 t_{{1}}t_{{2}}}}&{\frac 
	{ \left( t_{{1}}-1 \right) ^{2} \left( 1+t_{{1}} \right) }{3 {t_{{1}}}^{
			3}}}\end {array} \right] 
$$
We compute 
\be \label{smat3}
(-1)^{l}= \left[ \begin {array}{ccc} 1&0&0\\ \noalign{\medskip}0&-1&0
\\ \noalign{\medskip}0&0&1\end {array} \right], \ \ \ 
\fp = \left[ \begin {array}{ccc} 0&0&1\\ \noalign{\medskip}0&1&0
\\ \noalign{\medskip}1&0&0\end {array} \right] 
\ee
and thus
$$
\mathsf{P}^{\ast}=(-1)^{l}\, \mathsf{P}\, \fp= \left[ \begin {array}{ccc} 1/6\,{\frac { \left( 1+t_{{1}} \right) 
		\left( {t_{{1}}}^{2}+t_{{1}}+1 \right) }{{t_{{1}}}^{3}}}&1/6\,{\frac 
	{t_{{1}}t_{{2}}+2\,t_{{1}}+2\,t_{{2}}+1}{t_{{1}}t_{{2}}}}&1/6\,{\frac 
	{ \left( {t_{{2}}}^{2}+t_{{2}}+1 \right)  \left( 1+t_{{2}} \right) }{{
			t_{{2}}}^{3}}}\\ \noalign{\medskip}-1/2\,{\frac { \left( t_{{1}}-1
		\right)  \left( {t_{{1}}}^{2}+t_{{1}}+1 \right) }{{t_{{1}}}^{3}}}&-1/
2\,{\frac {t_{{1}}t_{{2}}-1}{t_{{1}}t_{{2}}}}&-1/2\,{\frac { \left( t_
		{{2}}-1 \right)  \left( {t_{{2}}}^{2}+t_{{2}}+1 \right) }{{t_{{2}}}^{3
}}}\\ \noalign{\medskip}1/3\,{\frac { \left( t_{{1}}-1 \right) ^{2}
		\left( 1+t_{{1}} \right) }{{t_{{1}}}^{3}}}&1/3\,{\frac { \left( t_{{1
		}}-1 \right)  \left( t_{{2}}-1 \right) }{t_{{1}}t_{{2}}}}&1/3\,{\frac 
	{ \left( t_{{2}}-1 \right) ^{2} \left( 1+t_{{2}} \right) }{{t_{{2}}}^{
			3}}}\end {array} \right] 
$$

\section{Stable envelopes of $\X$ \label{stabxapp} } 
In this section we give explicit formulas for the matrices of K-theoretic stable envelopes defined by (\ref{defrestr}). All formulas are written in variables:
\be \label{tahvar}
t_{{1}}=ah, \ \ \ t_{{2}}={\frac {h}{a}}.
\ee
so that $h=\hbar^{1/2}$.

\noindent
$\underline{n=2}$:
$$
{\bf U}^{+}_{0}(a)={\bf U}^{-}_{1/2}(a)= \left[ \begin {array}{cc} {\frac { \left( -h+a \right) ^{2} \left( a+
		h \right) }{{h}^{2}a}}&-{\frac { \left( h-1 \right)  \left( h+1
		\right)  \left( -h+a \right) }{{h}^{2}a}}\\ \noalign{\medskip}0&{
	\frac { \left( -h+a \right)  \left( a+1 \right)  \left( a-1 \right) }{
		{a}^{2}h}}\end {array} \right] 
$$
$$
{\bf U}^{+}_{1/2}(a)= {\bf U}^{-}_{1}(a)=  \left[ \begin {array}{cc} {\frac { \left( -h+a \right) ^{2} \left( a+
		h \right) }{{h}^{2}a}}&-{\frac {a \left( -h+a \right)  \left( h-1
		\right)  \left( h+1 \right) }{{h}^{2}}}\\ \noalign{\medskip}0&{\frac 
	{ \left( -h+a \right)  \left( a+1 \right)  \left( a-1 \right) }{{a}^{2
		}h}}\end {array} \right] 
$$
By Lemma \ref{luskthlem}, ${\bf L}^{\pm}_{w}(a)=\fp {\bf U}^{\pm}_{w}(a^{-1}) \fp $ and we compute:
$$
{\bf L}^{+}_{0}(a)= {\bf L}^{-}_{1/2}(a)=\left[ \begin {array}{cc} {\frac { \left( ah-1 \right)  \left( a+1
		\right)  \left( a-1 \right) }{ah}}&0\\ \noalign{\medskip}{\frac {
		\left( h+1 \right)  \left( h-1 \right)  \left( ah-1 \right) }{{h}^{2}
}}&{\frac { \left( ah-1 \right) ^{2} \left( ah+1 \right) }{{a}^{2}{h}^
		{2}}}\end {array} \right], 
$$
$$
{\bf L}^{+}_{1/2}(a)= {\bf L}^{-}_{1}(a)= \left[ \begin {array}{cc} {\frac { \left( ah-1 \right)  \left( a+1
		\right)  \left( a-1 \right) }{ah}}&0\\ \noalign{\medskip}{\frac {
		\left( h+1 \right)  \left( h-1 \right)  \left( ah-1 \right) }{{a}^{2}
		{h}^{2}}}&{\frac { \left( ah-1 \right) ^{2} \left( ah+1 \right) }{{a}^
		{2}{h}^{2}}}\end {array} \right]. 
$$
All other stable envelope matrices differ from the ones given above by an integral shift of the slope, and can be computed by:
$$
{\bf U}^{\pm}_{w+1}(a)=\mathscr{O}(1) {\bf U}^{\pm}_{w}(a) \mathscr{O}(1)^{-1}, \ \ \ {\bf L}^{\pm}_{w+1}(a)=\mathscr{O}(1) {\bf L}^{\pm}_{w}(a) \mathscr{O}(1)^{-1},
$$
with
$$
\mathscr{O}(1)= \left[ \begin {array}{cc} {\frac {a}{h}}&0\\ \noalign{\medskip}0&{
	\frac {1}{ah}}\end {array} \right]. 
$$

\noindent
$\underline{n=3}$:
$$
\begin{array}{l}
{\bf U}^{+}_{0}(a)={\bf U}^{-}_{1/3}(a)= \\ \\ \left[ \begin {array}{ccc} -{\frac { \left( -h+a \right) ^{3} \left( 
		a+h \right)  \left( {a}^{2}+ah+{h}^{2} \right) }{{a}^{3/2}{h}^{9/2}}}&
{\frac { \left( -h+a \right) ^{2} \left( h-1 \right)  \left( h+1
		\right)  \left( a+h \right) }{{h}^{7/2}{a}^{3/2}}}&-{\frac { \left( -
		h+a \right)  \left( h-1 \right)  \left( h+1 \right)  \left( a{h}^{3}-1
		\right) }{{a}^{5/2}{h}^{9/2}}}\\ \noalign{\medskip}0&-{\frac {
		\left( -h+a \right) ^{2} \left( {a}^{3}-h \right) }{{h}^{5/2}{a}^{5/2
}}}&{\frac { \left( ah+1 \right)  \left( -h+a \right)  \left( h-1
		\right)  \left( h+1 \right)  \left( a+1 \right)  \left( a-1 \right) 
	}{{a}^{7/2}{h}^{7/2}}}\\ \noalign{\medskip}0&0&-{\frac { \left( a+1
		\right)  \left( a-1 \right)  \left( {a}^{3}h-1 \right)  \left( -h+a
		\right) }{{h}^{5/2}{a}^{9/2}}}\end {array} \right] 
\end{array}
$$

$$
\begin{array}{l}
{\bf U}^{+}_{1/3}(a)={\bf U}^{-}_{1/2}(a)= \\ \\  \left[ \begin {array}{ccc} -{\frac { \left( -h+a \right) ^{3} \left( 
		a+h \right)  \left( {a}^{2}+ah+{h}^{2} \right) }{{a}^{3/2}{h}^{9/2}}}&
{\frac { \left( h-1 \right)  \left( h+1 \right)  \left( -h+a \right) ^
		{2} \left( {a}^{2}+1 \right) }{{h}^{7/2}\sqrt {a}}}&-{\frac { \left( h
		-1 \right)  \left( h+1 \right)  \left( -h+a \right)  \left( {a}^{4}{h}
		^{3}-{a}^{2}h-a+h \right) }{{a}^{3/2}{h}^{9/2}}}\\ \noalign{\medskip}0
&-{\frac { \left( -h+a \right) ^{2} \left( {a}^{3}-h \right) }{{h}^{5/
			2}{a}^{5/2}}}&{\frac { \left( {a}^{2}+1 \right)  \left( -h+a \right) 
		\left( h-1 \right)  \left( h+1 \right)  \left( a-1 \right)  \left( a+
		1 \right) }{{h}^{5/2}{a}^{5/2}}}\\ \noalign{\medskip}0&0&-{\frac {
		\left( a+1 \right)  \left( a-1 \right)  \left( {a}^{3}h-1 \right) 
		\left( -h+a \right) }{{h}^{5/2}{a}^{9/2}}}\end {array} \right] 
\end{array}
$$

$$
\begin{array}{l}
{\bf U}^{+}_{1/2}(a)={\bf U}^{-}_{2/3}(a)= \\ \\  \left[ \begin {array}{ccc} -{\frac { \left( -h+a \right) ^{3} \left( 
		a+h \right)  \left( {a}^{2}+ah+{h}^{2} \right) }{{a}^{3/2}{h}^{9/2}}}&
{\frac { \left( h-1 \right)  \left( h+1 \right)  \left( -h+a \right) ^
		{2} \left( {a}^{2}+1 \right) }{{h}^{7/2}\sqrt {a}}}&{\frac { \left( h-
		1 \right)  \left( h+1 \right)  \left( -h+a \right)  \left( {a}^{4}{h}^
		{2}-{a}^{3}{h}^{3}-{a}^{2}{h}^{2}+1 \right) }{{h}^{9/2}\sqrt {a}}}
\\ \noalign{\medskip}0&-{\frac { \left( -h+a \right) ^{2} \left( {a}^{
			3}-h \right) }{{h}^{5/2}{a}^{5/2}}}&{\frac { \left( {a}^{2}+1 \right) 
		\left( -h+a \right)  \left( h-1 \right)  \left( h+1 \right)  \left( a
		-1 \right)  \left( a+1 \right) }{{h}^{5/2}{a}^{5/2}}}
\\ \noalign{\medskip}0&0&-{\frac { \left( a+1 \right)  \left( a-1
		\right)  \left( {a}^{3}h-1 \right)  \left( -h+a \right) }{{h}^{5/2}{a
		}^{9/2}}}\end {array} \right]
\end{array}
$$

$$
\begin{array}{l}
{\bf U}^{+}_{2/3}(a)={\bf U}^{-}_{1}(a)= \\ \\  \left[ \begin {array}{ccc} -{\frac { \left( -h+a \right) ^{3} \left( 
		a+h \right)  \left( {a}^{2}+ah+{h}^{2} \right) }{{a}^{3/2}{h}^{9/2}}}&
{\frac { \left( -h+a \right) ^{2}{a}^{3/2} \left( h-1 \right)  \left( 
		h+1 \right)  \left( a+h \right) }{{h}^{9/2}}}&-{\frac { \left( h-1
		\right)  \left( h+1 \right) {a}^{7/2} \left( -h+a \right)  \left( a{h
		}^{3}-1 \right) }{{h}^{9/2}}}\\ \noalign{\medskip}0&-{\frac { \left( -
		h+a \right) ^{2} \left( {a}^{3}-h \right) }{{h}^{5/2}{a}^{5/2}}}&{
	\frac { \left( ah+1 \right)  \left( -h+a \right)  \left( h-1 \right) 
		\left( h+1 \right)  \left( a+1 \right)  \left( a-1 \right) }{\sqrt {a
		}{h}^{5/2}}}\\ \noalign{\medskip}0&0&-{\frac { \left( a+1 \right) 
		\left( a-1 \right)  \left( {a}^{3}h-1 \right)  \left( -h+a \right) }{
		{h}^{5/2}{a}^{9/2}}}\end {array} \right]
\end{array}
$$
By Lemma \ref{luskthlem}, ${\bf L}^{\pm}_{w}(a)=\fp {\bf U}^{\pm}_{w}(a^{-1}) \fp $ and using (\ref{smat3}) these matrices are easy to compute explicitly. The stable envelope matrices for other slopes are determined from
$$
{\bf U}^{\pm}_{w+1}(a)=\mathscr{O}(1) {\bf U}^{\pm}_{w}(a) \mathscr{O}(1)^{-1}, \ \ \ {\bf L}^{\pm}_{w+1}(a)=\mathscr{O}(1) {\bf L}^{\pm}_{w}(a) \mathscr{O}(1)^{-1},
$$
with
$$
\mathscr{O}(1)= \left[ \begin {array}{ccc} {\frac {{a}^{3}}{{h}^{3}}}&0&0
\\ \noalign{\medskip}0&{h}^{-2}&0\\ \noalign{\medskip}0&0&{\frac {1}{{
			a}^{3}{h}^{3}}}\end {array} \right]. 
$$

\section{Stable envelopes for $\Y_{w}$ \label{stabyapp}}
Here we give examples of stable envelope matrices (\ref{lufpy}):

\noindent
\underline{Case $n=2$}:
$$ \hspace{-10mm}
\begin{array}{l}
U^{+}_{\Y_{0}}(a)= \left[ \begin {array}{cc} {\frac { \left( a+{\it hh} \right)  \left( 
		-{\it hh}+a \right) ^{2}}{{{\it hh}}^{2}a}}&-{\frac { \left( {\it hh}+
		1 \right)  \left( {\it hh}-1 \right)  \left( -{\it hh}+a \right) }{{{
				\it hh}}^{2}a}}\\ \noalign{\medskip}0&{\frac { \left( -{\it hh}+a
		\right)  \left( a+1 \right)  \left( a-1 \right) }{{a}^{2}{\it hh}}}
\end {array} \right]
\ \ 
U^{+}_{\Y_{1/2}}(a)= \left[ \begin {array}{cc} -{\frac { \left( -{\it hh}+a \right) 
		\left( a+{\it hh} \right) }{\sqrt {a}{{\it hh}}^{3/2}}}&{\frac {
		\sqrt {a} \left( {\it hh}-1 \right)  \left( {\it hh}+1 \right) }{{{
				\it hh}}^{3/2}}}\\ \noalign{\medskip}0&-{\frac { \left( a+1 \right) 
		\left( a-1 \right) }{\sqrt {{\it hh}}{a}^{3/2}}}\end {array} \right]
\\
\\
U^{-}_{\Y_{0}}(a)=  \left[ \begin {array}{cc} {\frac { \left( a+{\it hh} \right)  \left( 
		-{\it hh}+a \right) ^{2}}{{{\it hh}}^{2}a}}&-{\frac { \left( {\it hh}+
		1 \right)  \left( {\it hh}-1 \right)  \left( -{\it hh}+a \right) }{{{
				\it hh}}^{2}a}}\\ \noalign{\medskip}0&{\frac { \left( -{\it hh}+a
		\right)  \left( a+1 \right)  \left( a-1 \right) }{{a}^{2}{\it hh}}}
\end {array} \right] 
\ \ 
U^{-}_{\Y_{1/2}}(a)=\left[ \begin {array}{cc} -{\frac { \left( -{\it hh}+a \right) 
		\left( a+{\it hh} \right) }{\sqrt {a}{{\it hh}}^{3/2}}}&{\frac {
		\left( {\it hh}-1 \right)  \left( {\it hh}+1 \right) }{{a}^{3/2}{{
				\it hh}}^{3/2}}}\\ \noalign{\medskip}0&-{\frac { \left( a+1 \right) 
		\left( a-1 \right) }{\sqrt {{\it hh}}{a}^{3/2}}}\end {array} \right] 
\end{array}
$$
By Lemma \ref{luskthlem} ${L}^{\pm }_{\Y_w}(a) = \fp \, {U}^{\pm }_{\Y_w}(a^{-1}) \, \fp$ and we compute:
$$
\hspace{-10mm}
\begin{array}{l}
L^{+}_{\Y_{0}}(a)=  \left[ \begin {array}{cc} {\frac { \left( a{\it hh}-1 \right) 
		\left( a+1 \right)  \left( a-1 \right) }{a{\it hh}}}&0
\\ \noalign{\medskip}{\frac { \left( a{\it hh}-1 \right)  \left( {\it 
			hh}-1 \right)  \left( {\it hh}+1 \right) }{{{\it hh}}^{2}}}&{\frac {
		\left( a{\it hh}-1 \right) ^{2} \left( a{\it hh}+1 \right) }{{a}^{2}{
			{\it hh}}^{2}}}\end {array} \right] 
\ \ 
L^{+}_{\Y_{1/2}}(a)=   \left[ \begin {array}{cc} {\frac { \left( a+1 \right)  \left( a-1
		\right) }{\sqrt {a}\sqrt {{\it hh}}}}&0\\ \noalign{\medskip}{\frac {
		\left( {\it hh}-1 \right)  \left( {\it hh}+1 \right) }{\sqrt {a}{{
				\it hh}}^{3/2}}}&{\frac { \left( a{\it hh}-1 \right)  \left( a{\it hh}
		+1 \right) }{{a}^{3/2}{{\it hh}}^{3/2}}}\end {array} \right]
\\
\\
L^{-}_{\Y_{0}}(a)=   \left[ \begin {array}{cc} {\frac { \left( a{\it hh}-1 \right) 
		\left( a+1 \right)  \left( a-1 \right) }{a{\it hh}}}&0
\\ \noalign{\medskip}{\frac { \left( a{\it hh}-1 \right)  \left( {\it 
			hh}-1 \right)  \left( {\it hh}+1 \right) }{{{\it hh}}^{2}}}&{\frac {
		\left( a{\it hh}-1 \right) ^{2} \left( a{\it hh}+1 \right) }{{a}^{2}{
			{\it hh}}^{2}}}\end {array} \right]
\ \ 
L^{-}_{\Y_{1/2}}(a)= \left[ \begin {array}{cc} {\frac { \left( a+1 \right)  \left( a-1
		\right) }{\sqrt {a}\sqrt {{\it hh}}}}&0\\ \noalign{\medskip}{\frac {
		\left( {\it hh}-1 \right)  \left( {\it hh}+1 \right) {a}^{3/2}}{{{
				\it hh}}^{3/2}}}&{\frac { \left( a{\it hh}-1 \right)  \left( a{\it hh}
		+1 \right) }{{a}^{3/2}{{\it hh}}^{3/2}}}\end {array} \right]
\end{array}
$$

\noindent
\underline{Case $n=3$}:
$$
\begin{array}{l}
U^{+}_{\Y_{0}}(a)= \\
\\
\left[ \begin {array}{ccc} -{\frac { \left( a+{\it hh} \right) 
		\left( -{\it hh}+a \right) ^{3} \left( {a}^{2}+a{\it hh}+{{\it hh}}^{
			2} \right) }{{a}^{3/2}{{\it hh}}^{9/2}}}&{\frac { \left( {\it hh}-1
		\right)  \left( {\it hh}+1 \right)  \left( -{\it hh}+a \right) ^{2}
		\left( a+{\it hh} \right) }{{{\it hh}}^{7/2}{a}^{3/2}}}&-{\frac {
		\left( a{{\it hh}}^{3}-1 \right)  \left( {\it hh}+1 \right)  \left( {
			\it hh}-1 \right)  \left( -{\it hh}+a \right) }{{a}^{5/2}{{\it hh}}^{9
			/2}}}\\ \noalign{\medskip}0&-{\frac { \left( -{\it hh}+a \right) ^{2}
		\left( {a}^{3}-{\it hh} \right) }{{{\it hh}}^{5/2}{a}^{5/2}}}&{\frac 
	{ \left( a{\it hh}+1 \right)  \left( -{\it hh}+a \right)  \left( a+1
		\right)  \left( a-1 \right)  \left( {\it hh}-1 \right)  \left( {\it 
			hh}+1 \right) }{{a}^{7/2}{{\it hh}}^{7/2}}}\\ \noalign{\medskip}0&0&-{
	\frac { \left( -{\it hh}+a \right)  \left( a+1 \right)  \left( a-1
		\right)  \left( {a}^{3}{\it hh}-1 \right) }{{{\it hh}}^{5/2}{a}^{9/2}
}}\end {array} \right]
\\
\\
U^{+}_{\Y_{1/2}}(a)= 
\left[ \begin {array}{ccc} -{\frac {\sqrt {a} \left( -{\it hh}+a
		\right)  \left( a+{\it hh} \right) }{{{\it hh}}^{5/2}}}&0&{\frac {
		\left( {\it hh}-1 \right)  \left( {\it hh}+1 \right) }{{{\it hh}}^{5/
			2}\sqrt {a}}}\\ \noalign{\medskip}0&{{\it hh}}^{-1}&0
\\ \noalign{\medskip}0&0&-{\frac { \left( a+1 \right)  \left( a-1
		\right) }{{{\it hh}}^{3/2}{a}^{5/2}}}\end {array} \right]
\\
\\
U^{+}_{\Y_{1/3}}(a)= 
\left[ \begin {array}{ccc} -{\frac { \left( -{\it hh}+a \right) 
		\left( {a}^{2}+a{\it hh}+{{\it hh}}^{2} \right) }{{{\it hh}}^{3}}}&{
	\frac { \left( {\it hh}-1 \right)  \left( {\it hh}+1 \right) {a}^{3/2}
	}{{{\it hh}}^{5/2}}}&{\frac { \left( {\it hh}-1 \right)  \left( {\it 
			hh}+1 \right) }{{{\it hh}}^{3}}}\\ \noalign{\medskip}0&-{\frac {{a}^{3
		}-{\it hh}}{{a}^{3/2}{{\it hh}}^{3/2}}}&{\frac { \left( {\it hh}-1
		\right)  \left( {\it hh}+1 \right) }{{{\it hh}}^{2}}}
\\ \noalign{\medskip}0&0&-{\frac {{a}^{3}{\it hh}-1}{{a}^{3}{{\it hh}}
		^{2}}}\end {array} \right]
\end{array}
$$

$$
\begin{array}{l}
U^{-}_{\Y_{0}}(a)= \\
\\
\left[ \begin {array}{ccc} -{\frac { \left( a+{\it hh} \right) 
		\left( -{\it hh}+a \right) ^{3} \left( {a}^{2}+a{\it hh}+{{\it hh}}^{
			2} \right) }{{a}^{3/2}{{\it hh}}^{9/2}}}&{\frac { \left( {\it hh}-1
		\right)  \left( {\it hh}+1 \right)  \left( -{\it hh}+a \right) ^{2}
		\left( a+{\it hh} \right) }{{{\it hh}}^{7/2}{a}^{3/2}}}&-{\frac {
		\left( a{{\it hh}}^{3}-1 \right)  \left( {\it hh}+1 \right)  \left( {
			\it hh}-1 \right)  \left( -{\it hh}+a \right) }{{a}^{5/2}{{\it hh}}^{9
			/2}}}\\ \noalign{\medskip}0&-{\frac { \left( -{\it hh}+a \right) ^{2}
		\left( {a}^{3}-{\it hh} \right) }{{{\it hh}}^{5/2}{a}^{5/2}}}&{\frac 
	{ \left( a{\it hh}+1 \right)  \left( -{\it hh}+a \right)  \left( a+1
		\right)  \left( a-1 \right)  \left( {\it hh}-1 \right)  \left( {\it 
			hh}+1 \right) }{{a}^{7/2}{{\it hh}}^{7/2}}}\\ \noalign{\medskip}0&0&-{
	\frac { \left( -{\it hh}+a \right)  \left( a+1 \right)  \left( a-1
		\right)  \left( {a}^{3}{\it hh}-1 \right) }{{{\it hh}}^{5/2}{a}^{9/2}
}}\end {array} \right] 
\\
\\
U^{-}_{\Y_{1/2}}(a)= \left[ \begin {array}{ccc} -{\frac {\sqrt {a} \left( -{\it hh}+a
		\right)  \left( a+{\it hh} \right) }{{{\it hh}}^{5/2}}}&0&{\frac {
		\left( {\it hh}-1 \right)  \left( {\it hh}+1 \right) }{{{\it hh}}^{5/
			2}{a}^{5/2}}}\\ \noalign{\medskip}0&{{\it hh}}^{-1}&0
\\ \noalign{\medskip}0&0&-{\frac { \left( a+1 \right)  \left( a-1
		\right) }{{{\it hh}}^{3/2}{a}^{5/2}}}\end {array} \right] 
\\
\\
U^{-}_{\Y_{1/3}}(a)= \left[ \begin {array}{ccc} -{\frac { \left( -{\it hh}+a \right) 
		\left( {a}^{2}+a{\it hh}+{{\it hh}}^{2} \right) }{{{\it hh}}^{3}}}&{
	\frac { \left( {\it hh}-1 \right)  \left( {\it hh}+1 \right) }{{a}^{3/
			2}{{\it hh}}^{3/2}}}&{\frac { \left( {\it hh}-1 \right)  \left( {\it 
			hh}+1 \right) }{{a}^{3}{{\it hh}}^{2}}}\\ \noalign{\medskip}0&-{\frac 
	{{a}^{3}-{\it hh}}{{a}^{3/2}{{\it hh}}^{3/2}}}&{\frac { \left( {\it hh
		}-1 \right)  \left( {\it hh}+1 \right) }{{a}^{3}{{\it hh}}^{3}}}
\\ \noalign{\medskip}0&0&-{\frac {{a}^{3}{\it hh}-1}{{a}^{3}{{\it hh}}
		^{2}}}\end {array} \right] 
\end{array}
$$
By Lemma \ref{luskthlem}, ${L}^{\pm }_{\Y_w}(a) = \fp \, {U}^{\pm }_{\Y_w}(a^{-1}) \, \fp$ and the matrices ${L}^{\pm }_{\Y_w}(a)$ are easy to compute explicitly using (\ref{smat3}).

If $w=a/b, w'=a'/b'$ with the same denominator $b=b'$ then ${\Y_w}={\Y_{w'}}$. Thus, in cases 
$n=2$ and $n=3$ the matrices ${U}^{\pm }_{\Y_w}(a)$, ${L}^{\pm }_{\Y_w}(a)$ are equal to one of listed above. 

\section{Twisted R-matrices for $\Y_w$  \label{twisrsect} }

The R-matrices of $\Y_w$ are defined by (\ref{RmatForY}). Using explicit matrices from the previous section we compute:

\noindent
$\underline{n=2}$:
$$
R^{+}_{\Y_0}(a)=\left[ \begin {array}{cc} {\frac { \left( -h+a \right)  \left( a+1
		\right)  \left( a-1 \right) h}{ \left( ah-1 \right) ^{2} \left( ah+1
		\right) }}&-{\frac { \left( h+1 \right)  \left( h-1 \right)  \left( -
		h+a \right) a}{ \left( ah-1 \right) ^{2} \left( ah+1 \right) }}
\\ \noalign{\medskip}-{\frac { \left( h+1 \right)  \left( h-1 \right) 
		\left( -h+a \right) a}{ \left( ah-1 \right) ^{2} \left( ah+1 \right) 
}}&{\frac { \left( -h+a \right)  \left( a+1 \right)  \left( a-1
		\right) h}{ \left( ah-1 \right) ^{2} \left( ah+1 \right) }}
\end {array} \right]
$$
$$
R^{-}_{\Y_0}(a)= \left[ \begin {array}{cc} {\frac { \left( -h+a \right)  \left( a+1
		\right)  \left( a-1 \right) h}{ \left( ah-1 \right) ^{2} \left( ah+1
		\right) }}&-{\frac { \left( h+1 \right)  \left( h-1 \right)  \left( -
		h+a \right) a}{ \left( ah-1 \right) ^{2} \left( ah+1 \right) }}
\\ \noalign{\medskip}-{\frac { \left( h+1 \right)  \left( h-1 \right) 
		\left( -h+a \right) a}{ \left( ah-1 \right) ^{2} \left( ah+1 \right) 
}}&{\frac { \left( -h+a \right)  \left( a+1 \right)  \left( a-1
		\right) h}{ \left( ah-1 \right) ^{2} \left( ah+1 \right) }}
\end {array} \right]
$$
$$
R^{+}_{\Y_{1/2}}(a)= \left[ \begin {array}{cc} -{\frac { \left( a+1 \right)  \left( a-1
		\right) h}{ \left( ah-1 \right)  \left( ah+1 \right) }}&{\frac {{a}^{
			2} \left( h-1 \right)  \left( h+1 \right) }{ \left( ah-1 \right) 
		\left( ah+1 \right) }}\\ \noalign{\medskip}{\frac { \left( h+1
		\right)  \left( h-1 \right) }{ \left( ah-1 \right)  \left( ah+1
		\right) }}&-{\frac { \left( a+1 \right)  \left( a-1 \right) h}{
		\left( ah-1 \right)  \left( ah+1 \right) }}\end {array} \right] 
$$
$$
R^{-}_{\Y_{1/2}}(a)=\left[ \begin {array}{cc} -{\frac { \left( a+1 \right)  \left( a-1
		\right) h}{ \left( ah-1 \right)  \left( ah+1 \right) }}&{\frac {
		\left( h+1 \right)  \left( h-1 \right) }{ \left( ah-1 \right) 
		\left( ah+1 \right) }}\\ \noalign{\medskip}{\frac {{a}^{2} \left( h-1
		\right)  \left( h+1 \right) }{ \left( ah-1 \right)  \left( ah+1
		\right) }}&-{\frac { \left( a+1 \right)  \left( a-1 \right) h}{
		\left( ah-1 \right)  \left( ah+1 \right) }}\end {array} \right] 
$$

\noindent
$\underline{n=3}$:

$$
\hspace{-15mm}
\begin{array}{l}
R^{+}_{\Y_{0}}(a)=\\
\\
\left[ \begin {array}{ccc} -{\frac { \left( a+1 \right)  \left( a-1
		\right)  \left( {a}^{3}h-1 \right)  \left( -h+a \right) {h}^{2}}{
		\left( ah-1 \right) ^{3} \left( ah+1 \right)  \left( {a}^{2}{h}^{2}+a
		h+1 \right) }}&{\frac { \left( h+1 \right)  \left( h-1 \right) 
		\left( -h+a \right)  \left( a-1 \right)  \left( a+1 \right) ah}{
		\left( ah-1 \right) ^{3} \left( {a}^{2}{h}^{2}+ah+1 \right) }}&-{
	\frac { \left( -h+a \right)  \left( h-1 \right)  \left( h+1 \right) 
		\left( a{h}^{3}-1 \right) {a}^{2}}{ \left( ah-1 \right) ^{3} \left( a
		h+1 \right)  \left( {a}^{2}{h}^{2}+ah+1 \right) }}
\\ \noalign{\medskip}{\frac { \left( h+1 \right)  \left( h-1 \right) 
		\left( -h+a \right)  \left( a-1 \right)  \left( a+1 \right) ah}{
		\left( ah-1 \right) ^{3} \left( {a}^{2}{h}^{2}+ah+1 \right) }}&-{
	\frac { \left( {a}^{4}{h}^{2}-{a}^{3}{h}^{3}+{a}^{2}{h}^{4}-2\,{a}^{2}
		{h}^{2}+{a}^{2}-ah+{h}^{2} \right)  \left( -h+a \right) }{ \left( ah-1
		\right) ^{3} \left( {a}^{2}{h}^{2}+ah+1 \right) }}&{\frac { \left( h+
		1 \right)  \left( h-1 \right)  \left( -h+a \right)  \left( a-1
		\right)  \left( a+1 \right) ah}{ \left( ah-1 \right) ^{3} \left( {a}^
		{2}{h}^{2}+ah+1 \right) }}\\ \noalign{\medskip}-{\frac { \left( -h+a
		\right)  \left( h-1 \right)  \left( h+1 \right)  \left( a{h}^{3}-1
		\right) {a}^{2}}{ \left( ah-1 \right) ^{3} \left( ah+1 \right) 
		\left( {a}^{2}{h}^{2}+ah+1 \right) }}&{\frac { \left( h+1 \right) 
		\left( h-1 \right)  \left( -h+a \right)  \left( a-1 \right)  \left( a
		+1 \right) ah}{ \left( ah-1 \right) ^{3} \left( {a}^{2}{h}^{2}+ah+1
		\right) }}&-{\frac { \left( a+1 \right)  \left( a-1 \right)  \left( {
			a}^{3}h-1 \right)  \left( -h+a \right) {h}^{2}}{ \left( ah-1 \right) ^
		{3} \left( ah+1 \right)  \left( {a}^{2}{h}^{2}+ah+1 \right) }}
\end {array} \right]
\end{array}
$$

$$
R^{+}_{\Y_{1/3}}(a)=  \left[ \begin {array}{ccc} -{\frac {h \left( {a}^{3}h-1 \right) }{
		\left( ah-1 \right)  \left( {a}^{2}{h}^{2}+ah+1 \right) }}&{\frac {
		\left( h+1 \right)  \left( h-1 \right) h{a}^{3}}{ \left( ah-1
		\right)  \left( {a}^{2}{h}^{2}+ah+1 \right) }}&{\frac { \left( h+1
		\right)  \left( h-1 \right) {a}^{3}}{ \left( ah-1 \right)  \left( {a}
		^{2}{h}^{2}+ah+1 \right) }}\\ \noalign{\medskip}{\frac { \left( h+1
		\right)  \left( h-1 \right) }{ \left( ah-1 \right)  \left( {a}^{2}{h}
		^{2}+ah+1 \right) }}&-{\frac {h \left( {a}^{3}h-1 \right) }{ \left( ah
		-1 \right)  \left( {a}^{2}{h}^{2}+ah+1 \right) }}&{\frac { \left( h+1
		\right)  \left( h-1 \right) h{a}^{3}}{ \left( ah-1 \right)  \left( {a
		}^{2}{h}^{2}+ah+1 \right) }}\\ \noalign{\medskip}{\frac { \left( h+1
		\right)  \left( h-1 \right) h}{ \left( ah-1 \right)  \left( {a}^{2}{h
		}^{2}+ah+1 \right) }}&{\frac { \left( h+1 \right)  \left( h-1 \right) 
	}{ \left( ah-1 \right)  \left( {a}^{2}{h}^{2}+ah+1 \right) }}&-{\frac 
	{h \left( {a}^{3}h-1 \right) }{ \left( ah-1 \right)  \left( {a}^{2}{h}
		^{2}+ah+1 \right) }}\end {array} \right] 
$$

$$
R^{+}_{\Y_{1/2}}(a)=  \left[ \begin {array}{ccc} -{\frac { \left( a+1 \right)  \left( a-1
		\right) h}{ \left( ah-1 \right)  \left( ah+1 \right) }}&0&{\frac {{a}
		^{2} \left( h-1 \right)  \left( h+1 \right) }{ \left( ah-1 \right) 
		\left( ah+1 \right) }}\\ \noalign{\medskip}0&1&0\\ \noalign{\medskip}
{\frac { \left( h+1 \right)  \left( h-1 \right) }{ \left( ah-1
		\right)  \left( ah+1 \right) }}&0&-{\frac { \left( a+1 \right) 
		\left( a-1 \right) h}{ \left( ah-1 \right)  \left( ah+1 \right) }}
\end {array} \right]
$$

$$
\hspace{-15mm}
\begin{array}{l}
R^{-}_{\Y_{0}}(a)=\\
\\
\left[ \begin {array}{ccc} -{\frac { \left( a+1 \right)  \left( a-1
		\right)  \left( {a}^{3}h-1 \right)  \left( -h+a \right) {h}^{2}}{
		\left( ah-1 \right) ^{3} \left( ah+1 \right)  \left( {a}^{2}{h}^{2}+a
		h+1 \right) }}&{\frac { \left( h+1 \right)  \left( h-1 \right) 
		\left( -h+a \right)  \left( a-1 \right)  \left( a+1 \right) ah}{
		\left( ah-1 \right) ^{3} \left( {a}^{2}{h}^{2}+ah+1 \right) }}&-{
	\frac { \left( -h+a \right)  \left( h-1 \right)  \left( h+1 \right) 
		\left( a{h}^{3}-1 \right) {a}^{2}}{ \left( ah-1 \right) ^{3} \left( a
		h+1 \right)  \left( {a}^{2}{h}^{2}+ah+1 \right) }}
\\ \noalign{\medskip}{\frac { \left( h+1 \right)  \left( h-1 \right) 
		\left( -h+a \right)  \left( a-1 \right)  \left( a+1 \right) ah}{
		\left( ah-1 \right) ^{3} \left( {a}^{2}{h}^{2}+ah+1 \right) }}&-{
	\frac { \left( {a}^{4}{h}^{2}-{a}^{3}{h}^{3}+{a}^{2}{h}^{4}-2\,{a}^{2}
		{h}^{2}+{a}^{2}-ah+{h}^{2} \right)  \left( -h+a \right) }{ \left( ah-1
		\right) ^{3} \left( {a}^{2}{h}^{2}+ah+1 \right) }}&{\frac { \left( h+
		1 \right)  \left( h-1 \right)  \left( -h+a \right)  \left( a-1
		\right)  \left( a+1 \right) ah}{ \left( ah-1 \right) ^{3} \left( {a}^
		{2}{h}^{2}+ah+1 \right) }}\\ \noalign{\medskip}-{\frac { \left( -h+a
		\right)  \left( h-1 \right)  \left( h+1 \right)  \left( a{h}^{3}-1
		\right) {a}^{2}}{ \left( ah-1 \right) ^{3} \left( ah+1 \right) 
		\left( {a}^{2}{h}^{2}+ah+1 \right) }}&{\frac { \left( h+1 \right) 
		\left( h-1 \right)  \left( -h+a \right)  \left( a-1 \right)  \left( a
		+1 \right) ah}{ \left( ah-1 \right) ^{3} \left( {a}^{2}{h}^{2}+ah+1
		\right) }}&-{\frac { \left( a+1 \right)  \left( a-1 \right)  \left( {
			a}^{3}h-1 \right)  \left( -h+a \right) {h}^{2}}{ \left( ah-1 \right) ^
		{3} \left( ah+1 \right)  \left( {a}^{2}{h}^{2}+ah+1 \right) }}
\end {array} \right]
\end{array}
$$

$$
R^{-}_{\Y_{1/3}}(a)=\left[ \begin {array}{ccc} -{\frac {h \left( {a}^{3}h-1 \right) }{
		\left( ah-1 \right)  \left( {a}^{2}{h}^{2}+ah+1 \right) }}&{\frac {
		\left( h+1 \right)  \left( h-1 \right) }{ \left( ah-1 \right) 
		\left( {a}^{2}{h}^{2}+ah+1 \right) }}&{\frac { \left( h+1 \right) 
		\left( h-1 \right) h}{ \left( ah-1 \right)  \left( {a}^{2}{h}^{2}+ah+
		1 \right) }}\\ \noalign{\medskip}{\frac { \left( h+1 \right)  \left( h
		-1 \right) h{a}^{3}}{ \left( ah-1 \right)  \left( {a}^{2}{h}^{2}+ah+1
		\right) }}&-{\frac {h \left( {a}^{3}h-1 \right) }{ \left( ah-1
		\right)  \left( {a}^{2}{h}^{2}+ah+1 \right) }}&{\frac { \left( h+1
		\right)  \left( h-1 \right) }{ \left( ah-1 \right)  \left( {a}^{2}{h}
		^{2}+ah+1 \right) }}\\ \noalign{\medskip}{\frac { \left( h+1 \right) 
		\left( h-1 \right) {a}^{3}}{ \left( ah-1 \right)  \left( {a}^{2}{h}^{
			2}+ah+1 \right) }}&{\frac { \left( h+1 \right)  \left( h-1 \right) h{a
		}^{3}}{ \left( ah-1 \right)  \left( {a}^{2}{h}^{2}+ah+1 \right) }}&-{
	\frac {h \left( {a}^{3}h-1 \right) }{ \left( ah-1 \right)  \left( {a}^
		{2}{h}^{2}+ah+1 \right) }}\end {array} \right] 
$$

$$
R^{-}_{\Y_{1/2}}(a)= \left[ \begin {array}{ccc} -{\frac { \left( a+1 \right)  \left( a-1
		\right) h}{ \left( ah-1 \right)  \left( ah+1 \right) }}&0&{\frac {
		\left( h+1 \right)  \left( h-1 \right) }{ \left( ah-1 \right) 
		\left( ah+1 \right) }}\\ \noalign{\medskip}0&1&0\\ \noalign{\medskip}
{\frac {{a}^{2} \left( h-1 \right)  \left( h+1 \right) }{ \left( ah-1
		\right)  \left( ah+1 \right) }}&0&-{\frac { \left( a+1 \right) 
		\left( a-1 \right) h}{ \left( ah-1 \right)  \left( ah+1 \right) }}
\end {array} \right]
$$
Using (\ref{RmatForY}) one also computes similar matrices $R^{+}_{\Y_{1/2}}(a)$. It might be instructive to check Proposition \ref{unitlem} which in this case claims that 
\be \label{unitarY}
R^{+}_{\Y_{1/2}}(a^{-1}) R^{-}_{\Y_{1/2}}(a)=1
\ee

The matrix $\gamma_{w}(a)$  defined by (\ref{gamm}). Here are the first examples:
$$
\gamma_{0}(a)= \left[ \begin {array}{cc} -{h}^{-1}&0\\ \noalign{\medskip}0&-{h}^{-1}
\end {array} \right], \ \ \   \gamma_{1/2}(a)= \left[ \begin {array}{cc} {\frac {\sqrt {-a}}{{h}^{2}}}&0
\\ \noalign{\medskip}0&{\frac {1}{{h}^{2}\sqrt {-a}}}\end {array}
\right] 
$$

$$
\gamma_{0}(a)=\left[ \begin {array}{ccc}  \left( -{h}^{-1} \right) ^{3/2}&0&0
\\ \noalign{\medskip}0&-{\frac {1}{h}\sqrt {-{h}^{-1}}}&0
\\ \noalign{\medskip}0&0& \left( -{h}^{-1} \right) ^{3/2}\end {array}
\right], 
$$
$$
\gamma_{1/3}(a)=\left[ \begin {array}{ccc} - \left( -{h}^{-1} \right) ^{5/2}a&0&0
\\ \noalign{\medskip}0&{\frac {1}{{h}^{2}}\sqrt {-{h}^{-1}}}&0
\\ \noalign{\medskip}0&0&-{\frac {1}{a} \left( -{h}^{-1} \right) ^{5/2
}}\end {array} \right]
$$

$$
\gamma_{1/2}(a)=\left[ \begin {array}{ccc} - \left( -{h}^{-1} \right) ^{7/2}a \sqrt {-a
}&0&0\\ \noalign{\medskip}0&{\frac {1}{{h}^{2}}\sqrt {-{h}^{-1}}}&0
\\ \noalign{\medskip}0&0&-{\frac {1}{a\sqrt {-a}} \left( -{h}^{-1}
	\right) ^{7/2}}\end {array} \right] 
$$
$$
\gamma_{2/3}(a)= \left[ \begin {array}{ccc}  \left( -{h}^{-1} \right) ^{9/2}{a}^{2}&0&0
\\ \noalign{\medskip}0&-{\frac {1}{{h}^{3}}\sqrt {-{h}^{-1}}}&0
\\ \noalign{\medskip}0&0&{\frac {1}{{a}^{2}} \left( -{h}^{-1} \right) 
	^{9/2}}\end {array} \right]
$$
Let $\kappa^{\ast}$ denote the substitution (\ref{mirsym}): 
$$
\kappa^{*}: \ \ \  a\to z h, \ \ h\to {h}^{-1},\ \ z\to a h 
$$
The twisted $R$-matrices are defined by (\ref{twistedR}). Using the above explicit formulas we find the first several examples:

\noindent
$\underline{n=2}$:
$$
{\bf R}^{-}_{\Y_{0}}(z)=   \left[ \begin {array}{cc} {\frac { \left( zh+1 \right)  \left( zh-1
		\right)  \left( {h}^{2}z-1 \right) }{{h}^{2} \left( z-1 \right) ^{2}
		\left( z+1 \right) }}&{\frac {z \left( h+1 \right)  \left( h-1
		\right)  \left( {h}^{2}z-1 \right) }{{h}^{2} \left( z-1 \right) ^{2}
		\left( z+1 \right) }}\\ \noalign{\medskip}{\frac {z \left( h+1
		\right)  \left( h-1 \right)  \left( {h}^{2}z-1 \right) }{{h}^{2}
		\left( z-1 \right) ^{2} \left( z+1 \right) }}&{\frac { \left( zh+1
		\right)  \left( zh-1 \right)  \left( {h}^{2}z-1 \right) }{{h}^{2}
		\left( z-1 \right) ^{2} \left( z+1 \right) }}\end {array} \right] 
$$

$$
{\bf R}^{-}_{\Y_{1/2}}(z)=   \left[ \begin {array}{cc} -{\frac {{h}^{2}{z}^{2}-1}{h \left( {z}^{2}
		-1 \right) }}&{\frac {a \left( {h}^{2}-1 \right) }{{h}^{2} \left( {z}^
		{2}-1 \right) }}\\ \noalign{\medskip}{\frac {{z}^{2} \left( {h}^{2}-1
		\right) }{a \left( {z}^{2}-1 \right) }}&-{\frac {{h}^{2}{z}^{2}-1}{h
		\left( {z}^{2}-1 \right) }}\end {array} \right] 
$$

\noindent
$\underline{n=3}$:
$$
\hspace{-20mm}
\begin{array}{l}
{\bf R}^{-}_{\Y_{0}}(z)= \\ 
\\
\left[ \begin {array}{ccc} -{\frac { \left( {h}^{2}{z}^{3}-1 \right) 
		\left( {h}^{2}z-1 \right)  \left( zh-1 \right)  \left( zh+1 \right) 
	}{{h}^{3} \left( z-1 \right) ^{3} \left( z+1 \right)  \left( {z}^{2}+z
		+1 \right) }}&-{\frac {z \left( zh+1 \right)  \left( zh-1 \right) 
		\left( h+1 \right)  \left( h-1 \right)  \left( {h}^{2}z-1 \right) }{{
			h}^{3} \left( z-1 \right) ^{3} \left( {z}^{2}+z+1 \right) }}&-{\frac {
		\left( h-1 \right)  \left( h+1 \right)  \left( {h}^{2}z-1 \right) 
		\left( {h}^{2}-z \right) {z}^{2}}{{h}^{3} \left( z-1 \right) ^{3}
		\left( z+1 \right)  \left( {z}^{2}+z+1 \right) }}
\\ \noalign{\medskip}-{\frac {z \left( zh+1 \right)  \left( zh-1
		\right)  \left( h+1 \right)  \left( h-1 \right)  \left( {h}^{2}z-1
		\right) }{{h}^{3} \left( z-1 \right) ^{3} \left( {z}^{2}+z+1 \right) 
}}&-{\frac { \left( {h}^{4}{z}^{4}+{h}^{4}{z}^{2}-{h}^{2}{z}^{3}-2\,{h
		}^{2}{z}^{2}-{h}^{2}z+{z}^{2}+1 \right)  \left( {h}^{2}z-1 \right) }{{
			h}^{3} \left( z-1 \right) ^{3} \left( {z}^{2}+z+1 \right) }}&-{\frac {
		z \left( zh+1 \right)  \left( zh-1 \right)  \left( h+1 \right) 
		\left( h-1 \right)  \left( {h}^{2}z-1 \right) }{{h}^{3} \left( z-1
		\right) ^{3} \left( {z}^{2}+z+1 \right) }}\\ \noalign{\medskip}-{
	\frac { \left( h-1 \right)  \left( h+1 \right)  \left( {h}^{2}z-1
		\right)  \left( {h}^{2}-z \right) {z}^{2}}{{h}^{3} \left( z-1
		\right) ^{3} \left( z+1 \right)  \left( {z}^{2}+z+1 \right) }}&-{
	\frac {z \left( zh+1 \right)  \left( zh-1 \right)  \left( h+1 \right) 
		\left( h-1 \right)  \left( {h}^{2}z-1 \right) }{{h}^{3} \left( z-1
		\right) ^{3} \left( {z}^{2}+z+1 \right) }}&-{\frac { \left( {h}^{2}{z
		}^{3}-1 \right)  \left( {h}^{2}z-1 \right)  \left( zh-1 \right) 
		\left( zh+1 \right) }{{h}^{3} \left( z-1 \right) ^{3} \left( z+1
		\right)  \left( {z}^{2}+z+1 \right) }}\end {array} \right] 
\end{array}
$$

$$ \hspace{-30mm}
{\bf R}^{-}_{\Y_{1/3}}(z)=   \left[ \begin {array}{ccc} -{\frac {{h}^{2}{z}^{3}-1}{h \left( {z}^{3
		}-1 \right) }}&{\frac {a \left( {h}^{2}-1 \right) }{{h}^{2} \left( {z}
		^{3}-1 \right) }}&-{\frac {{a}^{2} \left( {h}^{2}-1 \right) }{{h}^{3}
		\left( {z}^{3}-1 \right) }}\\ \noalign{\medskip}{\frac {{z}^{3}
		\left( {h}^{2}-1 \right) }{a \left( {z}^{3}-1 \right) }}&-{\frac {{h}
		^{2}{z}^{3}-1}{h \left( {z}^{3}-1 \right) }}&{\frac {a \left( {h}^{2}-
		1 \right) }{{h}^{2} \left( {z}^{3}-1 \right) }}\\ \noalign{\medskip}-{
	\frac {h{z}^{3} \left( {h}^{2}-1 \right) }{{a}^{2} \left( {z}^{3}-1
		\right) }}&{\frac {{z}^{3} \left( {h}^{2}-1 \right) }{a \left( {z}^{3
		}-1 \right) }}&-{\frac {{h}^{2}{z}^{3}-1}{h \left( {z}^{3}-1 \right) }
}\end {array} \right]
$$

$$
{\bf R}^{-}_{\Y_{1/2}}(z)=  \left[ \begin {array}{ccc} -{\frac {{h}^{2}{z}^{2}-1}{h \left( {z}^{2
		}-1 \right) }}&0&{\frac {{a}^{3} \left( {h}^{2}-1 \right) }{{h}^{2}
		\left( {z}^{2}-1 \right) }}\\ \noalign{\medskip}0&1&0
\\ \noalign{\medskip}{\frac {{z}^{2} \left( {h}^{2}-1 \right) }{{a}^{3
		} \left( {z}^{2}-1 \right) }}&0&-{\frac {{h}^{2}{z}^{2}-1}{h \left( {z
		}^{2}-1 \right) }}\end {array} \right]
$$

$$
{\bf R}^{-}_{\Y_{2/3}}(z)=  \left[ \begin {array}{ccc} -{\frac {{h}^{2}{z}^{3}-1}{h \left( {z}^{3
		}-1 \right) }}&{\frac {{a}^{2} \left( {h}^{2}-1 \right) }{{h}^{3}
		\left( {z}^{3}-1 \right) }}&-{\frac {{a}^{4} \left( {h}^{2}-1
		\right) }{{h}^{3} \left( {z}^{3}-1 \right) }}\\ \noalign{\medskip}{
	\frac {h{z}^{3} \left( {h}^{2}-1 \right) }{{a}^{2} \left( {z}^{3}-1
		\right) }}&-{\frac {{h}^{2}{z}^{3}-1}{h \left( {z}^{3}-1 \right) }}&{
	\frac {{a}^{2} \left( {h}^{2}-1 \right) }{h \left( {z}^{3}-1 \right) }
}\\ \noalign{\medskip}-{\frac {h{z}^{3} \left( {h}^{2}-1 \right) }{{a}
		^{4} \left( {z}^{3}-1 \right) }}&{\frac {{z}^{3} \left( {h}^{2}-1
		\right) }{h{a}^{2} \left( {z}^{3}-1 \right) }}&-{\frac {{h}^{2}{z}^{3
		}-1}{h \left( {z}^{3}-1 \right) }}\end {array} \right]
$$
Note that the properties of these matrices are in full agreement with Proposition \ref{monprop}. In particular, their matrix elements are monomials in $a$.

\section{Monodromy and wall-crossing operators}
By Theorem \ref{brmatth}
$$
\overset{\curvearrowright}{\B}_{w}(z)=\hbar^{\Omega_{w}} \, {\bf R}^{-}_{\Y_{w}}(z). 
$$
the twisted $R$-matrices ${\bf R}^{-}_{\Y_{w}}(z)$ are computed in the previous section and the operators $\hbar^{\Omega_{w}}$ are given by (\ref{homegaop}). We find:

\noindent
$n=2$:
$$
\hbar^{\Omega_{0}}=\left[ \begin {array}{cc} {h}^{2}&0\\ \noalign{\medskip}0&{h}^{2}
\end {array} \right], \ \ \ 
\hbar^{\Omega_{1/2}}=\left[ \begin {array}{cc} -h&0\\ \noalign{\medskip}0&-h\end {array}
\right]
$$

\noindent
$n=3$:
$$
\begin{array}{l}
\hbar^{\Omega_{0}}=\left[ \begin {array}{ccc} -{h}^{3}&0&0\\ \noalign{\medskip}0&-{h}^{3
}&0\\ \noalign{\medskip}0&0&-{h}^{3}\end {array} \right],  \ \ 
\hbar^{\Omega_{1/3}}= \left[ \begin {array}{ccc} -h&0&0\\ \noalign{\medskip}0&-h&0
\\ \noalign{\medskip}0&0&-h\end {array} \right], 
\\
\\
\hbar^{\Omega_{1/2}}=  \left[ \begin {array}{ccc} -h&0&0\\ \noalign{\medskip}0&1&0
\\ \noalign{\medskip}0&0&-h\end {array} \right], 
\ \ \
\hbar^{\Omega_{2/3}}=   \left[ \begin {array}{ccc} -h&0&0\\ \noalign{\medskip}0&-h&0
\\ \noalign{\medskip}0&0&-h\end {array} \right]. 
\end{array}
$$
The wall crossing operators in the basis of fixed points (i.e.,  Macdonald polynomials) can be computed  using explicit matrices from Appendix \ref{stabxapp} by $$
{\B}^{\bullet}_{w}(z)={\bf U}^{-}_{w}(a)^{-1} \hbar^{\Omega_{w}} \, {\bf R}^{-}_{\Y_{w}}(z)\, {\bf U}^{+}_{w}(a)$$
For instance, in the case $n=2$ we find:
$$ \hspace{-5mm}
\begin{array}{l}
{\B}^{\bullet}_{0}(z)= \\
\\ \left[ \begin {array}{cc} {\frac { \left( {h}^{2}z-1 \right)  \left( 
		{a}^{2}{h}^{3}{z}^{2}+a{h}^{4}z-{h}^{3}{z}^{2}-2\,a{h}^{2}z-{a}^{2}h+a
		z+h \right) }{h \left( z-1 \right) ^{2} \left( a+1 \right)  \left( a-1
		\right)  \left( z+1 \right) }}&{\frac { \left( h-1 \right)  \left( h+
		1 \right)  \left( ah+1 \right)  \left( ah-1 \right)  \left( {h}^{2}z-1
		\right) z}{ah \left( z-1 \right) ^{2} \left( a+1 \right)  \left( a-1
		\right)  \left( z+1 \right) }}\\ \noalign{\medskip}{\frac {a \left( h
		-1 \right)  \left( h+1 \right)  \left( -h+a \right)  \left( a+h
		\right)  \left( {h}^{2}z-1 \right) z}{h \left( z-1 \right) ^{2}
		\left( a+1 \right)  \left( a-1 \right)  \left( z+1 \right) }}&{\frac 
	{ \left( {h}^{2}z-1 \right)  \left( {a}^{2}{h}^{3}{z}^{2}-a{h}^{4}z-{h
		}^{3}{z}^{2}+2\,a{h}^{2}z-{a}^{2}h-az+h \right) }{h \left( z-1
		\right) ^{2} \left( a+1 \right)  \left( a-1 \right)  \left( z+1
		\right) }}\end {array} \right]
\end{array}
$$

$$
{\B}^{\bullet}_{1/2}(z)=  \left[ \begin {array}{cc} {\frac {{a}^{2}{h}^{2}{z}^{2}-{h}^{4}{z}^{2
		}+{h}^{2}{z}^{2}-{a}^{2}-{z}^{2}+1}{ \left( z-1 \right)  \left( z+1
		\right)  \left( a+1 \right)  \left( a-1 \right) }}&-{\frac { \left( a
		h-1 \right)  \left( h-1 \right)  \left( h+1 \right) {z}^{2} \left( ah+
		1 \right) }{ \left( z-1 \right)  \left( z+1 \right)  \left( a+1
		\right)  \left( a-1 \right) }}\\ \noalign{\medskip}-{\frac { \left( -
		h+a \right)  \left( h-1 \right)  \left( h+1 \right) {z}^{2} \left( a+h
		\right) }{ \left( z-1 \right)  \left( z+1 \right)  \left( a+1
		\right)  \left( a-1 \right) }}&{\frac {{a}^{2}{h}^{4}{z}^{2}-{a}^{2}{
			h}^{2}{z}^{2}+{z}^{2}{a}^{2}-{h}^{2}{z}^{2}-{a}^{2}+1}{ \left( z-1
		\right)  \left( z+1 \right)  \left( a+1 \right)  \left( a-1 \right) }
}\end {array} \right] 
$$
For $n=3$ these matrices are already too large to print  here. 

\section{Representation of the fundamental group \label{monapp}}
For $n=2$, the fundamental group $\pi_{1}(\mathbb{P}^{1}\setminus\sing,0_{+})$ is generated by 
$\gamma_{0},\gamma_{1/2},\gamma_{\infty}$ subject to a relation
$
\gamma_{0} \gamma_{1/2}=\gamma_{\infty}.
$
By Theorem \ref{nomodtheor} 
$$
\nabla_{\DT}(\gamma_{0})=\textsf{E}_{0}, \ \
\nabla_{\DT}(\gamma_{\infty})=\T^{-1} \textsf{E}_{\infty} \T, \ \ \nabla_{\DT}(\gamma_{1/2})={\B}^{\bullet}_{1/2}
$$
and thus, we need to check that
\be \label{reln2}
{\B}^{\bullet}_{1/2} \textsf{E}_{0} = \T^{-1} \textsf{E}_{\infty} \T.
\ee
From definitions, we compute
$$
\textsf{E}_{0}= \left[ \begin {array}{cc} {\frac {a}{h}}&0\\ \noalign{\medskip}0&{
	\frac {1}{ah}}\end {array} \right], \ \ \ \textsf{E}_{\infty} = \left[ \begin {array}{cc} ah&0\\ \noalign{\medskip}0&{\frac {h}{a}}
\end {array} \right],
$$
${\B}^{\bullet}_{w}:={\B}^{\bullet}_{1/2}(\infty)$ and thus from the matrix in the previous section we find:
$$
{\B}^{\bullet}_{1/2}= \left[ \begin {array}{cc} {\frac {{a}^{2}{h}^{2}-{h}^{4}+{h}^{2}-1}{
		\left( a+1 \right)  \left( a-1 \right) }}&-{\frac { \left( ah-1
		\right)  \left( h-1 \right)  \left( h+1 \right)  \left( ah+1 \right) 
	}{ \left( a+1 \right)  \left( a-1 \right) }}\\ \noalign{\medskip}-{
	\frac { \left( -h+a \right)  \left( h-1 \right)  \left( h+1 \right) 
		\left( a+h \right) }{ \left( a+1 \right)  \left( a-1 \right) }}&{
	\frac {{a}^{2}{h}^{4}-{a}^{2}{h}^{2}+{a}^{2}-{h}^{2}}{ \left( a+1
		\right)  \left( a-1 \right) }}\end {array} \right] 
$$
Finally, by definition $\T=\mathsf{P}^{-1} \mathsf{P}^{\ast}$ and from explicit matrices in Appendix \ref{macappen}, in variables (\ref{tahvar}) we compute:
$$
\T=\left[ \begin {array}{cc} {\frac { \left( ah+1 \right)  \left( ah-1
		\right) }{ah \left( a+1 \right)  \left( a-1 \right) }}&{\frac {a
		\left( h-1 \right)  \left( h+1 \right) }{ \left( a-1 \right)  \left( 
		a+1 \right) h}}\\ \noalign{\medskip}-{\frac {a \left( h-1 \right) 
		\left( h+1 \right) }{ \left( a-1 \right)  \left( a+1 \right) h}}&{
	\frac {a \left( a+h \right)  \left( -h+a \right) }{ \left( a-1
		\right)  \left( a+1 \right) h}}\end {array} \right],
$$
and we check that relation (\ref{reln2}) holds. 

As another non-trivial consistency check we can compute the monodropmy operators of $\nabla_{\GW}(\gamma')$ for loops based at $z=1$. Using Proposition \ref{gwdtmonod}  we compute:
$$
\begin{array}{l}
\nabla_{\GW}(\gamma'_{0}) = \mathsf{P} \, \textsf{E}_{0} \, \mathsf{P}^{-1}= \left[ \begin {array}{cc} {\frac {{a}^{2}h+a{h}^{2}-a+h}{2 a{h}^{2
}}}&-{\frac { \left( ah+1 \right)  \left( a+h \right) }{2 a{h}^{2}}
}\\ \noalign{\medskip}-{\frac { \left( ah-1 \right)  \left( -h+a
		\right) }{2 a{h}^{2}}}&{\frac {{a}^{2}h-a{h}^{2}+a+h}{2 a{h}^{2}}}
\end {array} \right],
\\
\\
\nabla_{\GW}(\gamma'_{1/2}) = \mathsf{P} \, {\B}^{\bullet}_{1/2} \, \mathsf{P}^{-1}=  \left[ \begin {array}{cc} -{\frac {{a}^{2}{h}^{3}-a{h}^{4}-{a}^{
			2}h+{h}^{3}-a-h}{2 a}}&-{\frac { \left( a+h \right)  \left( h-1
		\right)  \left( h+1 \right)  \left( ah+1 \right) }{2 a}}
\\ \noalign{\medskip}{\frac { \left( h+1 \right)  \left( h-1
		\right)  \left( ah-1 \right)  \left( -h+a \right) }{2 a}}&{\frac {
		{a}^{2}{h}^{3}+a{h}^{4}-{a}^{2}h+{h}^{3}+a-h}{2 a}}\end {array} \right], 
\\
\\
\nabla_{\GW}(\gamma'_{\infty}) =\mathsf{P} \, \T^{-1} \textsf{E}_{\infty} \T\, \mathsf{P}^{-1} = \left[ \begin {array}{cc} {\frac {{a}^{2}h-a{h}^{2}+a+h}{2 a}}&-{\frac { \left( ah+1 \right)  \left( a+h \right) }{2 a}}
\\ \noalign{\medskip}-{\frac { \left( ah-1 \right)  \left( -h+a
		\right) }{2 a}}&{\frac {{a}^{2}h+a{h}^{2}-a+h}{2 a}}\end {array}
\right].
\end{array}
$$
We observe that, in full agreement with Theorem \ref{thmbased1}, all matrix of $\nabla_{\GW}(\gamma')$ are Laurent polynomials in $a$ and $h$.

Similarly, in the case $n=3$, the fundamental group $\pi_{1}(\mathbb{P}^{1}\setminus\sing,0_{+})$ is generated by 
$\gamma_{0},\gamma_{1/3},\gamma_{1/2},\gamma_{2/3},\gamma_{\infty}$ subject to the relation
$$
\gamma_{0}\, \gamma_{1/3}\,\gamma_{1/2}\,\gamma_{2/3}=\gamma_{\infty}
$$
By Theorem \ref{nomodtheor} we need to check the relation
$$
{\B}^{\bullet}_{1/3} {\B}^{\bullet}_{1/2} {\B}^{\bullet}_{2/3} \textsf{E}_{0} = \T^{-1} \textsf{E}_{\infty} \T. 
$$
This relation, and the polynomiality of $\nabla_{\GW}(\gamma')$ are checked using explicit matrices below:

\begin{landscape}
	\thispagestyle{empty}
	\begin{scriptsize}
		$$ \hspace{-10mm}
		\begin{array}{l}
		{\B}^{\bullet}_{1/3}=1+ \left[ \begin {array}{ccc} {\frac { \left( a+h \right)  \left( {a}^{2
				}+ah+{h}^{2} \right)  \left( h+1 \right)  \left( h-1 \right)  \left( -
				h+a \right) ^{2}}{ \left( a+1 \right)  \left( a-1 \right)  \left( {a}^
				{3}-h \right) }}&-{\frac { \left( {a}^{2}h+a+h \right) h \left( h+1
				\right)  \left( h-1 \right)  \left( ah-1 \right)  \left( -h+a
				\right) }{ \left( a+1 \right)  \left( a-1 \right)  \left( {a}^{3}-h
				\right) }}&{\frac { \left( ah+1 \right)  \left( {a}^{2}{h}^{2}+ah+1
				\right)  \left( h+1 \right)  \left( h-1 \right)  \left( ah-1 \right) 
				^{2}}{a \left( {a}^{3}-h \right)  \left( a-1 \right)  \left( a+1
				\right) }}\\ \noalign{\medskip}-{\frac { \left( {a}^{2}+ah+1 \right) 
				\left( a+h \right)  \left( {a}^{2}+ah+{h}^{2} \right)  \left( h+1
				\right)  \left( h-1 \right)  \left( -h+a \right) ^{2}}{ \left( {a}^{3
				}-h \right)  \left( {a}^{3}h-1 \right) }}&{\frac {h \left( h-1
				\right)  \left( h+1 \right)  \left( -h+a \right)  \left( {a}^{2}h+a+h
				\right)  \left( {a}^{2}+ah+1 \right)  \left( ah-1 \right) }{ \left( {
					a}^{3}-h \right)  \left( {a}^{3}h-1 \right) }}&-{\frac { \left( {a}^{2
				}+ah+1 \right)  \left( ah+1 \right)  \left( {a}^{2}{h}^{2}+ah+1
				\right)  \left( h+1 \right)  \left( h-1 \right)  \left( ah-1 \right) 
				^{2}}{a \left( {a}^{3}-h \right)  \left( {a}^{3}h-1 \right) }}
		\\ \noalign{\medskip}{\frac { \left( {a}^{2}+ah+{h}^{2} \right) 
				\left( a+h \right) a \left( h+1 \right)  \left( h-1 \right)  \left( -
				h+a \right) ^{2}}{ \left( a+1 \right)  \left( a-1 \right)  \left( {a}^
				{3}h-1 \right) }}&-{\frac { \left( {a}^{2}h+a+h \right) ha \left( h+1
				\right)  \left( h-1 \right)  \left( ah-1 \right)  \left( -h+a
				\right) }{ \left( a+1 \right)  \left( a-1 \right)  \left( {a}^{3}h-1
				\right) }}&{\frac { \left( ah+1 \right)  \left( {a}^{2}{h}^{2}+ah+1
				\right)  \left( h+1 \right)  \left( h-1 \right)  \left( ah-1 \right) 
				^{2}}{ \left( a+1 \right)  \left( a-1 \right)  \left( {a}^{3}h-1
				\right) }}\end {array} \right] 
		\\
		\\
		{\B}^{\bullet}_{1/3}= 1+\left[ \begin {array}{ccc} {\frac { \left( h-1 \right)  \left( h+1
				\right)  \left( -h+a \right)  \left( a+h \right)  \left( ah-1
				\right)  \left( {a}^{2}+ah+{h}^{2} \right) }{h \left( a+1 \right) 
				\left( a-1 \right)  \left( {a}^{3}-h \right) }}&-{\frac {a \left( {a}
				^{2}+1 \right)  \left( h+1 \right) ^{2} \left( h-1 \right) ^{2}
				\left( ah-1 \right) }{ \left( a+1 \right)  \left( a-1 \right) 
				\left( {a}^{3}-h \right) }}&-{\frac { \left( ah+1 \right)  \left( {a}
				^{2}{h}^{2}+ah+1 \right)  \left( h+1 \right)  \left( h-1 \right) 
				\left( ah-1 \right) ^{2}}{h \left( a+1 \right)  \left( a-1 \right) 
				\left( {a}^{3}-h \right) }}\\ \noalign{\medskip}-{\frac { \left( {a}^
				{2}+1 \right)  \left( a+h \right)  \left( {a}^{2}+ah+{h}^{2} \right) 
				\left( h+1 \right) ^{2} \left( h-1 \right) ^{2} \left( -h+a \right) 
			}{h \left( {a}^{3}-h \right)  \left( {a}^{3}h-1 \right) }}&{\frac {a
				\left( h-1 \right) ^{3} \left( h+1 \right) ^{3} \left( {a}^{2}+1
				\right) ^{2}}{ \left( {a}^{3}-h \right)  \left( {a}^{3}h-1 \right) }}
		&{\frac { \left( {a}^{2}+1 \right)  \left( ah+1 \right)  \left( {a}^{2
				}{h}^{2}+ah+1 \right)  \left( h+1 \right) ^{2} \left( h-1 \right) ^{2}
				\left( ah-1 \right) }{h \left( {a}^{3}-h \right)  \left( {a}^{3}h-1
				\right) }}\\ \noalign{\medskip}-{\frac { \left( a+h \right)  \left( {
					a}^{2}+ah+{h}^{2} \right)  \left( h+1 \right)  \left( h-1 \right) 
				\left( -h+a \right) ^{2}}{h \left( a+1 \right)  \left( a-1 \right) 
				\left( {a}^{3}h-1 \right) }}&{\frac {a \left( {a}^{2}+1 \right) 
				\left( h+1 \right) ^{2} \left( h-1 \right) ^{2} \left( -h+a \right) 
			}{ \left( a+1 \right)  \left( a-1 \right)  \left( {a}^{3}h-1 \right) }
		}&{\frac { \left( h-1 \right)  \left( h+1 \right)  \left( ah-1
				\right)  \left( -h+a \right)  \left( ah+1 \right)  \left( {a}^{2}{h}^
				{2}+ah+1 \right) }{h \left( a+1 \right)  \left( a-1 \right)  \left( {a
				}^{3}h-1 \right) }}\end {array} \right] 
		\\
		\\
		{\B}^{\bullet}_{2/3}=1+ \left[ \begin {array}{ccc} {\frac { \left( a+h \right)  \left( {a}^{2
				}+ah+{h}^{2} \right)  \left( h+1 \right)  \left( h-1 \right)  \left( -
				h+a \right) ^{2}}{ \left( a+1 \right)  \left( a-1 \right)  \left( {a}^
				{3}-h \right) }}&-{\frac { \left( {a}^{2}+ah+1 \right) ha \left( h+1
				\right)  \left( h-1 \right)  \left( ah-1 \right)  \left( -h+a
				\right) }{ \left( a+1 \right)  \left( a-1 \right)  \left( {a}^{3}-h
				\right) }}&{\frac { \left( {a}^{2}{h}^{2}+ah+1 \right)  \left( ah+1
				\right)  \left( h+1 \right)  \left( h-1 \right) a \left( ah-1
				\right) ^{2}}{ \left( a+1 \right)  \left( a-1 \right)  \left( {a}^{3}
				-h \right) }}\\ \noalign{\medskip}-{\frac { \left( {a}^{2}h+a+h
				\right)  \left( a+h \right)  \left( {a}^{2}+ah+{h}^{2} \right) 
				\left( h+1 \right)  \left( h-1 \right)  \left( -h+a \right) ^{2}}{a
				\left( {a}^{3}-h \right)  \left( {a}^{3}h-1 \right) }}&{\frac {h
				\left( h-1 \right)  \left( h+1 \right)  \left( -h+a \right)  \left( {
					a}^{2}h+a+h \right)  \left( {a}^{2}+ah+1 \right)  \left( ah-1 \right) 
			}{ \left( {a}^{3}-h \right)  \left( {a}^{3}h-1 \right) }}&-{\frac {
				\left( {a}^{2}h+a+h \right)  \left( ah+1 \right)  \left( {a}^{2}{h}^{
					2}+ah+1 \right)  \left( h+1 \right)  \left( h-1 \right)  \left( ah-1
				\right) ^{2}}{ \left( {a}^{3}-h \right)  \left( {a}^{3}h-1 \right) }}
		\\ \noalign{\medskip}{\frac { \left( a+h \right)  \left( {a}^{2}+ah+{h
				}^{2} \right)  \left( h+1 \right)  \left( h-1 \right)  \left( -h+a
				\right) ^{2}}{a \left( a+1 \right)  \left( a-1 \right)  \left( {a}^{3
				}h-1 \right) }}&-{\frac { \left( {a}^{2}+ah+1 \right) h \left( h+1
				\right)  \left( h-1 \right)  \left( ah-1 \right)  \left( -h+a
				\right) }{ \left( a+1 \right)  \left( a-1 \right)  \left( {a}^{3}h-1
				\right) }}&{\frac { \left( ah+1 \right)  \left( {a}^{2}{h}^{2}+ah+1
				\right)  \left( h+1 \right)  \left( h-1 \right)  \left( ah-1 \right) 
				^{2}}{ \left( a+1 \right)  \left( a-1 \right)  \left( {a}^{3}h-1
				\right) }}\end {array} \right] 
		\\
		\\
		\T= \left[ \begin {array}{ccc} {\frac { \left( ah-1 \right) ^{2} \left( {
					a}^{2}{h}^{2}+ah+1 \right)  \left( ah+1 \right) }{{h}^{2}{a}^{3}
				\left( a+1 \right)  \left( a-1 \right)  \left( {a}^{3}-h \right) }}&{
			\frac { \left( ah-1 \right)  \left( h-1 \right)  \left( h+1 \right) 
				\left( ah+1 \right) }{h \left( a+1 \right)  \left( a-1 \right) 
				\left( {a}^{3}-h \right) }}&-{\frac {{a}^{2} \left( h-1 \right) 
				\left( h+1 \right)  \left( -{h}^{3}+a \right) }{{h}^{2} \left( a+1
				\right)  \left( a-1 \right)  \left( {a}^{3}-h \right) }}
		\\ \noalign{\medskip}-{\frac { \left( h-1 \right)  \left( h+1 \right) 
				\left( {a}^{2}{h}^{2}+ah+1 \right)  \left( ah+1 \right)  \left( ah-1
				\right)  \left( a+h \right) }{a{h}^{3} \left( {a}^{3}-h \right) 
				\left( {a}^{3}h-1 \right) }}&{\frac {{a}^{6}{h}^{3}-{a}^{4}{h}^{5}-{a
				}^{3}{h}^{6}+2\,{a}^{4}{h}^{3}-{a}^{2}{h}^{5}-{a}^{4}h+2\,{a}^{2}{h}^{
					3}-{a}^{3}-{a}^{2}h+{h}^{3}}{{h}^{2} \left( {a}^{3}h-1 \right) 
				\left( {a}^{3}-h \right) }}&{\frac { \left( -h+a \right)  \left( a+h
				\right)  \left( {a}^{2}+ah+{h}^{2} \right)  \left( ah+1 \right) 
				\left( h-1 \right)  \left( h+1 \right) {a}^{2}}{{h}^{3} \left( {a}^{3
				}-h \right)  \left( {a}^{3}h-1 \right) }}\\ \noalign{\medskip}{\frac {
				\left( a{h}^{3}-1 \right)  \left( h-1 \right)  \left( h+1 \right) {a}
				^{2}}{{h}^{2} \left( {a}^{3}h-1 \right)  \left( a+1 \right)  \left( a-
				1 \right) }}&-{\frac {{a}^{3} \left( -h+a \right)  \left( h-1 \right) 
				\left( h+1 \right)  \left( a+h \right) }{h \left( a+1 \right) 
				\left( a-1 \right)  \left( {a}^{3}h-1 \right) }}&{\frac { \left( {a}^
				{2}+ah+{h}^{2} \right)  \left( a+h \right)  \left( -h+a \right) ^{2}{a
				}^{3}}{{h}^{2} \left( {a}^{3}h-1 \right)  \left( a+1 \right)  \left( a
				-1 \right) }}\end {array} \right] 
		\\
		\\
		\mathsf{E}_{0}= \left[ \begin {array}{ccc} {\frac {{a}^{3}}{{h}^{3}}}&0&0
		\\ \noalign{\medskip}0&{h}^{-2}&0\\ \noalign{\medskip}0&0&{\frac {1}{{
					a}^{3}{h}^{3}}}\end {array} \right], \ \ \ \ \ \mathsf{E}_{\infty}=\left[ \begin {array}{ccc} {a}^{3}{h}^{3}&0&0\\ \noalign{\medskip}0&{
			h}^{2}&0\\ \noalign{\medskip}0&0&{\frac {{h}^{3}}{{a}^{3}}}
		\end {array} \right] 
		\end{array}
		$$
	\end{scriptsize}
\end{landscape}

\bibliographystyle{abbrv}
\bibliography{bib}

\end{document}